\newtheorem{theorem}{Theorem}[section]
\newtheorem{proposition}[theorem]{Proposition}
\newtheorem{corollary}[theorem]{Corollary}
\newtheorem{lemma}[theorem]{Lemma}
\theoremstyle{definition}
\newtheorem{definition}[theorem]{Definition}
\newtheorem{remark}[theorem]{Remark}
\newcommand{\norm}[1]{\left\Vert#1\right\Vert}
\newcommand{\N}{\mathbb{N}}
\newcommand{\Z}{\mathbb{Z}}
\newcommand{\Q}{\mathbb{Q}}
\newcommand{\R}{\mathbb{R}}
\newcommand{\e}{\epsilon} % Remove for better compilation.
\newcommand{\type}{(*)}
 \newcommand{\liec}{\mathfrak c}
\newcommand{\liel}{\mathfrak l}
 \newcommand{\lieg}{\mathfrak g}
\newcommand{\lieh}{\mathfrak h}
\newcommand{\liek}{\mathfrak k}
\newcommand{\liem}{\mathfrak m}
\newcommand{\lien}{\mathfrak n}
\newcommand{\liea}{\mathfrak a}
\newcommand{\liep}{\mathfrak p}
\newcommand{\HaarK}{\eta_K} %Haar measure on K
\newcommand{\HaarG}{\eta_G} %Haar measure on G
\newcommand{\mGamma}{\mu_{\Gamma}} % discretization measure
\newcommand{\mG}{\mu_G} %proba measure on G defining random walk
\newcommand{\mB}{\mu_B} % K-invariant proba measure on \(B\) 
\newcommand{\mBg}{\mu_B^g} %measures on the Furstenberg boundary
\newcommand{\mGBM}{\mu_G^{BM}} %time one distribution of Brownian motion on \(G\) starting at \(e\)
\newcommand{\mX} {\mu_X} %stationary measure on X
\newcommand{\mDer}{\mu_{\Der}}
\newcommand{\mL}{\mu_L} % leaf measure
\newcommand{\PW}{P_{\mathcal{W}}} % minimal parabolic for the weyl chamber W
\newcommand{\W}{\mathcal{W}}% weyl chamber W
\newcommand{\central}{\kappa_{\mG}}
\newcommand{\Homeo}{\text{Homeo}^+(\R)}
\newcommand{\HomeoR}{\text{Homeo}^+(\R)}
\newcommand{\HomeoS}{\text{Homeo}^+(\mathbb{S}^1)}
\newcommand{\Der} {Z} % The Derriennic space
\newcommand{\s}{\mathbb{S}^1}
\date{2020}
\title[Non left-orderability of higher rank lattices]{Non left-orderability of lattices in higher rank semi-simple Lie groups}
\author{Bertrand Deroin and Sebastian Hurtado}
\address{CNRS  -- IMPA, Rio de Janeiro, Brasil -- AGM, Cergy-Pontoise, France} 
\email{bertrand.deroin@u-cergy.fr}
\address{Department of Mathematics
University of Chicago
Chicago, Il 60615}
\email{shurtados@uchicago.edu}
\subjclass{20F60, 22F50, 37B05, 37C85, 37E10, 57R30.}%
\keywords{Left-orderable groups, Semi-simple Lie groups, Lattices}%
\begin{document}
\maketitle

\begin{abstract}
We prove that an irreducible lattice in a real semi-simple Lie group of real rank at least two and finite center is not left-orderable.
\end{abstract}

\section{Introduction}

\subsection{Overview} 
A group $\Gamma$ is said to be left-orderable if it has a total order \(<\) which is invariant by left multiplication, i.e. $\forall f,g,h \in \Gamma , \  f < g \ \text{implies} \  hf < hg$. For a countable group, this is well-known to be equivalent to the existence of an injective group homomorphism from \(\Gamma\) to the group of orientation preserving homeomorphisms of the real line $\HomeoR$, see e.g. \cite{Ghys groups acting on the circle.}. Left orderable groups appear naturally in different branches of mathematics including group theory, low dimensional topology, dynamical systems and foliation theory.

Well known examples of groups which are left-orderable are torsion-free abelian and nilpotent groups, free non-abelian groups and fundamental groups of closed surfaces. Other examples are right angled Artin groups (RAAGs), braid groups, mapping class groups of orientable surfaces with boundary (fixing the boundary) and Thompson's group $F$.

Many other groups are known to not be left-orderable. For example, any left-orderable group is torsion free and in particular finite groups are not left-orderable. Also, random groups in the Gromov density model are not left-orderable, \cite{Orlef}. The group $\text{SL}(n, \Z)$ and its finite index subgroups were shown to be not left-orderable by Witte-Morris \cite{Witte1}.

We consider here the question of the left-orderability of a lattice of a real semi-simple Lie group is a left-orderable group. 

We begin with a brief discussion of what is known in the case where $G$ has real rank one and trivial center (in this case, $G$ is the group of isometries of a symmetric space $S$ with strictly negative curvature). When $S$ is the hyperbolic space ($G = SO_0(n,1), \ n \geq 2$) we have the following: When $n =2 $, every torsion free lattice is left-orderable being either a surface group or free. When $n = 3$, there is an interesting conjecture of Boyer-Gordon-Watson \cite{BGW} relating the left-orderability of the fundamental group of a closed 3-manifold (more precisely a rational homology 3-sphere) and its Heeggard-Floer homology; if the manifold is hyperbolic, its fundamental group is virtually left-orderable, by Agol's solution of the virtual Haken conjecture \cite{AGM}. For $n > 3$, the picture is not well-understood yet but there are examples of closed hyperbolic manifold with left-orderable fundamental group known in every dimension; namely the standard arithmetic manifolds, see e.g. \cite{BHW}. %containing enough codimension one subgroups are known to be cubulated by the work of Sageev and Bergeron-Wise \cite{Sageev}, \cite{BergeronWise} and by Agol's solution of the virutal Haken conjecture \cite{AGM}, cubulated groups are virtually RAAGs and therefore virtually left-orderable. {\color{green} the right reference here isn't it Bergeron/Haglund/Wise JLM?}

For complex hyperbolic space ($G = PU(n,1), \ n > 1$) much less is known, a recent example of Agol-Stover of a lattice in $PU(2,1)$ which is RFRS (See \cite{AS}[Sec. 2]) is an example of a left-orderable lattice in $PU(2,1)$ because RFRS groups are locally indicable and locally indicable groups are left-orderable, as far as we know, this is the only known example of a left-orderable lattice in complex hyperbolic space. For quaternionic hyperbolic space ($G = Sp(n,1)$) and the Cayley plane ($G = F_4^{-20}$), no examples are known, this is related to a well known question in the theory of left-orderable groups asking whether there exists a left-orderable group with Kazhdan's Property $(T)$.

Our main result is about left-orderability of lattices in a semi-simple Lie group of real rank greater or equal to two (this is equivalent to the symmetric space $S$ containing an isometric copy of $\R^2$). Examples of such Lie groups are $\text{SL}(n, \R), n \geq 3$, $SO(n,m), n,m > 1$, and $ \text{SL}(2, \R)\times \text{SL}(2, \R)$ among many others. 

%We should point out that lattices in higher rank Lie groups are known to have many rigidity properties, a key example being Margulis super-rigidity \cite{Margulisbook}, which is the classification of the linear representations of lattices in higher rank Lie groups and roughly says that any linear representation of a lattice comes from a linear representation of the Lie group itself. Another example is the work of Avni-Lubotzky-Meiri \cite{ALM}, where some higher rank lattices are shown to be rigid in the sense of first-order logic, and says that any group which has the same first-order theory of a lattice $\Gamma$ must be isomorphic to $\Gamma$.

Irreducible lattices in higher rank Lie groups are known to have many rigidity properties, a key example of this fact is Margulis super-rigidity \cite{Margulisbook}, which roughly says that any linear representation of a lattice comes from a linear representation of the Lie group itself, and constitutes the main tool to prove arithmeticity of lattices. Another example is the Zimmer program, which is a series of conjectures by Robert Zimmer in the late 80's attempting to generalize Margulis super-rigidity to a non-linear setting. The program vaguely says that any smooth action of a higher rank lattice in a smooth manifold comes from algebraic actions (linear representations, boundary actions, among others) up to some minor modifications and combinations (Blowing up orbits, gluing, etc.) See \cite{Fisher} and \cite{WZ} for more precise statements and the history of the program.

 Zimmer's program might be even true if one drops the smoothness assumption and consider only topological actions (by $C^0$ homeomorphisms) on manifolds and in that category the simplest question is about understanding actions in the line, which is asking about the left-orderability of higher rank lattices. One should warn the reader that topological actions are much more difficult to deal with than smooth actions and in higher dimensions any progress seems quite difficult to achieve. For example, it is not known whether there exists a topological action of the $p$-adic integers $\Z_p$ in $\R^4$ (this is a case of the Hilbert-Smith conjecture).

In this work, we give a complete classification of left-orders or circular left-orders on irreducible lattices in semi-simple Lie groups of higher rank and correspondingly, of all the possible topological actions of the group on the line and on the circle. This completes works and solve conjectures of Ghys \cite{Ghys} and Witte-Morris \cite{Witte1, Witte2}, see the survey \cite{Witte3}.

\subsection{Statement of results}

Two actions $\phi_1, \phi_2: \Gamma \to \HomeoR$ are said to be semi-conjugate if there exists a proper continuous non-decreasing function $f: \R \to \R$ such that for every $\gamma \in \Gamma$, $f \circ \phi_1(\gamma) = \phi_2(\gamma) \circ f$.  

We denote by \(\widetilde{\text{SL}(2, \R)} \) the universal covering of the real Lie group \(\text{SL}(2, \R) \). There is a standard action $\widetilde{\text{SL}(2, \R)}$ on the real line obtained by lifting the projective action of $\text{PSL}(2,\R)$ from $\mathbb{RP}^1$ to its universal covering.

\begin{theorem} \label{t: no orderable lattice 1}
Let \(\Gamma\subset G\) be an irreducible lattice in a connected real semi-simple Lie group of rank at least two. Then,  \(\Gamma\) is left-orderable if and only if \(\Gamma\) is torsion free and there exists a surjective morphism \( G \rightarrow \widetilde{\text{SL}(2, \R)} \). Moreover, in the latter case every action $\phi: \Gamma \to \HomeoR$ is semi-conjugate to an action obtained in the following way:
\begin{enumerate}
\item The inclusion of $\Gamma$ into $G$.
\item A surjective morphism \( G\rightarrow \widetilde{\text{SL}(2, \R)} \).
\item The standard action $\widetilde{\text{SL}(2, \R)}$ in $\R$.
\end{enumerate}
\end{theorem}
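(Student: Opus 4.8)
The plan is to prove the structural statement describing all actions, from which the left-orderability dichotomy follows by soft arguments.

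\textbf{Easy direction and first reductions.} A left-orderable group is torsion-free, and the dynamical realisation of a left order is a faithful action on $\R$ with no global fixed point; conversely, a countable group embedding into $\HomeoR$ is left-orderable. Hence, if $\Gamma$ is torsion-free and a surjection $\rho\colon G\twoheadrightarrow\widetilde{\text{SL}(2,\R)}$ exists, then the composition $\Gamma\hookrightarrow G\xrightarrow{\rho}\widetilde{\text{SL}(2,\R)}\curvearrowright\R$ is faithful (its kernel $\Gamma\cap\ker\rho$ is normal in $\Gamma$, hence by the Margulis normal subgroup theorem — applicable since $\operatorname{rank}G\ge 2$ and $\Gamma$ is irreducible — finite or of finite index; finite index is impossible as it would force infinite covolume, and finiteness plus torsion-freeness gives triviality), so $\Gamma$ is left-orderable. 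Conversely, a left order on $\Gamma$ yields a faithful fixed-point-free action, so the crux is the following assertion: \emph{if $\Gamma$ is a torsion-free irreducible lattice in $G$ of rank $\ge 2$ and $\phi\colon\Gamma\to\HomeoR$ has no global fixed point, then $G$ surjects onto $\widetilde{\text{SL}(2,\R)}$ and $\phi$ is semi-conjugate to a standard action.} I would first reduce to the case where $\phi$ is minimal and preserves no Radon measure: passing to a minimal set and collapsing complementary intervals preserves fixed-point-freeness and semi-conjugacy class, while an invariant Radon measure would give, via the associated developing map, either a global fixed point or a non-trivial homomorphism $\Gamma\to\R$, and the latter is excluded because $\Gamma^{\mathrm{ab}}$ is finite (again Margulis: $[\Gamma,\Gamma]$ cannot have infinite index).

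\textbf{Core step: boundary map and rigidity from rank $\ge 2$.} With $\phi$ minimal and measure-free, I would bring in boundary theory. Fix an admissible measure $\mu$ on $\Gamma$; by Furstenberg's theorem its Poisson boundary is $(G/P,\nu_P)$ with $P$ a minimal parabolic. Using the flat $\R$-bundle $E=(G\times\R)/\Gamma$ over $G/\Gamma$ on which $G$ acts, together with a leafwise-harmonic (equivalently, $\mu$-stationary) family of probability measures on the fibres, I would construct a measurable $\Gamma$-equivariant map from $G/P$ into the space of probability measures on the one-point compactification $\s=\R\cup\{\infty\}$, and then use minimality and the absence of an invariant measure — i.e. proximality of the dynamics — to see that its values are Dirac masses, producing a measurable $\Gamma$-map $G/P\to\s$. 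The essential point is then to exploit $\operatorname{rank}G\ge 2$: via the Howe--Moore/Mautner phenomenon (mixing of the Weyl chamber flow on $G/\Gamma$) and the presence of commuting $\mathrm{SL}(2,\R)$-subgroups, such an equivariant map is forced to be either essentially constant — contradicting fixed-point-freeness of $\phi$ — or to factor through a quotient boundary, necessarily of the form $\widetilde{\text{SL}(2,\R)}/\widetilde{P}$, yielding a continuous surjection $\rho\colon G\to\widetilde{\text{SL}(2,\R)}$ and identifying the dynamics of $\phi$ with that of the standard action of $\rho(\Gamma)$.

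\textbf{Upgrading to a semi-conjugacy.} Finally I would promote the measurable data to an honest semi-conjugacy: minimality of both $\phi$ and the standard action, together with uniqueness of stationary measures, forces the conjugating monotone map to be continuous and proper; and the morphism $\rho$ obtained on the boundary is genuinely a continuous surjective homomorphism by Zariski-density of $\Gamma$ in $G$ and Margulis superrigidity applied to the linear quotient $\mathrm{PSL}(2,\R)$ of $\widetilde{\text{SL}(2,\R)}$. Combining, $G$ surjects onto $\widetilde{\text{SL}(2,\R)}$ whenever $\Gamma$ admits a fixed-point-free action, every such action is semi-conjugate to a standard one, and torsion-freeness is necessary, which gives the theorem.

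\textbf{Main obstacle.} The hard part throughout is the rigidity step. Because $\HomeoR$ is neither algebraic nor compact-by-amenable, Zimmer's cocycle superrigidity does not apply, and one must substitute for the missing algebraicity using purely one-dimensional dynamics — minimality, proximality, and the structure of centralisers of hyperbolic homeomorphisms — together with Lie-theoretic recurrence along commuting higher-rank directions. In particular, controlling the behaviour at the two ends of the line so as to exclude infinite-invariant-measure (``parabolic'') degenerations, and making the passage from a measurable $G/P$-map to a continuous $G$-equivariant structure on the bundle $E$ watertight, are where the real work concentrates.
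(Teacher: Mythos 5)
Your core step does not work, and the failure is precisely the reason this problem remained open long after the circle case was settled by Ghys and by Burger--Monod. You propose to compactify $\R$ to $\s=\R\cup\{\infty\}$, build a measurable $\Gamma$-equivariant map from $G/P$ to $\text{Prob}(\s)$, and invoke proximality to get Dirac masses and then a factor boundary. But the induced action on $\s$ fixes the point $\infty$, so it preserves the probability measure $\delta_\infty$; consequently the natural stationary measure and the associated boundary map can be (and, absent further input, will be) the constant map $\zeta\mapsto\delta_\infty$, which carries no information about the dynamics on $\R$ and does not contradict fixed-point-freeness of $\phi$, since $\infty\notin\R$. There is no proximality statement on the line that rules this out: the dichotomy you invoke (invariant measure versus semi-conjugacy to a projective action) is Ghys's theorem for \emph{circle} actions, and a line action viewed in $\s$ always lands in the degenerate ``invariant measure'' branch; likewise the bounded Euler class of a line action is trivial, so the bounded-cohomology route collapses. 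The paper circumvents exactly this obstruction by never compactifying the line: it replaces $\R$ by a compact almost-periodic space $Z$ of harmonic actions, carrying a one-dimensional lamination along which $\Gamma$ acts Lipschitz-ly and for which leafwise Lebesgue measure is $\mu_\Gamma$-stationary; it forms the suspension $X=(G\times Z)/\Gamma$, produces a genuine stationary probability measure $\mu_X$ there, and proves a stiffness statement ($\mu_X$ is $G$-invariant when $G$ is simple) by comparing the $P_{\mathcal W}$-invariant measures attached to adjacent Weyl chambers along singular directions of the Cartan flow (Lemma \ref{hard1}), with a separate transportation-of-measures argument in the product case. The contradiction is then that a $\Gamma$-invariant Radon measure on the line yields a translation-number homomorphism $\Gamma\to\R$, impossible since $b_1(\Gamma)=0$. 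Your outline correctly identifies that endgame but supplies no mechanism for manufacturing the invariant Radon measure, which is the entire content of the theorem.

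Two smaller points. First, in your easy direction the composition $\Gamma\hookrightarrow G\to\widetilde{\text{SL}(2,\R)}\curvearrowright\R$ need not be faithful: by the normal subgroup theorem its kernel is central in $\Gamma$, and when $G$ has infinite center this central subgroup can be infinite and torsion-free, hence nontrivial. The correct conclusion, as in the paper, is that the kernel is torsion-free abelian, hence left-orderable, so $\Gamma$ is left-orderable as an extension of a left-orderable group by a left-orderable group. Second, the reduction from infinite to finite center (needed to extract the surjection onto $\widetilde{\text{SL}(2,\R)}$ and the semi-conjugacy statement) is itself a nontrivial step in the paper, passing through the quotient by $Z(G)\cap\Gamma$, Ghys's circle theorem, and a lifting argument for the universal cover of $\text{PSL}(2,\R)$; your sketch does not address it.
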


\begin{remark}
The existence of a surjective morphism  \( G\rightarrow \widetilde{\text{SL}(2, \R)} \) is equivalent to $G$ being a non-trivial product with one factor equal to $ \widetilde{\text{SL}(2, \R)}$.
\end{remark}

As a corollary, in the case where $G$ has finite center we have the following:

\begin{theorem}\label{t: no orderable lattice 2}
Let \(\Gamma\) be an irreducible lattice in a connected real semi-simple Lie group $G$ with finite center and with real rank at least two. Then, $\Gamma$ is not left-orderable and moreover every $\Phi: \Gamma \to \HomeoR$ is trivial. In particular, if \(G\) is a real algebraic semi-simple Lie group of rank two, it contains no left-orderable irreducible lattice.
\end{theorem}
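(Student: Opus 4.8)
The plan is to deduce Theorem~\ref{t: no orderable lattice 2} from Theorem~\ref{t: no orderable lattice 1}, the Remark following it, and two standard facts about higher-rank lattices: Margulis's normal subgroup theorem and virtual torsion-freeness. Throughout, $\Gamma\subset G$ is an irreducible lattice, $G$ is connected semi-simple of real rank at least two, and $Z(G)$ is finite. There are two things to establish: that $\Gamma$ is not left-orderable, and that every $\Phi:\Gamma\to\HomeoR$ is trivial.

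\emph{Non-left-orderability.} First I would invoke the Remark after Theorem~\ref{t: no orderable lattice 1}: a surjective morphism $G\to\widetilde{\text{SL}(2,\R)}$ exists only if $G$ is a non-trivial product having $\widetilde{\text{SL}(2,\R)}$ as one factor. This is impossible under our hypotheses. Indeed, $\widetilde{\text{SL}(2,\R)}$ alone has real rank one, so the complementary factor $H$ cannot be trivial, and then $Z(G)=Z(\widetilde{\text{SL}(2,\R)})\times Z(H)$ already contains the infinite cyclic group $Z(\widetilde{\text{SL}(2,\R)})\cong\Z$, contradicting the finiteness of $Z(G)$. Hence no such morphism exists, and Theorem~\ref{t: no orderable lattice 1} gives that $\Gamma$ is not left-orderable. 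The same reasoning applies verbatim to every finite-index subgroup $\Gamma_0\le\Gamma$, which is again an irreducible lattice in the same $G$; so \emph{no} finite-index subgroup of $\Gamma$ is left-orderable.

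\emph{Triviality of all line actions, and conclusion.} Let $\Phi:\Gamma\to\HomeoR$. Its kernel is normal in $\Gamma$, so by Margulis's normal subgroup theorem \cite{Margulisbook} it is either of finite index or finite. If $\ker\Phi$ has finite index, then $\Phi(\Gamma)\cong\Gamma/\ker\Phi$ is a finite subgroup of $\HomeoR$; since $\HomeoR$ is torsion-free (an orientation-preserving homeomorphism $h$ of $\R$ with $h(x_0)\neq x_0$ satisfies $h^n(x_0)\neq x_0$ for all $n\ge1$), this forces $\Phi(\Gamma)$ trivial. If instead $\ker\Phi$ is finite, choose a torsion-free finite-index subgroup $\Gamma_0\le\Gamma$ (Selberg's lemma, using residual finiteness of higher-rank lattices); then $\ker\Phi\cap\Gamma_0$ is a finite subgroup of a torsion-free group, hence trivial, so $\Phi|_{\Gamma_0}$ embeds $\Gamma_0$ as a countable subgroup of $\HomeoR$, making $\Gamma_0$ left-orderable and contradicting the previous paragraph. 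Thus $\ker\Phi$ is never finite, so $\Phi$ is trivial. Finally, a real algebraic semi-simple Lie group is linear and hence has finite center, so the last assertion of the theorem is the special case $\mathrm{rank}_\R G=2$ of the above. The real content is packed into Theorem~\ref{t: no orderable lattice 1}; granting it, the only point requiring care is the second step, namely combining the normal subgroup theorem with virtual torsion-freeness to rule out a finite kernel — that, and the (routine) observation that the conclusion of the first step is inherited by finite-index subgroups.
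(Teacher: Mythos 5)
Your argument is circular with respect to the paper's logical structure. You deduce Theorem~\ref{t: no orderable lattice 2} from Theorem~\ref{t: no orderable lattice 1}, but the paper proves them in the opposite order: Theorem~\ref{t: no orderable lattice 2} (the finite-center case) is the result established from scratch in Sections~\ref{Lietheory}--\ref{maintheoremsemisimplecase}, and Theorem~\ref{t: no orderable lattice 1} is then \emph{derived} from it in Section~\ref{reductionfinitecenter} --- as the paper states there explicitly, and as the argument requires: when $Z(G)$ is infinite, one passes to the lattice $\Gamma/(Z(G)\cap\Gamma)$ in $G/Z(G)$, which has finite center, and one must already know that this quotient lattice is not left-orderable. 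So invoking Theorem~\ref{t: no orderable lattice 1} as a black box here is assuming the conclusion. Your intermediate steps are individually correct --- an $\widetilde{\text{SL}(2,\R)}$ factor forces $Z(G)$ infinite, Margulis's normal subgroup theorem constrains $\ker\Phi$, the torsion-free finite-index reduction via Selberg rules out a finite kernel, and real algebraic semi-simple implies finite center --- and together with Section~\ref{reductionfinitecenter} they show the two theorems are logically equivalent, but they do not constitute a proof of either.

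What is missing is the entire content of the theorem. The actual proof constructs the compact space $\Der$ of harmonic almost-periodic actions (Theorem~\ref{t: almost-periodic space}) and its $G$-suspension $X$, produces the $\mG$-stationary measure $\mX$ and, for each Weyl chamber $\W$, the unique $P_{\W}$-invariant measure $\mX^{\W}$ (Lemma~\ref{l: PW invariant measure}) with its Lyapunov functional $\chi_{\W}$, establishes the global contraction estimates of Sections~\ref{globalcontraction1}--\ref{globalcontraction2}, and then propagates invariance across adjacent Weyl chambers via Lemma~\ref{hard1}. In the simple case this forces $\mX$ to be $G$-invariant, hence $\mDer$ to be $\Gamma$-invariant, contradicting the vanishing of $b_1(\Gamma)$ (Proposition~\ref{p: invariance mX} together with Proposition~\ref{p: vanishing first Betti number}); in the semi-simple case one is left with a rank-one factor, which is ruled out via Lemma~\ref{l: bounded displacement of sequences} and the transportation estimate of Proposition~\ref{p: claim 5}. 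None of this appears in your proposal.
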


\begin{remark}
This statement has been conjectured by Ghys \cite{Ghys} and Dave Witte-Morris (See \cite{Witte3}). In \cite{Witte1}, Witte-Morris solved the case where $G$ is simple and $\Gamma$ has $\mathbb{Q}$-rank at least two. In \cite{LW}, Witte-Morris and Lifschitz proved some cases in $\Q$-rank one, including the case of non-uniform lattices in products of $\text{SL}(2,\R)$, their proof uses the bounded generation by unipotents property proven by Carter-Keller, \cite{Witte2}. The case where $\Gamma$ is co-compact, which in some sense is the most common case among lattices as was shown by Belolipetsky and Lubotzky \cite{BL}, was the main open problem. \end{remark}

As a consequence of Theorem \ref{t: no orderable lattice 1} and a Theorem of Ghys (see Theorem \ref{t: Ghys}), we have the following theorem for actions on the circle.

\begin{theorem}\label{t: circle actions} Let \(\Gamma\subset G\) be an irreducible lattice in a connected real semi-simple Lie group of rank at least two and finite center. If $\phi: \Gamma \to \HomeoS$, then either $\phi(\Gamma)$ is finite or $\phi$ is semi-conjugate up to finite index to an action obtained by composition of the following morphims:

\begin{enumerate}
\item The inclusion of $\Gamma$ in $G$.
\item A surjective homomorphism from $G$ to $\text{PSL}(2,\R)$.
\item The projective action of $\text{PSL}(2,\R)$ on the circle.
\end{enumerate}

\end{theorem}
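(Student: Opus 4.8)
The plan is to deduce this from Theorem \ref{t: no orderable lattice 1} together with Ghys' theorem (Theorem \ref{t: Ghys}), which I will assume states that an action of a higher-rank lattice on the circle either has finite image or is semi-conjugate (up to passing to a finite-index subgroup) to a projective action coming from a morphism onto $\mathrm{PSL}(2,\R)$, or else—and this is the case one must rule out—fixes a point. The strategy is therefore: given $\phi: \Gamma \to \HomeoS$ with infinite image, first apply Ghys' theorem to reduce to the situation where $\phi$ either has a fixed point on $\s$ or is already of the desired projective form; then eliminate the fixed-point case by invoking Theorem \ref{t: no orderable lattice 1}.

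First I would invoke Ghys' theorem. This produces a dichotomy: either $\phi(\Gamma)$ is finite (the first alternative in the statement, so we are done), or, after replacing $\Gamma$ by a finite-index subgroup $\Gamma'$, the action $\phi|_{\Gamma'}$ is semi-conjugate to one of two model actions—the projective $\mathrm{PSL}(2,\R)$ action pulled back along a surjection $G \to \mathrm{PSL}(2,\R)$, which is exactly conclusion (1)–(3), or an action with a global fixed point. (Some formulations of Ghys' theorem phrase the latter as: the Euler class of $\phi$ is torsion, hence after finite cover the action lifts to $\HomeoR$.) In the first subcase there is nothing more to do. In the second subcase, the action of $\Gamma'$ fixes a point $p \in \s$, so $\Gamma'$ acts on $\s \setminus \{p\} \cong \R$, giving a homomorphism $\Gamma' \to \HomeoR$.

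Now I would run this homomorphism $\Gamma' \to \HomeoR$ through Theorem \ref{t: no orderable lattice 1}. Note $\Gamma'$ is again an irreducible lattice in the same connected higher-rank $G$, and since $G$ has finite center it admits no surjection onto $\widetilde{\mathrm{SL}(2,\R)}$ (which has infinite center); indeed by the Remark following Theorem \ref{t: no orderable lattice 1}, such a surjection would force $\widetilde{\mathrm{SL}(2,\R)}$ to be a direct factor of $G$, contradicting finiteness of the center. Hence by Theorem \ref{t: no orderable lattice 1} the only action $\Gamma' \to \HomeoR$ is—up to semiconjugacy—the trivial one; more precisely the theorem forces the action to be semi-conjugate to an action factoring through a surjection onto $\widetilde{\mathrm{SL}(2,\R)}$, which does not exist, so the action is semi-conjugate to the trivial action, i.e. has a global fixed point and in fact (after unwinding the semiconjugacy, or directly from Theorem \ref{t: no orderable lattice 2}) is trivial. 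Either way $\phi|_{\Gamma'}$ has finite image on $\s$, hence $\phi(\Gamma)$ is finite, contradicting our assumption. This rules out the fixed-point subcase and leaves only the projective model, completing the proof.

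The main obstacle is bookkeeping around the phrase \emph{semi-conjugate up to finite index} and making sure the reductions compose correctly: passing to the finite-index $\Gamma'$ supplied by Ghys' theorem, checking that $\Gamma'$ still satisfies the hypotheses of Theorem \ref{t: no orderable lattice 1}, and verifying that "finite image of $\Gamma'$" together with "$[\Gamma:\Gamma']<\infty$" really yields "finite image of $\Gamma$" (true, since the image of $\Gamma$ is generated by the image of $\Gamma'$ and finitely many coset representatives, but one should confirm the relevant normalization in the statement of Ghys' theorem). The genuinely mathematical content is entirely borrowed—Ghys' theorem handles the dynamics on the circle and Theorem \ref{t: no orderable lattice 1} kills the line action—so the proof is essentially a careful concatenation of these two inputs.
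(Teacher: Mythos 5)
Your overall strategy is exactly the paper's: Theorem \ref{t: circle actions} is deduced by feeding the output of Ghys's theorem into Theorem \ref{t: no orderable lattice 1} (equivalently, Theorem \ref{t: no orderable lattice 2}, since $G$ has finite center and hence no surjection onto $\widetilde{\text{SL}(2,\R)}$). The one place where your argument is incomplete is the statement of the dichotomy you borrow from Ghys. As recorded in Theorem \ref{t: Ghys}, the alternative to the projective case is not ``$\phi$ has a fixed point'' but ``$\phi$ preserves a probability measure on $\s$,'' and passing from an invariant probability measure to a global fixed point of a finite-index subgroup is a genuine (if short) step that uses the higher-rank hypothesis again: when a probability measure is preserved, the rotation number is a homomorphism $\Gamma\to\R/\Z$; since every homomorphism $\Gamma\to\Z$ is trivial (Proposition \ref{p: vanishing first Betti number}) and $\Gamma$ is finitely generated, its abelianization is finite, so this homomorphism has finite image; its kernel $\Gamma'$ then has rotation number zero and fixes the support of the invariant measure pointwise, which is what lets you cut the circle at a fixed point and invoke the line result. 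This is precisely the rotation-number argument the paper runs in Section \ref{reductionfinitecenter}. Your alternative phrasing via a torsion Euler class and a lift to $\HomeoR$ would also work, but it is likewise not literally what Theorem \ref{t: Ghys} asserts, so either way you must supply this bridge. The remaining bookkeeping you describe (finite image of $\Gamma'$ plus finite index gives finite image of $\Gamma$, and $\Gamma'$ inherits the hypotheses) is fine.
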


\begin{remark}
\begin{enumerate}
\item For the definition of semi-conjugacy up to finite index, see the discussion before Theorem \ref{t: Ghys}. 
\item In the case where $|\phi(\Gamma)| < \infty$ the action is conjugate to an action by rotations.
\item In the case where $G$ is infinite center one has an extra-possibility :  $G$ must have a  $ \widetilde{\text{SL}(2, \R)}$ factor and  $\Gamma$ has a finite orbit $F \subset \s$. In this case, if $\Gamma'$ is the finite index subgroup of $\Gamma$ fixing $F$, the action of $\Gamma'$ in $\s \setminus F$ is as in Theorem \ref{t: no orderable lattice 1}.
\end{enumerate}
\end{remark}

Ghys proved the previous Theorem in the case where the action is of class \(C^1\) (See Theorem \ref{t: Ghys}) and Theorem \ref{t: circle actions} is a consequence of Ghys results in \cite{Ghys} and Theorem \ref{t: no orderable lattice 1}. Ghys result was also shown by Burger-Monod \cite{BurgerMonod} via the study of bounded cohomology. Bader-Furman-Shaker \cite{BFS} have a similar Theorem which holds for more general groups including some non-linear groups. Burger unified those approaches in \cite{Burger}. Navas \cite{Navas, Navas1} proved there are no infinite groups with Kazhdan's property $(T)$ in the group of $C^{1 + \frac{1}{2}}$ diffeomorphisms of the circle, strongly inspired by the work of Reznikov  \cite[Ch.2]{Reznikov}.

\subsection{Outline of the proof}

% The arithmeticity theorem of Margulis states that any irreducible lattice in a semi-simple Lie group of rank at least two is arithmetic, namely up to commensurability, it is isomorphic to the group \( G(O_K)\), where \(K\) is a number field, \(O_K\) its ring of integers, and \(G\) is a \(K\)- simple group.  However, quite surprisingly, this knowledge is difficult to use in order to explore the algebraic properties of the group itself, as for instance left-orderability, apart in some special cases, as e.g. when the \(\Q\)-rank of \(G(O_K)\) is at least two, as was noticed by Witte-Morris. Actually we will employ techniques from the super-rigidity theorem itself to get the whole classification of left orders/circular orders, involving tools from ergodic theory as for instance Furstenberg's theory of boundaries on \(G\), etc...

%In the case where $G$ is a non-trivial almost product, for example for $G = \text{SL}(2, \R) \times \text{SL}(2, \R)$, further ideas are necessary, these ideas will be discussed in \ref{sss: the semi-simple case} (this case is also more difficult because there are non-trivial actions of lattices in $\text{SL}(2, \R) \times \text{SL}(2, \R)$ in the circle, or equivalently non trivial left-invariant circular orderings on these groups). 

Due to work of Ghys \cite{Ghys}, it is sufficient to prove Theorem \ref{t: no orderable lattice 1} in the case where \(G\) has finite center, %$G = \text{SL}(3,\R)$, 
in which case it claims that no irreducible lattice $\Gamma$ is left-orderable, or equivalently (this is classical, see e.g. \cite[section 1.1.3]{DNR}), that given any lattice \( \Gamma\subset G\), any action of \(\Gamma\) on the real line by orientation preserving homeomorphisms is trivial.  The proof of this statement is by contradiction, assuming there is a non-trivial action of $\Gamma$ in $\R$ that we can assume to be fixed point free, by considering the restriction of the action to a component of the complement of the set of fixed points. 

%We first point out a finite number of general remarks. A theorem of Kazhdan and Margulis states that \(\Gamma\) has vanishing first Betti number, hence the action does not have any discrete orbit. In particular, it can be semi-conjugated to a minimal action (every \(\Gamma\)-orbit is dense in \(\R\)). The theorem of Ghys cited above \cite{Ghys} shows moreover that such an action cannot commute with a fix point free homeomorphism of \(\R\). 

 %action cannot have a discrete orbit since otherwise \(\Gamma\) would have a non trivial first Betti number, and that would contradict a theorem of Kazhdan and Margulis. In particular, up to semi-conjugating the action, we can assume that it is minimal (there are no non-trivial closed invariant sets in $\R$). 

To understand the proof it is useful to mention some of the recent progress in Zimmer's program, since our result is a special instance of this program in dimension one without any regularity apart from continuity. In \cite{BFH}, Brown-Fisher-Hurtado proved that any smooth action of an irreducible lattice $\Gamma$ of a semi-simple Lie group $G$ on a closed manifold $M$ of dimension at most $\text{Rank}_\R(G)- 1$ is finite. One of the main ingredients of the proof is the recent work of Brown, Rodriguez-Hertz, and Wang \cite{BRHW} where the authors establish the existence of a $\Gamma$-invariant probability measure in $M$ under the same assumptions on the dimension of $M$.

As in \cite{BRHW}, \cite{BFH}, our goal will be to show the existence of a $\Gamma$-invariant Radon (infinite mass) measure  on $\R$, which immediately yields the desired contradiction since a translation number with respect to such a measure would provide a non trivial cohomology class of degree one on \(\Gamma\), which contradicts a theorem of Kazhdan and Margulis. However, in our case two new difficulties arise and make impossible the implementation of the method of \cite{BRHW, BFH}, which essentially rely on the use of the Ledrappier-Young formula. The first one is that $\R$ is not compact and the second is that the action is not smooth. %If the action is minimal, this measure must have infinite support and easily implies the action is conjugate to an action by translations, which leads to a contradiction as homomorphisms $\Gamma \to \Z$ are trivial, by the afore mentioned theorem of Kazhdan and Margulis. 
So, instead of using Ledrappier-Young formula, our argument consists in establishing a weak form of the stiffness property of Furstenberg \cite{Furstenberg stiffness} for the action, namely that after taking a suspension \(G\)-space encoding the \(\Gamma\)-action, a stationary measure is indeed globally invariant by a large normal subgroup of \(G\). There have been recently a bunch of new ideas and methods to establish stiffness of group actions in many contexts (see e.g. \cite{BFLM, BenoistQuintstiff, EskinMirzakhani, BRH}); the one we develop here is yet another one.  This strategy yields immediately to the contradiction when \(G\) is simple.  We outline this method in \ref{sss: finding invariant measure} in the case \(G= \text{SL}(3, \R)\) which covers most of the arguments.

In the semi-simple case however, the proof of Theorem \ref{t: no orderable lattice 1} requires some additional tools.  We outline this in \ref{sss: the semi-simple case}.%, one of them consisting in studying a new notion of transportation of measures along a flow, that could be interesting in other contexts as well. 
 
\begin{subsubsection}{The almost-periodic space}

%One of our main ideas is to embed the action of $\Gamma$ in $\R$ into an action of $\Gamma$ in a suitable compact space $Z$ (the space of almost-periodic actions). This idea was introduced by Deroin in \cite{almost-periodic}, where it was established that given a minimal action of a group $\Gamma$ on the real line, there exists a one dimensional laminated compact metric space $(Z,\mathcal L_Z)$  having an action of $\Gamma$ preserving each of the one-dimensional leaves. Moreover, there is an $\R$-flow in $Z$ preserving the leaves of \(\mathcal L_Z\) in such a way that in each leaf, $\Gamma$ acts by Lipschitz homeomorphisms (with respect to the usual distance in the time parametrization of the leaves given by the $\R$-flow). The original action of $\Gamma$ in $\R$ can be chosen to be conjugate to the action in one of the leaves of $Z$. The space $Z$ is not unique and can be constructed in different ways, see \cite{almost-periodic}, \cite{DKNP}. Such an almost-periodic space  $Z$ is related to the space of orders (which apparently was introduced informally by Ghys) and we believe it can become a valuable tool for understanding left-orderable groups, as for instance in \cite{MBT}.

One of our main tool is to embed the action of $\Gamma$ in $\R$ into an action of $\Gamma$ in a suitable compact space $Z$, that we call the almost-periodic space. This tool was introduced by Deroin in \cite{almost-periodic}, where it was established that given a fixed point free action of a group $\Gamma$ on the real line, there exists a one dimensional laminated compact metric space $(Z,\mathcal L_Z)$  having a fixed point free action of $\Gamma$ preserving each of the one-dimensional leaves, in such a way that the original action of $\Gamma$ on $\R$ can be chosen to be conjugate to the action on one of the leaves of $Z$. The construction provides an additional regularity, namely there is an $\R$-flow in $Z$ preserving the leaves of \(\mathcal L_Z\) in such a way that in each leaf, $\Gamma$ acts by Lipschitz homeomorphisms (with respect to the usual distance in the time parametrization of the leaves given by the $\R$-flow). The space $Z$ is not unique and can be constructed in different ways, see \cite{almost-periodic}, \cite{DKNP}. Such an almost-periodic space  $Z$ is related to the space of orders (which apparently was introduced informally by Ghys) and we believe it can become a valuable tool for understanding left-orderable groups, as for instance in \cite{MBT}.

%It is well-known that (see e.g. \cite{almost-periodic}, \cite[Question 3.5.11]{DNR} and \cite[Question 4]{ICMNavas}), all fixed point free actions of the lattice \(\Gamma\) has the following \textit{global contraction property}: for any bounded interval \( I\subset \R\) there exists a sequence of elements \(\gamma_n \) of \(\Gamma\) such that \( \gamma_n I \) tends to a point. This combines a theorem of Kazhdan and Margulis and the theorem of Ghys cited above. Hence, the action of \(\Gamma\) on one of its almost-periodic space has the global contraction property along each of its leaf. \footnote{Untill recently, it was not known wether there exist orderable countable groups whose actions on the real line have all the global contraction property, but independently Matte Bon-Triestino and Hyde-Lohda-Navas-Rivas have founded examples of such a group \cite{MBT, HLNR}.} 

Here we will need to  construct an almost periodic space $Z$ having the additional property that in each leaf, the Lebesgue measure defined by the $\R$-flow is a stationary measure for an appropriate probability measure $\mu_\Gamma$ on $\Gamma$: a discretization measure for the Brownian motion on the symmetric space of $G$. We will call informally this property the \textit{harmonicity property}. A consequence of it is that any \(\R\)-invariant probability measure \(\mu_Z\) on \( Z\) is \(\mu_\Gamma\)-stationary. The construction of a space \(Z\) having the harmonicity property relies heavily on the study of random walks on the group \( \text{Homeo}^+ (\R)\) made by Deroin-Kleptsyn-Navas-Parwani in \cite{DKNP}, one of the main results being that given a symmetric finitely supported probability on \(\Gamma\), any action of \(\Gamma\) on the real line without discrete orbit can be semi-conjugated to a harmonic action, namely an action such that the Lebesgue measure is \(\mu_\Gamma\)-stationary. A crucial fact is that harmonic actions are almost-periodic; this observation was established by Klepstyn and is fundamental for our purpose, since we can then define our space \(Z\) as the space of harmonic actions having a displacement bounded by some given constants.

Our proof consists then in showing that any of the \(\R\)-invariant probability measures \(\mu_Z\) on \(Z\), are also \(\Gamma\)-invariant. Since those measures are \(\mu_\Gamma\)-stationary by the  harmonicity property of the almost periodic space, this is equivalent to establishing  a weak form of the stiffness property of Furstenberg in our context of the action of \( \Gamma\) on the almost-periodic space \(Z\), see \cite{Furstenberg stiffness}.

%where the almost-periodic space is constructed by studying random walks in the line by homeomorphisms. 

\subsubsection{The suspension space}

 %is the existence of a $\Gamma$-invariant probability measure in $Z$ whose leaf-wise measures are Lebesgue in a.e. leaf. 

As in \cite{BRHW}, \cite{BFH}, to find a $\Gamma$-invariant measure on the almost-periodic space $Z$ we consider the suspension space $X := (G \times Z)/\Gamma$, which is a space with a natural action of $G$ that fibers over $G/\Gamma$ with fiber $Z$. There is a natural correspondence between $\Gamma$-invariant measures on $Z$ (resp. \(\mu_\Gamma\)-stationary measures on \(Z\)) and $G$-invariant measures on $X$ (resp. \(\mu_G\)-stationary measures on \(X\) for symmetric absolutely continuous probability measures on \(G\)) and so we will instead prove that the \(\mu_G\)-stationary measure \( \mu_X\) on \(X\) corresponding to \( \mu_Z\) is in fact $G$-invariant. On the suspension \(X\), there is also an associated unidimensional lamination \(\mathcal L_X\) (in each \(Z\)-fiber it is defined by \(\mathcal L_Z\)) carrying a Lipschitz structure along the leaves,  which is invariant by the \(G\)-action (however in this case \(G\) might permute the \(\mathcal L_Z\)-leaves). One of the reasons we need the harmonicity property for our almost-periodic space is that it implies that on a \(G\)-invariant set of total \(\mu_X\)-measure, any element of \(G\) has a well-defined derivative along the lamination \(\mathcal L_X\).

%For technical reasons, we will construct our almost-periodic space $Z$ in such a way that for an appropriate choice of a probability measure in $G$, the space $X$ has a $G$-stationary probability measure $\mu_X$ such that along the fibers defined by the one-dimensional leaves of $Z$, the $G$-action is differentiable $\mu_X$ almost everywhere (this comes from the fact that the $\Gamma$-action is Lipschitz in every one dimensional leaf of $Z$).

\begin{remark}
Our construction of the suspension space and its various structures works for a left-orderable lattice in the rank one case as well, assuming it is uniform, and could be interesting for other applications. This is in particular the case for torsion free uniform lattices in \(\text{PSL} (2,\R)\) for instance, and for the known examples of left-orderable lattices in $SO(n,1)$. One can actually carry out a universal construction, by considering as the almost-periodic space the set of \textit{all} harmonic actions of the lattice on the real numbers, with a normalization on the average displacement. Equiped with a natural distance making the \(G\)-action Lipschitz, this space might be infinite dimensional in the rank one case, this is what happens in the case of a lattice in \(\text{PSL}(2,\R)\). We do not develop this point of view here however, since there are technical issues related to the control of the displacement in this generality.  In the non uniform case, there are other difficulties due to the  distortion between the word metric on the lattice and the distance coming from the symmetric space (such a phenomenon does not occur in the higher rank case thanks to work of A. Lubotzky, S. Mozes and M. Ragunathan \cite{Lubotzky Mozes Ragunathan}).  
\end{remark}

\end{subsubsection}

\subsubsection{Outline in the simple case: propagating invariance}\label{sss: finding invariant measure}

We continue with the outline of the proof in the case where \(G= \text{SL} (3,\R)\). Recall  the  Iwasawa decomposition $G = KAN$ of $G = \text{SL}(3,\R)$ where  $K = SO(3, \R)$, $A$ is the group of diagonal matrices with positive entries in $\text{SL}(3,\R)$ (the action of this group in $G$ is called the Cartan flow) and $N$ is the unipotent subgroup that can be chosen to be  the upper (or lower) triangular matrices with ones in the diagonal. Recall also the minimal parabolic group $P:= MAN$, where $M$ is the group of diagonal matrices in $\text{SL}(3, \R)\cap SO(3,\R)$.

Using the Furstenberg correspondence between harmonic functions on the symmetric space \(S\) of $G$ and bounded functions in the Poisson boundary $G/P$, one shows that $\mu_X$ corresponds to a unique $P$-invariant measure $\mu_X^{'}$ on $X$ such that $\int_K k_{*}(\mu_X') dk = \mu_X$. Therefore, to show that $\mu_X$ is $G$-invariant we only need to show that $\mu_X^{'}$ is $G$-invariant. This is done as in \cite{BRHW}, via studying the dynamics of the $A$-action on $X$, but our method is different and does not use the notion of entropy and the Ledrappier-Young entropy formula, which in our setting results problematic because $Z$ could be infinite dimensional. Our method to show $G$-invariance of $\mu_X^{'}$ consists in studying some ergodicity properties of the Cartan flow on $X$ along singular directions in $A$ and by using the action of the Weyl group. We proceed to explain briefly how this argument works.

The choice of the unipotent subgroup $N$ as the upper triangular unipotents is arbitrary, and it actually depends on a choice of a Weyl chamber for the lie algebra $\liea$ of $A$. Indeed, for each Weyl chamber $\mathcal{W}$ in $\liea$ there is a choice of unipotent subgroup $N$ and an associated minimal parabolic subgroup $P^{\mathcal{W}}$ containing $A$. For $G = \text{SL}(3,\R)$ there are six Weyl chambers and so we have six $A$-invariant probability measures $\{\mu_X^{\mathcal{W}} \}$ which are $A$-invariant (one of them will be $\mu_X^{'}$), these measures are related to each other via elements of the Weyl group. The idea of the proof is to show that many of these measures coincide (at least three for $\text{SL}(3,\R)$), this shows that $\mu^{'}_X$ is invariant by three minimal parabolic subgroups, and this is enough to show $G$-invariance of $\mu'_X$. 

In order to achieve this, one first establishes that for any such measure $\mu_X^{\mathcal{W}}$, there exists a non-trivial linear functional $\chi_{\mathcal{W}}: \liea \to \R$, which is a Lyapunov functional describing the rate of expansion of the derivative of the \(A\)-action along the one-dimensional leaves of \(\mathcal L_X\) at generic \(\mu_X\)-points, and which satisfy the following contraction property: for $\mu_X^{\mathcal{W}}$ a.e. $x \in X$, if $x, y \in X$ are in the same one-dimensional leaf \(L\) of $\mathcal L_X$, then $\lim_{t \to \infty} d_L(e^{ta}x, e^{ta}y)  = 0$ provided that $\chi_{\mathcal{W}}(a) < 0$. We call this property the \textit{global contraction property for $a \in \liea$ with respect to $\mu_X^{\mathcal{W}}$}. 

The key lemma \ref{hard1} shows that if we take an element $a \in \liea$ which is singular (i.e. in the wall between two Weyl chambers $\mathcal{W}, \mathcal{W}'$), and has the global contraction property with respect to both $\mu_X^{\mathcal{W}}, \mu_X^{\mathcal{W}'}$, then both measures coincide. The idea of the proof is to show there exists two Birkhoff generic points $x, x'$ for $\mu_X^{\mathcal{W}}, \mu_X^{\mathcal{W}'}$ for which $\lim_{t \to \infty} d_X(e^{ta}x, e^{ta}x' ) = 0$, if this were true we have $\mu_X^{\mathcal{W}} = \mu_X^{\mathcal{W}'}$ using Birkhoff's ergodic theorem. The fact that $a$ is singular is used in the fact that both measures $\mu_X^{\mathcal{W}}, \mu_X^{\mathcal{W}'}$ are invariant by the horospherical unstable subgroup $N_a$ of $e^{a}$ which allow us to take generic points $x, x'$ and ``modify their future" (using the $N_a$-invariance) in such a way that $\limsup_{t \to \infty} d_{G/\Gamma}\big( e^{ta}\pi_{G/\Gamma}(x), e^{ta}\pi_{G/\Gamma}(x') \big)$ is as small as we want (here $\pi_{G/\Gamma}$ is the natural projection from $X$ to $G/\Gamma$).  This, together with the global contraction property give us $x,x'$ with $\limsup_{t \to \infty} d_X(e^{ta}x, e^{ta}x' )$ as small as we want and we conclude $\mu_X^{\mathcal{W}} = \mu_X^{\mathcal{W}'}$ using Birkhoff's ergodic theorem.
 
 \subsubsection{Outline in the semi-simple case}\label{sss: the semi-simple case}
 
 In the case where \(G\) is a non trivial almost-product of simple Lie groups, we can assume up to taking a finite index subgroup of \(\Gamma\) and a finite covering of \(G\), that \(G\) is indeed isomorphic to a product \( G_1 \times \ldots \times G_r\) of simple Lie groups \(G_k\). The strategy described in \ref{sss: finding invariant measure} leads to the conclusion that the stationary measure \(\mX\) is invariant by all factors \( G_2 , \ldots, G_r\), and that \(G_1\) is of rank one.  
  
To find a contradiction in this case, we control the displacement of the elements of \(\Gamma\) along the leaves of the lamination \(\mathcal L _Z\) equiped with their \(\Gamma\)-invariant Lipschitz structures coming from the time parametrization by the \(\R\)-flow. We prove that, denoting by \(p_1 : G\rightarrow G_1\) the projection on the \(G_1\) factor, any element in a compact subset of \( G_1\) can be approximated by a sequence of elements of \( p_1( \Gamma ) \) whose displacements remains bounded by a constant that depends only on the chosen compact subset of \( G_1\).  This step uses the fact that we can approximate elements of \( G_1\) by elements of \(\Gamma\) with an error which is exponentially small in terms of the length function on \(\Gamma\) with respect to a finite system of generators, that we deduce from the local spectral gap estimates in simple Lie groups obtained recently by Boutonnet, Ioana and Salehi-Golsefidy in \cite{Boutonnet-al}.  
 
Furstenberg's Poisson formula and the fact that \( \mu_X\) is \(\mu_G\)-stationary and invariant by \( G_k\) for \(k\geq 2\) permits to write \( \mu_Z \) as an integral over the boundary \(B(G_1)\) of probability measures \(\mu_Z ^{\zeta_1}\) on \(Z\), for \(\zeta_1\in B(G_1)\), with the following equivariance property: \( \mu_Z^{p_1(\gamma) \zeta_1} = \gamma_* \mu_Z^{\zeta_1}\).  This equivariance and the afore mentioned displacement control, permits to show that all the measures \( \mu_Z ^{\zeta_1}\) can be transported mutually to each other along the lamination \( \mathcal L_Z\) by a certain bounded amount of displacement. This notion of transportation of measures along a lamination is reminiscent of Kantorovich's theory, and it seems to us that this is a new tool. Since the measures \(\gamma _* \mu_Z \) are also expressed as an integral of the measures \( \mu_Z^{\zeta_1}\), we infer that all the measures \( \gamma _* \mu_Z\) can be transported from \( \mu_Z\) by a certain amount of displacement, which is independant of \(\gamma\).  This property easily contradicts the global contraction property of the action of \(\Gamma\) in restriction to any leaf of \(\mathcal L_Z\).

 \subsubsection{Organization of the article}
 
 In Section \ref{reductionfinitecenter}, we show that Theorem \ref{t: no orderable lattice 1} and Theorem \ref{t: circle actions} are a consequence of Theorem \ref{t: no orderable lattice 2}. As a consequence, after section \ref{reductionfinitecenter} we assume that the Lie group $G$ has finite center.  In section \ref{Lietheory} and \ref{ergodictheory} we recall some facts from Lie Theory and Ergodic Theory. In Section \ref{almostperiodicspace}, we construct the space of almost periodic actions $Z$ and its suspension space and its various structures. In sections \ref{globalcontraction1} and \ref{globalcontraction2} we describe the global contraction property and the Lyapunov functionals $\chi_{\mathcal{W}}$ for the action of $G$ on the suspension space. In Section \ref{maintheoremsimplecase}, we  discuss the propagation of  invariance of the measures $\mu_X^{\mathcal{W}}$ on the suspension space and prove Theorem \ref{t: no orderable lattice 2} when $G$ is simple. Finally, in Section \ref{maintheoremsemisimplecase} we prove Theorem \ref{t: no orderable lattice 2} in the case where $G$ is a non-trivial almost product.
 
 \vspace{1cm}

\textit{Acknowledgments.} We would like to thank: Dave Witte Morris and \'Etienne Ghys for their inspiring conjecture. Victor Kleptsyn for his crucial observation \cite[Proposition 8.1]{DKNP} establishing that harmonic representations are almost-periodic, and enabling our construction of a useful almost-periodic space. Ian Agol, %for an enlightening mathoverflow discussion on Kazhdan's theorem, , 
Richard Aoun, % for discussing exponential convergence of the compact components in the Cartan decomposition of a random product of matrices, 
Uri Bader, Nicolas Bergeron, %for discussions on spectral gap estimates on symmetric manifolds and cohomology of symmetric manifolds,
Aaron Brown, Yves de Cornulier, Alex Furman, Victor Kleptsyn, Andr\'es Navas and Cristobal Rivas for fruitful discussions.  IPAM/UCLA for his organization of the program  "New methods in the Zimmer Program" where this project were initiated. UNAM for hosting the conference "Groups of dynamical origin II" where we pursued research on this topic.

\section{Reduction to the case of finite center}\label{reductionfinitecenter}

In this section we show that Theorem \ref{t: no orderable lattice 2} implies Theorem \ref{t: no orderable lattice 1} and Theorem \ref{t: circle actions}.

We first prove the easy implication of Theorem \ref{t: no orderable lattice 1}. If there exists a surjective morphism \( G\rightarrow \widetilde{\text{SL}(2, \R)} \), then we have an action $\phi: \Gamma \to \HomeoR$ via the standard action of $\widetilde{\text{SL}(2, \R)}$ in $\R$. The kernel of $\phi$ is a normal subgroup of $\Gamma$ and from Margulis Normal subgroup Theorem it follows that $\ker{\pi}$ is contained in the center $Z(\Gamma)$ of $\Gamma$, therefore $\ker(\pi)$ is an abelian group. Moreover, $\ker(\pi)$ is torsion free because $\Gamma$ is torsion free. Therefore, we have an exact sequence $\ker{\pi} \to \Gamma \to \pi(\Gamma)$, and as $\ker(\pi), \pi(\Gamma)$ are left orderable groups, $\Gamma$ is left orderable.

We now prove the other implication of Theorem \ref{t: no orderable lattice 1}. Assume that $\Gamma$ is left-orderable and so we have a faithful action $\phi: \Gamma \to \HomeoR$, we can assume that the action $\phi$ is fixed point free. We will make use of the following important statement due to Margulis, based on previous results of Kazhdan, see \cite[19, remark IX 6.20]{Margulisbook}:

\begin{proposition}\label{p: vanishing first Betti number}
Let \(\Gamma\subset G\) be an irreducible lattice in a connected real semi-simple Lie group of rank at least two. Then any homomorphism $\Gamma \to \Z$ is trivial.
\end{proposition}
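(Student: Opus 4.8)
The plan is to derive this from the Margulis Normal Subgroup Theorem (another deep theorem of Margulis, applicable to irreducible lattices in semi-simple Lie groups of rank at least two) together with the non-amenability of $\Gamma$. The argument is short once the Normal Subgroup Theorem is in hand.

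Suppose, for contradiction, that $\phi : \Gamma \to \Z$ is a non-trivial homomorphism. Since $\phi(\Gamma)$ is a non-trivial subgroup of $\Z$, hence itself isomorphic to $\Z$, I would replace the target by $\phi(\Gamma)$ and assume $\phi$ is surjective. Set $N := \ker\phi \triangleleft \Gamma$, so that $N$ is normal in $\Gamma$ with $\Gamma/N \cong \Z$ infinite. The Normal Subgroup Theorem then forces a normal subgroup of $\Gamma$ with infinite quotient to be contained in the center $Z(G)$ of $G$; thus $N \subseteq Z(G)$, and in particular $N$ is a central, hence abelian, subgroup of $\Gamma$.

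Next I would pick $g \in \Gamma$ with $\phi(g)$ a generator of $\phi(\Gamma)$. Then every $\gamma \in \Gamma$ can be written $\gamma = n g^{k}$ with $n \in N$ and $k \in \Z$, and since $N$ is central these elements all commute; hence $\Gamma$ is abelian. This is absurd: $\Gamma$ is a lattice in the group $G$, which is non-amenable (any non-compact semi-simple Lie group is non-amenable, e.g. because it contains a subgroup locally isomorphic to $\text{SL}(2,\R)$), and a lattice in a non-amenable locally compact group is non-amenable, whereas abelian groups are amenable. This contradiction proves the Proposition. (One could equally invoke Borel's density theorem or the Tits alternative in place of non-amenability.)

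The only non-elementary ingredient is the Normal Subgroup Theorem, which carries all the weight of the proof; everything else is bookkeeping. I note that when every simple factor of $G$ has real rank at least two — so that $G$, and therefore $\Gamma$, has Kazhdan's property $(T)$ — one can avoid the Normal Subgroup Theorem altogether: property $(T)$ yields $H^1(\Gamma;\R)=0$, and a non-trivial homomorphism $\Gamma\to\Z$ composed with $\Z\hookrightarrow\R$ would produce a non-zero class there. This shortcut does not cover groups such as $\text{SL}(2,\R)\times\text{SL}(2,\R)$, which lack property $(T)$, so in full generality the Normal Subgroup Theorem seems unavoidable.
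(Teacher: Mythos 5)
Your argument is correct. Note first that the paper itself does not prove this proposition: it is quoted as a known theorem of Margulis (building on Kazhdan), with a citation to remark IX.6.20 of Margulis's book, so there is no in-paper proof to compare against. Your derivation from the Normal Subgroup Theorem is the standard one and every step checks out: a nontrivial $\phi:\Gamma\to\Z$ may be assumed surjective, $N=\ker\phi$ is normal with infinite quotient, so by the NST $N\subseteq Z(G)$ and hence $N\subseteq Z(\Gamma)$; since $\Gamma$ is generated by the central subgroup $N$ together with a single element mapping to a generator of $\Z$, it is abelian, hence amenable, contradicting the non-amenability of a lattice in a noncompact semi-simple group (real rank $\geq 2$ forces $G$ noncompact, and amenability of a lattice would force amenability of $G$ because $G/\Gamma$ carries a finite invariant measure). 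This is moreover exactly the form of the NST the authors use themselves in Section~\ref{reductionfinitecenter} to show that the kernel of the map to $\widetilde{\text{SL}(2,\R)}$ is central, so your proof is consistent with the paper's toolkit and, unlike the property-$(T)$ shortcut you mention, covers the cases such as $\text{SL}(2,\R)\times\text{SL}(2,\R)$ where $\Gamma$ does not have property $(T)$.
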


As $G$ has infinite center, the intersection $Z(G) \cap \Gamma$ is an infinite abelian group, see \cite[Ch. 9, Sect. 6]{Margulisbook}. Let $K= \text{Fix}(Z(G) \cap \Gamma)\subset \R$ be the fixed point set of $Z(G) \cap \Gamma$; $K$ is a $\Gamma$-invariant closed set.  The action $\phi:\Gamma \to \HomeoR$ has no discrete orbits, otherwise we obtain a non-trivial homomorphism of $\Gamma$ to $\Z$. By \cite[Lemma 3.5.18]{DNR}, if $K$ is non-empty, there exists a unique minimal closed $\Gamma$-invariant set $\Lambda$. Collapsing the intervals of $\R \setminus \Lambda$ we obtain an action of $\Gamma/(Z(G) \cap \Gamma)$ on the real line which is minimal. However,  $\Gamma/(Z(G) \cap \Gamma)$ is a lattice in $G/Z(G)$ which is not left orderable by Theorem  \ref{t: no orderable lattice 1}, obtaining a contradiction; therefore $K$ is empty. As $Z(G) \cap \Gamma$ is abelian, one can find an element $\gamma \in Z(G)\cap \Gamma$ which acts freely in $\R$ and so we obtain a circle $\mathbb{S}^1 := \R/\phi(\gamma)$ with an action $\phi_1 : \Gamma \to \HomeoS$ induced by $\phi$. \\

In order to continue, we need to recall the following Theorem of Ghys about actions of higher rank lattices in the circle. We say that two actions $\phi_1, \phi_2: \Gamma \to \HomeoS$ are semi-conjugate up to finite cover if there exists a $f: \mathbb{S}^1 \to \mathbb{S}^1$, continuous, locally non-decreasing, surjective map such that $f \circ \phi_1(\gamma) = \phi_2 \circ f (\gamma)$ for every $\gamma \in \Gamma$. %If $f$ has degree one, then $\phi_1$, $\phi_2$ are said to be semi-conjugate.

\begin{theorem}[\cite{Ghys}, Thm. 3.1.]\label{t: Ghys} Let \(\Gamma\subset G\) be an irreducible lattice in a connected real semi-simple Lie group of rank at least two and $\phi: \Gamma \to \HomeoS$, then $\phi$ either preserves a probability measure on $\mathbb{S}^1$ or it is semi-conjugate up to finite index to an action obtained by composition of the following morphims:

\begin{enumerate}
\item The inclusion of $\Gamma$ in $G$.
\item A surjective homomorphism from $G$ to $\text{PSL}(2,\R)$.
\item The projective action of $\text{PSL}(2,\R)$ in $\HomeoS$.
\end{enumerate}

\end{theorem}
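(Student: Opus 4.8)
This statement is due to Ghys; I outline the proof I would give, which is in essence the bounded-cohomology argument of Ghys \cite{Ghys} and Burger--Monod \cite{BurgerMonod}.

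\emph{Reduction to a minimal action.} If $\phi$ has a finite orbit $F$, the uniform atomic probability measure on $F$ is $\phi$-invariant and we are in the first alternative. Otherwise $\phi(\Gamma)$ has a unique minimal set, either $\s$ or a Cantor set; in the Cantor case, collapsing the connected components of its complement produces a minimal action $\bar\phi$ semiconjugate to $\phi$, and it suffices to treat $\bar\phi$ (a minimal action admitting an invariant probability measure carries a nonatomic, fully supported one, which pulls back through the collapsing map to a $\phi$-invariant measure; and a semiconjugacy of $\bar\phi$ to a projective action composes with the collapsing map, at worst multiplying the degree). So assume henceforth that $\phi$ is minimal and preserves no probability measure, since otherwise we are again in the first alternative.

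\emph{The bounded Euler class.} The plan is to identify the real bounded Euler class $e_b^{\R}(\phi)\in H^2_b(\Gamma;\R)$. By results of Ghys and Matsumoto, $e_b^{\R}(\phi)=0$ exactly when $\phi$ preserves a probability measure, and the integral bounded Euler class in $H^2_b(\Gamma;\Z)$ determines $\phi$ up to semiconjugacy; so under our standing assumption $e_b^{\R}(\phi)\neq 0$, and it remains to pin it down and invoke Matsumoto's theorem. Here I would use Burger--Monod \cite{BurgerMonod}: restriction induces an isomorphism $H^2_{cb}(G;\R)\xrightarrow{\ \sim\ }H^2_b(\Gamma;\R)$. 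Writing $G$ as isogenous to a product $G_1\times\cdots\times G_r$ of simple factors, one has $H^2_{cb}(G;\R)=\bigoplus_i H^2_{cb}(G_i;\R)$, and each summand is either $0$ or a line spanned by the bounded Kähler class $\kappa_{G_i}$, the latter exactly when $G_i$ is of Hermitian type. Thus $e_b^{\R}(\phi)=\sum_i t_i\kappa_{G_i}$ with the $t_i$ not all zero, so at least one factor is Hermitian. (When $G$ is simple and not Hermitian this already gives $H^2_b(\Gamma;\R)=0$, forcing the first alternative, consistently with the fact that then no surjection $G\to\text{PSL}(2,\R)$ exists.)

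\emph{Isolating a $\text{PSL}(2,\R)$-factor.} The crucial point is that a nonzero bounded class arising as the Euler class of a circle action must be supported, in the decomposition above, on a single factor $G_i$ locally isomorphic to $\text{PSL}(2,\R)$. To prove this I would exploit a boundary representative of $e_b^{\R}(\phi)$: amenability of the $\Gamma$-action on the Furstenberg boundary $B=G/P$ yields a $\Gamma$-equivariant measurable map $\psi\colon B\to\mathrm{Prob}(\s)$, and $e_b^{\R}(\phi)$ is represented by the pullback under $\psi$ of the orientation cocycle of $\s$ (of bounded Gromov norm, by Milnor--Wood). The hypothesis that $\phi$ preserves no probability measure forces $\psi(b)$ to have, for a.e.\ $b$, a nonempty finite set $A(b)\subset\s$ of maximal atoms, with $b\mapsto A(b)$ being $\Gamma$-equivariant of constant cardinality; projecting onto these atoms gives a $\Gamma$-equivariant measurable map from $B$ to $\s$ up to a finite cover of $\s$, which is the origin of the ``up to finite index'' in the statement. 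Comparing with the $G$-invariant boundary representatives of the classes $\kappa_{G_i}$ and using Howe--Moore ergodicity of the factor actions on the partial products of $B$, one finds that this map depends on a single factor, hence descends to a map $B_i=G_i/P_i\to\s$ intertwining the action of $p_i(\Gamma)$ on $B_i$, where $p_i\colon\Gamma\to G_i$ is the projection, with the $\phi(\Gamma)$-action on $\s$; minimality of $\phi$ together with ergodicity of $G_i$ on $B_i\times B_i$ then upgrades this to a $G_i$-invariant structure on $B_i$ incompatible with $\dim(G_i/P_i)\geq 2$. Since $\dim(G_i/P_i)\geq 2$ for every simple $G_i$ not locally isomorphic to $\text{PSL}(2,\R)$, we get $G_i$ locally isomorphic to $\text{PSL}(2,\R)$, $B_i=\mathbb{RP}^1\cong\s$, and the resulting monotone map $\s\to\s$ exhibits $\phi$, up to finite index, as semiconjugate to the projective $\text{PSL}(2,\R)$-action composed with $G\to G_i\to\text{PSL}(2,\R)$. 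Matsumoto's rigidity theorem then turns the equality of bounded Euler classes into the claimed semiconjugacy.

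\emph{Main obstacle.} The substantive step is the last one: isolating a single $\text{PSL}(2,\R)$-factor and excluding every other simple (in particular every other Hermitian) factor. This is exactly where higher rank is used, and it genuinely fails in rank one --- e.g.\ $SU(n,1)$ with $n\geq 2$ is Hermitian of rank one and has a nonzero bounded Kähler class, which is nonetheless not the Euler class of any circle action, once more for dimension reasons. Making this rigorous requires either Ghys's direct super-rigidity argument for the Euler cocycle or the full continuous bounded cohomology machinery with boundary coefficients of Burger--Monod; in both, the boundary map $\psi$ and the ergodicity of the factor actions (Howe--Moore, Mautner) carry the weight.
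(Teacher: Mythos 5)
The paper does not actually prove this statement: it is imported verbatim from Ghys (\cite{Ghys}, Thm.~3.1) and used as a black box, so there is no internal proof to compare yours against; the relevant comparison is with the cited literature. Judged on that basis, your outline follows the bounded-cohomology route of Burger--Monod rather than Ghys's original boundary-map superrigidity argument (the paper's introduction notes both exist), and the skeleton is correctly assembled: the reduction to a minimal, measure-free action; the equivalence of ``preserves a probability measure'' with $e_b^{\R}(\phi)=0$ (via $H^1_b(\Gamma;\R/\Z)\to H^2_b(\Gamma;\Z)\to H^2_b(\Gamma;\R)$ and Ghys's semiconjugacy theorem); the isomorphism $H^2_{cb}(G;\R)\simeq H^2_b(\Gamma;\R)$ and its decomposition over the Hermitian factors; and Matsumoto's rigidity to convert equality of bounded Euler classes into semiconjugacy. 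The parenthetical observation that a simple non-Hermitian $G$ forces the first alternative is also correct and consistent with the statement.

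The gap is the one you name yourself, and it is not a small one: everything in the theorem beyond ``some factor is Hermitian'' lives in the step where the equivariant map $\psi\colon B\to\mathrm{Prob}(\s)$, or rather the finite-atom map $b\mapsto A(b)$, is shown to factor through a single factor boundary $B_i=G_i/P_i$ which is then forced to be one-dimensional. Your sentences ``Comparing with the $G$-invariant boundary representatives \dots one finds that this map depends on a single factor'' and ``minimality \dots upgrades this to a $G_i$-invariant structure on $B_i$ incompatible with $\dim(G_i/P_i)\geq 2$'' assert the conclusion rather than derive it: no argument is given for why the map cannot genuinely mix several Hermitian factors, nor for what the ``invariant structure'' on $B_i$ is and why dimension at least two excludes it. That is precisely the superrigidity-for-boundary-maps content of Ghys's proof (equivalently, the Burger--Monod machinery with boundary coefficients), i.e.\ the entire substance of the theorem. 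As it stands the proposal is an accurate and well-organized roadmap through the literature --- perfectly adequate as a gloss on the citation the paper actually makes --- but it is not a self-contained proof.
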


Applying Ghys's theorem, the action  $\phi_1: \Gamma/(\gamma) \to \HomeoS$ described before, either preserves a probability measure on $\mathbb{S}^1$ or it is semi-conjugate up to finite index to a projective action. The action of  $\Gamma/(\gamma)$ does not preserve a probability measure on $\mathbb{S}^1$, otherwise by looking at the rotation number (which is a homomorphism in the case where the action preserves a probability measure) one obtains a non-trivial homomorphism $\Gamma \to \Z$. Therefore the action $\phi_1$ is semi-conjugate to a projective action $\phi_2: \Gamma \to \text{PSL}(2,\R)$ via a surjective continuous map  $f:\mathbb{S}^1 \to \mathbb{S}^1$.\\

We can define a homomorphism $\alpha: \Gamma \to  \widetilde{\text{SL}(2, \R)}$ in the following way, let $\widetilde{f}: \R \to \R$ be a fixed lift of $f$ and for each $\gamma \in \Gamma$, let $\alpha(\gamma)$ be the unique element of $\widetilde{\text{SL}(2, \R)}$  such that $\widetilde{f} \circ \phi(\gamma) = \alpha(\gamma) \circ \widetilde{f}$, the uniqueness of $\alpha(\gamma)$ implies that $\alpha$ is a homomorphism. From Ghys's Theorem we also have a surjective homomorphism $\pi: G \to \text{PSL}(2,\R)$. This homomorphism defines a unique copy of the lie algebra of $\text{SL}(2, \R)$ as an ideal in the Lie algebra  of $G$. As $G$ is semi-simple, the universal covering $\widetilde{G}$ of $G$ is a non-trivial product $\widetilde{G} =  \widetilde{\text{SL}(2, \R)} \times L$, where $L$ is a simply connected Lie group and $G = \widetilde{G}/ \Lambda$, where $\Lambda$ is a subgroup of the center of $\widetilde{G}$. We will prove that $\Lambda$ is contained in $L$ and this will show that $G$ surjects onto $\widetilde{\text{SL}(2, \R)}$.\\

This can be proven as follows: Let $\pi_1, \pi_2, \pi_G$ be the projections of $\widetilde{G}$ onto $\widetilde{\text{SL}(2, \R)}$, $L$ and $G$ respectively. Let $\widetilde{\Gamma}$ be the lift of $\Gamma$ to $\widetilde{G}$. We can define a  group homomorphism $\Pi: \widetilde{\Gamma} \to \widetilde{\Gamma}$ by $\Pi(\gamma) := (\alpha(\pi_G(\gamma)), \pi_2(\gamma))$ for every $\gamma \in \widetilde{\Gamma}$.  Assume for the moment  that  $\Pi$ is the identity homomorphism (we will check this later). This implies that  $\alpha \circ \pi_G = \pi_1$ and so for every  $\gamma \in \Lambda$ (notice that $\Lambda \subset \widetilde{\Gamma}$), we have $\pi_1(\gamma) = \alpha(\pi_G(\gamma))$, the latter being equal to the identity of $\widetilde{\text{SL}(2, \R)}$, therefore $\gamma \in L$ as we wanted to show.

Moreover, observe that $\pi_1$ is now a well defined projection of $G$ and that our original action $\phi$ is semi-conjugate via $\widetilde{f}$ with the action obtained by first projecting $\Gamma$ to $\widetilde{\text{SL}(2, \R)}$ via $\pi_1$ as described in Theorem \ref{t: no orderable lattice 1}. The only remaining thing to check is that $\Pi$ is the identity homomorphism.

\begin{proposition} For every $\gamma \in \widetilde{\Gamma}$, $\Pi(\gamma) = \gamma$. 
\end{proposition}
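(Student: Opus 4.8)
The plan is to exploit the rigidity of $\widetilde{\Gamma}$ (via Margulis superrigidity applied to the factor) together with the fact that $\Pi$ is built from the semi-conjugacy map $\widetilde{f}$, so that $\Pi$ induces a map that is close to the identity in a suitable sense. First I would observe that $\Pi=(\alpha\circ\pi_G,\pi_2)$ is a homomorphism $\widetilde{\Gamma}\to\widetilde{\Gamma}$ whose second coordinate is simply the projection $\pi_2$; so $\Pi$ is the identity if and only if its first coordinate $\alpha\circ\pi_G\colon\widetilde{\Gamma}\to\widetilde{\text{SL}(2,\R)}$ equals $\pi_1$. The strategy is therefore to prove $\alpha\circ\pi_G=\pi_1$ as homomorphisms $\widetilde{\Gamma}\to\widetilde{\text{SL}(2,\R)}$.

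Next I would pass to $\text{PSL}(2,\R)$ to use Margulis superrigidity, which applies to the lattice $\Gamma$ in the higher-rank group $G$. Composing $\alpha\circ\pi_G$ with the covering projection $q\colon\widetilde{\text{SL}(2,\R)}\to\text{PSL}(2,\R)$ kills the center and descends to a homomorphism $\Gamma\to\text{PSL}(2,\R)$; by construction of $\alpha$ from the semi-conjugacy $f$ between $\phi_1$ and the projective action $\phi_2=\pi\circ(\text{inclusion})$, this descended homomorphism is exactly $\pi|_\Gamma$ (the quotient of $\alpha$ by the center is the action $\phi_2$, which is $\pi$ restricted to $\Gamma$). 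Hence $q\circ(\alpha\circ\pi_G)$ and $q\circ\pi_1$ agree on $\widetilde{\Gamma}$, which means $\alpha\circ\pi_G$ and $\pi_1$ differ by a homomorphism $c\colon\widetilde{\Gamma}\to Z(\widetilde{\text{SL}(2,\R)})\cong\Z$ (the kernel of $q$). Now $c$ is a homomorphism to $\Z$; restricting to the finite-index subgroup $\Gamma\subset\widetilde{\Gamma}/\Lambda$ — more precisely, since $\widetilde{\Gamma}$ is a central extension of $\Gamma$ by $\Lambda$ and $c$ kills commutators, $c$ factors through $\widetilde{\Gamma}^{\mathrm{ab}}$ and in particular restricts to a homomorphism $[\widetilde{\Gamma},\widetilde{\Gamma}]\to 0$ and to a homomorphism on the image of $\Gamma$'s preimage — Proposition \ref{p: vanishing first Betti number} (any homomorphism $\Gamma\to\Z$ is trivial) forces $c$ to vanish on a finite-index subgroup, and being a homomorphism to the torsion-free group $\Z$, it vanishes identically. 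Therefore $\alpha\circ\pi_G=\pi_1$ and $\Pi=\mathrm{id}$.

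The step I expect to be the main obstacle is the bookkeeping with the central extensions: $\widetilde{\Gamma}$ is not literally $\Gamma$, it is its preimage in $\widetilde{G}$, and one must be careful that $\Lambda\subset Z(\widetilde{G})$ maps correctly under $\pi_G$, $\pi_1$, $\pi_2$ and under $\alpha\circ\pi_G$. Concretely one has to check: (i) that $\alpha$ is well-defined and a homomorphism, which follows from uniqueness of the element of $\widetilde{\text{SL}(2,\R)}$ conjugating $\widetilde{f}\circ\phi(\gamma)$ to the right thing (this is already asserted in the text); (ii) that the quotient of $\alpha$ by the center $Z(\widetilde{\text{SL}(2,\R)})$ is $\phi_2\circ\pi_G$, i.e. that the induced circle action is the projective one — this is exactly the defining property of $f$; and (iii) the application of the vanishing of the first Betti number to the central difference $c$, where one uses that a homomorphism $\widetilde{\Gamma}\to\Z$ restricted to the finite-index subgroup corresponding to $\Gamma$ vanishes by Proposition \ref{p: vanishing first Betti number}, hence $c$ itself has finite image, hence trivial image in $\Z$. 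Once these are in place, the identity $\Pi=\mathrm{id}$ is immediate, and with it the conclusion that $\Lambda\subset L$ and that $G$ surjects onto $\widetilde{\text{SL}(2,\R)}$.
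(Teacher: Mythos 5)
Your approach is essentially the same as the paper's: both reduce to showing $\alpha\circ\pi_G=\pi_1$, both observe that $\Pi(\gamma)$ and $\gamma$ project to the same element of $\text{PSL}(2,\R)$ so that they differ by a central ``error'' homomorphism $c$, and both kill $c$ by combining the vanishing of the first Betti number with the fact that the center of $\widetilde{\text{SL}(2,\R)}$ is a torsion-free copy of $\Z$.

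The one place where your write-up is genuinely confused is exactly the step you flag as the main obstacle. You write of ``restricting to the finite-index subgroup $\Gamma\subset\widetilde{\Gamma}/\Lambda$,'' but $\Gamma=\widetilde{\Gamma}/\Lambda$ is a \emph{quotient} of $\widetilde{\Gamma}$, not a subgroup, so there is no meaningful restriction to $\Gamma$ here; and one cannot factor $c$ through $\Gamma$ a priori, since showing $c(\Lambda)=0$ is equivalent to showing $\Lambda\subset L$, which is what the whole argument is aiming to prove --- that route is circular. The correct and simplest fix is to apply Proposition~\ref{p: vanishing first Betti number} directly to $\widetilde{\Gamma}$ itself, which is an irreducible lattice in the connected semi-simple Lie group $\widetilde{G}$ of rank $\geq 2$ (the proposition is stated with no hypothesis on the center of the ambient group): since $c\colon\widetilde{\Gamma}\to\ker q\cong\Z$ is a homomorphism to $\Z$, it vanishes. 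The paper phrases this slightly differently --- it views $c$ as landing in the center, concludes its image is finite because $\widetilde{\Gamma}^{\mathrm{ab}}$ is finite, and then invokes torsion-freeness of $Z(\widetilde{\text{SL}(2,\R)})$ --- but the content is the same. Modulo this correction your argument is sound.
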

\begin{proof} 
It is enough to check that $\alpha \circ \pi_G = \pi_1$. Observe that for every $\gamma \in \widetilde{\Gamma}$, the projection of both $\Pi(\gamma)$ and $\gamma$ in $\text{PSL}(2,\R)$ coincide. Therefore there exists $c(\gamma) \in Z(\Gamma)$ such that $\Pi(\gamma) = \gamma c(\gamma)$. Observe that as $\Pi$ is a homomorphism, then $c: \Gamma \to Z(\Gamma)$ is a homomorphism into an abelian group and so it has a finite image, which must be contained in $L$ because $\widetilde{\text{SL}(2, \R)}$ has infinite center. Therefore  $\pi_1(\gamma) = \pi_1(\Pi(\gamma))$ and this is exactly the equation $\alpha \circ \pi_G = \pi_1$.
\end{proof}

\section{Review of Lie theory}\label{Lietheory}

\subsection{Iwasawa decomposition, root systems and strong stable unipotent subgroups}

We recall some classical facts about real semi-simple Lie groups, we will use the same notation as in \cite[Ch.6]{Knapp}.  We refer the reader to that reference for more precise statements and proofs of most of this material.

Let $G$ be a real semi-simple Lie group and $\lieg$ be the corresponding real semi-simple Lie algebra, let $B: \lieg \times \lieg \to \R$ be the Killing form in $\lieg$  given by $B(x,y) = \text{Tr}(\text{ad}(x)\text{ad}(y))$ where $\text{ad}$ denotes the adjoint action of $\lieg$ in itself. We fix a Cartan involution $\theta: \lieg \to \lieg$; this is a Lie algebra homomorphism such that $B_{\theta}(x,y) = -B(x, \theta(y))$ is a positive definite bilinear symmetric form. Any real semi-simple Lie algebra $\lieg$ is isomorphic to a Lie algebra of matrices closed under transpose  and using such an identification one can assume  the Killing form is given for $x , y \in \lieg$ by  $B(x ,y) = \text{Tr}(xy)$ and the Cartan involution is given by $\theta(x) = -x^{*}$. We write $\liek$ and $\liep$, respectively,  for the $+1$ and $-1$ eigenspaces of $\theta$. Denote by $\liea$ a maximal abelian subalgebra of $\liep$. 

We recall the restricted root decomposition of $\lieg$. The adjoint action of $\liea$ in $\lieg$ gives a decomposition  $\lieg = \lieg_{0}\oplus \bigoplus_{\alpha\in \Sigma} \lieg_\alpha$ where $\Sigma$ denotes a finite subset of the dual space $\liea^{*}$ which is known as the set of \emph{restricted roots} of $\lieg$ with respect to $\liea$. For all $a \in \liea $ and $x \in \lieg_{\alpha}$ we have $ad(a) x = \alpha (a) x$. We have the following:

\begin{theorem}[Prop.6.4,\cite{Knapp}]\label{Knapp1} The restricted root space decomposition has the following properties:

\begin{enumerate}

\item $\lieg$ is the orthogonal direct sum $\lieg = \lieg_{0} \oplus \bigoplus_{\alpha\in \Sigma} \lieg_\alpha$.
\item $[\lieg_{\alpha}, \lieg_{\beta}] \subset \lieg_{\alpha + \beta}$ for every \(\alpha, \beta \in \Sigma\),
\item $\theta \lieg_{\alpha} = \lieg_{-\alpha}$,
\item $\lieg_0 = \liea \oplus \liem$, where $\liem$ is the centralizer of $\liea$ in $\liek$.

\end{enumerate}
\end{theorem}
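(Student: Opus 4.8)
The plan is to diagonalize the commuting family $\{\mathrm{ad}(a)\ :\ a\in\liea\}$ acting on the Euclidean space $(\lieg,B_\theta)$ and then read off the four properties by elementary manipulations. The one structural input I would establish first is that for $x\in\liep$ the operator $\mathrm{ad}(x)$ is self-adjoint with respect to $B_\theta$ (and, although not needed below, skew-adjoint when $x\in\liek$). This follows by combining two standard facts: the $\mathrm{ad}$-invariance of the Killing form, $B(\mathrm{ad}(x)y,z)=-B(y,\mathrm{ad}(x)z)$, and the identity $\mathrm{ad}(\theta x)=\theta\circ\mathrm{ad}(x)\circ\theta$, valid because $\theta$ is a Lie algebra automorphism with $\theta^2=\mathrm{id}$. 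Indeed, when $\theta x=-x$ one has $\mathrm{ad}(x)\theta z=\theta\,\mathrm{ad}(\theta x)z=-\theta\,\mathrm{ad}(x)z$, whence $B_\theta(\mathrm{ad}(x)y,z)=-B(\mathrm{ad}(x)y,\theta z)=B(y,\mathrm{ad}(x)\theta z)=-B(y,\theta\,\mathrm{ad}(x)z)=B_\theta(y,\mathrm{ad}(x)z)$. Since $\liea\subset\liep$ is abelian, $\{\mathrm{ad}(a):a\in\liea\}$ is a commuting family of $B_\theta$-self-adjoint operators, hence simultaneously diagonalizable over $\R$. This produces the decomposition $\lieg=\bigoplus_{\alpha}\lieg_\alpha$, where $\alpha$ ranges over the (necessarily finitely many) elements of $\liea^*$ with $\lieg_\alpha:=\{x\in\lieg:\mathrm{ad}(a)x=\alpha(a)x\ \text{for all}\ a\in\liea\}\neq 0$; putting $\Sigma=\{\alpha\neq 0\}$ among these gives property (1), and orthogonality for $B_\theta$ holds because for $\alpha\neq\beta$ one may choose $a\in\liea$ with $\alpha(a)\neq\beta(a)$, so $\lieg_\alpha$ and $\lieg_\beta$ sit in distinct eigenspaces of the self-adjoint operator $\mathrm{ad}(a)$ and are therefore orthogonal.

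Property (2) is the Jacobi identity: for $x\in\lieg_\alpha$, $y\in\lieg_\beta$ and $a\in\liea$, one has $\mathrm{ad}(a)[x,y]=[\mathrm{ad}(a)x,y]+[x,\mathrm{ad}(a)y]=(\alpha+\beta)(a)[x,y]$, so $[x,y]\in\lieg_{\alpha+\beta}$, with the convention that $\lieg_{\alpha+\beta}=\{0\}$ when $\alpha+\beta$ is neither $0$ nor in $\Sigma$. Property (3) uses that $\theta$ is an automorphism with $\theta|_\liea=-\mathrm{id}$: for $x\in\lieg_\alpha$ and $a\in\liea$, $[a,\theta x]=-[\theta a,\theta x]=-\theta[a,x]=-\alpha(a)\,\theta x$, so $\theta x\in\lieg_{-\alpha}$; thus $\theta\lieg_\alpha\subseteq\lieg_{-\alpha}$, and equality follows on applying $\theta$ once more.

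For property (4), first observe $\lieg_0=Z_{\lieg}(\liea)$, the centralizer of $\liea$ in $\lieg$, and that it is $\theta$-stable: for $x\in\lieg_0$ and $a\in\liea$, $[a,\theta x]=-\theta[a,x]=0$. Hence $\lieg_0=(\lieg_0\cap\liek)\oplus(\lieg_0\cap\liep)$, and by definition $\lieg_0\cap\liek=Z_\liek(\liea)=\liem$. If $x\in\lieg_0\cap\liep$, then $\liea+\R x$ is an abelian subalgebra of $\liep$ containing $\liea$ (abelian since $x$ centralizes $\liea$ and $\liea$ is abelian), so maximality of $\liea$ forces $x\in\liea$; since trivially $\liea\subseteq\lieg_0\cap\liep$, we conclude $\lieg_0\cap\liep=\liea$, and therefore $\lieg_0=\liea\oplus\liem$.

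The only point requiring genuine care is the self-adjointness of $\mathrm{ad}(a)$ with respect to $B_\theta$ in the first step; once this is in place the spectral theorem does the work for (1), and the remaining three properties are short formal computations with the Jacobi identity and the automorphism $\theta$, so I do not anticipate a real obstacle.
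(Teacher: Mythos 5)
Your proof is correct and follows the standard argument found in the reference the paper cites (Knapp, around Prop.~6.40): self-adjointness of $\mathrm{ad}(a)$ with respect to $B_\theta$ for $a\in\liea\subset\liep$, simultaneous diagonalization of the commuting family, and then the Jacobi identity, the automorphism property of $\theta$, and maximality of $\liea$ in $\liep$ give the four assertions. The paper itself does not reprove this and simply quotes Knapp, so there is nothing in its text to compare against beyond the citation.
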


We will identify $\liea$ with $\liea^{*}$ via the Killing form as follows: for every $\lambda \in \liea^{*}$, we let $a_{\lambda}$ be its dual under the Killing form, that is, for every $a \in \liea$ we have: $$B(a_{\lambda}, a) = \lambda(a).$$
This defines a non-degenerate metric in $\liea^{*}$ given by $$(\lambda, \beta) := B(a_{\lambda}, a_{\beta}).$$

We will need the following:

\begin{theorem}[Prop.6.52,\cite{Knapp}]\label{Knapp2} Let $\lambda$ be a restricted root of $\lieg$, and let $x_{\lambda}$ be a nonzero element in $\lieg_{\lambda}$. We have: $$[x_{\lambda}, \theta (x_{\lambda})] =B(x_{\lambda}, \theta (x_{\lambda}))a_{\lambda} \text{ and } B(x_{\lambda}, \theta (x_{\lambda})) < 0.$$
\end{theorem}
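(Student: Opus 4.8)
The plan is to use the invariance of the Killing form together with the $\theta$-equivariance of the restricted root space decomposition recorded in Theorem \ref{Knapp1}.

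First I would pin down where the bracket lives. Since $\theta\lieg_\lambda=\lieg_{-\lambda}$ and $[\lieg_\lambda,\lieg_{-\lambda}]\subset\lieg_0=\liea\oplus\liem$, we have $[x_\lambda,\theta(x_\lambda)]\in\liea\oplus\liem$. The first real step is to eliminate the $\liem$-component. Applying the automorphism $\theta$, and using $\theta^2=\mathrm{id}$,
$$\theta[x_\lambda,\theta(x_\lambda)]=[\theta(x_\lambda),x_\lambda]=-[x_\lambda,\theta(x_\lambda)],$$
so the bracket lies in the $(-1)$-eigenspace $\liep$ of $\theta$. Since $\liem\subset\liek$ sits in the $(+1)$-eigenspace of $\theta$, we conclude $[x_\lambda,\theta(x_\lambda)]\in(\liea\oplus\liem)\cap\liep=\liea$.

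Next I would identify this element of $\liea$ by pairing it, via $B$, against an arbitrary $a\in\liea$. Using invariance of the Killing form, the relation $[a,\theta(x_\lambda)]=-\lambda(a)\,\theta(x_\lambda)$ (valid since $\theta(x_\lambda)\in\lieg_{-\lambda}$), and the defining property $B(a_\lambda,a)=\lambda(a)$ of $a_\lambda$, one computes
$$B([x_\lambda,\theta(x_\lambda)],a)=B(x_\lambda,[\theta(x_\lambda),a])=\lambda(a)\,B(x_\lambda,\theta(x_\lambda))=B\big(B(x_\lambda,\theta(x_\lambda))\,a_\lambda,\ a\big).$$
Because $B$ restricted to $\liep$, hence to $\liea$, is non-degenerate (indeed positive definite, as $B_\theta=B$ on $\liep$), and because both $[x_\lambda,\theta(x_\lambda)]$ and $B(x_\lambda,\theta(x_\lambda))\,a_\lambda$ already lie in $\liea$, this identity forces $[x_\lambda,\theta(x_\lambda)]=B(x_\lambda,\theta(x_\lambda))\,a_\lambda$. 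For the sign, observe that $-B(x_\lambda,\theta(x_\lambda))=B_\theta(x_\lambda,x_\lambda)>0$ since $B_\theta$ is positive definite and $x_\lambda\neq0$; hence $B(x_\lambda,\theta(x_\lambda))<0$.

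The computation itself is short and essentially forced. The only point requiring genuine care is the vanishing of the $\liem$-component, which is exactly where the $\theta$-equivariance $\theta\lieg_\alpha=\lieg_{-\alpha}$ enters, via the $\theta$-anti-invariance of $[x_\lambda,\theta(x_\lambda)]$; the rest is a direct manipulation of the invariant form and the definition of $a_\lambda$.
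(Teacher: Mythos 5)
Your proof is correct and is essentially the standard argument from Knapp's Proposition 6.52, which the paper simply cites without reproducing a proof: one places the bracket in $\liea$ via $\theta$-anti-invariance and then identifies it by pairing against $\liea$ with the invariant form. All steps (the invariance identity, the non-degeneracy of $B$ on $\liea$, and the sign via positive definiteness of $B_\theta$) are sound.
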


\subsubsection{Iwasawa Decomposition}\label{iwasawa}

Recall that a \emph{base} (or a collection of \emph{simple roots}) for $\Sigma$ is a subset $\Pi\subset \Sigma$ which is a basis for the vector space $\liea^*$ and which is such that every non-zero root $\beta\in \Sigma$ is either a positive or a negative integer combination of elements of $\Pi$.  For a choice of $\Pi$, elements $\beta\in \Pi$ are called \emph{simple} (positive) roots.  Relative to a choice of base $\Pi$, let $\Sigma_+\subset \Sigma$ be the collection of positive roots and let $\Sigma_-$ be the corresponding set of negative roots.  Then $\lien =\bigoplus_{\beta\in \Sigma_+} \lieg^\beta$ is a nilpotent subalgebra and we have the Iwasawa decomposition $\lieg = \liek \oplus \liea \oplus \lien$ which gives rise to the Iwasawa decomposition $G = KAN$.  The Lie  exponential $\exp\colon \lieg\to G$ restricts to  diffeomorphisms between $\liea$ and $A$ and $\lien$ and the nilpotent group $N$.

In the case where $G$ has finite center, $K$ is a compact subgroup.  If we let $M$ to be the centralizer of $A$ in $K$, then $M$ is a closed subgroup of $G$ and the Lie algebra $\liem$ of $M$ is the centralizer of $\liea$ in $\liek$. The group $P := MAN$ is the \emph{standard minimal parabolic subgroup}. The identity component of $M$ is compact  and the group $P$ is amenable.

\subsubsection{Weyl group, Weyl chambers and the parabolic subgroups $P_{\W}$}\label{weylgroup}

From \cite[Thm. 6.53]{Knapp}, it follows that $\Sigma$ is an abstract root system. The Killing form on \(\liea\) being positive definite induces an inner product in $\liea^{*}$ by duality. We let $W(\Sigma)$ to be the Weyl group of $\Sigma$; this is a finite group acting on $\liea^{*}$ (and also on $\liea$ by duality) which is generated by reflections across hyperplanes perpendicular to elements in $\Sigma$.

We let $\mathcal{N}_K(\liea)$ (resp. $\mathcal{Z}_K(\liea)$) be the normalizer (resp. centralizer) of $A$ in $K$.

\begin{theorem}[Thm. 6.5, \cite{Knapp}]\label{weyl1} There is a natural isomorphism $W(\Sigma) = \mathcal{N}_{K}(\liea) / \mathcal{Z}_K(\liea)$.
\end{theorem}

For a choice $\Pi$ of simple roots of $\Sigma$ we define the corresponding \emph{Weyl Chamber} by $$\W := \{ a \in \liea | \lambda(a) \geq 0 \text{ for all } a \in \Pi\}. $$ 
This produces a one to one correspondence between the set of Weyl chambers and the set of choices of simple roots. 

The Weyl group acts transitively on the set of Weyl chambers.  For a Weyl chamber $\W$, we let $P_{\W}$ to be the minimal parabolic subgroup associated to the corresponding choice of simple roots defining $\W$.

\subsection{Horospherical subgroups}\label{horospherical}

We warn the reader that the following notation is not standard. For any \(a \in \liea\), we define \(N_a\) to be the \emph{unstable horospherical} subgroup of \(\exp(a)\) in \(G\), that is 
\[N_a := \{ g \in G | \  \lim_{n \to -\infty} d_{G}(\exp(na)g\exp(-an), \text{Id}) = 0 \}.\]

The group \(N_a\) is a closed connected subgroup of \(G\). The Lie algebra of \(N_a\) can be described as follows. %From the decomposition \(\lieg = \lieg_{0} \oplus \bigoplus_{\lambda\in \Sigma} \lieg_\lambda\), where \(\Sigma\) is the set of restricted roots of $\lieg$. 
If we let \[\Sigma_a := \{ \lambda \in \Sigma | \lambda(a)< 0  \},\] then \( \lien_{a} = \bigoplus_{\lambda\in \Sigma_a} \lieg_\lambda \). The Lie algebra $\lien_{a}$ is a nilpotent subalgebra. Moreover if $a$ is not in a wall of a Weyl chamber (i.e. if $\lambda(a) \neq 0$ for all $a \in \Sigma$), then $\lien_a$ can be taken to be $\lien$, the Lie algebra corresponding to $N$. For any choice of $a \in A$, we can choose $N$ in such a way that $G = KAN$ and $N_a \subset N$ by choosing $\Sigma^{+}$ such that $\Sigma_a \subset \Sigma^{+}$.

We can define also the \emph{stable horocyclic group}  $L_a := N_{-a}$ and the  \emph{central group} $C_a$ to be the identity component of the centralizer of $\exp(a) \in G$. If we let $\liel_a, \liec_a$ to be the lie algebra of $L_a$ and $C_a$, %from the decomposition  $\lieg = \lieg_{0} \bigoplus_{\lambda\in \Sigma} \lieg_\lambda$ 
we have the following decomposition of the Lie algebra $$\lieg = \liel_a \oplus \liec_a  \oplus \lien_a.$$

From the previous discussion, by using the exponential map, we have the following proposition, which is similar to the LU-factorization of matrices:

\begin{proposition}\label{lufactor} The function $\mathcal{F}: L_a \times C_a \times N_a \to G$ given by $\mathcal{F}(l,c,n) = lcn$ is a diffeomorphism of a neighborhood of the identity in $L_a \times C_a \times N_a$ and a neighborhood of the identity in $G$.
\end{proposition}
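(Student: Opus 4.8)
The plan is to apply the inverse function theorem at the identity. Since $L_a\times C_a\times N_a$ is a product of Lie groups and $\mathcal{F}$ is the restriction of the (smooth) triple multiplication map on $G$, the map $\mathcal{F}$ is smooth, and it suffices to show that its differential $d\mathcal{F}_{(e,e,e)}$ is a linear isomorphism from $T_{(e,e,e)}(L_a\times C_a\times N_a)=\liel_a\oplus\liec_a\oplus\lien_a$ onto $T_eG=\lieg$.

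To compute this differential, take $(X,Y,W)\in\liel_a\oplus\liec_a\oplus\lien_a$ and evaluate $\mathcal{F}$ along the curve $t\mapsto(\exp(tX),\exp(tY),\exp(tW))$, obtaining $t\mapsto\exp(tX)\exp(tY)\exp(tW)$. Expanding each factor to first order in $t$ (equivalently, by the Leibniz rule for the differential of group multiplication, using that all three factors start at $e$), its derivative at $t=0$ is $X+Y+W\in\lieg$. Hence $d\mathcal{F}_{(e,e,e)}$ is the addition map $(X,Y,W)\mapsto X+Y+W$.

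It remains to see that this addition map is bijective, i.e. that $\lieg=\liel_a\oplus\liec_a\oplus\lien_a$ as vector spaces, which is precisely the decomposition displayed immediately before the statement. This follows from the restricted root space decomposition $\lieg=\lieg_0\oplus\bigoplus_{\lambda\in\Sigma}\lieg_\lambda$ of Theorem~\ref{Knapp1} by sorting the roots according to the sign of $\lambda(a)$: one has $\lien_a=\bigoplus_{\lambda(a)<0}\lieg_\lambda$, $\liel_a=\lien_{-a}=\bigoplus_{\lambda(a)>0}\lieg_\lambda$, and $\liec_a=\lieg_0\oplus\bigoplus_{\lambda\ne 0,\, \lambda(a)=0}\lieg_\lambda$, the last equality because the Lie algebra of the centralizer of $\exp(a)$ is the fixed subspace of $\mathrm{Ad}(\exp(a))=\exp(\mathrm{ad}\,a)$, namely the sum of those root spaces on which $\mathrm{ad}\,a$ acts by $0$. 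Since every $\lambda\in\Sigma\cup\{0\}$ satisfies exactly one of $\lambda(a)<0$, $\lambda(a)=0$, $\lambda(a)>0$, the three subspaces are pairwise transverse and span $\lieg$, so the addition map is an isomorphism.

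With $d\mathcal{F}_{(e,e,e)}$ an isomorphism, the inverse function theorem yields open neighborhoods $U$ of $(e,e,e)$ in $L_a\times C_a\times N_a$ and $V$ of $e$ in $G$ with $\mathcal{F}|_U\colon U\to V$ a diffeomorphism, which is the assertion. There is no serious obstacle in this argument; the only point deserving a line of care is the identification of $\liec_a$ with the stated sum of root spaces, since this is what guarantees that the three subalgebras together exhaust $\lieg$.
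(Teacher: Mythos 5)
Your proof is correct and follows exactly the route the paper indicates: the paper offers no explicit proof, merely prefacing the proposition with "From the previous discussion, by using the exponential map," which points to precisely your argument — the Lie algebra decomposition $\lieg=\liel_a\oplus\liec_a\oplus\lien_a$ makes the differential of $\mathcal{F}$ at $(e,e,e)$ an isomorphism, and the inverse function theorem gives the local diffeomorphism. You have filled in the details the paper left implicit, including the correct identification $\liec_a=\lieg_0\oplus\bigoplus_{\lambda\ne 0,\,\lambda(a)=0}\lieg_\lambda$ via the fixed space of $\mathrm{Ad}(\exp a)=\exp(\mathrm{ad}\,a)$.
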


\subsubsection{KAK decomposition and Cartan projection}\label{KAKdecomp}

\begin{theorem}[KAK decomposition] 
Let $G$ be a real semi-simple Lie group with finite center and let $\W$ be a fixed Weyl Chamber, then for every $g \in G$, there exists $\kappa_{\W}(g) \in \W$, and \(k_\W (g), k'_\W  (g)\in K\)  such that $$g = k'_\W (g)  \exp (\kappa_{\W} (g))k_\W (g).$$ 
Moreover $k_\W (g)$ (and $k'_\W (g)$) are unique up to right (or left) multiplication by an element of  $M$, and \(\kappa_\W (g)\) is unique if it belongs to the interior of \(\W\). 
\end{theorem}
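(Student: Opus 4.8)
The plan is to deduce the statement from the infinitesimal Cartan decomposition, using as black boxes three classical facts about a connected semi-simple $G$ with finite center (see \cite{Knapp}): (a) the global Cartan decomposition, i.e. that $(k,X)\mapsto k\exp(X)$ is a diffeomorphism of $K\times\liep$ onto $G$ and that $\exp$ maps $\liep$ diffeomorphically onto a closed subset $\exp(\liep)\subset G$; (b) the equality $\bigcup_{k\in K}\mathrm{Ad}(k)\liea=\liep$, which holds because $\liea$ is a maximal abelian subspace of $\liep$; and (c) the fact, from Theorem \ref{weyl1}, that every element of $W(\Sigma)$ is represented by an element of $\mathcal N_K(\liea)\subset K$, together with $\liea=\bigcup_{w\in W(\Sigma)}w\cdot\W$. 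Recall also that $K$ is connected, so the global Cartan involution $\Theta$ of $G$ (with differential $\theta$) fixes $K$ pointwise and satisfies $\Theta(\exp Y)=\exp(-Y)$ for $Y\in\liep$.

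\textbf{Existence.} Given $g\in G$, write $g=k_1\exp(X)$ with $k_1\in K$, $X\in\liep$ by (a). By (b) choose $k_2\in K$ with $\mathrm{Ad}(k_2^{-1})X\in\liea$, and by (c) choose $n\in\mathcal N_K(\liea)$ so that $H:=\mathrm{Ad}(n)\mathrm{Ad}(k_2^{-1})X\in\W$. Then $X=\mathrm{Ad}(k_2n^{-1})H$, hence $\exp(X)=(k_2n^{-1})\exp(H)(k_2n^{-1})^{-1}$, and therefore
\[
g=\bigl(k_1k_2n^{-1}\bigr)\exp(H)\bigl(nk_2^{-1}\bigr),
\]
a decomposition of the asserted shape with $\kappa_\W(g):=H\in\W$, $k'_\W(g):=k_1k_2n^{-1}\in K$, $k_\W(g):=nk_2^{-1}\in K$.

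\textbf{Uniqueness of $\kappa_\W(g)$.} For any decomposition $g=k'\exp(H)k$ one computes
\[
g\,\Theta(g)^{-1}=k'\exp(H)k\cdot k^{-1}\exp(H)(k')^{-1}=\exp\bigl(\mathrm{Ad}(k')\,2H\bigr)\in\exp(\liep).
\]
Hence if $g=k'_1\exp(H_1)k_1=k'_2\exp(H_2)k_2$, injectivity of $\exp$ on $\liep$ gives $\mathrm{Ad}(k'_1)2H_1=\mathrm{Ad}(k'_2)2H_2$, so $H_1$ and $H_2$ are $\mathrm{Ad}(K)$-conjugate elements of $\liea$. Using the classical fact that $\mathrm{Ad}(K)$-conjugacy and $W(\Sigma)$-conjugacy coincide on $\liea$, there is $w\in W(\Sigma)$ with $H_1=wH_2$; since the open Weyl chamber is a fundamental domain for $W(\Sigma)$ on $\liea$ and $H_1,H_2\in\W$, this forces $H_1=H_2$ whenever one of the two lies in the interior of $\W$.

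\textbf{Uniqueness of the $K$-parts, and the main obstacle.} Now fix $a=\exp(H)$ with $H=\kappa_\W(g)$ and compare two decompositions $g=k'_1ak_1=k'_2ak_2$. The displayed identity (with $H_1=H_2=H$) gives $\mathrm{Ad}\bigl((k'_1)^{-1}k'_2\bigr)H=H$; thus $m:=(k'_1)^{-1}k'_2$ lies in $\mathcal Z_K(\exp H):=\{k\in K:\mathrm{Ad}(k)H=H\}$, in particular $m$ commutes with $a$, and cancelling $k'_1a$ in $k'_1ak_1=k'_1amk_2$ yields $k_1=mk_2$. Hence $k'_\W(g)$ is determined up to right multiplication, and $k_\W(g)$ up to left multiplication, by an element of $\mathcal Z_K(\exp H)$. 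When $H$ lies in the interior of $\W$ (so $H$ is regular) one checks $\mathcal Z_K(\exp H)=M$: if $\mathrm{Ad}(k)H=H$, then $\mathrm{Ad}(k)\liea$ is a maximal abelian subspace of $\liep$ all of whose elements commute with $\mathrm{Ad}(k)H=H$, hence it is contained in $\liep\cap\mathcal Z_\lieg(H)$; since $H$ is regular, $\mathcal Z_\lieg(H)=\lieg_0=\liea\oplus\liem$ by Theorem \ref{Knapp1}, so $\liep\cap\mathcal Z_\lieg(H)=\liea$ and therefore $\mathrm{Ad}(k)\liea=\liea$, i.e. $k\in\mathcal N_K(\liea)$; its class in $W(\Sigma)=\mathcal N_K(\liea)/\mathcal Z_K(\liea)$ fixes the regular element $H$, hence is trivial, so $k\in\mathcal Z_K(\liea)=M$. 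This gives the uniqueness up to $M$ in the statement (on a wall of $\W$, $\mathcal Z_K(\exp H)$ is in general strictly larger than $M$, consistently with the loss of uniqueness of $\kappa_\W(g)$). The only genuinely non-formal inputs are the two classical structural facts used above — the coincidence of $\mathrm{Ad}(K)$-conjugacy and $W(\Sigma)$-conjugacy on $\liea$, and the identification $\mathcal Z_K(\exp H)=M$ for regular $H$, both resting on the restricted root-space decomposition — and I expect correctly assembling these (rather than the formal manipulation of the global Cartan decomposition and of the injectivity of $\exp$ on $\liep$) to be the main point.
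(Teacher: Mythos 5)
Your proposal is correct. The paper itself gives no proof of this theorem — it simply cites \cite[Thm. 7.39]{Knapp} — and your argument is exactly the standard one contained in that reference: existence from the global Cartan decomposition $G=K\exp(\liep)$ together with the $\mathrm{Ad}(K)$-conjugacy of maximal abelian subspaces of $\liep$ and the transitivity of $W(\Sigma)$ on Weyl chambers, and uniqueness by reading off $\exp(\mathrm{Ad}(k')2H)$ from $g\,\Theta(g)^{-1}$ and identifying the stabilizer in $K$ of a regular $H\in\text{Int}(\W)$ with $M$ via the restricted root-space decomposition.
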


The assignment $g \to \kappa_\W(g)$ is known as the \emph{Cartan projection}. For more general statements, see \cite[Thm. 7.39]{Knapp} and \cite[Sect 6.7.4]{BenoistQuint}.

\subsection{Harmonic functions and Furstenberg's Poisson formula}  \label{ss: harmonic functions}

%\subsection{Harmonic functions on a semi-simple Lie group}\label{ss: harmonic functions}

Let \(G\) be a semi-simple Lie group with finite center. In the sequel we will use left random walks on \(G\) associated to either the distribution of the Brownian motion on \(G\) (for a \(K\)-invariant riemannian metric on \(G\)) at time one, or to a probability measure of type \(\type\): 

%The following concept is not truly essential for our argument, but it will permit to short-cut arguments involving moment estimates. 

\begin{definition}
A probability measure \(\mG\) on \(G\) has type \(\type\) if it is 

\vspace{0.2cm}

1. absolutely continuous with respect to Haar measure on \(G\),

\vspace{0.2cm}

2. symmetric,

\vspace{0.2cm}

3. compactly supported, 

\vspace{0.2cm}

4. \(K\) bi-invariant.
\end{definition}

In this section we review some aspects of the fundational paper \cite{Furstenberg Poisson Formula} of Furstenberg. Most of the results do not need the symmetry nor the compactly supported assumptions.

\begin{definition}\label{d: harmonic function} A left \(\mG\)-harmonic function on \(G\) is a measurable function \( f: G\rightarrow \R \) which is such that the function \( h\mapsto f(hg) \) belongs to \(L^1(\mG) \) for every \(g\in G\) and which satisfies  
\[ f (g) = \int f (h  g ) \mG(d h) \] 
for every \( g\in G\). 
\end{definition}

A left \(\mG\)-harmonic function on \(G\) is then invariant by left multiplications  by \(K\). Furstenberg proved that for a bounded function, being left \(\mG\)-harmonic does not depend upon the choice of the measure \(\mG\) satisfying \(\type\), see \cite[Remark after Theorem 4.1]{Furstenberg Poisson Formula} (notice that while in Furstenberg's paper the convolutions by \(\mG\) are done on the right, here they are done on the left). In particular, bounded harmonic functions on \(G\) are in natural correspondence with bounded harmonic functions on the symmetric space \(K\backslash G\) in a more classical sense, namely the functions of class \(C^2\) belonging to the kernel of any elliptic second order operator on \(G\) which is invariant by right multiplications by \(G\) and by left multiplications by \(K\). 

%In the sequel we will use two sorts of measures. The first ones is when \(\mG''\) has compact support. The second one is the time one distribution of the Brownian motion on \(G\) starting at \( e\) for a right invariant Riemannian metric on \(G\) which is also invariant by left multiplications by \(K\); it will be denoted \(\mGBM\).  \marginpar{verify that we truly need Brownian motion}

Let us recall the following classical lemma. 

\begin{lemma}\label{eq: preliminary Poisson formula} Suppose that \(L\subset G\) is a closed subgroup such that \( G=LK\), and that \(f: G\rightarrow \R\) is a bounded measurable function which is left \(L\)-invariant.  Then the function \(F: G\rightarrow \R\) defined by  \begin{equation}\label{eq: poisson formula} F(g) := \int _K f (kg) \HaarK (dk) \end{equation} is left \(\mG\)-harmonic.\end{lemma}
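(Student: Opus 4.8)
The plan is to verify the harmonicity identity $F(g) = \int_G F(hg)\,\mG(dh)$ directly from the definition, exploiting the factorization $G = LK$ and the fact that $f$ is left $L$-invariant together with the $K$ bi-invariance of $\mG$. First I would observe that the integral defining $F$ makes sense: since $f$ is bounded and measurable and $\HaarK$ is a probability measure, $F$ is bounded and measurable, and the function $h \mapsto F(hg)$ lies in $L^1(\mG)$ for every $g$. So the only thing to check is the mean-value property.

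The key computation is the following chain of equalities, starting from the right-hand side:
\[
\int_G F(hg)\,\mG(dh) = \int_G \int_K f(khg)\,\HaarK(dk)\,\mG(dh).
\]
By Fubini (everything is bounded, the measures finite) this equals $\int_K \int_G f(khg)\,\mG(dh)\,\HaarK(dk)$. Now I would use the $K$ bi-invariance of $\mG$: for fixed $k \in K$, the pushforward of $\mG$ under $h \mapsto kh$ is again $\mG$ (left $K$-invariance), so $\int_G f(khg)\,\mG(dh) = \int_G f(hg)\,\mG(dh)$, which no longer depends on $k$. Hence the whole expression collapses to $\int_G f(hg)\,\mG(dh)$. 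It remains to show this last quantity equals $F(g) = \int_K f(kg)\,\HaarK(dk)$. For this I would use the decomposition $G = LK$: write $h = \ell(h)\,k(h)$ with $\ell(h) \in L$, $k(h) \in K$ (measurably), and decompose $\mG$ accordingly; since $f$ is left $L$-invariant, $f(hg) = f(\ell(h) k(h) g) = f(k(h) g)$, so $\int_G f(hg)\,\mG(dh) = \int_G f(k(h) g)\,\mG(dh)$, i.e. the integral only depends on $\mG$ through the law of $k(h)$ under $\mG$. Because $\mG$ is right $K$-invariant and absolutely continuous, the distribution of $k(h)$ is the Haar probability measure $\HaarK$ on $K$ (the right $K$-action on $L\backslash G \cong K$ is transitive and the right-$K$-invariant measure class is unique), and therefore $\int_G f(k(h) g)\,\mG(dh) = \int_K f(kg)\,\HaarK(dk) = F(g)$.

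The main obstacle, and the step deserving the most care, is the last one: justifying that the $k(h)$-marginal of $\mG$ is exactly $\HaarK$. This uses both the absolute continuity and the right $K$-invariance of $\mG$ of type $\type$: right $K$-invariance forces the marginal to be a right-$K$-invariant (hence Haar) measure on the homogeneous space $L\backslash G$, which under the identification $L\backslash G \cong K$ (valid since $G = LK$) is the Haar probability $\HaarK$; absolute continuity guarantees there is no issue with the measurable section $h \mapsto (\ell(h), k(h))$ and that $\mG$ genuinely lives on the part of $G$ where this decomposition is unique up to $L\cap K$. Alternatively, one can bypass the marginal argument by noting that the function $g \mapsto \int_G f(hg)\,\mG(dh)$ is itself left $L$-invariant and left $K$-invariant (the latter again by $K$ bi-invariance of $\mG$), hence left $LK = G$-invariant, hence constant — but that is not what we want; instead the cleaner route is to observe that $F$ is the unique left-$K$-invariant function whose values are obtained by averaging $f$ over $K$-orbits, and the computation above shows $\int_G F(hg)\,\mG(dh)$ equals that same average, giving $F(g)$ back. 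I would present the marginal version as the main argument since it is the most transparent.
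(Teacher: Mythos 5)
Your proof is correct and reaches the desired conclusion, but by a genuinely different computation from the paper's. After the Fubini swap, you use \emph{left} $K$-invariance of $\mG$ to collapse the $\HaarK$-average and reduce the right-hand side to $\int_G f(hg)\,\mG(dh)$; you then use left $L$-invariance of $f$ to push this integral down to $L\backslash G$ and invoke uniqueness of the right-$K$-invariant probability on the compact homogeneous $K$-space $L\backslash G\cong (L\cap K)\backslash K$. (A slight slip: this space is $(L\cap K)\backslash K$, not $K$ itself — e.g.\ $P_{\mathcal W}\cap K=M$ — but since $f(k(h)g)$ only depends on the class of $h$ in $L\backslash G$, the argument goes through unchanged.) The paper's proof instead starts from the \emph{right} $K$-invariance of $\mG$ to insert a free $\HaarK(dk')$-integration, expands $F$ to a triple integral, writes $kh'=\ell k''$, kills $\ell$ by left $L$-invariance of $f$, and then lets the remaining $\HaarK(dk')$-integral absorb the $k''$-dependence: crucially, it introduces the pushforward measure $m''$ but never identifies it with Haar, sidestepping the uniqueness-of-invariant-measure step entirely, because $\int_K f(k''(k')^{-1}g)\,\HaarK(dk')=F(g)$ for every $k''$. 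Both routes use exactly $K$ bi-invariance of $\mG$, left $L$-invariance of $f$, and $G=LK$; in particular absolute continuity of $\mG$ (which you invoke to justify the measurable section) is not actually needed here — a measurable section of $G\to L\backslash G$ exists regardless, by standard selection theorems on homogeneous spaces. Your ``alternatively'' aside is not sound as stated (the function $g\mapsto\int_G f(hg)\,\mG(dh)$ need not be left $L$-invariant, since $\mG$ is not assumed right-$L$-invariant), but you correctly declined to pursue it.
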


\begin{proof} We have \[ \int F (hg) \mG (dh) = \int F(h'(k')^{-1}g) m'_G(dh') \HaarK(dk') = \]\[=\int f(k h' (k')^{-1}g) \HaarK(dk) \mG(dh') \HaarK(dk'). \]For every \( k\in K\) and \( h'\in G\) there exists \(l = l(k,h') \in L\) and \( k''(k,h') \in K\) such that \( kh' = l k''\). We denote by \( m''\) the image of the measure \( \HaarK\otimes \mG\) on \(G\) by the map \( (k,h') \mapsto k''\); this is a probability measure supported on \(K\). We then have \[ \int F (hg) \mG (dh) = \int f(l k'' (k')^{-1} g) \HaarK(dk) \mG(dh') \HaarK(dk') = \](and using \(P\)-invariance)\[= \int f( k'' (k')^{-1} g) m'' (dk'') \HaarK(dk')=\](using invariance of \(\HaarK\) by right multiplications)\[=\int f(kg) \HaarK(dk) = F(g) \]which ends the proof of the harmonicity of \(F\). \end{proof}

The Furstenberg boundary associated to \(K\) is the space \(B= M \backslash K\). Notice that for any Weyl chamber \(\mathcal W\), $B$ is naturally in bijection with the compact \(G\)-space \( P_{\mathcal W} \backslash G\), the \(G\)-action being on the right, where \(P_{\mathcal W}\) is the standard minimal parabolic subgroup associated to \(\mathcal W\) defined in \ref{iwasawa}. 

\begin{theorem}[Poisson formula]\label{t: Poisson formula}
The formula \eqref{eq: poisson formula} gives a linear bijective correspondance between the space of bounded measurable functions on \(G\) invariant by left multiplication by \(P_{\mathcal W}\) (namely bounded measurable functions on \( B \simeq P_{\mathcal W} \backslash G\)) and the space of bounded left \(\mG\)-harmonic functions on \(G\). 
\end{theorem}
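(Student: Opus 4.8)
The plan is to derive the statement from Lemma~\ref{eq: preliminary Poisson formula} together with Furstenberg's boundary theory for $G$ — convergence of the random walk to the boundary, and the identification of the Poisson boundary of $(G,\mG)$ with $(B,\eta_B)$. First I would record the elementary direction and a convenient reformulation. Inverting the Iwasawa decomposition $G=KAN$ gives $G=NAK$, hence $G=P_{\mathcal W}K$, so Lemma~\ref{eq: preliminary Poisson formula} applies with $L=P_{\mathcal W}$: the Poisson transform $\mathcal P\colon f\mapsto F$ carries any bounded measurable left $P_{\mathcal W}$-invariant function on $G$ — that is, any element of $L^\infty(B,\eta_B)$, where $\eta_B$ is the image of $\HaarK$ under $K\to B$ — to a bounded left $\mG$-harmonic function, and it is plainly linear with $\norm{F}_\infty\le\norm{f}_\infty$. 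I would then rewrite $F(g)=\int_B f\,d\nu_g$, where $\nu_g:=(R_g)_*\eta_B$ is the translate of $\eta_B$ by right multiplication by $g$ on $B$; letting $f$ range over $L^\infty(B,\eta_B)$ in the harmonicity identity for $\mathcal P f$ at $g=e$ shows that $\eta_B$ is $\mG$-stationary, whence by induction $\int_G(R_h)_*\eta_B\,\mG^{*n}(dh)=\eta_B$ for every $n\ge1$. It then remains to prove that $\mathcal P$ is a bijection onto the bounded left $\mG$-harmonic functions.

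For injectivity I would use the left random walk $W_n=X_n\cdots X_1$ on $(\Omega,\mathbb P)=(G^{\N},\mG^{\otimes\N})$ with $\mathcal F_n=\sigma(X_1,\dots,X_n)$: for any bounded left $\mG$-harmonic $h$ the process $n\mapsto h(W_n)$ is a bounded $\mathcal F_n$-martingale, hence converges $\mathbb P$-a.s.\ and in $L^1$. The input is Furstenberg's convergence to the boundary: since the support of $\mG$ generates $G$, for $\mathbb P$-a.e.\ $\omega$ the measures $(R_{W_n})_*\eta_B$ converge weakly to a Dirac mass $\delta_{\zeta(\omega)}$, and $\zeta_*\mathbb P=\eta_B$ (which follows from $\mathbb E\big[(R_{W_n})_*\eta_B\big]=\eta_B$ and dominated convergence). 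Given $f$ with $\mathcal P f\equiv0$, I would split $f=f_c+r$ with $f_c\in C(B)$, $\norm{f_c}_\infty\le\norm{f}_\infty$, and $\norm{r}_{L^1(\eta_B)}$ small: the term $\int_B f_c\,d\big((R_{W_n})_*\eta_B\big)$ converges a.s.\ to $f_c(\zeta)$ by weak convergence, while $n\mapsto\int_B r\,d\big((R_{W_n})_*\eta_B\big)=(\mathcal P r)(W_n)$ is a bounded martingale of expected absolute value at most $\norm{r}_{L^1(\eta_B)}$, so Doob's maximal inequality keeps it uniformly small off an event of small probability. Letting $\norm{r}_{L^1(\eta_B)}\to0$ yields $0=\lim_n F(W_n)=f(\zeta)$ a.s., whence $f=0$ in $L^\infty(B,\eta_B)$.

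Surjectivity is where the real work lies, and the step I expect to be the main obstacle. By Furstenberg's theorem the Poisson boundary of $(G,\mG)$ is $(B,\eta_B)$, and by the previous paragraph the harmonic measure of the walk started at $g$ — the law of its limit boundary point — is exactly $\nu_g$; granting this, every bounded left $\mG$-harmonic $h$ is the Poisson integral $g\mapsto\int_B\hat h\,d\nu_g=(\mathcal P\hat h)(g)$ of its boundary function $\hat h\in L^\infty(B,\eta_B)$, so $\mathcal P$ is onto. In martingale language, what must be proved is that the martingale limit $h^\partial=\lim_n h(W_n)$, which is $\mathcal F_\infty$-measurable, is actually $\sigma(\zeta)$-measurable — the \emph{maximality} of the Furstenberg boundary, equivalently the fact that the tail $\sigma$-algebra of the walk is carried by $\zeta$; once this is known, writing $h^\partial=\hat h\circ\zeta$ and feeding it back through the shifted martingales $n\mapsto h(W_ng)$ (with $L^1$-convergence) recovers $h(g)=\mathbb E\big[\hat h(\zeta\cdot g)\big]=\int_B\hat h\,d\nu_g$. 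This maximality is the genuinely deep ingredient; I would either cite it from \cite{Furstenberg Poisson Formula} or prove it from the structure theory of $G$ — for instance via a Fatou-type radial-limit theorem for bounded harmonic functions on the symmetric space $K\backslash G$, using that random-walk trajectories stay within bounded distance of a geodesic ray pointing at $\zeta$. Combining injectivity and surjectivity gives the asserted linear bijection.
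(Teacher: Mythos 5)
Your proposal is correct in outline, but note that the paper does not actually prove this theorem: it is recalled from the literature, with the reference given as \cite[Theorem 4.2]{Furstenberg Poisson Formula} and, for the identification of the maximal boundary with \(B = M\backslash K \simeq P_{\mathcal W}\backslash G\), the remark of C.~C.~Moore preceding \cite[Theorem 1.10]{Furstenberg Poisson Formula}. Your reconstruction is faithful to how that proof actually goes, and you have correctly isolated where the real content sits. The easy direction via \(G = P_{\mathcal W}K\) and Lemma~\ref{eq: preliminary Poisson formula} is exactly right, and your injectivity argument (bounded martingale \(h(W_n)\), a.s.\ convergence of \((R_{W_n})_*\HaarK\) to a Dirac mass \(\delta_{\zeta}\) with \(\zeta_*\mathbb P\) equal to the harmonic measure, plus the continuous-plus-small-\(L^1\) splitting and Doob's inequality to pass from continuous to bounded measurable \(f\)) is the standard and correct route; it rightly shows the correspondence is a bijection only at the level of \(L^\infty\) of the harmonic measure class, which is the intended meaning of the statement. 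For surjectivity you correctly identify that everything reduces to the maximality of \(B\) as Poisson boundary (equivalently, that the invariant/tail \(\sigma\)-algebra of the walk is generated by \(\zeta\)), and you defer this to Furstenberg — which is precisely what the paper itself does. So the two treatments coincide on the one genuinely deep ingredient; what your write-up adds is an explicit derivation of the soft parts that the paper leaves entirely to the citation. The only caveats worth flagging are bookkeeping ones: the convergence to the boundary you invoke needs the non-degeneracy of \(\mG\) (guaranteed here since \(\mG\) is of type \(\type\), hence absolutely continuous with support generating \(G\)), and one should be consistent about the left-convolution/right-action conventions (with \(W_n = X_n\cdots X_1\) one has \((R_{W_{n+1}})_* = (R_{W_n})_*\circ(R_{X_{n+1}})_*\)), which you appear to handle correctly.
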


The reference is \cite[Theorem 4.2]{Furstenberg Poisson Formula}. The fact that the maximal boundary is \( B \) is the remark due to C. C. Moore just before \cite[Theorem 1.10]{Furstenberg Poisson Formula}. 

The Poisson formula is presented in another form in Furstenberg's paper. Let \(\{\mBg\}_{g\in G}\) be the family of harmonic measures on \({B_\mathcal W}\) defined in the following way. For every \(g\in G\), \(\mBg\) is the image of the measure \(\HaarK\) by the map \( k\in K\mapsto P_{\mathcal W} kg \in B\simeq P_{\mathcal W}\backslash G\). This is a harmonic family of measures, in the sense that the function \( g \in G\mapsto \mBg \in \text{Prob} (B) \) is harmonic. The Poisson formula can be restated by saying that the bounded harmonic functions on \(G\) are those of the form 
\[ F (g ) = \int f d\mBg \text{ where } f\in L^\infty(B). \]

We end this section with the following result, that will be helpful for us. 

\begin{theorem} \label{t: poisson boundary product}
Given semi-simple Lie groups with finite center \(G_1, \ldots, G_n\), and \( K_1, \ldots, K_n\) corresponding maximal compact subgroups, a bounded function \( f\in L^\infty (G_1\times \ldots \times G_n)\) is harmonic if and only if it is harmonic in each \( G_k\)-coordinates. This produces an identification \(B(G) \simeq B(G_1) \times \ldots \times B(G_n)\). 
\end{theorem}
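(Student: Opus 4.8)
The plan is to leverage the characterization of bounded harmonic functions via the defining mean-value property (Definition~\ref{d: harmonic function}) together with a Fubini argument, and then to deduce the identification of boundaries from the Poisson formula (Theorem~\ref{t: Poisson formula}) applied to the product. For the first part, I would fix a product measure $\mu_G = \mu_{G_1} \otimes \cdots \otimes \mu_{G_n}$ on $G = G_1 \times \cdots \times G_n$, where each $\mu_{G_k}$ is of type $\type$ on $G_k$; this is again of type $\type$ on $G$ (absolute continuity, symmetry, compact support and $K = K_1 \times \cdots \times K_n$ bi-invariance all pass to products), and by Furstenberg's remark that the notion of bounded harmonicity does not depend on the choice of measure satisfying $\type$, it suffices to work with this particular $\mu_G$. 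A bounded function $f$ that is harmonic in each $G_k$-coordinate separately satisfies, by iterating the one-variable mean-value identity and using Fubini (legitimate since $f$ is bounded and each $\mu_{G_k}$ is a probability measure), the full mean-value identity $f(g) = \int f(hg)\,\mu_G(dh)$, hence is harmonic on $G$. The converse direction is the substantive one.

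For the converse, suppose $f \in L^\infty(G_1 \times \cdots \times G_n)$ is harmonic. By induction it is enough to treat $n = 2$, so write $G = G_1 \times G_2$ and fix $g_2 \in G_2$; I want to show $g_1 \mapsto f(g_1, g_2)$ is $\mu_{G_1}$-harmonic on $G_1$ for a.e.\ $g_2$, and symmetrically. The natural tool is to consider, for a bounded harmonic $f$ on $G$, the conditional averaging operator: iterating harmonicity, $f$ is fixed by convolution with $\mu_G^{*m}$, and one wants to let $m \to \infty$ to "project onto the $G_1$-boundary" while keeping the $G_2$-variable fixed. Concretely I would use the Poisson representation on $G$: by Theorem~\ref{t: Poisson formula}, $f(g) = \int_{B(G)} \hat f \, d\mu_B^g$ with $\hat f \in L^\infty(B(G))$, where $B(G) = M\backslash K = (M_1 \backslash K_1) \times (M_2 \backslash K_2) = B(G_1) \times B(G_2)$ as compact $K$-spaces, and $\mu_B^{(g_1,g_2)}$ is the pushforward of $\eta_{K_1} \otimes \eta_{K_2}$ under $(k_1,k_2) \mapsto (P_1 k_1 g_1,\, P_2 k_2 g_2)$, i.e.\ it factors as the product $\mu_{B(G_1)}^{g_1} \otimes \mu_{B(G_2)}^{g_2}$. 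Hence $f(g_1,g_2) = \int_{B(G_1)}\!\int_{B(G_2)} \hat f(\xi_1,\xi_2)\, \mu_{B(G_2)}^{g_2}(d\xi_2)\, \mu_{B(G_1)}^{g_1}(d\xi_1)$, and for fixed $g_2$ the inner integral $\xi_1 \mapsto \int_{B(G_2)} \hat f(\xi_1,\cdot)\, d\mu_{B(G_2)}^{g_2}$ is a bounded function on $B(G_1)$, so the Poisson formula on $G_1$ shows $g_1 \mapsto f(g_1,g_2)$ is $\mu_{G_1}$-harmonic; the $G_2$-coordinate is symmetric. This simultaneously proves the "harmonic iff separately harmonic" statement and exhibits the identification $B(G) \simeq B(G_1) \times \cdots \times B(G_n)$, since the whole argument rests on the factorization of the Furstenberg boundary $M\backslash K$ and of the harmonic measures as products.

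The main obstacle I anticipate is making the factorization $\mu_B^{(g_1,g_2)} = \mu_{B(G_1)}^{g_1} \otimes \mu_{B(G_2)}^{g_2}$ and the identification $B(G) = B(G_1) \times B(G_2)$ fully rigorous: one must check that a minimal parabolic of $G = G_1 \times G_2$ is exactly $P_1 \times P_2$ (so that $P_{\mathcal W}\backslash G = (P_1\backslash G_1)\times(P_2\backslash G_2)$), that $M = M_1 \times M_2$ and $K = K_1 \times K_2$ are compatible with the Iwasawa and $KAK$ data of the factors, and that the Haar measure $\eta_K$ is the product $\eta_{K_1}\otimes\eta_{K_2}$ — all of which are standard but should be invoked carefully via the restricted root decomposition of $\lieg = \lieg_1 \oplus \lieg_2$ (the roots of the product being the disjoint union of the roots of the factors, each extended by zero on the other abelian part). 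Once these structural facts are in place, the measure-theoretic step is a routine application of Fubini and of Theorem~\ref{t: Poisson formula} in each factor and on the product.
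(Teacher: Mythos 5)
Your argument is correct. Note that the paper itself states Theorem \ref{t: poisson boundary product} without proof (it is treated as a classical consequence of Furstenberg's work), so there is no in-paper argument to compare against; what you have written is the standard way to establish it, and both halves go through. The easy direction (separately harmonic $\Rightarrow$ harmonic) is exactly the Fubini iteration for the product measure $\mu_{G_1}\otimes\cdots\otimes\mu_{G_n}$, which is indeed of type $\type$, followed by Furstenberg's remark that bounded harmonicity is independent of the chosen type-$\type$ measure — you invoke both correctly. The converse rests entirely on the structural identifications $K=K_1\times\cdots\times K_n$, $P_{\mathcal W}=P_1\times\cdots\times P_n$, $M=M_1\times\cdots\times M_n$, $\eta_K=\eta_{K_1}\otimes\cdots\otimes\eta_{K_n}$, hence $\mu_B^{(g_1,\ldots,g_n)}=\mu_{B(G_1)}^{g_1}\otimes\cdots\otimes\mu_{B(G_n)}^{g_n}$; these follow from the restricted root decomposition of $\lieg_1\oplus\cdots\oplus\lieg_n$ exactly as you indicate, and then the Poisson formula in the factor $G_1$ applied to the bounded function $\xi_1\mapsto\int\hat f(\xi_1,\cdot)\,d\mu_{B(G_2)}^{g_2}$ does give harmonicity in $g_1$. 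Two small cleanups: the hedge \emph{for a.e.\ $g_2$} is unnecessary — since $\mu_G$ is absolutely continuous, a bounded harmonic function has a continuous (everywhere-defined) representative and the Poisson representation holds at every point, so separate harmonicity holds for \emph{every} fixed value of the other coordinates, which is what the Fubini step in the easy direction also requires; and the identification $B(G)\simeq B(G_1)\times\cdots\times B(G_n)$ is, as you say, a purely structural fact about $M\backslash K$ that you are entitled to use as input rather than derive as output, with the content of the theorem being that this product space is the \emph{maximal} boundary of the product group, which is precisely what the equivalence of harmonicity with separate harmonicity delivers.
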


\subsection{Discretization}\label{ss: discretization}

In this section, we describe a procedure that enables to discretize the Brownian motion on a symmetric manifold by a random walk on its fundamental group. This is originally due to Furstenberg as well, see \cite{Furstenberg discretization}, but we will follow here some developments that appeared later in history,  the works of Lyons-Sullivan \cite{Lyons Sullivan} and Ledrappier-Ballmann \cite{Ballmann-Ledrappier}.

\begin{definition} 
Let \(\Gamma\) be a lattice in \( G\). A discretization measure on \(\Gamma\) is a probability measure \(\mGamma\) on \(\Gamma\) which has the property that for every probability measure \(\mG\) on \(G\) of type \(\type\), the restriction of any bounded left \(\mG\)-harmonic function \( f: G\rightarrow \R\) to \(\Gamma\) is left \(\mGamma\)-harmonic. 
\end{definition}

We equip \(G\) with a right invariant riemannian metric, which is also invariant by left multiplications by \(K\). The Brownian motion generated by the associated Laplacian operator is a continuous time Markov process which is invariant by right multiplications by \(G\). It is associated to a semi-group of probability measures (for the convolution operation) on \(G\) whose time one is \(\mGBM\).

%The existence of discretization measures has been established by Furstenberg (see \cite{Furstenberg discretization}), and later on has been revisited and completed by several authors. We follow here the exposition by Ballmann and Ledrappier \cite{Ballmann-Ledrappier}, which has the main advantage of producing symmetric discretization measures, and also give information about the discretization procedure for positive harmonic functions. 

Recall the notion of a balanced Lyons-Sullivan data (for short LS data) on \(G\): this is a family of sets \(  V_\gamma , F_\gamma \subset G\) for \(\gamma\in \Gamma\),  such that 

\vspace{0.2cm} 

 (D1) \(\gamma \in \text{Int} (F_\gamma) \subset V_\gamma\) for every \(\gamma\in \Gamma\)

\vspace{0.2cm}

(D2) \( F_\gamma \cap V_{\gamma'} =\emptyset \) if \(\gamma \neq \gamma '\in \Gamma\). 

\vspace{0.2cm}

(D3) \(F=\cup _\gamma F_\gamma\) is recurrent for the Brownian motion on \(G\)

\vspace{0.2cm}

(D4) there exists a constant \(C\geq 1\) such that for every \(\gamma\in \Gamma\) and every  \(g\in F_\gamma\), we have 
\[ \frac{1}{C} \leq \frac{d\varepsilon (g, V_\gamma) }{d\varepsilon (\gamma, V_\gamma)} \leq C \]
where \( \varepsilon (\cdot, V_\gamma) \) is the distribution of the point of first exist of the domain \(V_\gamma\) for a Brownian trajectory starting at the point \(\cdot\).

\vspace{0.2cm}

(D5) there exists a constant \(D\) such that \(G_{V_\gamma} ( g, \gamma)= D\) for every \(\gamma\in \Gamma\) and every \(g\in \partial F_\gamma\), where \(G_{V_\gamma}\) is the Green function of \(V_\gamma\) (namely the fundamental solution of the Laplacian that vanishes on \(\partial V_\gamma\)).

\vspace{0.2cm}
  
To construct such a Lyons-Sullivan data it suffices to start with any relatively compact open neighborhood \(V_e\) of the identity which is disjoint from all its images by the elements of \(\Gamma\), and then to set \( F_e = \{g\ |\ G_{V_e} (g, e) \geq 1\}\).  The family of sets \( V_\gamma:=  V_e \gamma^{-1}  \), \(F_\Gamma := F_e \gamma^{-1}\) satisfies the axioms D1--D5. 

This data gives rise to a family of Lyons-Sullivan measures, namely a family of probability measures \( \{\mGamma ^g\} _{g\in G} \) on \( \Gamma \) such that for any bounded harmonic function \( H: G \rightarrow \R \) we have 
\begin{equation}\label{eq: restriction Gamma} H(g) = \int H( \gamma ) \mGamma ^g(d\gamma) \text{ for every } g\in G.\end{equation}
%(here \(\mG\) is any probability distribution on \(G\) of the form \eqref{eq: mG}). 
We refer to \cite[p. 7]{Ballmann-Ledrappier} for an iterative  construction of such a family. 

%\begin{small} Since \(\Gamma\) is invariant by right multiplications this family of measures has the following invariance property: for every \( \gamma \) and every \( g\), we have \( \mGamma ^{r_\gamma (g)} = r_\gamma ^* \mGamma^g \) where \( r_\gamma(g) = g \gamma^{-1}\). This is exactly saying 
%\begin{equation} \label{eq: useful formula} \mGamma ^{g\gamma^{-1}} (\gamma') = \mGamma ^g (\gamma ' \gamma)\end{equation} for every \(g\in G\) and every \(\gamma, \gamma'\in \Gamma\). \end{small}

Setting \( \mGamma := \mGamma ^e ,\) we get that for every bounded harmonic function \(H: G \rightarrow \R \), its restriction to \(\Gamma\) satisfies  
\[ H(\gamma   ) = \int  H(s \gamma ) \mGamma (ds) \]
In particular, \(\mGamma\) is a discretization measure. 

\begin{theorem} 
\label{t: Martin boundary of discretization} 
We have the following properties:
\vspace{0.2cm} 

1. For any absolutely continuous right \(K\)-invariant probability measure \(\mG\) on \(G\), the restriction map induces an isomorphism between the spaces of bounded left \(\mG\)-harmonic functions on \(G\) and the space of bounded left \(\mGamma\)-harmonic functions on \(\Gamma\), \cite{Kaimanovich, Ledrappier}.

\vspace{0.2cm} 

2. The discretization measure \(\mGamma\) is symmetric, \cite{Ballmann-Ledrappier}. 

\vspace{0.2cm} 

3. Every positive left \(\mGamma\)-harmonic function on \(\Gamma\) can be extended in a unique way to a positive left \(\mGBM\)-harmonic function on \(G\), and the extension map is continuous when we equip the functional spaces with the topology of uniform convergence on compact subsets \cite{Ballmann-Ledrappier}.

\end{theorem}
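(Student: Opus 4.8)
The plan is to deduce the three assertions from the balanced Lyons-Sullivan data recalled above, together with Furstenberg's description of bounded harmonic functions and the Green function comparison of \cite{Ballmann-Ledrappier, Kaimanovich, Ledrappier}; we only indicate the architecture, as none of these facts is new. For statement 1, the first point is that the class of bounded left \(\mG\)-harmonic functions does not depend on the choice of absolutely continuous right \(K\)-invariant measure \(\mG\) (for \(\mG\) of type \(\type\) this is the remark before Theorem \ref{t: Poisson formula}; in general it is \cite{Kaimanovich, Ledrappier}), and coincides with the bounded left \(\mGBM\)-harmonic functions and with the classical bounded harmonic functions on \(K\backslash G\); so it suffices to show that restriction to \(\Gamma\) maps this space isomorphically onto the bounded left \(\mGamma\)-harmonic functions. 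That the restriction lands in the target is formula \eqref{eq: restriction Gamma} taken at \(g=\gamma\in\Gamma\). Injectivity uses recurrence of \(F=\cup_\gamma F_\gamma\) (axiom (D3)): a bounded harmonic function vanishing on \(\Gamma\) vanishes on \(F\), and optional stopping along Brownian trajectories returning to \(F\) infinitely often forces it to vanish identically. For surjectivity, given a bounded \(\mGamma\)-harmonic \(h\) on \(\Gamma\), set \(H(g):=\int_\Gamma h(\gamma)\,\mGamma^g(d\gamma)\); harmonicity of the family \(\{\mGamma^g\}\), which is the defining output of the Lyons-Sullivan construction, makes \(H\) bounded harmonic, and the balancing axioms (D4)--(D5) give \(H|_\Gamma=h\), cf. \cite[p.\,7]{Ballmann-Ledrappier}. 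Linearity being clear, restriction is an isomorphism.

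Statement 2 I would get from reversibility. The Brownian motion on \(G\) for the chosen metric is reversible with respect to the Riemannian volume, which is a Haar measure of \(G\); equivalently its generator is self-adjoint in the corresponding \(L^2\). Hence the Green functions \(G_{V}(x,y)\) are symmetric in \((x,y)\), and the exit distributions inherit the matching symmetry. Propagating this through the iterative construction of the Lyons-Sullivan measures shows that the discretized random walk on \(\Gamma\) is reversible with respect to some measure on \(\Gamma\); by the \(\Gamma\)-equivariance of the data \(V_\gamma=V_e\gamma^{-1}\), \(F_\gamma=F_e\gamma^{-1}\), this measure is invariant under right translations by \(\Gamma\), hence a multiple of counting measure, and reversibility with respect to counting measure is exactly the identity \(\mGamma(\{\gamma\})=\mGamma(\{\gamma^{-1}\})\). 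This is \cite{Ballmann-Ledrappier}.

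Statement 3, the identification of Martin boundaries, is where the genuine work lies. By the Martin representation theorem, every positive \(\mGamma\)-harmonic function on \(\Gamma\) is an integral of extremal ones against a unique measure on the Martin boundary of \((\Gamma,\mGamma)\), and likewise positive \(\mGBM\)-harmonic functions on \(G\) correspond to measures on the Martin boundary of Brownian motion. The Harnack-type estimates comparing the Green function of \(G\) with that of the discretized walk, provided by axioms (D3)--(D5) following \cite{Ballmann-Ledrappier}, show that the two Martin compactifications coincide with matching Martin kernels; transporting a positive \(\mGamma\)-harmonic function along this identification produces the sought positive \(\mGBM\)-harmonic extension, uniqueness being uniqueness of the Martin representing measure, and continuity for uniform convergence on compact subsets is continuity of the Martin kernels in the boundary variable. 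I expect this comparison of the full boundaries --- not merely of the Poisson boundaries --- to be the main obstacle; the remainder is bookkeeping around formula \eqref{eq: restriction Gamma}.
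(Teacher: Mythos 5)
The paper does not actually prove this theorem: it is stated as a recollection of results from the literature, with the three items attributed to \cite{Kaimanovich}, \cite{Ledrappier} and \cite{Ballmann-Ledrappier}, so there is no in-paper argument to compare yours against. Judged on its own, your outline is a reasonable reconstruction of how those references proceed: surjectivity in item 1 via \(H(g):=\int h(\gamma)\,\mGamma^g(d\gamma)\) together with the equivariance \(\mGamma^\gamma = (\,\cdot\,\gamma)_*\mGamma\), symmetry in item 2 from reversibility of Brownian motion and the balanced condition (D5), and item 3 from Green-function comparisons between \(G\) and the discretized walk.

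Two remarks. First, your injectivity argument for item 1 has a genuine soft spot: the claim that a bounded harmonic function vanishing on \(\Gamma\) must vanish on \(F\) does not follow from (D3)--(D5) as you use them, since (D4) controls ratios of exit measures and gives no pointwise control on a \emph{signed} harmonic function. But the detour is unnecessary: formula \eqref{eq: restriction Gamma} holds for \emph{every} \(g\in G\), not only for \(g\in\Gamma\), so \(H|_\Gamma\equiv 0\) forces \(H\equiv 0\) immediately, and the same formula also gives surjectivity once one knows the equivariance of the family \(\{\mGamma^g\}\). Second, for item 3 you correctly flag the boundary comparison as the hard part, but Ballmann--Ledrappier do not identify the full Martin compactifications; they work directly with the extension \(g\mapsto\int h\,d\mGamma^g\), using Harnack and the balanced axioms to show the integral converges, is harmonic, and restricts to \(h\), with uniqueness and continuity coming from the same estimates. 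Your Martin-representation route is a legitimate alternative framing, but it is strictly heavier than what the cited proof requires.
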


Given a finite generating set \( S\) on \(\Gamma\), it is straighforward to prove that there exists a positive constant \(\xi= \xi(m_\Gamma , S) \) such that the logarithm of a positive left \(m_\Gamma\)-harmonic function on \(\Gamma\) is \(\xi\)-Lipschitz with respect to a word metric on \(\Gamma\). This property makes the set of positive \(m_\Gamma\)-harmonic functions a compact space. We recall here that the same phenomenon occurs for positive left-\(\mGBM\)-harmonic functions. We denote by \( d_{K\backslash G} \) the distance on the symmetric space \( K\backslash G\) of the semi-simple Lie group \(G\) (the normalisation by a positive constant will not be of importance for us). 

\begin{theorem} [Harnack inequality] \label{t: Harnack inequality}
There exists a constant \( \xi= \xi(G)\) such that the logarithm of every positive left \( \mGBM\)-harmonic function on \(G\) induces a function on the symmetric space \( K\backslash G\) which is  \(\xi\)-Lipschitz with respect to the distance \( d_{K\backslash G}\). \end{theorem}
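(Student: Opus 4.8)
The plan is to reduce the statement to the classical Harnack inequality for positive harmonic functions on the Riemannian symmetric space $M := K\backslash G$. First I would note that, exactly as for measures of type $\type$, every left $\mGBM$-harmonic function on $G$ is automatically invariant under left multiplication by $K$, hence descends to a function on $M$. The key reduction is then that a positive function on $G$ is left $\mGBM$-harmonic if and only if the induced function on $M$ is a positive solution of $\Delta u = 0$, where $\Delta$ is the Laplace operator of the $G$-invariant metric on $M$ inherited from the right-invariant, left-$K$-invariant metric on $G$. This is the foundational fact underlying the discretization procedure of Section~\ref{ss: discretization}: the Brownian motion on $G$ projects to the Brownian motion on $M$, so $\mGBM$-harmonicity is the time-one (discrete) version of heat-semigroup invariance; and on the Cartan--Hadamard space $M$---which is stochastically complete, and on which every positive harmonic function admits a Martin integral representation---the process $t\mapsto u(B_t)$ is a genuine (not merely local) martingale, so time-one invariance already forces $\Delta u=0$, while conversely $\Delta u=0$ gives back $\mGBM$-harmonicity. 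I would refer to \cite{Ballmann-Ledrappier, Lyons Sullivan} for this circle of ideas.

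Granting this, the statement becomes: there is $\xi=\xi(G)$ such that $\log u$ is $\xi$-Lipschitz on $M$ for every positive harmonic $u$ on $M$. Here I would invoke the geometry of $M$: being a symmetric space of non-compact type it is a homogeneous Cartan--Hadamard manifold, and in particular its sectional curvature, hence its Ricci curvature, is bounded below by a constant $-\kappa$ with $\kappa=\kappa(G)\ge 0$ depending only on $G$. The Cheng--Yau gradient estimate then applies: there is a constant $c_n$, $n=\dim M$, such that any positive harmonic function $u$ on a geodesic ball $B(p,2R)\subset M$ satisfies $\sup_{B(p,R)}|\nabla\log u|\le c_n(R^{-1}+\sqrt{\kappa})$. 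Since $M$ is complete, letting $R\to\infty$ yields the global pointwise bound $|\nabla\log u|\le c_n\sqrt{\kappa}$ on all of $M$. (Alternatively, one may use Moser's local elliptic Harnack inequality on balls of radius one, whose constant is independent of the centre because $M$ is homogeneous, and then chain the estimate along a minimizing geodesic; this avoids quoting the gradient estimate.)

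It then remains to set $\xi := \xi(G) := c_n\sqrt{\kappa(G)}$ and observe that a uniform bound $|\nabla\log u|\le\xi$ on the connected manifold $M$ gives $|\log u(x)-\log u(y)|\le\xi\, d_{K\backslash G}(x,y)$ for all $x,y\in M$ by integrating along a minimizing geodesic; pulling back to $G$, the function induced by $\log u$ on $K\backslash G$ is $\xi$-Lipschitz, which is the assertion. The one genuinely delicate point is the equivalence in the first paragraph between being $\mGBM$-harmonic and being $\Delta$-harmonic for \emph{a priori} unbounded positive functions; this rests on the stochastic completeness of $M$ together with the fact that, for a positive harmonic function on a symmetric space of non-compact type, the local martingale $u(B_t)$ is uniformly integrable---both standard facts in this setting and already implicit in the references used for Theorem~\ref{t: Martin boundary of discretization}. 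Once that is in hand, everything else is the textbook Harnack inequality on a fixed homogeneous manifold.
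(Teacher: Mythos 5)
Your strategy --- pass to the symmetric space $M = K\backslash G$, reduce to $\Delta$-harmonic functions, and invoke the Cheng--Yau gradient estimate, which needs only the Ricci lower bound (uniform on the homogeneous manifold $M$) --- is a correct outline, and the second half of the proof is fine. The gap is the reduction. You assert that a positive left $\mGBM$-harmonic function is automatically $\Delta$-harmonic on $M$, but the argument you give for that direction, namely ``the process $u(B_t)$ is a genuine martingale, so time-one invariance already forces $\Delta u = 0$,'' is circular: that $u(B_t)$ is even a \emph{local} martingale is equivalent to $\Delta u = 0$, which is exactly the conclusion sought. From $P_1 u = u$ alone one gets only that the discrete-time sequence $(u(B_n))_{n\in\N}$ is a martingale. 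Nor can this reduction be bypassed by a direct heat-kernel argument: already on $\R^n$ the ratio $p_1(x,z)/p_1(y,z)$ and the gradient $|\nabla_x \log p_1(x,z)| \sim \tfrac12 |x - z|$ are unbounded in $z$, so feeding the same-time kernel comparison or the gradient bound for $p_1(\cdot,z)$ into the representation $u(x) = \int u(z)\, p_1(x,z)\, dz$ does not close. The positivity of $u$ must be used through something like Choquet--Deny or a comparison of Martin boundaries; it is precisely the positivity constraint that kills the directions in which the kernel ratio blows up.

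The fact that positive $P_1$-harmonic functions on a symmetric space of noncompact type are exactly the positive $\Delta$-harmonic ones is true, but it is a theorem --- essentially a comparison of the Martin compactification of the one-step operator $P_1$ with that of $\Delta$, of the kind appearing in Lyons--Sullivan and Ballmann--Ledrappier --- and it needs to be either cited as such or proved; ``implicit in the references'' does not suffice, especially since the uniform-integrability observation you add only bears on the easier converse implication ($\Delta$-harmonic $\Rightarrow$ $\mGBM$-harmonic). Note also that for the way Theorem~\ref{t: Harnack inequality} is actually used in the paper (item 4 of Proposition~\ref{p: properties of suspension}), the relevant function $D_{\mathcal L_X}(\cdot,x)$ is by construction a Ballmann--Ledrappier extension, hence already $\Delta$-harmonic, so in that context the reduction is automatic and your Cheng--Yau step finishes the job; but as a proof of the theorem as stated, for arbitrary positive $\mGBM$-harmonic functions, your argument has a genuine hole at the reduction.
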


We warn the reader that if the choice of \(\mG\) among the set of absolutely continuous right \(K\)-invariant probability measures does not matter for the notion of \textit{bounded} left \(\mG\)-harmonic function, it does matter if we consider \textit{positive} left \(\mG\)-harmonic function. 

We will also need to use the following classical moment estimates on the discretization measure. 

\begin{proposition}[Exponential Moment for the discretization measure]\label{p: exponential moment}
There exists \(\alpha>0\) such that the measures \( \mGamma ^g\) satisfy 
\[\int e^{\alpha l(\gamma)}  \mGamma ^g(d\gamma) <+\infty  \]
where \(l: \Gamma\rightarrow \mathbb N\) is the length function wrt to a finite system of generators on \(\Gamma\).  \end{proposition}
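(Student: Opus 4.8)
The plan is to combine a quantitative recurrence estimate for the Brownian motion on $G$ with the Harnack inequality of Theorem \ref{t: Harnack inequality} and the interpretation of $\mGamma^g$ as a harmonic measure. First I would recall that the Lyons--Sullivan measures $\mGamma^g$ are built from a sequence of independent ``trials'': one runs the Brownian motion started at $g$, and at a sequence of stopping times corresponding to successive visits to $F = \bigcup_\gamma F_\gamma$ one either ``accepts'' (with a probability bounded below, by axiom (D4) and the definition of $F$ via the Green function, (D5)) or ``rejects'' and continues. When acceptance occurs inside $F_\gamma$, the current point is recorded as $\gamma$. Two quantities control $l(\gamma)$: the number of trials before the first acceptance, which is dominated by a geometric random variable with parameter bounded away from $0$, and the displacement in the symmetric space accrued by the Brownian trajectory during those trials. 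Since $\gamma \in F_\gamma \subset V_\gamma = V_e\gamma^{-1}$, the word length $l(\gamma)$ is comparable, by the Milnor--\v{S}varc lemma, to $d_{K\backslash G}(K e, K\gamma)$, so it suffices to get an exponential moment for the latter.

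The key step is therefore to show that the law of $d_{K\backslash G}(Ke, K\gamma)$ under $\mGamma^g$ has an exponential tail, uniformly in $g$ ranging over a fundamental domain (which is all that is needed since $\mGamma^{g\gamma_0} = (\gamma_0)_*\mGamma^g$ for $\gamma_0\in\Gamma$ and $l$ is quasi-isometric to the metric). Here I would use the harmonic-measure characterization: for the bounded harmonic function obtained by the Poisson formula from an indicator on the boundary one has exact reproduction \eqref{eq: restriction Gamma}, but to estimate tails I would rather test against the (positive, not bounded) harmonic functions given by the Harnack inequality. Concretely, Theorem \ref{t: Harnack inequality} furnishes, for the Brownian semigroup, exponential-type moment control: there is $\beta = \beta(G)>0$ such that $\int_G e^{\beta d_{K\backslash G}(Ke, Kh)}\,\mGBM(dh) < \infty$, and more to the point the Green function $G_{V_e}(\cdot, e)$, hence the ``acceptance weights'', together with the standard Gaussian-type heat kernel upper bounds on $G$ (Li--Yau / Anker type estimates on symmetric spaces) give exponential decay of the probability that the Brownian motion started at $g$ reaches distance $R$ from $g$ before being absorbed/accepted. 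Summing the geometric number of trials, each contributing a displacement with exponential tails (with a loss in the exponent controlled by the geometric parameter), yields an exponential moment for the total displacement, i.e. an $\alpha>0$ with $\int e^{\alpha l(\gamma)}\mGamma^g(d\gamma)<\infty$; the uniformity in $g\in F_e$ is automatic since all estimates are uniform on the relatively compact set $\overline{V_e}$.

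The main obstacle I anticipate is making the ``number of trials times displacement per trial'' bound genuinely produce an exponential — rather than merely a polynomial — moment: the number of trials is geometric, and the displacement accumulated up to the $n$-th trial grows, so one is really estimating $\mathbb{E}\,e^{\alpha D_T}$ where $T$ is geometric and $D_n$ is a sum of $n$ roughly i.i.d. displacement increments each with exponential tail of parameter $\beta$. This is finite provided $\alpha$ is chosen small enough relative to $\beta$ and the geometric parameter $p$ (one needs $\mathbb{E}[e^{\alpha D_1}]$ to be less than $1/(1-p)$), which forces a careful but routine optimization of constants using the heat kernel bounds and (D4). A secondary point requiring care is that the displacement increments between consecutive visits to $F$ are not literally independent nor identically distributed, only uniformly stochastically dominated by a fixed exponentially-tailed law thanks to the strong Markov property and the uniform geometry of the LS data $\{V_\gamma, F_\gamma\}$; handling this with a stopping-time/optional-sampling argument is the technical heart of the proof.
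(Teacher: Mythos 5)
Your overall two-step structure --- first an exponential moment for the symmetric-space displacement under $\mGamma^g$, then a transfer to the word length --- is the same as the paper's, but both steps as you have written them contain gaps that are precisely the points the paper outsources to nontrivial external results. The most serious one is the transfer step: you invoke the Milnor--\v{S}varc lemma to claim $l(\gamma)$ is comparable to $d_{K\backslash G}(Ke,K\gamma)$. That lemma requires the action of $\Gamma$ on the symmetric space to be cocompact, and the proposition must hold for non-uniform lattices as well. For a general lattice the word metric can be badly distorted relative to the symmetric-space metric (this actually happens for non-uniform rank-one lattices, where the distortion is exponential and the desired conclusion would not follow from an exponential moment for $d_{K\backslash G}$). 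The correct input here is the theorem of Lubotzky--Mozes--Raghunathan, which gives the bi-Lipschitz comparison exactly under the hypotheses in force --- $G$ of rank at least two and $\Gamma$ irreducible --- and this is the reference the paper uses; the rank hypothesis is doing real work, not just bookkeeping.

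The first step also hides the key analytic input. Your ``displacement per trial has an exponential tail'' claim is easy when $\Gamma$ is uniform (the return set $F$ is then cocompact and a compactness argument suffices), but for a non-uniform lattice the Brownian motion on $\Gamma\backslash G$ can make long excursions into the thin part before returning to $F$, and Gaussian heat-kernel upper bounds only control the displacement over a \emph{fixed} time, not over an excursion of unbounded duration. The uniform exponential tail for the return time (hence for the per-excursion displacement) is exactly what the spectral gap of the diffusion semigroup on $L^2(\Gamma\backslash G)$ provides; the paper gets this by adapting the argument of Deroin--Dujardin, with the spectral gap supplied by Bekka--de Cornulier and Gelander--Levit--Margulis. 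Your ``uniformly stochastically dominated by a fixed exponentially-tailed law thanks to the strong Markov property and the uniform geometry of the LS data'' is therefore not a routine technicality to be handled by optional sampling: it is the substantive probabilistic content of the proposition in the non-uniform case, and as stated your argument does not establish it.
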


%\begin{remark}Denote \(l:\Gamma\rightarrow [0,\infty) \) the length function associated to a system of generators \(\Gen\) on \(\Gamma\), and \( d_\Gamma\) the associated right invariant distance. In the uniform case the inclusion \( (\Gamma, l)\rightarrow (K\backslash G, d) \) is a almost-isometry. This implies that the same result holds with respect to the length function, namely there exists \(\beta>0\) such that \[ \int e^{ \beta l(\gamma)}  \mGamma ^g(d\gamma) <+\infty \] for every \(g\in G\).\end{remark}

\begin{proof} In \cite[Paragraph 2.4]{DeroinDujardin} it was proved that the exponential moment with respect to the hyperbolic distance on the hyperbolic plane is finite for a lattice in \( \text{PSL} (2, \R)\). The same argument holds in the general setting of any complete locally symmetric manifold of finite volume as soon as we know that the diffusion semi-group has a spectral gap in \(L^2\), which in general is a consequence of the works  \cite{BekkaCornulier, Gelander Levit Margulis}. The corresponding finiteness of an exponential moment with respect to the length function is then a corollary of the estimates obtained by A. Lubotzky, S. Mozes and M. Ragunathan between word length and distance in symmetric space \cite{Lubotzky Mozes Ragunathan}, which works under the assumption that \(G\) has rank \(\geq 2\) and \(\Gamma\) is irreducible.
\end{proof}

\subsection{Random walks on semi-simple Lie groups} 

We recall the following known facts about random walks on real semi-simple Lie groups.  We will assume here that the random walk is defined by a probability measure  \(\mG\) on \(G\) which is compactly supported, absolutely continuous with respect to the Haar measure and right \(K\)-invariant. We refer to \cite[Thm. 10.9]{BenoistQuint}, and \cite[Thm.4.5.]{Aoun} for more general statements.

The following Theorems state that in the $KAK$ decomposition of $G$ associated to a Weyl chamber \(\W\), the Cartan projection of the $\mG$-random walk on $G$ is almost surely close to a line in $\liea$ given by a special direction $\central ^\W\in \liea$.  For a sequence \( \omega = (g_n)_{n\in \N} \in G^\N\) of increments,  we denote \( l_n (\omega) = g_n \ldots g_1\) and 
\[ l_n (\omega) = k'_\W (\omega, n )  e^{\kappa _\W (\omega, n )} k_\W (\omega,n) \]
the \(KAK\) decomposition of \(l_n(\omega)\) relative to the Weyl chamber \(\W\) (see \ref{KAKdecomp}).

\begin{theorem}[Thm. 10.9, \cite{BenoistQuint}]\label{centraldirection}
Let $G$ be a real semi-simple Lie group with finite center,  $\mG$ an absolutely continuous right \(K\)-invariant probability measure on \(G\), and \(\mathcal W\) a Weyl chamber. Then there exists $\central ^{\mathcal W} $ in the interior of $\mathcal W$ such that  for $\mG^{\otimes \N^*}$ a.e. sequence  $\omega = (g_1, g_2, ....)$ we have: 
 $$\lim_{n \to \infty}\frac{1}{n}\kappa _\W (\omega , n) \to \central ^\W,$$ 
 where $\kappa_\W $ is the Cartan projection as defined in \ref{KAKdecomp}.
\end{theorem}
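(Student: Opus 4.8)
The plan is to prove convergence of $\tfrac1n\kappa_\W(\omega,n)$ by a subadditivity argument, to identify the limit as a deterministic vector of $\W$ by ergodicity of the Bernoulli shift, and finally to upgrade "$\central^\W\in\W$" to "$\central^\W$ interior to $\W$" using that $\mG$ is absolutely continuous. For the first step I would pick a basis $\omega_1,\dots,\omega_p$ of $\liea^*$ consisting of fundamental weights of the restricted root system $\Sigma$, and for each $j$ a proximal irreducible representation $\rho_j\colon G\to\mathrm{GL}(V_j)$ of highest restricted weight $m_j\omega_j$, equipped with a $K$-invariant Euclidean norm on $V_j$, so that $\log\|\rho_j(g)\|=m_j\,\omega_j(\kappa_\W(g))$ for every $g\in G$. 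Submultiplicativity of the operator norm makes $\omega\mapsto\log\|\rho_j(l_n(\omega))\|$ a subadditive cocycle over the shift $(G^{\N},\mG^{\otimes\N},T)$, and it is $\mG^{\otimes\N}$-integrable because $\mG$ is compactly supported. Kingman's subadditive ergodic theorem then gives, for $\mG^{\otimes\N}$-a.e.\ $\omega$ and each $j$, the limit $\lim_n\tfrac1n\,\omega_j(\kappa_\W(\omega,n))$; since $\|\kappa_\W(\omega,n)\|\le n\sup_{\mathrm{supp}\,\mG}\|\kappa_\W(\cdot)\|$, the bounded vectors $\tfrac1n\kappa_\W(\omega,n)$ must themselves converge, necessarily to a vector of the closed cone $\W$.

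Next I would use the coarse Lipschitz estimate $\|\kappa_\W(gh)-\kappa_\W(g)\|\le\|\kappa_\W(h)\|$ (which follows from $\|\rho_j(gh)\|\le\|\rho_j(g)\|\,\|\rho_j(h)\|$ applied to a generating family of representations) together with the identity $l_n(\omega)=l_{n-1}(T\omega)\,g_1$: this shows $\tfrac1n\kappa_\W(\omega,n)$ and $\tfrac1n\kappa_\W(T\omega,n-1)$ have the same limit, so the a.s.\ limit is a $T$-invariant function of $\omega$, hence a.s.\ equal to a constant vector by ergodicity of the Bernoulli shift. Call it $\central^\W\in\W$. (Kingman also identifies it as $\inf_n\tfrac1n\int\kappa_\W(\cdot,n)\,d\mG^{\otimes\N}$, but we will not need this.)

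The remaining, and genuinely hard, step is to show $\central^\W$ lies in the \emph{interior} of $\W$, i.e.\ $\alpha(\central^\W)>0$ for every simple root $\alpha$. Here the absolute continuity of $\mG$ is essential: $\mathrm{supp}\,\mG$ then has positive Haar measure and so cannot be contained in any proper algebraic subvariety of $G$, whence the subsemigroup of $G$ generated by $\mathrm{supp}\,\mG$ is Zariski-dense. I would then invoke the simplicity of the Lyapunov spectrum for Zariski-dense random walks on semisimple groups (Guivarc'h--Raugi, Gol'dsheid--Margulis, Benoist), which says precisely that each ``$\alpha$-drift'' $\lim_n\tfrac1n\,\alpha(\kappa_\W(\omega,n))=\alpha(\central^\W)$ is strictly positive. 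A more hands-on alternative, valid when $\mG$ is $K$-bi-invariant, is to compare the radial parts of $\mG^{*n}$ with those of the heat kernel, whose escape direction is the interior vector dictated by Harish-Chandra's asymptotics for the spherical function $\varphi_0$. I expect this interiority to be the main obstacle: it cannot be purely soft — a measure carried by a proper reductive subgroup may have its drift on a wall of $\W$ — and in the text it is precisely what is being imported through the reference \cite[Thm.~10.9]{BenoistQuint}.
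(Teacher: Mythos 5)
The paper does not prove this statement; it imports it wholesale as \cite[Thm.~10.9]{BenoistQuint} and treats it as a black box. Your sketch faithfully reproduces the architecture of the Benoist--Quint proof: Kingman's subadditive ergodic theorem applied to the subadditive cocycles $\log\|\rho_j(l_n)\|$ (for representations whose highest restricted weights $m_j\omega_j$ span $\liea^*$, with a weight-orthogonal $K$-invariant norm so that $\log\|\rho_j(g)\|=m_j\,\omega_j(\kappa_\W(g))$) gives a.s.\ convergence of $\tfrac1n\kappa_\W(\omega,n)$ to a constant vector $\central^\W\in\W$, and absolute continuity of $\mG$ forces the semigroup generated by $\mathrm{supp}\,\mG$ to be Zariski dense (a proper algebraic subvariety is Haar null), at which point the Guivarc'h--Raugi / Gol'dsheid--Margulis simplicity of the Lyapunov spectrum yields $\alpha(\central^\W)>0$ for every simple root $\alpha$. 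You are right to flag interiority as the substantive content: the paper defers it to Benoist--Quint exactly as you defer it to the Lyapunov-simplicity literature, so neither is more ``proved'' than the other. Two small remarks. First, constancy of the a.s.\ limit already comes for free from Kingman once the Bernoulli shift is noted to be ergodic; your separate $T$-invariance argument is a safety check, and the coarse Lipschitz bound it uses is really $\|\kappa_\W(gh)-\kappa_\W(g)\|\le C\,\|\kappa_\W(h)\|$ for a constant $C$ (obtained by combining both $\|\rho_j(gh)\|\le\|\rho_j(g)\|\,\|\rho_j(h)\|$ and $\|\rho_j(g)\|\le\|\rho_j(gh)\|\,\|\rho_j(h^{-1})\|$ and comparing $\max_j|\omega_j(\cdot)|$ to the Euclidean norm), not literally $C=1$. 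Second, the theorem as quoted in the paper omits a moment hypothesis; you supply it via compact support, which is legitimate here since in the paper's use $\mG$ is of type $\type$, but worth noting that the statement as written is slightly too strong.
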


Recall that the Furstenberg boundary is defined by  $M\backslash K$ which using the Iwasawa decomposition is naturally identified with $B := P_{\mathcal W} \backslash G$.  Recall that there is a probability measure $\mB$ which is the projection of Haar measure $\HaarK$  via the projection $K \to M \backslash K$. The probability measure $\mB$ is the unique $\mG$-stationary measure on $B$. We choose a metric $d_B$ in $B$ which is $K$ right-invariant.

%For $\omega = (g_1, g_2, ....)$  in $G^{\otimes \N^*}$ we can define using the KAK decomposition of G $$g_ng_{n-1}...g_1 = k'(\omega,n) \exp (\kappa(\omega, n)) k(\omega, n),$$ where $k(\omega, n)$ is an element of $K$ defined up to left multiplication of $M$ and is therefore a well defined element of  $B$. More precisely we have a measurable map $ k(, n): G^{\otimes \N^*} \to B$. 
By the choice of $\mG$, we have $k_\W(, n)_{*} (\mG^{\otimes \N^*}) = \mB$, which says that $k_\W(\omega, n)$ is distributed with the same distribution as $\mB$.  The sequence of random variables $k_\W(\omega, n)$ converges exponentially fast in $B$ to a random variable with the same distribution $\mB$, more precisely we have the following: 

\begin{theorem}[Thm.4.5, \cite{Aoun}]\label{expconvergencek}

There exists $\rho_1 \in (0,1)$  and  a measurable map $k_\W( , \infty):  \mG^{\otimes \N^*} \to B$  such that $k_\W(, \infty)_{*} (\mG^{\otimes \N^*} )= \mB$ and for $\mG^{\otimes \N^*}$ a.e. $\omega = (g_1, g_2, ....)$ we have: 

$$\int_{G^{\otimes \N^*}} d_B (k_\W(\omega,n), k_\W(\omega, \infty))  \ \mG^{\otimes \N^*}(d\omega)  < \rho_1^n$$ for $n$ sufficiently large.

\end{theorem}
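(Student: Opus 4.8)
The plan is to prove that $n\mapsto k_\W(\omega,n)$ is an exponentially Cauchy sequence in $B$, with a rate governed by Theorem \ref{centraldirection}: since $\tfrac1n\kappa_\W(\omega,n)\to\central^\W$ and $\central^\W$ lies in the \emph{interior} of $\W$, the numbers $\alpha(\kappa_\W(\omega,n))$, for $\alpha$ a simple root, grow linearly in $n$, so $e^{\kappa_\W(\omega,n)}$ develops a widening gap between consecutive diagonal blocks, and this gap forces $k_\W(\omega,n)$ to stabilize exponentially. To set things up I would use the realization of $\lieg$ by matrices stable under transpose recalled in Section \ref{Lietheory}, with $\theta(x)=-x^T$; replacing $G$ by its image in $\mathrm{GL}(d,\R)$ affects neither $\kappa_\W$, nor the class of $k_\W$ modulo $M$, nor the space $B=M\backslash K$, so I may assume $G\subset\mathrm{GL}(d,\R)$, $K=G\cap O(d)$, $A$ positive diagonal, and that the $KAK$ decomposition relative to $\W$ is the restriction of the singular value decomposition with the ordering prescribed by $\W$. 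In this picture the class of $k_\W(g)$ in $B$ is canonically the ordered eigenflag of the positive symmetric matrix $g^Tg$ (a homeomorphism onto its image, continuous uniformly on compact subsets of the locus where $\kappa_\W(g)$ is regular), and, $d_B$ being $K$-invariant, $K$ acts on these flags by isometries. (One could instead avoid matrices and use a finite family of proximal representations, one per fundamental weight.)

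The heart of the argument is a one-step estimate. Writing $l_{n+1}=g_{n+1}l_n$ and $l_n=k'_\W(\omega,n)\,e^{\kappa_\W(\omega,n)}\,k_\W(\omega,n)$ (all identities understood modulo the $M$-ambiguity, which does not affect classes in $B$), one has
$$ l_{n+1}^Tl_{n+1}=k_\W(\omega,n)^T\,M_n\,k_\W(\omega,n),\qquad M_n=e^{\kappa_\W(\omega,n)}\,\tilde S_{n+1}\,e^{\kappa_\W(\omega,n)}, $$
with $\tilde S_{n+1}=k'_\W(\omega,n)^T\,g_{n+1}^Tg_{n+1}\,k'_\W(\omega,n)$. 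Since $\mG$ is compactly supported, $\tilde S_{n+1}$ is positive symmetric with uniformly bounded condition number; as $e^{\kappa_\W(\omega,n)}$ is diagonal, the $(i,j)$ entry of $M_n$ is $(\tilde S_{n+1})_{ij}$ times $e^{\sigma_i+\sigma_j}$, where $\sigma_1\ge\cdots\ge\sigma_d$ are the diagonal entries of $\kappa_\W(\omega,n)$. On the event $E_n(\delta):=\{\alpha(\kappa_\W(\omega,n))\ge\delta n\ \text{for all simple roots }\alpha\}$ the consecutive gaps $\sigma_i-\sigma_{i+1}$ are $\ge\delta n$, so $M_n$ is, multiplicatively, a bounded perturbation of $e^{2\kappa_\W(\omega,n)}$ whose own eigenflag gaps are $\ge e^{\delta n}$; by standard eigenflag perturbation this pins the ordered eigenflag of $M_n$ within $O(e^{-\delta n})$ of the standard coordinate flag. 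Conjugating by $k_\W(\omega,n)\in O(d)$ and using that $K$ acts isometrically on $B$ yields $d_B\big(k_\W(\omega,n),k_\W(\omega,n+1)\big)\le C'e^{-\delta n}$ on $E_n(\delta)$.

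The remaining and, I expect, hardest ingredient is a large-deviation bound $\mG^{\otimes\N^*}\big(E_n(\delta)^c\big)\le Ce^{-\varepsilon n}$ for suitable $\delta,\varepsilon>0$, quantifying the rate in Theorem \ref{centraldirection} at which the Cartan projection penetrates the interior of the Weyl chamber; for an absolutely continuous, $K$-invariant, compactly supported $\mG$ this follows from the exponential large-deviation estimates for the Cartan projection of such random walks (Le Page-type results, cf. \cite{BenoistQuint, Aoun}), the $K$-invariance making the relevant subadditive cocycle estimates elementary. Granting it, bounding $d_B$ by $\mathrm{diam}(B)$ off $E_n(\delta)$ gives $\int d_B\big(k_\W(\omega,n),k_\W(\omega,n+1)\big)\,\mG^{\otimes\N^*}(d\omega)\le C''e^{-\beta n}$ with $\beta=\min(\delta,\varepsilon)$, and summing the telescoping inequality $d_B\big(k_\W(\omega,n),k_\W(\omega,m)\big)\le\sum_{j=n}^{m-1}d_B\big(k_\W(\omega,j),k_\W(\omega,j+1)\big)$ shows that $k_\W(\omega,n)$ converges $\mG^{\otimes\N^*}$-almost surely to a measurable limit $k_\W(\omega,\infty)\in B$ with
$$ \int d_B\big(k_\W(\omega,n),k_\W(\omega,\infty)\big)\,\mG^{\otimes\N^*}(d\omega)\ \le\ \sum_{j\ge n}C''e^{-\beta j}\ =\ \frac{C''}{1-e^{-\beta}}\,e^{-\beta n}, $$
which is $<\rho_1^n$ for all large $n$, for any fixed $\rho_1\in(e^{-\beta},1)$.

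To finish, recall that $k_\W(\cdot,n)_*\mG^{\otimes\N^*}=\mB$ for every $n$ (stated just before the theorem); since almost-sure convergence implies convergence in law, the limit $k_\W(\cdot,\infty)$ also pushes $\mG^{\otimes\N^*}$ forward to $\mB$. Thus all the conclusions of the statement hold, and the only genuinely substantial point to be filled in is the large-deviation estimate $\mG^{\otimes\N^*}\big(E_n(\delta)^c\big)\le Ce^{-\varepsilon n}$; the rest is the linear-algebra perturbation of eigenflags together with a routine Borel--Cantelli and summation argument.
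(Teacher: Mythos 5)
First, note that the paper itself gives no proof of this statement: it is quoted verbatim as Theorem 4.5 of Aoun's paper \cite{Aoun}, so there is no internal argument to compare yours against. Judged on its own terms, your strategy is the standard one and is sound in outline: the one-step estimate $d_B\bigl(k_\W(\omega,n),k_\W(\omega,n+1)\bigr)=O(e^{-\delta n})$ on the event that all simple roots of $\kappa_\W(\omega,n)$ exceed $\delta n$, followed by a telescoping sum, is exactly how such results are proved. In fact the paper performs essentially your one-step comparison intrinsically in Lemma \ref{l: exponential estimate in KAK} (inside the proof of Theorem \ref{goodtracking}), using the quadratic forms $q_g(x)=\norm{\mathrm{Ad}(g)x}^2$ on $\lieg$ rather than a matrix realization; adopting that formulation would spare you the caveats about whether a single faithful representation separates points of $B$ (in general one does need the family of fundamental representations, as you acknowledge), and the related point that your flag metric only needs to be H\"older-equivalent to the chosen $K$-invariant $d_B$, which is harmless for an exponential bound. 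You should also record that for each fixed $n$ the law of $l_n$ is absolutely continuous, so $\kappa_\W(\omega,n)$ is a.s.\ regular and the class of $k_\W(\omega,n)$ in $B=M\backslash K$ is genuinely well defined.

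The one point I would not let pass silently is the large-deviation estimate $\mG^{\otimes\N^*}\bigl(E_n(\delta)^c\bigr)\le Ce^{-\varepsilon n}$. You correctly identify it as the crux, but it carries the entire exponential rate: without it your argument only yields convergence of $k_\W(\omega,n)$, not the bound $\rho_1^n$. The estimate is indeed a theorem of the literature (Le Page in rank one, Benoist--Quint for the Cartan projection of random walks whose support generates a Zariski-dense subgroup, which an absolutely continuous $\mG$ certainly does), so invoking it is legitimate here, where the statement being proved is itself a citation; but as written your proposal reduces Aoun's theorem to another nontrivial external input rather than establishing it from first principles. If you want a self-contained argument, that large-deviation bound is the step you would actually have to prove.
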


\begin{corollary}\label{corollary to aoun's thm}

There exists $\rho_2 \in (0,1)$  and  a measurable map $k(  , \infty):  \mG^{\otimes \N^*} \to B$  with $k(, \infty)_{*} (\mG^{\otimes \N^*})= \mB$ such that for $\mG^{\otimes \N^*}$ a.e. $\omega = (g_1, g_2, ....)$, there is $N_{\omega}$ such that if $n \geq N_{\omega}$: 

$$d_B (k_\W(\omega,n), k_\W(\omega, \infty))  < \rho_2^n.$$

\end{corollary}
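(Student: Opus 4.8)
The plan is to upgrade the $L^1$-estimate of Theorem~\ref{expconvergencek} to an almost-sure exponential estimate by a routine Borel--Cantelli argument, taking $k(\cdot,\infty) := k_\W(\cdot,\infty)$ so that the pushforward statement $k(\cdot,\infty)_*(\mG^{\otimes\N^*})=\mB$ is automatic. First I would fix $\rho_1\in(0,1)$ and the map $k_\W(\cdot,\infty)$ provided by Theorem~\ref{expconvergencek}, so that $\int_{G^{\otimes\N^*}} d_B(k_\W(\omega,n),k_\W(\omega,\infty))\,\mG^{\otimes\N^*}(d\omega)<\rho_1^n$ for all $n\ge n_0$. Then I would choose any $\rho_2\in(\rho_1,1)$, for instance $\rho_2:=\sqrt{\rho_1}$, and set $A_n:=\{\omega : d_B(k_\W(\omega,n),k_\W(\omega,\infty))\ge \rho_2^n\}$.

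Next, by Markov's inequality applied to the nonnegative random variable $\omega\mapsto d_B(k_\W(\omega,n),k_\W(\omega,\infty))$, for $n\ge n_0$ we get
\[
\mG^{\otimes\N^*}(A_n)\ \le\ \frac{1}{\rho_2^n}\int d_B(k_\W(\omega,n),k_\W(\omega,\infty))\,\mG^{\otimes\N^*}(d\omega)\ <\ \Big(\frac{\rho_1}{\rho_2}\Big)^{n},
\]
and since $\rho_1/\rho_2<1$ the series $\sum_n \mG^{\otimes\N^*}(A_n)$ converges. By the Borel--Cantelli lemma, for $\mG^{\otimes\N^*}$-a.e.\ $\omega$ only finitely many of the events $A_n$ occur; letting $N_\omega$ be larger than $n_0$ and than the index of the last such occurrence, we obtain $d_B(k_\W(\omega,n),k_\W(\omega,\infty))<\rho_2^n$ for every $n\ge N_\omega$, which is exactly the claim.

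There is no real obstacle here: the only point requiring minimal care is that the estimate of Theorem~\ref{expconvergencek} is only asserted for $n$ sufficiently large, which is harmless since we are free to enlarge $N_\omega$, and that $N_\omega$ depends measurably on $\omega$ (it is, up to the deterministic constant $n_0$, the last index $n$ with $\omega\in A_n$, hence measurable). The measurability of $k(\cdot,\infty)=k_\W(\cdot,\infty)$ and its distribution are inherited directly from Theorem~\ref{expconvergencek}.
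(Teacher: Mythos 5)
Your proof is correct and is the standard argument: Markov's inequality converts the $L^1$ bound $\int d_B(k_\W(\omega,n),k_\W(\omega,\infty))\,d\mG^{\otimes\N^*}<\rho_1^n$ into a tail bound that is summable once $\rho_2\in(\rho_1,1)$, and Borel--Cantelli then gives the almost-sure eventual bound; taking $k(\cdot,\infty):=k_\W(\cdot,\infty)$ inherits measurability and the pushforward identity directly from Theorem~\ref{expconvergencek}. The paper states the corollary without proof, and the argument you give is evidently the intended one.
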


%We define the following element of $A$: $$a_{m_{G}} := e^{\central ^\W}$$ where $\central$ is defined by Theorem \ref{centraldirection}. 
We next prove that we can track the \(\mG\)-random walk on \(G\) by the flow generated by the central element \(\central ^\W \in \text{Int} (\W)\), up to some subexponential error. Although this result is presumably well-known, we haven't found reference for this statement in the literature, and we provide a detailed proof.

\begin{theorem}\label{goodtracking} Assume that \(\mG\) is a compactly supported, absolutely continuous and right \(K\)-invariant probability measure on \(G\). Let $d_G$ be a fixed right-invariant Riemannian metric on $G$. For $\mG^{\otimes \N^*}$ a.e. $\omega = (g_1, g_2, ....)$ we have:   $$\lim_{n \to \infty} \frac{1}{n} d_G(   e^{n \central ^\W}k_\W(\omega, n), e^{n \central ^\W}k_\W(\omega, \infty)) = 0 .$$
\end{theorem}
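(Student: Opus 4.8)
The plan is to reduce the statement to the exponential convergence result of Corollary~\ref{corollary to aoun's thm} together with a bound on how fast the left-translation by $e^{n\central^\W}$ can \emph{expand} distances between points of $K$. Write $d_n(\omega) := d_G(e^{n\central^\W}k_\W(\omega,n),\, e^{n\central^\W}k_\W(\omega,\infty))$. Since $d_G$ is right-invariant, $d_G(k_\W(\omega,n),k_\W(\omega,\infty)) = d_B(\cdots)$ up to the $M$-ambiguity can be controlled; more precisely, I would first observe that there is a constant $R$ (depending only on $\mG$ and the metric) so that $d_G(k_\W(\omega,n),k_\W(\omega,\infty))$, measured between the actual $K$-representatives appearing in the $KAK$ decompositions, is comparable to the boundary distance $d_B(k_\W(\omega,n),k_\W(\omega,\infty))$ away from a uniformly small exceptional set; alternatively one can simply work with representatives chosen measurably and bound the $K$-distance directly, since $k_\W(\omega,n)\to k_\W(\omega,\infty)$ in $B$ with the $M$-fibers being compact.

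Next I would quantify the expansion. For $g,h\in K$ lying at $d_G(g,h)=\delta$, we have $d_G(e^{n\central^\W}g, e^{n\central^\W}h) = d_G(e^{-n\central^\W}\!\cdot e^{n\central^\W}g,\, h)$ is not right-invariant-friendly, so instead I bound it via the differential of left translation: writing $g = h\exp(v)$ for $v$ in the Lie algebra with $|v|\asymp\delta$, one has $e^{n\central^\W}g = e^{n\central^\W}h\exp(v) = \big(e^{n\central^\W}h e^{-n\central^\W}\big)\,e^{n\central^\W}\exp(v)$, and the point is that conjugating $\exp(v)$ by $e^{n\central^\W}$ multiplies the components of $v$ in the root spaces $\lieg_\lambda$ by $e^{n\lambda(\central^\W)}$. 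Since $\central^\W$ lies in the interior of $\W$, we have $|\lambda(\central^\W)| \le C_0$ for all $\lambda\in\Sigma$, so the conjugation distorts $|v|$ by at most a factor $e^{C_0 n}$, i.e. $d_G(e^{n\central^\W}g, e^{n\central^\W}h) \le e^{C_0 n}\cdot O(\delta)$ when $\delta$ is small, by right-invariance of $d_G$ and smoothness of $\exp$ near the identity. (When $\delta$ is not small one uses that $K$ is compact, hence this regime occurs only finitely often by Corollary~\ref{corollary to aoun's thm}.)

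Combining the two: for $n\ge N_\omega$ we get $\delta = \delta_n := d_G(k_\W(\omega,n),k_\W(\omega,\infty)) < \rho_2^n$ (after translating the $B$-distance estimate to a $K$-distance estimate), hence $d_n(\omega) \le e^{C_0 n}\cdot O(\rho_2^n)$, so $\tfrac1n d_n(\omega) \le C_0 + \tfrac1n\log O(\rho_2^n) \to C_0 + \log\rho_2$. This only gives a bound, not convergence to $0$, so the last — and main — step is to improve the exponential rate $C_0$ in the expansion estimate: one should replace the crude bound $|\lambda(\central^\W)|\le C_0$ by the observation that only the \emph{unstable} directions $\lambda(\central^\W)>0$ actually expand, while $k_\W(\omega,n)$ and $k_\W(\omega,\infty)$ differ, in the relevant boundary chart $N_{\central^\W}\backslash G \simeq B$, only in coordinates that are \emph{contracted} by $e^{n\central^\W}$ acting on the right in the appropriate sense. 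Concretely, the boundary $B = P_\W\backslash G$ with its $G$-action on the right has $e^{n\central^\W}$ acting as a contraction toward a point with exponential rate governed by the positive roots, and the map $k\mapsto P_\W k$ intertwines left translation on $K$ with this boundary action; so the boundary distance $\rho_2^n$ already absorbs the expansion, and what remains to bound is the lift from $B$ back to $K$, which only involves the compact $M$-direction and the contracted $N_{\central^\W}$-direction. Making this precise — i.e. choosing the measurable representatives $k_\W(\omega,n), k_\W(\omega,\infty)\in K$ so that $e^{n\central^\W}k_\W(\omega,n)$ and $e^{n\central^\W}k_\W(\omega,\infty)$ stay in a bounded region and their distance is controlled by $\rho_2^n$ times a subexponential factor — is the real content, and I expect the bookkeeping of the $KAK$ versus $KAN$ coordinates (and the non-uniqueness up to $M$) to be the main obstacle. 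Once that is done, $\tfrac1n d_n(\omega)\to 0$ follows immediately.
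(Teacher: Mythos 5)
Your framework is the right one, and you have correctly located the crux: after reducing via right-invariance to bounding $\text{sub}_n := e^{n\central^\W}k_\W(\omega,n)k_\W(\omega,\infty)^{-1}e^{-n\central^\W}$, the crude bound $\|w_n-1\|\le\rho_2^n$ with the uniform conjugation factor $e^{n\max_\lambda\lambda(\central^\W)}$ is not enough, because there is no reason for $\max_\lambda\lambda(\central^\W)+\log\rho_2$ to be nonpositive. You also correctly observe that one should replace this crude estimate by something exploiting the block structure of $w_n=k_\W(\omega,n)k_\W(\omega,\infty)^{-1}$ relative to the root decomposition. But you then leave exactly this step --- ``the real content,'' as you say --- unproved, and the heuristic you offer in its place (``$k_\W(\omega,n)$ and $k_\W(\omega,\infty)$ differ in a boundary chart only in contracted directions'') is not quite the right invariant and would not deliver the estimate without substantial extra work.

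What is actually needed is the following \emph{graded} decay: for each pair of generalized eigenspaces $\lieg_\alpha,\lieg_\beta$ of $\text{ad}(\central^\W)$ (with eigenvalues $\lambda_\alpha,\lambda_\beta$), the $(\alpha,\beta)$-block satisfies
\[
\bigl\|\bigl(\text{Ad}(k_\W(\omega,n)k_\W(\omega,\infty)^{-1})-1\bigr)_{\alpha,\beta}\bigr\| = O\bigl(e^{-n\,\lambda_{\alpha,\beta}}\bigr),\qquad \lambda_{\alpha,\beta}\ge |\lambda_\alpha-\lambda_\beta|-\lambda
\]
for any small $\lambda>0$. This is precisely calibrated so that conjugating by $\text{Ad}(e^{n\central^\W})$, which multiplies the $(\alpha,\beta)$-block by $e^{n(\lambda_\alpha-\lambda_\beta)}$, leaves only an $O(e^{n\lambda})$ residue, and letting $\lambda\to 0$ gives subexponential growth of $\text{Ad}(\text{sub}_n)$, hence $d_G(\text{sub}_n,e)=o(n)$. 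The point you do not address is \emph{why} the $(\alpha,\beta)$-block decays faster when $|\lambda_\alpha-\lambda_\beta|$ is larger. The paper extracts this from the $KAK$ structure in two steps: first a single-increment estimate (Lemma~\ref{l: exponential estimate in KAK}) showing that whenever the quadratic forms $q_g(x)=\|\text{Ad}(g)x\|^2$ and $q_{g'}$ are comparable up to a fixed constant, the $(\alpha,\beta)$-block of $\text{Ad}(k_g k_{g'}^{-1})$ is bounded by $Ce^{\alpha(\kappa(g'))-\beta(\kappa(g))}$, which combined with orthogonality of $\text{Ad}(k_{l_{n+1}}k_{l_n}^{-1})$ yields a two-sided bound in terms of $|\lambda_\alpha-\lambda_\beta|$ for the increments $E_n$; and second, an inductive bound on the infinite product $(1+E_n)(1+E_{n+1})\cdots$ (Lemma~\ref{l: multi-exponential estimates}), using that the exponents $\lambda_{\alpha,\beta}$ satisfy the ultrametric-type triangle inequalities \eqref{eq: triangular inequalities}. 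Neither step follows from the qualitative boundary convergence of Corollary~\ref{corollary to aoun's thm} alone, and your sketch does not indicate how to produce them. There is also a small algebraic slip worth flagging: with a right-invariant metric one should decompose $k_\W(\omega,n)=w_n\,k_\W(\omega,\infty)$ with $w_n$ on the \emph{left} (so that two right-cancellations reduce to $d_G(e^{n\central^\W}w_n e^{-n\central^\W},e)$); writing $g=h\exp(v)$ with $v$ on the right, as you do, does not directly produce a conjugation of $\exp(v)$ by $e^{n\central^\W}$, though the final bound $e^{C_0 n}\cdot O(\delta)$ can still be recovered by conjugating $\exp(v)$ by $h\in K$ first.
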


\begin{proof} For simplicity we will omit the reference to the Weyl chamber \(\W\) in the notations. 

We need to prove that the element \(  \text{sub}_n= e^{n\central } k(\omega, n)k(\omega, \infty )^{-1} e^{-n\central } \) grows sublinearly with \(n\), or equivalently that the norm of the endomorphism \( \text{Ad} (\text{sub}_n) \in \text{End} (\lieg ) \) grows subexponentially with \(n\). 

We choose the determinations of \( k(\omega, n) \) modulo \(M\) in such a way that we have an exponential convergence to \( k(\omega, \infty)\). This is possible for \(\mG^\N\)-a.e. \(\omega= (g_n)_n\) by the use of the corollary to Aoun's Theorem \ref{corollary to aoun's thm} and does not affect the result since \( M\) commutes with \( e^{\central}\).

Let us consider the scalar product given by \( \text{Tr}(\theta (x) y)\) on \(\lieg\) and its associated norm \(\norm{x}^2= \text{Tr} (\theta(x) x ) \), where \(\theta\) is the Cartan involution. Notice that those functionals are invariant by the action of \(K\), and that the eigenspaces \(\lieg _\alpha\) are orthogonal. 

Given an element \( g \in G \), with \(KAK\) decomposition \( g = k_g '  \exp (\kappa (g) ) k_g\), let us introduce the quadratic form \( q_g (x) = \norm{\text{Ad}(g)(x) }^2 \). It is diagonal in the decomposition \( \lieg = \oplus _ \alpha (k_g)^{-1} (\lieg _ \alpha ) \), since on \( (k_g)^{-1} (\lieg _ \alpha )\) it is equal to \( e^{2 \alpha (\kappa(g))} \norm{\cdot}^2 \).  For \( H \in \text{End} (\lieg) \), we denote by \( H_{\alpha, \beta} \in \text{End}(\lieg_\beta, \lieg_\alpha) \) the endomorphisms defined according to the formulas \( H_{|\lieg_\beta} = \sum _\alpha  H_{\alpha, \beta} \) valid for any \(\beta\).

\begin{lemma}\label{l: exponential estimate in KAK}
Let \(g , g' \) in \(G\), and \(C\geq 1\) be a constant. Assume that \( q_g \leq C^2 q_{g'}\). Then  
\[  \norm{\text{Ad}( k_g k_{g'}^{-1})_{\alpha, \beta }} \leq  C e^{ \alpha (\kappa(g')) - \beta (\kappa(g)) } . \] 
\end{lemma}

\begin{proof}
Let \(x\in \lieg _\alpha \). We have \(q_{g k_g^{-1}} \leq C^2 q_{g' k_g^{-1}}\), so 
\[ \sum _\beta \exp (2\beta(\kappa(g))) \norm{\text{Ad}(k_g k_{g'}^{-1} )_{\alpha,\beta} (x) } ^2 = q_{g k_g^{-1}} (\text{Ad}(k_g k_{g'}^{-1})  (x)) \leq \]
\[ \leq C^2 q_{g' k_g^{-1}} ( \text{Ad}(k_g k_{g'} ^{-1}) x) = C^2 q_{g'k_{g'}^{-1} } (x) = C^2 \exp (2\alpha (\kappa(g'))) \norm{x}^2.\] 
In particular, 
\[ \exp (\beta(\kappa(g))) \norm{\text{Ad}(k_g k_{g'}^{-1} )_{\alpha,\beta} (x) } \leq C \exp (\alpha (\kappa(g'))) \norm{x}\]
and the lemma follows. 
\end{proof}

Introduce the notations \( \lambda _\alpha = \alpha (\central) \), so that \( \frac{1}{n} \alpha (\kappa (l_n))\rightarrow \lambda_\alpha\) and recall that  \(l_n = g_n \ldots g_1\) and \( k(\omega, n)= k_{l_n}\). Since the increments \(g_n\) belong to a fixed compact subset of \(G\), there is a constant \(C\geq 1\) such that, for every \(n\in \N\),  \( q_{l_n} \leq C^2q_{l_{n+1}}\) and \(q_{l_{n+1}}\leq C^2  q_{l_n}\). Lemma \ref{l: exponential estimate in KAK} applied to \( g=l_n\) and \(g'=l_{n+1}\) shows that, 
\[ \norm{\text{Ad}(k(\omega,n)  k(\omega, n+1) ^{-1} )_{\alpha,\beta} } \leq C e^{\alpha (\kappa(l_{n+1})) - \beta (\kappa(l_n)) } ,\] 
and in particular 
\[ \frac{1}{n} \log \norm{\text{Ad}(k(\omega, n)  k(\omega, n+1)^{-1} )_{\alpha,\beta} } \leq \lambda_\alpha - \lambda_\beta + O (1/n) .\]
Exchanging the roles of \(l_n\) and \(l_{n+1}\) gives 
\[\frac{1}{n} \log \norm{\text{Ad}(k(\omega, n+1), k(\omega, n)^{-1} )_{\alpha,\beta} } \leq \lambda_\alpha - \lambda_\beta + O (1/n) .\] 
Because \( \text{Ad}(k(\omega, n+1) k(\omega, n)^{-1})\) is orthogonal, we have 
\[ \norm{\text{Ad}(k(\omega, n) k(\omega, n+1)^{-1} )_{\alpha,\beta} } = \norm{\text{Ad}(k(\omega, n+1)  k(\omega, n)^{-1} )_{\beta,\alpha}},\] 
and we infer from all this that for every \(\alpha, \beta\)
\[ \frac{1}{n} \log \norm{\text{Ad}(k(\omega,n)  k(\omega, n+1) ^{-1} )_{\alpha,\beta} } \leq - | \lambda_\alpha - \lambda_\beta | + O (1/n) .\]

Denote \(E_n\in \text{End} (\lieg) \) the endomorphism satisfying \( 1+ E_n = \text{Ad}(k_{l_n} k_{l_{n+1}}^{-1} )\). Aoun's theorem shows that for \(\lambda >0\) small enough, \( \norm{E_n} = O ( e^{-n \lambda} ) \). Define the following numbers 
\[ \lambda_{\alpha, \beta}=\max (\lambda , |\lambda_\alpha -\lambda_\beta|-\lambda ) . \]
Notice that as soon as \(\lambda >0\) is sufficiently small, those numbers satisfy the inequalities 
\begin{equation}\label{eq: triangular inequalities} \lambda_{\alpha, \gamma} \leq \lambda_{\alpha, \beta} + \lambda_{\beta, \gamma} \text{ for every } \alpha, \beta, \gamma.\end{equation}
Moreover, the endomorphisms \( (E_n)_{\alpha, \beta} \in \text{End} (\lieg_\alpha, \lieg_\beta) \) satisfy the estimates \( \norm{( E_n )_{\alpha, \beta}} = O ( e^{ - n \lambda_{\alpha, \beta} }) \).  
In the sequel we will assume that \(\lambda >0 \) is chosen so small that this is the minimum of the \(\lambda_{\alpha, \beta}\)'s.

\begin{lemma}\label{l: multi-exponential estimates}
\(\norm{\left( \text{Ad}(k(\omega,n) k(\omega, \infty) ^{-1}) - 1\right)_{\alpha, \beta} } = O(e^{-n \lambda_{\alpha, \beta}} )\). 
\end{lemma}

\begin{proof}
Notice first that 
\[ \text{Ad}(k (\omega, n )  k(\omega, \infty) ^{-1}) = \lim_{k\rightarrow \infty} (1+ E_n) (1+E_{n+1} ) \ldots (1+ E_{n+k}).\]
Let \(C>0\) be a constant such that for every \(\alpha, \beta\), and every \(n\in \N\), we have 
\[| (E_n)_{\alpha, \beta} |\leq C e^{-n\lambda_{\alpha, \beta}} .\] 

We fix \( n\). For every integer \( k\geq 0\), let \( F_{n,k}\in \text{End} (\lieg )  \) defined by
\[ 1+ F_{n,k} = (1+E_n) (1+ E_{n+1}) \ldots (1+ E_{n+k}) .\]
Set \( C_{n,k} >0\) to be the smallest constant such that 
\[ \norm{ ( F_{n,k} )_{\alpha, \beta} } \leq C_{n,k} e^{-\lambda_{\alpha, \beta} n} \text{ for every } \alpha, \beta. \] 
To prove the lemma we just need to prove that \(C_{n,k}\) is bounded independently of \(k\) and \(n\) by a constant. Notice that \( C_{n,0} \leq C\) for every \(n\). 

From the relation \( F_{n,k}= F_{n,k-1} + E_{n+k} + F_{n,k-1} E_{n+k} \), and the inequalities \eqref{eq: triangular inequalities}, we get
\[ \norm{(F_{n,k})_{\alpha, \beta}} \leq e^{-\lambda_{\alpha,\beta} n} (C_{n, k-1} (1+Cd e^{-\lambda k } ) + C e^{-\lambda k}). \] 
Introducing the numbers 
\[ \gamma_k = 1+ C d e^{-\lambda k} \text{  and  } \varepsilon _k = C e^{-\lambda k}  \]
we get the estimates 
\[  C_ {n, k} \leq \gamma_k C_{n, k-1} + \varepsilon_k .\]
Denoting by \( \Gamma \) the infinite product \( \Gamma = \gamma_1 \ldots \gamma_k \ldots \), we get by induction on \(k\) 
\[ C_{n, k} \leq \frac{\Gamma C}{1-e^{-\lambda} }  < \infty \]
and this ends the proof of the lemma since the term on the right hand side of the last inequality does not depend neither on \(n\) nor \( k\). 
\end{proof}

To conclude, notice that 
\[ \left( \text{Ad}(e^{n \central }) (\text{Ad}(k(\omega,n) k(\omega,\infty)^{-1}) -1) \text{Ad}(e^{-n \central })  \right) _{\alpha, \beta} =\]
\[=e^{n(\lambda_\alpha - \lambda_\beta)} (\text{Ad}(k(\omega,n) k(\omega,\infty)^{-1})  -1) _{\alpha, \beta}   \] 
and apply Lemma \ref{l: multi-exponential estimates} to get that  
\[ \norm{\text{Ad}(\text{sub}_n)_{\alpha, \beta} }= \norm{\text{Ad} \left( e^{n \central } k(\omega,n) k(\omega,\infty)^{-1} e^{-n \central } \right)_{\alpha, \beta} } =\]
\[= \norm{\left(1 +  \text{Ad}(e^{n \central }) (\text{Ad}(k(\omega,n) k(\omega,\infty)^{-1}) -1) \text{Ad}(e^{-n \central }) \right) _{\alpha, \beta}} \leq \]
\[\leq  1+ O\left( e^{n(\lambda_\alpha - \lambda_\beta- \lambda_{\alpha, \beta} )} \right) = O ( e^{\lambda n} ).\]
This being valid for every sufficiently small \(\lambda>0\), the theorem follows.
\end{proof}

%%%%%%%%%%%%%%%%%%%%%%%%%%%%%%%%%%%%%%%%%%%%%%%%%%%%%%%%%%%%

\begin{section}{Some facts from Ergodic Theory}\label{ergodictheory}

In this section, there are no new results. We only review some facts that will serve in our argument in the next sections.

\subsection{Measurable partitions subordinate to locally free actions}

Suppose $\mu$ is a Radon measure on a Polish space $Y$ (which we do not assume to be compact) and let $\mathcal{B}$ be the $\sigma$-algebra of Borel sets. Given a partition $\mathcal{P}$, for a point $y \in Y$, $\mathcal{P}_y$ denotes the element of $\mathcal{P}$ containing $y$. A partition of the measure space $(Y, \mathcal{B}, \mu)$ is said to be measurable if the sets \(\mathcal{P}_y\) are measurable, up to measure zero, the quotient space $Y /\mathcal{P}$ is separated by a countable number of measurable sets. See \cite{CliKa}  for a discussion about measurable partitions.

We recall the notion of desintegration of measures. Let $Z \subset Y$ be a measurable set of positive finite measure of  $Y$ and let $\mu|_Z$ be the restriction of $\mu$ to $Z$. For any measurable partition $\Pi$ of $(Z, \mathcal{B}\cap Z, \mu|_{Z})$, there exists a family of probability measures $\mu_z^{\mathcal{P}}$ (which depend measurably in the atoms of $\mathcal{P}$) such that $\mu_z^{\mathcal{P}}$ is supported in the atom $\mathcal{P}_z$ for $\mu|_z$ a.e. $z \in Z$ and we have the following equality:

$$\mu(C) = \int_Z  \mu^{\mathcal{P}}_z(C)   \ d \mu (z)$$ for every $\mu$ measurable set $C$.

Suppose $H \times Y \to Y$ is a continuous action of a locally compact second countable topological group $H$  (which for us will be a Lie group). We equip $H$ with a right invariant metric and let $B_r^H(t)$ be the ball of radius $r>0$ centered at $t \in H$. Suppose the action is locally free in the following sense: For every compact set $Z \subset Y$, there exists $\eta > 0 $, such that for all $y \in K, t \in B_{\eta}^H(e)$, if $ty = y$, then $t = e$.

\begin{definition} Let $x \in X$. A set $A \subset Hx$ is an open $H$-plaque if for every $a \in A$, the set $\{t : ta \in A\}$ is open and bounded.
\end{definition}

Given two measures $\mu, \nu$ we write $\mu \propto \nu $ if there exists $c > 0$ such that $\mu = c\nu$. We now recall the notion of leaf-wise measures for a measure $\mu$ in $Y$ with a locally free action. The following theorem can be found in \cite[Thm.6.3]{EinsLind} in the language of $\sigma$-algebras, we refer the reader to \cite[Sec.6]{EinsLind} and the reference therein for more details.

\begin{theorem}[Leaf-wise measures and subordinate partitions]\label{leafwise} Assume the conditions above and moreover that for $\mu$ a.e. $y \in Y$, $\text{Stab}_H(y) := \{t \in H: ty = t\} = \{e\}$. Then there exists  a collection $\{\mu_y^H\}_{y \in Y'}$ of Radon measures on $H$ called the leaf-wise measures which are determined uniquely up to proportionality by the following properties:

\begin{enumerate}

\item The domain $Y' \subset Y$ has full measure. 
\item For every $f \in C_c(H)$, the map $y \to \int f d\mu_y^H$ is Borel measurable.
\item\label{nicepartition} Suppose that $Z\subset Y$ is a compact set and there is a measurable partition of $Z$, such that for  $\mu$-a.e. $y \in Z$, the atom  $\mathcal{P}_y$ is an open $H$-plaque. Then for $\mu$-a.e. $y \in Z$:

$$ (\mu|_Z)^{\mathcal{P}}_y \propto \mu_y^H|_{\mathcal{P}_y} $$

\item The identity element of $e \in H$ belongs to the support of $\mu_y^H$ for $\mu$-a.e. $y \in Y$.
\end{enumerate}

\end{theorem}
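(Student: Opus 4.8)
This statement is a reformulation, in the language of disintegrations, of \cite[Thm.~6.3]{EinsLind}; I only indicate the main steps of the construction, which by now is standard in homogeneous dynamics (going back to Katok--Spatzier). First I would reduce to the compact setting: fix an exhaustion $Y=\bigcup_n Z_n$ by compact sets, carry out the construction below on each $Z_n$ (obtaining $\mu_y^H$ a priori only on larger and larger neighbourhoods of $e$ in $H$ as $n$ grows), and check that the pieces produced on $Z_n$ and $Z_{n+1}$ agree up to normalisation on their common domain, so that they glue to Radon measures $\mu_y^H$ on all of $H$.

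So let $Z$ be compact. The first ingredient is the existence, for every $\rho>0$, of a measurable partition $\mathcal P=\mathcal P(\rho)$ of $(Z,\mathcal B\cap Z,\mu|_Z)$ all of whose atoms are open $H$-plaques and which is \emph{large in the $H$-direction}, in the sense that for $\mu$-a.e.\ $y$ the set $U_y:=\{t\in H:ty\in\mathcal P_y\}$ contains $B_\rho^H(e)$ up to the unavoidable defect coming from the $H$-orbit of $y$ re-entering $Z$. Such ``flowbox'' partitions are built from a Borel cross-section to the $H$-action on $Z$; the local freeness hypothesis provides the required measurable structure, and triviality of $\mathrm{Stab}_H(y)$ makes $t\mapsto ty$ a bijection of $U_y$ onto $\mathcal P_y$, so one may push $(\mu|_Z)^{\mathcal P}_y$ forward along $ty\mapsto t$ to a probability measure $\nu_y^{\mathcal P}$ on $H$ supported on $U_y$. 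The second ingredient is \emph{consistency under refinement}: if $\mathcal P'$ refines $\mathcal P$, both subordinate to the $H$-action, then transitivity of conditional measures gives $(\mu|_Z)^{\mathcal P}_y\big|_{\mathcal P'_y}\propto(\mu|_Z)^{\mathcal P'}_y$ for $\mu$-a.e.\ $y$, hence $\nu_y^{\mathcal P}\big|_{U'_y}\propto\nu_y^{\mathcal P'}$. Choosing the $\mathcal P(\rho_k)$ to refine one another along a sequence $\rho_k\to\infty$, the measures $\nu_y^{\mathcal P(\rho_k)}$, renormalised so as to have a fixed mass on $B_{\rho_0}^H(e)$, stabilise on every ball and thus define a Radon measure $\mu_y^H$ on $H$, uniquely determined up to a positive scalar.

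It then remains to check the listed properties. Property (1) records that the disintegrations and cross-sections are defined on a full-measure set $Y'$. Property (2), Borel measurability of $y\mapsto\int f\,d\mu_y^H$, follows from the measurability of the disintegration maps. Property (3) is consistency under refinement applied to a common refinement of the given $\mathcal P$ with the $\mathcal P(\rho_k)$'s; the same observation yields the uniqueness clause, since any family of Radon measures satisfying (3) for all subordinate partitions must coincide, up to scale, with $\{\mu_y^H\}$ on each $\mathcal P(\rho_k)$. Property (4), that $e\in\operatorname{supp}\mu_y^H$, is a Lebesgue-density argument along the refining sequence: since $y\in\mathcal P_y$ and $(\mu|_Z)^{\mathcal P}_y(\mathcal P_y)=1$, the conditional mass of shrinking neighbourhoods of $y$ inside the atoms cannot vanish for $\mu$-a.e.\ $y$.

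The step I expect to be the main obstacle is the geometric one: producing, measurably and for a general (possibly non-abelian) locally compact second countable $H$, the cross-sections and the associated subordinate partitions whose atoms exhaust $H$, while controlling the re-entry of $H$-orbits into the compact set $Z$. Once this is available, the disintegration, the gluing, and the verification of (1)--(4) are routine measure theory; for the details we refer to \cite[Sec.~6]{EinsLind}.
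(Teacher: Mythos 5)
The paper itself gives no proof of this theorem: it is stated as a recollection from the literature, with the remark that it ``can be found in [EinsLind, Thm.~6.3] in the language of $\sigma$-algebras.'' Your sketch follows precisely that reference and correctly reproduces the main line of the Einsiedler--Lindenstrauss construction (exhaustion by compacts, flow-box partitions built from a measurable cross-section, consistency of conditional measures under refinement, renormalisation to glue to a Radon measure, and a density argument for $e\in\operatorname{supp}\mu_y^H$), so it is compatible with the paper's treatment and adds useful detail the paper itself omits.
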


\begin{remark}
\begin{enumerate}
\item Observe that the measure $\mu$ is not required to be $H$-invariant, $\mu_Y^H$ is generally not a probability measure and $Y$ is not assumed to be compact.
\item\label{nicepartition2} A partition $\mathcal{P}$ satisfying Condition \ref{nicepartition} will be called a partition of $Z \subset Y$ subordinate to $H$-orbits. Such a partition always exists. See proof of \cite[Thm.6.3]{EinsLind}, it also follows easily from Lemma \ref{l: local product structure}.
\end{enumerate}
\end{remark}

The following proposition also follows from the proof of \cite[Thm.6.3]{EinsLind}, see \cite[Pr. 6.28]{EinsLind}.

\begin{proposition}[T-invariance implies Haar leafwise measures]\label{leafwisehaar} Assume the hypothesis of Theorem \ref{leafwise}. Then, a Radon measure $\mu$ on $Y$ is $H$-invariant if and only if for $\mu$ a.e. $y \in Y$ the measure $\mu_y^H$ coincides with the left Haar measure on $H$.
\end{proposition}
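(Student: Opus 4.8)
The plan is to deduce both implications from the two structural facts about leaf-wise measures recorded in Theorem~\ref{leafwise}: the characterization of $\mu^H_y$, up to proportionality, via the conditional measures of a partition subordinate to $H$-orbits (property~(\ref{nicepartition})), and the intrinsic $H$-equivariance $\mu^H_{hy}\propto (R_h)_*\mu^H_y$ valid for $\mu$-a.e.\ $y$ and every $h\in H$, where $R_h(t)=th^{-1}$ (this last point follows from the uniqueness clause, since $(R_h)_*\mu^H_y$ satisfies the defining properties of $\mu^H_{hy}$).

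For the easy direction, I would assume $\mu$ is $H$-invariant, fix a compact $Z$ of positive measure and a partition $\mathcal P$ of $Z$ subordinate to $H$-orbits whose atoms are open $H$-plaques, and observe that, after transporting $(\mu|_Z)^{\mathcal P}_y$ to $H$ through the orbit map $t\mapsto ty$, the $H$-invariance of $\mu$ forces this measure to be invariant under those left translations that keep one inside the atom; hence it is proportional to the restriction of a left Haar measure $m_H$. Comparing with property~(\ref{nicepartition}) and letting the atoms exhaust $H$ then gives $\mu^H_y\propto m_H$ for $\mu$-a.e.\ $y$.

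For the converse --- the substantive direction --- I would argue locally and then propagate. Using the local product structure (Lemma~\ref{l: local product structure}), $\mu$-a.e.\ point $y_0$ has a flow-box neighborhood $W\cong T\times V$ with $V\subset H$ an open neighborhood of $e$, $T$ a transversal, and the plaques of a subordinate partition of $\overline W$ corresponding to the slices $\{t\}\times V$, with the action $s\cdot(t,v)=(t,sv)$ whenever $sv\in V$. Property~(\ref{nicepartition}) together with the hypothesis $\mu^H_y\propto m_H$ then identifies the disintegration of $\mu|_W$ over $T$ as having conditional measures proportional to $m_H$ restricted to the slices, so there is a transversal Radon measure $\nu$ with $\mu|_W=\int_T (m_H)|_{\{t\}\times V}\,d\nu(t)$. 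Next, for $h\in H$ close to $e$ one checks that $h_*\mu$ and $\mu$ have the same restriction to a slightly smaller box: pushing by $h$ sends each slice $\{t\}\times V'$ to $\{t\}\times hV'$ by a left translation (which preserves $m_H$) over the \emph{same} transversal coordinate $t$, so both $\mu$ and $h_*\mu$ disintegrate over $T$ with Haar conditionals and the same transversal measure $\nu$; uniqueness of disintegration gives equality on the overlap. Covering $Y$ by countably many such boxes produces a neighborhood $U$ of $e$ in $H$ with $h_*\mu=\mu$ for all $h\in U$; since $\{h:h_*\mu=\mu\}$ is a subgroup and $U$ generates the identity component of $H$ --- which is all of $H$ in every application here, the general case being recovered using the equivariance to cross components --- we conclude that $\mu$ is $H$-invariant.

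I expect the only real difficulty to be the bookkeeping in the flow-box step: making precise that the transversal $T$ and its measure $\nu$ are genuinely intrinsic to the pair $(\mu|_W,\mathcal P)$, so that the small translation $h$ matches the two disintegrations rather than merely re-parametrizing them; this is exactly where the local-freeness hypothesis and the uniqueness of conditional measures are used, and it is the content of the relevant part of the proof of \cite[Thm.~6.3]{EinsLind} (see also \cite[Pr.~6.28]{EinsLind}).
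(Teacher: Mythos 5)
The paper offers no proof of this proposition at all --- it is quoted from Einsiedler--Lindenstrauss (\cite[Thm.~6.3 and Pr.~6.28]{EinsLind}) --- so you are in effect reconstructing their argument, and your overall strategy (flow boxes, disintegration over a transversal with Haar conditionals, matching the disintegrations of $\mu$ and $h_*\mu$) is the standard and correct one. The direction ``$\mu$ $H$-invariant $\Rightarrow$ $\mu^H_y\propto m_H$'' is essentially right as sketched, provided you make the first step honest: the $H$-invariance of $\mu$ does not act on $\mu|_Z$ directly (neither $Z$ nor the atoms are $H$-invariant), so the claimed translation-invariance of the conditionals must itself be extracted from the flow-box comparison of the disintegrations of $\mu$ and $h_*\mu$ for small $h$, with the null set depending on $h$ handled by separability of $H$, and with a sequence of partitions whose atoms exhaust $H$ --- which you do acknowledge.

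The genuine gap is in the converse, at the globalization step: ``covering $Y$ by countably many such boxes produces a neighborhood $U$ of $e$ in $H$ with $h_*\mu=\mu$ for all $h\in U$.'' What the flow-box argument yields is, for each box $W_j$, a neighborhood $U_j$ of $e$ whose size depends on the geometry of $W_j$, such that $h_*\mu$ and $\mu$ agree on a shrunken box $W_j'$ for $h\in U_j$. To conclude $h_*\mu=\mu$ for a fixed $h$ you need $h\in\bigcap_j U_j$, and over a countably infinite cover this intersection need not be a neighborhood of $e$ --- it can collapse to $\{e\}$ when the boxes have no uniformly bounded geometry, which is precisely the situation allowed by the hypotheses ($Y$ Polish, $\mu$ Radon, no compactness). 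The step is valid when finitely many boxes suffice (compact $Y$), but in general the standard repair is different: fix $h$ and $f\in C_c(Y)$ and use subordinate partitions whose atoms are plaques containing arbitrarily large balls $B_R^H\cdot y$ (such partitions exist by the construction behind Theorem \ref{leafwise}); since the conditionals are Haar and the atoms can be taken almost invariant under the fixed $h$ as $R\to\infty$, one gets $\int f\,d(h_*\mu)=\int f\,d\mu$ in the limit. Finally, your closing remark that disconnected $H$ is ``recovered using the equivariance to cross components'' is not an argument: the equivariance $\mu^H_{hy}\propto (R_h)_*\mu^H_y$ controls only the leafwise part of $\mu$, not the transverse part, so it cannot by itself produce invariance under elements far from the identity component. (This last point is harmless for the paper, where every group to which the proposition is applied is connected.)
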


\subsection{Ergodic decomposition}

Suppose $H \times X \to X$ is a continuous action of a second countable locally compact group $H$  (which for us will be either a Lie group or a discrete countable group) on a compact metric space $X$. Let $\mathcal{B}$ be the $\sigma$-algebra of Borel sets and let $\mathcal{M}(X)$ be the set of probability measures on $X$ with respect with the  weak-$*$ topology. Let $\mathcal{M}^{H}(X)$  be the set of probability measures on $X$ which are $H$-invariant. $\mathcal{M}^{H}(X)$ is a  closed subset of $\mathcal{M}(X)$. A probability measure $\mathcal{M}^{H}(X)$ is ergodic if it cannot be written as a convex combination of probability measures $\mathcal{M}^{H}(X)$ in a non-trivial way.

We have the following version of the ergodic decomposition Theorem for $H$-actions:

\begin{theorem}[Ergodic decomposition for invariant measures]\label{ergodicdecompinv}

Suppose $\mu$ is a $H$-invariant probability measure on $X$. Then there exists a unique probability measure $\lambda$ on $\mathcal{M}^{H}(X)$  and a measurable map $\pi: (X, \mu) \to (\mathcal{M}^H(X), \lambda)$ such that for $\mu$-a.e. \(x\in X\), the projection $\pi(x)$ is a $H$-ergodic invariant measure and such that for every Borel function $f$ on $X$ we have:

$$\int_X f d \mu =  \int_{\mathcal{M}^{H}(X)} \left( \int_X f d \pi(x) \right) d \lambda .$$

\end{theorem}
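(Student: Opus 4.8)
The plan is to obtain the decomposition as the disintegration of $\mu$ over the $\sigma$-algebra of $H$-invariant sets, which is the classical route to the ergodic decomposition; since the statement asserts nothing new, I only sketch the standard argument.

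\emph{Setup.} Let $\mathcal{I}\subset\mathcal{B}$ be the collection of $H$-invariant Borel sets modulo $\mu$, i.e. Borel $A$ with $\mu(hA\triangle A)=0$ for all $h\in H$; this is a sub-$\sigma$-algebra. Because the action is continuous and $\mu$ is $H$-invariant, for every Borel $A$ the map $h\mapsto\mathbf{1}_{hA}\in L^1(\mu)$ is continuous (check it for open and closed $A$ by dominated convergence, then extend by density), so the set $\{h:\mu(hA\triangle A)=0\}$ is closed; as $H$ is second countable, fixing a countable dense $S\subset H$ we see that $\mathcal{I}$ coincides mod $\mu$ with the $\sigma$-algebra of $S$-invariant sets. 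Since $L^{2}(X,\mathcal{B},\mu)$ is separable, so is its closed subspace $L^{2}(X,\mathcal{I},\mu)$, and hence $\mathcal{I}$ is countably generated mod $\mu$, say by sets $\{A_{k}\}_{k\in\mathbb{N}}$. The associated partition of $(X,\mathcal{B},\mu)$ into $\mathcal{I}$-atoms is therefore measurable, so the disintegration recalled in Section \ref{ergodictheory} applies: there is a $\mu$-a.e.\ defined, $\mathcal{I}$-measurable assignment $x\mapsto\mu_{x}^{\mathcal{I}}\in\mathcal{M}(X)$ with $\mu_{x}^{\mathcal{I}}$ supported on the $\mathcal{I}$-atom of $x$, with $x\mapsto\int f\,d\mu_{x}^{\mathcal{I}}$ a version of $\mathbb{E}_{\mu}[f\mid\mathcal{I}]$ for $f\in L^{1}(\mu)$, and with $\mu=\int\mu_{x}^{\mathcal{I}}\,d\mu(x)$.

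\emph{The conditional measures are ergodic and $H$-invariant.} Fix $h\in S$. Pushing $\mu=\int\mu_{x}^{\mathcal{I}}\,d\mu(x)$ forward by $h$ and changing variables, the map $y\mapsto h_{*}\mu_{h^{-1}y}^{\mathcal{I}}$ is again an $\mathcal{I}$-measurable family of probabilities, with $h_{*}\mu_{h^{-1}y}^{\mathcal{I}}$ carried by the $\mathcal{I}$-atom of $y$, disintegrating $\mu$ over $\mathcal{I}$; by essential uniqueness of the disintegration, $h_{*}\mu_{x}^{\mathcal{I}}=\mu_{hx}^{\mathcal{I}}$ for $\mu$-a.e.\ $x$. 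Moreover $x\mapsto\mu_{x}^{\mathcal{I}}$, being $\mathcal{I}$-measurable, is constant on $\mathcal{I}$-atoms, and $x$ and $hx$ lie in the same $\mathcal{I}$-atom; hence $\mu_{hx}^{\mathcal{I}}=\mu_{x}^{\mathcal{I}}$, so $h_{*}\mu_{x}^{\mathcal{I}}=\mu_{x}^{\mathcal{I}}$ for $\mu$-a.e.\ $x$. Intersecting over the countable set $S$ and using weak-$*$ continuity of $h\mapsto h_{*}\nu$ together with density of $S$, we conclude $\mu_{x}^{\mathcal{I}}\in\mathcal{M}^{H}(X)$ for $\mu$-a.e.\ $x$. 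For ergodicity, note $\mu_{x}^{\mathcal{I}}(A_{k})=\mathbf{1}_{A_{k}}(x)\in\{0,1\}$ for $\mu$-a.e.\ $x$, for each $k$; since $\{A_{k}\}$ generates $\mathcal{I}$ mod $\mu$, a standard argument shows it generates the $H$-invariant $\sigma$-algebra mod $\mu_{x}^{\mathcal{I}}$ for $\mu$-a.e.\ $x$, so every $H$-invariant set has $\mu_{x}^{\mathcal{I}}$-measure $0$ or $1$, i.e.\ $\mu_{x}^{\mathcal{I}}$ is ergodic. Recalling that $\mathcal{M}^{H}(X)$ is a compact metrizable (hence standard Borel) subset of $\mathcal{M}(X)$ in which the ergodic measures form a $G_{\delta}$, the map $\pi(x):=\mu_{x}^{\mathcal{I}}$ is a Borel map of $X$ into the ergodic part of $\mathcal{M}^{H}(X)$, defined $\mu$-a.e. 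Set $\lambda:=\pi_{*}\mu$. Then for any Borel $f$, $\int_{X}f\,d\mu=\int_{X}\bigl(\int_{X}f\,d\mu_{x}^{\mathcal{I}}\bigr)\,d\mu(x)=\int_{\mathcal{M}^{H}(X)}\bigl(\int_{X}f\,d\nu\bigr)\,d\lambda(\nu)$, which is the asserted identity.

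\emph{Uniqueness.} Suppose $(\lambda',\pi')$ satisfies the same conclusions. Distinct ergodic invariant measures are mutually singular: if $\nu_{1}\ne\nu_{2}$ are ergodic and not mutually singular, the Lebesgue decomposition of $\nu_{1}$ with respect to $\nu_{2}$ has $H$-invariant absolutely continuous and singular parts, so by ergodicity of $\nu_{1}$ one has $\nu_{1}\ll\nu_{2}$, and then the $H$-invariant density $d\nu_{1}/d\nu_{2}$ is $\nu_{2}$-a.e.\ constant equal to $1$, forcing $\nu_{1}=\nu_{2}$. Consequently the formula $\mu=\int\nu\,d\lambda'(\nu)$ exhibits $\mu$ as an integral of mutually singular ergodic measures, and one checks that $x\mapsto\pi'(x)$ is $\mu$-a.e.\ a disintegration of $\mu$ over $\mathcal{I}$ (each $\pi'(x)$ is $H$-invariant hence assigns $0$ or $1$ to every element of $\mathcal{I}$, and $\int f\,d\pi'(x)$ reproduces $\mathbb{E}_{\mu}[f\mid\mathcal{I}]$); essential uniqueness of the disintegration gives $\pi'=\pi$ $\mu$-a.e., whence $\lambda'=\pi'_{*}\mu=\pi_{*}\mu=\lambda$. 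Alternatively, uniqueness is immediate from the fact that $\mathcal{M}^{H}(X)$ is a Choquet simplex whose extreme points are exactly the ergodic measures.

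\emph{Main obstacle.} The genuinely delicate steps are (i) upgrading the invariance statement from ``for each fixed $h$, $\mu$-a.e.\ $x$'' to ``$\mu$-a.e.\ $x$, for all $h\in H$'', which is handled by separability of $H$ plus weak-$*$ continuity of the $H$-action on $\mathcal{M}(X)$, and (ii) verifying that a countable family generating $\mathcal{I}$ mod $\mu$ still generates mod $\mu_{x}^{\mathcal{I}}$ for $\mu$-a.e.\ $x$ — this is the classical technical core behind ergodicity of the fibers. Everything else is bookkeeping with the disintegration theorem already recalled.
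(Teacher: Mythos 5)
The paper does not prove this statement: Section \ref{ergodictheory} is explicitly a review of background facts, and this ergodic decomposition theorem is recalled without proof (only the stationary-measure analogue carries a citation, to \cite[Lemma B.11]{Brownetal}). So there is no paper argument to compare yours against; your proof stands or falls on its own. Your route — disintegrating $\mu$ over the $\sigma$-algebra $\mathcal{I}$ of $H$-invariant sets, using separability of $L^2(\mu)$ to make $\mathcal{I}$ countably generated, and upgrading invariance of the fibers from a countable dense $S\subset H$ to all of $H$ by weak-$*$ continuity — is the standard one and is correct in structure. The one place where you lean on "a standard argument" is in fact the real content of the theorem: showing that a set which is invariant merely mod $\mu_x^{\mathcal{I}}$ (and which need not arise from any $\mu$-invariant set) has $\mu_x^{\mathcal{I}}$-measure $0$ or $1$. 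Knowing that $\mu_x^{\mathcal{I}}(A_k)\in\{0,1\}$ for the generators $A_k$ of $\mathcal{I}$ only gives triviality of the trace of $\mathcal{I}$ on the fiber, not of the possibly larger invariant $\sigma$-algebra of $\mu_x^{\mathcal{I}}$; for a general locally compact second countable $H$ (no pointwise ergodic theorem available) this step is usually closed either by the conditional-expectation identity $\mathbb{E}_{\mu_x^{\mathcal{I}}}[f\mid\mathcal{I}]=\int f\,d\mu_x^{\mathcal{I}}$ a.e., or by invoking Choquet's theorem on the metrizable simplex $\mathcal{M}^H(X)$ (which you mention only for uniqueness). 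Either supply that argument or cite it (e.g. Varadarajan, or Greschonig--Schmidt); as written it is a flagged but unfilled step rather than an error.
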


There is a similar theorem for actions preserving a stationary measure which we recall.  Let $\nu$ be a probability measure on a locally compact group $G$. A probability measure $\mu$ on $X$ is said to be $\nu$-stationary if $\nu * \mu = \mu$ ($*$ denotes convolution). We let $\mathcal{M}^{\nu}$ be the set of $\nu$-stationary measures on $X$.

\begin{theorem}[Ergodic decomposition for stationary measures]
Suppose $\nu$ is a probability measure on $G$, and $\mu$ is a $\nu$-stationary measure for an action of $G$ on a compact metric space $X$. Then there exists a unique probability measure $\lambda$ on $\mathcal{M}^{\nu}(X)$ and a measurable map $\pi: (X, \nu) \to (\mathcal{M}^\nu(X), \lambda)$ such that for $\mu$-a.e. \(x\in X\),  $\pi(x)$ is an ergodic $\nu$-stationary probability measure and such that for every Borel function $f$ on $X$ we have:

$$\int_X f d \mu =  \int_{\mathcal{M}^{\nu}(X)} \left( \int_X f d \pi(x) \right)d \lambda .$$
\end{theorem}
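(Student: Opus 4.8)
The plan is to reproduce the classical ergodic decomposition, with the Markov operator attached to $\nu$ playing the role of a single transformation. Set $Pf(x)=\int_G f(gx)\,d\nu(g)$; since the action is continuous, $P$ is a positive contraction of $C(X)$ fixing the constants, and a probability measure $m$ on $X$ is $\nu$-stationary exactly when $P^{*}m=m$. Hence $\mathcal M^{\nu}(X)$ is a non-empty (Markov--Kakutani), convex, weak-$*$ compact subset of the metrizable space $\mathcal M(X)$, whose extreme points are, by the very definition of ergodicity given above, precisely the ergodic $\nu$-stationary measures. I would first invoke Choquet's theorem for metrizable compact convex sets to obtain a probability measure $\lambda$ on $\mathcal M^{\nu}(X)$, carried by the ergodic measures, whose barycenter is the given $\mu$; evaluating the barycenter identity on $f\in C(X)$ and extending by a monotone class argument yields $\int_X f\,d\mu=\int_{\mathcal M^{\nu}(X)}\big(\int_X f\,dm\big)\,d\lambda(m)$ for every bounded, hence every $\mu$-integrable, Borel $f$. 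This settles existence of $\lambda$; what remains is to produce the \emph{measurable} map $\pi$ with $\pi(x)$ ergodic $\mu$-a.e., and above all to prove uniqueness of $\lambda$.

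For this I would use the pointwise ergodic theorem rather than just its mean version. Fix $f\in C(X)$ and put $A_nf=\frac1n\sum_{k=0}^{n-1}P^kf\in C(X)$. As $P$ is a contraction of both $L^1(m)$ and $L^\infty(m)$ for every $m\in\mathcal M^{\nu}(X)$, the Dunford--Schwartz (Chacon--Ornstein) pointwise ergodic theorem gives that $A_nf$ converges $m$-a.e. to a $P$-invariant limit, which for ergodic $m$ is the constant $\int_X f\,dm$. Define the Borel function $g_f(x):=\lim_n A_nf(x)$ where the limit exists and $g_f(x):=0$ otherwise; it depends on no choices, and $g_f=\int_X f\,dm$ holds $m$-a.e. for every ergodic $m$. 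Fix a countable dense $\mathcal F\subset C(X)$; since $\big(\int_X f\,dm\big)_{f\in\mathcal F}$ determines $m$, the prescription ``$\pi(x)$ is the unique $m\in\mathcal M^{\nu}(X)$ with $\int_X f\,dm=g_f(x)$ for all $f\in\mathcal F$'' defines a Borel map wherever such a unique $m$ exists. Now for \emph{any} probability measure $\lambda'$ on $\mathcal M^{\nu}(X)$ carried by ergodic measures, and for $\lambda'$-a.e. $m$, one has $g_f=\int_X f\,dm$ $m$-a.e. simultaneously over $f\in\mathcal F$, so $\pi$ is defined $m$-a.e. and $\pi(x)=m$ for $m$-a.e. $x$, i.e. $\pi_*m=\delta_m$. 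If moreover $\lambda'$ has barycenter $\mu$, then $\pi$ is defined $\mu$-a.e. and $\pi_*\mu=\int\pi_*m\,d\lambda'(m)=\int\delta_m\,d\lambda'(m)=\lambda'$. Applying this both to the Choquet measure $\lambda$ of the first step and to an arbitrary competitor $\lambda'$ gives $\lambda=\pi_*\mu=\lambda'$, proving uniqueness; in particular $\lambda=\pi_*\mu$ is carried by ergodic measures, so $\pi(x)$ is ergodic for $\mu$-a.e. $x$, and the integral formula of the statement is just the barycenter identity for $\lambda=\pi_*\mu$ rewritten as $\int_X f\,d\mu=\int_X\big(\int_X f\,d\pi(x)\big)\,d\mu(x)$.

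The only genuinely delicate ingredient is this last uniqueness argument, which is where the fact that $\mathcal M^{\nu}(X)$ is a Choquet simplex is really being exploited; everything else is a mechanical transcription of the invariant-measure case, Theorem \ref{ergodicdecompinv}, with the Markov operator $P$ in place of a single transformation. The statement is in any case classical, and one may alternatively appeal to \cite[Ch.~1]{BenoistQuint}, or adapt the $\sigma$-algebra formalism of \cite[\S6]{EinsLind} to the $\nu$-invariant $\sigma$-algebra $\{A:\ P\mathbf 1_A=\mathbf 1_A\ \mu\text{-a.e.}\}$.
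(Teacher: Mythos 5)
The paper itself does not prove this statement --- it is quoted from the literature, with the reference \cite[Lemma B.11]{Brownetal} given immediately after it --- so your argument is necessarily a different route: a self-contained proof that treats the Markov operator $P$ as a surrogate for a single transformation, obtaining existence of $\lambda$ from Choquet's theorem on the compact convex metrizable set $\mathcal M^{\nu}(X)$ and uniqueness from the pointwise identification $\pi_*m=\delta_m$ for ergodic $m$, whence $\lambda'=\pi_*\mu$ for every competitor. That architecture is sound, and the uniqueness step is exactly the right way to exploit the simplex structure; what the citation to Brown et al.\ buys the paper is brevity, while your version buys transparency and makes visible that the only input beyond soft functional analysis is a pointwise ergodic theorem for $P$. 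The one step you assert without justification is the identification of the Dunford--Schwartz limit of $A_nf$ with the constant $\int_X f\,dm$ when $m$ is ergodic: with the paper's definition of ergodicity as extremality in $\mathcal M^{\nu}(X)$, this requires the classical but non-formal equivalence between extremality of $m$ and the $m$-a.e.\ constancy of bounded $P$-invariant functions, and the direction you need (extremal $\Rightarrow$ $P$-invariant functions constant) is the non-trivial one --- note in particular that the naive decomposition $m=c\cdot\frac{f}{c}m+(1-c)\cdot\frac{1-f}{1-c}m$ does not produce stationary pieces, since $Pf=f$ does not give $\nu*(fm)=fm$. You can either cite this equivalence (Benoist--Quint, or Kifer), or avoid Chacon--Ornstein altogether by deriving $A_nf\to\int_X f\,dm$ $m$-a.e.\ from the paper's own Theorem \ref{t: random ergodic theorem}: ergodicity of $m$ as a stationary measure gives ergodicity of the skew shift $S$ on $\bigl(G^{\N^*}\times X,\ \nu^{\otimes\N^*}\otimes m\bigr)$, Birkhoff then gives $\frac1n\sum_{k}f(l_kx)\to\int_X f\,dm$ for a.e.\ $(\omega,x)$, and integrating in $\omega$ with dominated convergence (using $P^kf(x)=\int f(l_kx)\,d\nu^{\otimes\N^*}(\omega)$) yields the same conclusion. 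With that point supplied, the proof is correct.
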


See \cite[Lemma B.11]{Brownetal}. %XXXMorereferences.

\subsubsection{Random walks and the suspension space}\label{suspensionrandomwalk}

Suppose that $\nu$ is a probability measure on $G$, $G \times X \to X$ is an action of $G$ on a compact metric space $X$ and $\mu$ is a $\nu$-stationary probability measure on $X$, there is an associated suspension space $$(S(X),\mu_{S(X)}) := (G^{\N^*} \times X, \nu^{\otimes \N^*} \otimes \mu)$$ and a measure preserving map $$S ((g_n)_n , x):= ( (g_{n+1})_n, g_1(x)) .$$

   The following result is classical, we refer to \cite[Thm.2.1.]{Kifer} and references therein.

\begin{theorem} [Random Ergodic Theorem]  \label{t: random ergodic theorem} The measure $\mu$ is ergodic as a $\nu$-stationary measure on $X$  if and only if the probability measure $\mu_{S(X)}$ is $S$-ergodic. 
\end{theorem}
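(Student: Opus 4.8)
The statement to prove is the Random Ergodic Theorem (Theorem~\ref{t: random ergodic theorem}), which characterizes ergodicity of a $\nu$-stationary measure $\mu$ on $X$ in terms of $S$-ergodicity of the measure $\mu_{S(X)} = \nu^{\otimes \N^*} \otimes \mu$ on the suspension space $S(X) = G^{\N^*} \times X$ under the skew shift $S((g_n)_n, x) = ((g_{n+1})_n, g_1 x)$.

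The plan is to work with the algebra of $S$-invariant functions on $S(X)$ and relate it to the harmonic functions on $X$. First I would recall that, since $\mu$ is $\nu$-stationary, $\mu_{S(X)}$ is indeed $S$-invariant (a direct computation using $\nu * \mu = \mu$ and the product structure of $\nu^{\otimes \N^*}$). Then I would characterize $S$-invariant $L^2$-functions: if $F \in L^2(\mu_{S(X)})$ satisfies $F \circ S = F$, one shows by a martingale/conditional-expectation argument that $F$ must be measurable with respect to the sub-$\sigma$-algebra coming from the ``past at infinity'' in a way that forces $F((g_n)_n, x)$ to depend only on $x$ through a $\nu$-harmonic function; more precisely, setting $h(x) = \int F((g_n)_n, x)\, d\nu^{\otimes \N^*}$, invariance under $S$ gives $h(x) = \int h(gx)\, d\nu(g)$, i.e.\ $h$ is harmonic with respect to the convolution operator $P_\nu h(x) = \int h(gx)\,d\nu(g)$, and moreover $F = h$ almost everywhere. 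Conversely, any bounded harmonic function $h$ on $(X,\mu)$ pulls back to an $S$-invariant function on $S(X)$. This establishes a bijection between the space of $S$-invariant functions in $L^\infty(\mu_{S(X)})$ and the space of bounded $P_\nu$-harmonic functions on $(X,\mu)$.

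Given this bijection, the equivalence is immediate: $\mu_{S(X)}$ is $S$-ergodic iff every bounded $S$-invariant function is constant iff every bounded harmonic function on $(X,\mu)$ is constant. On the other hand, the ergodic decomposition theorem for stationary measures (stated just before in the excerpt) shows that $\mu$ fails to be ergodic as a $\nu$-stationary measure precisely when there is a non-trivial convex decomposition into stationary measures, and such a decomposition corresponds to a non-constant bounded harmonic function (a Radon--Nikodym cocycle / the conditional expectation onto the $\sigma$-algebra generated by the decomposition is harmonic and non-constant). Hence $\mu$ is ergodic iff every bounded harmonic function on $(X,\mu)$ is constant, which closes the loop with $S$-ergodicity of $\mu_{S(X)}$.

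The main obstacle is the first direction of the $S$-invariant/harmonic correspondence: showing that an $S$-invariant function on the suspension space genuinely descends to a function of $x$ alone (and is harmonic), rather than depending in an essential way on the increments $(g_n)_n$. This is the place where one must use a backward-martingale convergence argument — writing $F = \lim_n \mathbb{E}[F \mid \mathcal{A}_n]$ for the decreasing filtration $\mathcal{A}_n$ of events not depending on $g_1,\dots,g_n$, and using $S$-invariance to transfer the dependence onto $x$ — and verify the relevant integrability. Since this is a classical fact, I would simply invoke \cite[Thm.~2.1]{Kifer} and the references therein rather than reproduce the martingale argument in detail, using the ergodic decomposition theorem quoted above to phrase the conclusion cleanly.
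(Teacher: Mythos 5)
The paper gives no proof of this statement at all: it simply records the result as classical and refers to \cite[Thm.~2.1]{Kifer}, which is exactly what your proposal ultimately does, so your treatment matches the paper's. One caution about your sketch, should you ever need to write it out: an $S$-invariant function $F$ is \emph{not} equal $\mu_{S(X)}$-a.e.\ to $h(x)=\int F(\omega,x)\,d\nu^{\otimes\N^*}(\omega)$ — the backward/forward martingale argument yields $F(\omega,x)=\lim_k h(g_k\cdots g_1x)$, which genuinely depends on the increments, and only the equivalence ``$F$ constant iff $h$ constant'' (via $h=\mathbb{E}[F\mid x]$) survives; likewise the implication ``non-constant bounded harmonic function $\Rightarrow$ $\mu$ non-ergodic'' does not follow from the ergodic decomposition theorem alone (that gives the converse) and is the genuinely delicate step hidden in the citation.
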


From this and Birkhoff's ergodic Theorem we have:

\begin{corollary} Suppose that $\nu$ is a probability measure on $G$, $G \times X \to X$ is an action of $G$ on a compact metric space $X$ and $\mu$ is a $\nu$-stationary ergodic probability measure on $X$. Then for any $f \in C(X)$ and $ \nu^{\otimes \N^*}$ a.e. sequence $(g_n)_n$ and $\mu$ a.e. $x \in X$, if we let $l_1 := g_1$ and $l_n := g_n l_{n-1}$, then:  
$$\frac{1}{N}\sum_{n=1}^N  f(l_n(x)) \to \int_X f d \mu .$$

\end{corollary}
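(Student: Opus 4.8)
The plan is to apply the Random Ergodic Theorem together with Birkhoff's ergodic theorem on the suspension space $S(X) = G^{\N^*}\times X$ equipped with the transformation $S$ and the measure $\mu_{S(X)} = \nu^{\otimes \N^*}\otimes \mu$.

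First, since $\mu$ is $\nu$-stationary and ergodic, Theorem \ref{t: random ergodic theorem} gives that $\mu_{S(X)}$ is $S$-ergodic. Next, given $f\in C(X)$, define $F: S(X)\to \R$ by $F((g_n)_n, x) := f(x)$. Although $G^{\N^*}$ is not compact, $F$ does not depend on the first coordinate, so $F$ is bounded and $\int |F|\,d\mu_{S(X)} = \int_X |f|\,d\mu < \infty$; in particular $F\in L^1(\mu_{S(X)})$. A direct induction on the definition $S((g_n)_n, x) = ((g_{n+1})_n, g_1 x)$ shows that the second coordinate of $S^n((g_n)_n, x)$ is exactly $l_n(x)$ with $l_n = g_n l_{n-1}$ (and $l_0 = e$), hence $F(S^n((g_m)_m, x)) = f(l_n(x))$.

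Birkhoff's ergodic theorem applied to $(S(X), \mu_{S(X)}, S)$ and $F$ then yields, for $\mu_{S(X)}$-a.e. $((g_n)_n, x)$,
$$\frac{1}{N}\sum_{n=0}^{N-1} f(l_n(x)) \longrightarrow \int F\, d\mu_{S(X)} = \int_X f\, d\mu.$$
Replacing the sum $\sum_{n=0}^{N-1}$ by $\sum_{n=1}^{N}$ changes the average by $\tfrac1N\big(f(l_N(x)) - f(x)\big)$, which tends to $0$ because $f$ is bounded; so the same limit holds for $\tfrac1N\sum_{n=1}^N f(l_n(x))$. Finally, since $\mu_{S(X)} = \nu^{\otimes \N^*}\otimes \mu$ is a product measure, Fubini's theorem translates ``$\mu_{S(X)}$-a.e.'' into ``for $\nu^{\otimes \N^*}$-a.e. $(g_n)_n$ and $\mu$-a.e. $x$'', which is the assertion of the corollary.

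There is no serious obstacle here: the statement is a routine consequence of the two ergodic theorems already recorded. The only points requiring a little care are the bookkeeping identifying the iterates of $S$ with the products $l_n$, the harmless index shift between $\sum_{n=0}^{N-1}$ and $\sum_{n=1}^{N}$, and the use of Fubini to split the full-measure condition on the product space into the two separate ``almost every'' conditions.
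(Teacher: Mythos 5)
Your proof is correct and follows exactly the route the paper intends: the paper derives this corollary directly from the Random Ergodic Theorem combined with Birkhoff's theorem on the skew-product $(S(X),\mu_{S(X)},S)$, which is precisely what you carry out (with the bookkeeping for $S^n$, the index shift, and Fubini made explicit). No issues.
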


\end{section}

%%%%%%%%%%%%%%%%%%%%%%%%%%%%%%%%%%%%%%%%%%%%%%%%%%%%%%%%%%%%%

\section{A laminated \(G\)-space by Lipschitz oriented unidimensional manifolds}\label{almostperiodicspace}

In this section, we associate to a left-orderable lattice in a semi-simple Lie group \(G\) with finite center, a laminated \(G\)-space by Lipschitz unidimensional manifolds. This space appears as the suspension of a \(\Gamma\)-space called the almost-periodic space, whose contruction relies on the combination of the works \cite{almost-periodic} and \cite{DKNP}. We begin by an expository section on laminations by Lipschitz manifolds.

\subsection{Laminated structure}

In this section we prove that any locally free flow has a local product structure. This statement is presumably well-known, but since we did not found any reference we include a proof, filling a gap in  \cite{almost-periodic}.  %It gives the almost-periodic space a laminated structure.

\subsubsection{Definition} \label{sss: lamination Lipschitz}

A lamination by Lipschitz manifolds (of dimension \(r\)) of a topological space \(Z\)  is the data of a maximal set of homeomorphisms \( \varphi_i : U_i  \rightarrow B \times T_i\), where \(\{U_i\}_i \) is a covering of \( Z\) by open subsets, \(B= B^r\subset \R ^r\) an open ball in the standard euclidean space \(\R^r\), and where the homeomorphisms \( \varphi _i \circ \varphi_j^{-1} : \varphi _ i (U_i \cap U_j) \rightarrow \varphi_j (U_i \cap U_j)\) preserve the local fibrations by balls \( B \times T_i \rightarrow T_i\) -- namely they are of the form \( (x_j, \tau _j ) \mapsto (x_i(x_j,\tau_j), \tau_i(\tau_j)) \) --  and satisfy that  each \( x_j, \tau_j\) in the domain of \(x_j\) has a neighborhood \(\mathcal V\) on which we have,  for a certain constant \( \xi_{\mathcal V} \), 
\[ d(x_i(x_j ', \tau_j ') , x_i (x_j '',\tau_j ' ) ) \leq \xi_{\mathcal V} \ d( x_j , x_{j'} ) \]
for every \( x_j ', x_j '', \tau_j'\)  such that \((x_j' , \tau_j') ,\  (x_j'', \tau_j ') \in \mathcal V\),
where \(d\) is the euclidean distance on \(B\). The lamination is oriented if the changes of coordinates preserve orientation along the plaques \(B\times \{t_i\}\). Its dimension is \(r\).  

\begin{lemma}[Flow boxes]\label{l: local product structure}
Let \(Z\) be a completely Hausdorff space (namely any two distinct points can be separated by a continuous function), and \( H:\R \times Z\rightarrow Z\) a locally free action of \(\R\) on \(Z\). Then every point \(x\in Z\) admits a neighborhood \(U\) which is homeomorphic the product \( I\times T\) of a neighborhood \(0\in I\subset \R \) of the origin in  \(\R\) with a topological space \(T\) (called transversal space) in such a way that the action of \(\R\) on \(U\) is given by \( t \cdot (s,\tau) = (t+s, \tau)\) in a neighborhood of \( (0,x)\in \R\times Z\). The changes of coordinates are of the form 
 \( (s,\tau) \mapsto (s'= s+ s_\tau  , \tau'= \tau'(\tau)) \) where \( \tau\mapsto s_\tau\) and \(\tau\mapsto \tau'(\tau)\) are continuous. In particular, the set of all these coordinates provides \(Z\) with a structure of an oriented lamination by Lipschitz manifolds of dimension one.
\end{lemma}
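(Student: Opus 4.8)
The statement is a standard "flow box" lemma for a locally free $\mathbb{R}$-action, but with attention paid to why the transversal can be taken to be just a topological space (no smoothness) and why the resulting coordinate changes give a Lipschitz lamination of dimension one. The key point is that along each orbit the time parameter provides a canonical local coordinate, and the Lipschitz constant of the change of coordinates in the orbit direction is exactly $1$ — the function $x_i(x_j',\tau_j') = x_j' + s_{\tau_j'}$ is a translation in the first variable, hence an isometry in $x_j$, so the Lipschitz condition in the definition of \ref{sss: lamination Lipschitz} holds with $\xi_{\mathcal V}=1$. So really the work is (a) producing the local product chart, and (b) checking the transition maps have the claimed shape.

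\emph{Step 1: construct one flow box.} Fix $x\in Z$. By local freeness there is $\eta>0$ such that $H(t,y)=y$ with $|t|<\eta$ forces $t=0$, on a neighborhood of $x$. Since $Z$ is completely Hausdorff, pick a continuous function $f:Z\to\R$ with $f(x)\neq f(H(\varepsilon,x))$ for some small $\varepsilon\in(0,\eta)$; after rescaling and translating $f$ we may arrange $f(x)=0$ and $\partial_t\big|_{t=0} f(H(t,x))\neq 0$ — more carefully, since $t\mapsto f(H(t,x))$ is continuous and takes two different values near $0$, it is not locally constant, so we can choose a small $\varepsilon$ and a level $c$ strictly between $f(x)$ and $f(H(\varepsilon,x))$. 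The set $T:=\{y : f(y)=c\}$ near $H(\varepsilon/2,x)$ will serve as transversal. The map $\Phi:(s,\tau)\in I\times T\mapsto H(s,\tau)$, with $I$ a small interval around $0$, is continuous; I claim it is a homeomorphism onto a neighborhood of $x$ once $I,T$ are shrunk. Injectivity: if $H(s,\tau)=H(s',\tau')$ then $H(s-s',\tau)=\tau'$; if $|s-s'|$ is small this forces (by local freeness plus the transversality: the orbit of $\tau$ crosses the level set $\{f=c\}$ only at time $0$ near $\tau$, since $f\circ H(\cdot,\tau)$ is monotone-ish near $0$ — here one uses that the sign of the increment of $f$ along orbits is locally constant in $\tau$ by continuity) $s=s'$ and $\tau=\tau'$. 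Openness/continuity of the inverse: follows from invariance of domain is not available (no manifold structure on $T$), so instead one argues directly that $\Phi$ is an open map using the flow: a basic neighborhood $H((-\delta,\delta),V)$ of $x$, with $V$ a relatively open piece of a slightly larger transversal, is carried by the retraction "flow back to $\{f=c\}$" onto an open product set. This retraction $r(y)=$ "the unique small-time flow translate of $y$ lying in $\{f=c\}$" is continuous because it is defined implicitly by the continuous monotone function $s\mapsto f(H(s,y))$, and continuity of implicitly-defined roots of a continuous strictly-monotone family is elementary.

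\emph{Step 2: transition maps.} Given two such flow boxes $\Phi_i:I_i\times T_i\to U_i$, $\Phi_j:I_j\times T_j\to U_j$ built from level sets $\{f_i=c_i\}$, on the overlap we have $\Phi_j^{-1}\Phi_i(s,\tau) = (s',\tau')$ where $\tau'=\tau'(\tau)\in T_j$ is obtained by flowing the point $\tau\in T_i$ until it meets $\{f_j=c_j\}$ — this is the composition of $r_j$ (the retraction of Step 1 for the $j$-box) with the inclusion, hence continuous and independent of $s$ — and then $s' = s - s_\tau$ where $s_\tau$ is the (continuous in $\tau$) time it takes to flow $\tau$ to $\tau'(\tau)$; continuity of $s_\tau$ is again implicit-function-theorem-for-continuous-monotone-families. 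This is exactly the asserted form $(s,\tau)\mapsto(s+s_\tau,\tau'(\tau))$ (up to sign of $s_\tau$, which is harmless). Since the first-coordinate map is $s\mapsto s+s_\tau$, literally a translation, it is $1$-Lipschitz in $s$ locally uniformly in $\tau$, so these charts satisfy the axioms of \ref{sss: lamination Lipschitz} with $r=1$; orientation is preserved because all charts use the orientation of $\R$ via the flow direction. Maximality is obtained by taking the union of all such charts, which is consistent since any two are compatible by the computation just done. Hence $Z$ is an oriented one-dimensional Lipschitz lamination and the $\R$-action is locally $t\cdot(s,\tau)=(t+s,\tau)$ by construction.

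\emph{Expected main obstacle.} The only genuinely delicate point is Step 1's claim that $\Phi$ is a \emph{homeomorphism} — i.e. that the "flow-back to a level set" retraction is well-defined, single-valued, and continuous on a full neighborhood of $x$ — since we cannot invoke invariance of domain or any differentiable structure. Everything hinges on choosing the continuous function $f$ so that $t\mapsto f(H(t,y))$ is \emph{strictly monotone} on a small time interval, uniformly for $y$ near $x$; a priori a merely continuous $f$ only gives non-constancy at the single point $x$. The fix is to replace $f$ by a time-average $\tilde f(y):=\int_0^{\varepsilon} f(H(u,y))\,du$, or more robustly to first find $f$ with $f(x)\ne f(H(\varepsilon,x))$ and then note that $g(y):=f(H(\varepsilon,y))-f(y)$ is continuous and nonzero at $x$, hence of constant sign near $x$, which forces $t\mapsto\int_0^\varepsilon f(H(t+u,y))\,du$ to be strictly monotone on a neighborhood; this averaged function is the right $f$ to define the transversal and the retraction. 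Once strict monotonicity holds locally uniformly, all the continuity statements above are routine, and the Lipschitz bound is automatic with constant $1$.
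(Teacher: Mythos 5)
Your proof is correct and follows essentially the same route as the paper: the paper also uses the completely Hausdorff hypothesis to produce a continuous $f$ separating two nearby points of an orbit, then averages $f$ along the flow (convolution with a measure on $\R$) to obtain a function whose restriction to orbits is $C^1$ with locally nonvanishing derivative, and takes a level set of that averaged function as the transversal, with the flow-to-the-level-set retraction furnishing the inverse chart. Your explicit computation that the $t$-derivative of the time average equals $f(H(t+\varepsilon,y))-f(H(t,y))$, which has constant sign near $x$, is exactly the mechanism the paper relies on, and your treatment of the transition maps and the Lipschitz constant $1$ matches what the paper leaves to the reader.
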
 

The neighborhood \( U\) and the coordinate system \((s, \tau)\) provided by Lemma \ref{l: local product structure} will be called a \textit{flow box}.

%\begin{remark} This statement has a converse in the one dimensional case: any oriented one dimensional lamination of a compact space \(Z\) is associated to a locally free action \(T_Z:\R\times Z\rightarrow Z\) as in Lemma \ref{l: local product structure}. We will not need to use this fact in general. See however Proposition \ref{p: free flow on X} where such a flow is constructed. \end{remark}

\begin{proof} 
Introduce the space \(C^1_H (Z)\) whose elements are continuous functions \(f: Z\rightarrow \R\) that are \(C^1\) along the \(H\)-orbits, and whose derivative \( H\cdot f:= \lim _{t\rightarrow 0} \frac{d f\circ H(t,.)}{dt} \) is continuous. 

The existence of such functions can be established using convolutions. Namely, let \( m_\R\) be a probability measure on \(\R\) having compact support and a smooth density with respect to Lebesgue measure on \(\R\). For every \(f\in C^0 (Z)\), the function \(F: Z\rightarrow \R\) defined by \( F(x) := \int _\R f(H(t ,x) ) \  m_\R (dt) \) is of class \(C^1\). %Indeed, we have \[ F(T^t  x) = \int_\R f( g\exp (tv) x) \ m_H (dg) = \int _ H f(g') r({\exp(tv)})_*m_H(dg') \]with \( r(h)\) the right multiplication by \(h\). Denoting \( m_{v,H} = \frac{d  r({\exp(tv)})_*m_H} {dt} \), we get that \(F\) is derivable along the flow \(v\in \lieg\) and that \[ v\cdot F (x) = \int _H f( g x) m_{v,H}(dg) .\] 

Notice that if \(m_\R\) weakly converges to the Dirac measure at the origin \(0\in \R\), then \( F \) converges to \(f\) uniformly on compact subsets. Hence, if we choose two distinct points in the same \(H\)-orbit, and an \(f\) taking distinct values at these two points, the function \(F\) have the same property as soon as \(m_\R\) is close to the Dirac mass at \(e\). In particular, there exists a point in that \(H\)-orbit at which the derivative \( H \cdot F\) does not vanish. Since that \(H\)-orbit is homogeneous, we have proved the following 

\vspace{0.2cm}

\textit{Given \(x\in Z\), there exists a function \(F\in C^1_H(Z)\) such that \( H\cdot F (x) \neq 0\).}

\vspace{0.2cm}

We can then assume that \( F(x)=0\) and \(H\cdot F=1\).  Take a neighborhood of \(x\) on which the derivative \( H\cdot F\geq 1/2\). Along each piece of trajectory passing sufficiently close to \(x\), there is a unique point where the function \(F\) vanishes. Hence, the Lemma is proved by defining the transversal space \( T\) as being the intersection of \( \{ F=0\}\) with a sufficiently close neighborhood of \(x\), and \(V\) a sufficiently close neighborhood of \(0\in \R\).\end{proof}

\subsubsection{Measurable volume forms and family of measures} \label{sss: measurable volume forms}

Let \(\mathcal L\) be a lamination by Lipschitz manifolds that is oriented. Recall that a locally Lipschitz map between two euclidean domains is differentiable almost everywhere with respect to the Lebesgue measure. Hence, we have a well defined concept of measurable volume form on \(\mathcal L\). Such an object writes in the coordinates \( (x_i, \tau_i)\) as \( \omega = \omega_i dx_i^r\), where \(dx_i^r\) is the euclidean volume of \(\R ^r\) and \( \omega_i : B^r \times T_i \rightarrow \R\)  a positive function, which is measurable, with bounded logarithm, and well-defined modulo changing its values on a set which intersects every plaque \( B^r \times \{\tau_i\}\) on a set of Lebesgue measure zero. These expressions are related one to the other by the change of coordinates rules for volume forms: \( \omega_j (x_j, \tau_j)  = \omega_j (x_j(x_i,\tau_i), \tau_j(\tau_i) ) \cdot \frac{dx_j^r}{dx_i^r}  \). If the lamination is oriented, it is always possible, using partition of unity, to construct a measurable volume form. In the example of the one dimensional lamination associated to a locally free flow (see Lemma \ref{l: local product structure}), the time form expressed by \(ds\) provides an example of measurable volume form. 

A measurable volume form \( \omega \) on a lamination \(\mathcal L\) induces a family of Radon measures \( \{\mu_L ^\omega\} _{L \text{ leaf of } \mathcal L}\) on the leaves of \(\mathcal L\), defined by the formula
\[ \mu_L^\omega (A) =  \int _{\varphi_i(A) \cap B^r\times \{\tau_i\}} \omega_i (x_i, \tau_i) dx_i^r, \]
for any set \(A\subset L\) contained in a plaque \( \varphi_i^{-1} (B\times \{\tau_i\}) \). If the set \(A\) is not contained in a plaque, the definition of its measures requires a partition of unity. We leave this to the reader. When the dependance on \(\omega\) is clear, we will omit it.

We will call a family of leafwise Radon measures \(  \{\mu_L\}_{L\text{ leaf of } \mathcal L}\) continuous (resp. measurable) if in every chart \( (x_i, \tau_i):U_i\rightarrow B\times T_i \), and any continuous function \(f\in C^c(U_i) \) with compact support (resp. every bounded measurable function), the function \( \tau_i \mapsto \int_{\varphi_i^{-1} (\tau_i) } f_{|\varphi_i^{-1} (\tau_i)} d\mu_{L_{\tau_i}} \) is continuous (resp. measurable), where \(L_{\tau_i}\) is the leaf of \( \mathcal L\) containing the plaque \( \varphi^{-1} (B\times \{\tau_i\}) \). This definition does not depend upon the chosen coordinate system. 

Assume now that the lamination \(\mathcal L\) is one dimensional, oriented, and without circular leaf. Let \(\omega\) be a measurable volume form on \(\mathcal L\) and \( \{\mu_L\} _{L\text{ leaf of } \mathcal L} \) the induced family of leafwise measures. Let \( \{ d_L \} _{L\text{ leaf of } \mathcal L}\) be the family of distances defined by 
\[ d_L (x,y) = \mu_ L ( [x,y] ) \text{ for every } x,y\in L\]
where \([x,y]\subset L\) is the interval between \(x\) and \(y\) in \(L\) (recall that \(L\) is assumed not to be a circle). We have the property that in a chart \(\varphi_i : U_i \rightarrow B\times T_i \) of the lamination structure, the function 
\begin{equation} \label{eq: regularity distance} (x,y, \tau_i ) \in B\times B\times T_i \mapsto d_{L_{\tau_i}}(\varphi_i^{-1} (x,\tau_i), \varphi_i^{-1} (y, \tau_i)) \in [0, +\infty)\end{equation} 
is measurable, where in this formula \( L_{\tau_i}\) is the leaf of \(\mathcal L\) containing the plaque \( \varphi_i^{-1} (B\times \{\tau_i\})\). In this situation, we will adopt the following notation: given a point \( x\in Z\) and a real number \( s \in \R\), we denote by \( x+ s\) the unique element such that \( d_{L} (x, x+s) = |s| \), and if \(s\neq 0\), the orientation from \( x \) to \(x+s\) correspond to the orientation of \(\mathcal L\) if \(s\) is positive and to the reversing orientation if \(s\) is negative. Analogously, given two points \(x,y\) lying in the same leaf \(L\), we denote by \( y- x \) the unique real number \( s\in \R\) such that \( x+ s= y\). 

To conclude these generalities, assume that the family of measures \( \{\mu_L\}_{L \text{ leaf of } \mathcal L} \) is continuous. Then the family of distances \(\{d_L\}_{L\text{ leaf of } \mathcal L}\) is also continuous, in the sense that the functions \eqref{eq: regularity distance} are continuous. In particular, one can then define a continuous local flow acting on the lamination freely, by the formula \( T_Z ^t (x) = x+t\), for \( (t, x) \) in a neighborhood of \( \{0\}\times Z\) in \(\R\times Z\). Note that the Lipschitz lamination structure associated to this flow (see Lemma \ref{l: local product structure}) is the original Lipschitz structure, and the volume form \( \omega \) is the form \(ds\) in the atlas given by Lemma \ref{l: local product structure}.

\subsubsection{Transverse invariant measures} 

\begin{definition} Given a lamination \( \mathcal L\) by Lipschitz manifolds of a space \(Z\), a \textit{transverse invariant measure} is a family \(\{ \mu_{T_i}\}_{i}\) of Radon measures on the transversal spaces \(T_i\)'s given by the lamination structure (with the notations of section \ref{sss: lamination Lipschitz}) which is such that, for every \(i,j\), the map \( \tau_j \mapsto \tau_i(\tau_j) \) sends locally \( \mu_{T_j}\) to \( \mu_{T_i}\). It is called ergodic iff for every measurable \(\mathcal L\)-saturated subset \( E\subset X\), there exists \( \delta\in \{0,1\}\) such that for every \(i\), the set of \(t_i\in T_i\)'s such that \( \varphi_i^{-1} (B^r\times \{t_i\}) \subset E\) has measure \(\delta\). \end{definition}

This concept has been introduced by Plante \cite{Plante} and does not need the Lipschitz assumption to be defined. Notice however that given a transverse invariant measure \(m^t_{\mathcal L}\) on a lamination \(\mathcal L\) by Lipschitz manifolds of a space \(Z\), and a measurable family of leafwise measures \(\mu _{\mathcal L} ^l = \{\mu _L \}_{L\text{ leaf of } \mathcal L}\), one can define a Radon measure \(m_Z\) on \(Z\): for each measurable subset \(A\subset U_i\) contained in the domain of the laminated coordinates chart \(\varphi_i\), set  \(m_Z(A) := \int _{ T_i } 
\mu_L (A \cap \varphi_i^{-1} (B\times \{\tau_i\}) \  \mu_i ( d\tau_i) \). We denote this measure by \( \mu_Z= \mu_{\mathcal L}^l  \otimes \mu_{\mathcal L}^t\). 

\begin{lemma} \label{l: invariant and transverse invariant} Let  \(T_Z:\R\times Z\rightarrow Z\) a locally free flow acting on a completely Hausdorff space \(Z\), and \(\mathcal L\) the lamination associated to it as in Lemma \ref{l: local product structure}. We denote by \(\mu^l_{\mathcal L}= \{\mu _L\} _{L \text{ leaf of  }\mathcal L}\) the Lebesgue measure in the time parametrization of the \(T_Z\)-trajectories. Then, the map \( \mu ^t_{\mathcal L}\mapsto \mu ^l _{\mathcal L}  \otimes \mu^t_{\mathcal L}\)  is a bijection between the set of transverse invariant measures on \(\mathcal L\) and the set of Radon measures on \(Z\) invariant by \( T_Z\). This correspondance preverves the concepts of ergodicity on both sides. \end{lemma}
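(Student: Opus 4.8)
The plan is to prove both halves of the bijection entirely inside flow boxes, where every object becomes a product, and then read off the ergodicity statement from the same local picture. Recall from Lemma~\ref{l: local product structure} that $Z$ is covered by flow boxes $U_i\cong I\times T_i$ in which $T_Z^t$ acts by $(s,\tau)\mapsto(s+t,\tau)$, that the change of coordinates on an overlap $U_i\cap U_j$ has the form $(s_j,\tau_j)\mapsto(s_j+s_{\tau_j},\tau_i(\tau_j))$, and that in these coordinates the leafwise Lebesgue family $\mu^l_{\mathcal L}$ is just the measure $ds$ on each plaque $I\times\{\tau\}$. I need to check (a) that $\mu^t_{\mathcal L}\mapsto\mu^l_{\mathcal L}\otimes\mu^t_{\mathcal L}$ is well defined, lands among $T_Z$-invariant Radon measures, and is injective; (b) that it is onto; and (c) that the correspondence matches the two notions of ergodicity.

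\textbf{Forward direction.} Given $\mu^t_{\mathcal L}=\{\mu_{T_i}\}$, the local recipe for $\mu_Z$ on $U_i$ is $ds\otimes\mu_{T_i}$. On an overlap, pushing $ds\otimes\mu_{T_j}$ through the change of coordinates leaves the $s$-factor unchanged by translation invariance of Lebesgue measure, and replaces $\mu_{T_j}$ by its image under $\tau_j\mapsto\tau_i(\tau_j)$, which equals $\mu_{T_i}$ precisely because $\{\mu_{T_i}\}$ is a transverse invariant measure; so the local expressions agree and glue to a global $\mu_Z$. It is Radon since locally it is a product of Lebesgue measure on an interval with a Radon measure on $T_i$ (hence locally finite and regular), with a partition-of-unity argument for global regularity; it is $T_Z$-invariant since the flow acts in each box by translation in $s$. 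For injectivity, if two families induce the same $\mu_Z$, restrict to each $U_i$ and use Fubini (uniqueness of disintegration along $U_i\to T_i$) to recover $\mu_{T_i}$, so the families coincide.

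\textbf{Onto.} This is the substantive step and the one I expect to be the main obstacle. Fix a $T_Z$-invariant Radon measure $m_Z$ and a flow box $U_i\cong I\times T_i$ with transverse projection $\pi_i\colon U_i\to T_i$. Disintegrate $m_Z|_{U_i}=\int_{T_i}\lambda_\tau\,d\nu_i(\tau)$, with $\nu_i=(\pi_i)_*(m_Z|_{U_i})$ and $\lambda_\tau$ a probability on the plaque $I\times\{\tau\}$ depending measurably on $\tau$. For small $t$, $T_Z^t$ maps a sub-box into $U_i$ by translating $s$, so $m_Z$-invariance forces $\lambda_\tau$ to be invariant under translation by arbitrarily small amounts, hence $\lambda_\tau=|I|^{-1}ds$ for $\nu_i$-a.e.\ $\tau$; therefore $m_Z|_{U_i}=ds\otimes\mu_{T_i}$ with $\mu_{T_i}:=|I|^{-1}\nu_i$, a Radon measure. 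Running this over all charts, uniqueness of the disintegration on overlaps together with the explicit form of the coordinate changes shows $\{\mu_{T_i}\}$ is a transverse invariant measure inducing $m_Z$. The delicate points here are the measurability of $\tau\mapsto\lambda_\tau$ (which needs the transversals to be reasonable — automatic when $Z$ is metrizable, as in our application), propagating translation invariance from infinitesimal pieces to the whole plaque, and the fact that the flow need not be free (circular leaves), so Theorem~\ref{leafwise} does not literally apply; the flow-box argument above sidesteps this and is a local version of Proposition~\ref{leafwisehaar}, which one could alternatively invoke when periodic orbits are $\mu$-negligible.

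\textbf{Ergodicity.} Let $E\subset Z$ be measurable and $\mathcal L$-saturated, i.e.\ $T_Z$-invariant. Inside a flow box $U_i\cong I\times T_i$, invariance forces $E\cap U_i$ to be a union of full plaques, so $E\cap U_i=I\times E_i$ with $E_i=\{\tau\in T_i:\varphi_i^{-1}(B^r\times\{\tau\})\subset E\}$, whence $m_Z(E\cap U_i)=|I|\,\mu_{T_i}(E_i)$. Passing to a countable subcover, $E$ is $m_Z$-null (resp.\ its complement is $m_Z$-null) if and only if every $E_i$ is $\mu_{T_i}$-null (resp.\ $\mu_{T_i}$-conull), which is exactly the equivalence between $m_Z$ being ergodic for $T_Z$ and $\{\mu_{T_i}\}$ being an ergodic transverse invariant measure. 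This finishes the proof.
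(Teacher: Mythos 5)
Your proposal is correct and follows essentially the same route as the paper: the forward direction is immediate from translation invariance in flow boxes, and surjectivity is obtained by disintegrating the $T_Z$-invariant measure over the transversal of a chart and using uniqueness of disintegration plus flow invariance to identify the conditional measures as Lebesgue up to a normalizing constant absorbed into $\mu_{T_i}$. You additionally spell out the ergodicity correspondence and the measurability/propagation caveats, which the paper's proof leaves implicit.
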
 

\begin{proof}  The measure \(\mu ^l _{T_Z}  \otimes m_{\mathcal L}\) is invariant by the flow \(T_Z\), so the map of the lemma is well-defined.  We just need to construct the inverse map. Given a Radon measure \(\mu_Z\) on \(Z\) which is \(T_Z\)-invariant, its desintegration with respect to the plaques in a laminated chart writes \(\mu_Z = \int _{T_i}  \mu _{B^1\times \{\tau_i\}} \ \mu_{T_i} (d\tau_i)\) for some measure \(\mu_{T_i}\) on \(T_i\), and some measurable family \(\{ \mu_{B^1\times \{\tau_i\}}\}_{\tau_i\in T_i}\) of measures supported on \(B^1\times \{\tau_i\}\). By unicity of the desintegration, and the invariance of \( \mu_Z\) by \(T_Z\), the measures \(\mu_{B^1\times \{\tau_i\}}\) are the Lebesgue measure in the time parametrization of the \(T_Z\)-trajectories multiplied by some constant \(c(\tau_i)\). Up to multiplying \(\mu_{T_i}\) by the function \(\tau_i\mapsto 1/c(\tau_i)\), we can assume that \(c(\tau_i)=1\). The family of measures \(\{\mu_{T_i}\}_i\) defines the desired transverse invariant measure \( \mu^t_{\mathcal L}\).\end{proof}

%%%%%%%%%%%%%%%%%%%%%%%%%%%%%%%%%%%%%%%%%%%%%%%%%%%%%%%%%%%%%%%%%%%%

\subsection{The almost-periodic space} 

\subsubsection{Construction}

Let \(\Gamma\) be a finitely generated group, and \(l:\Gamma\rightarrow [0,\infty)\) be the length function on \(\Gamma\) with respect to a finite set of generators $S$. Let \(\mGamma\) be a symmetric probability measure on \( \Gamma\) of full support, and which  satisfies the following \(L^2\) moment condition with respect to the length function:
\[ \int l(\gamma)^2 \ \mGamma (d\gamma) <\infty.\]
Discretization measures on a lattice satisfy such a moment estimates. 

\begin{theorem}\label{t: almost-periodic space}
Suppose that \(\Gamma\) is left-orderable. Then there exists a non empty compact space \(\Der\), a flow \( T_{\Der}=\{ T^t _{\Der} \}_{t\in \R}\) acting freely on \(\Der\), and an action of \(\Gamma\) on \(\Der\), such that 
\begin{enumerate}
\item \(\Gamma\) has no global fixed point, 
\item each \(T_{\Der}\)-trajectory is \(\Gamma\)-invariant (with the freeness of \(T_{\Der}\) this produces a unique function \( t: \Gamma \times \Der \rightarrow \R\) such that \( \gamma (p) = T_{\Der}^{t( \gamma, p)} \) for every \((\gamma, p)\in \Gamma\times \Der\)),
\item  for each \(p\in \Der\) we have the following zero mean displacement property:
\[ \int_\Gamma t(\gamma, p) \ \mGamma (\gamma) = 0.\]
\end{enumerate}
Moreover, we can assume that \(T_{\Der}\) is minimal -- namely that \(T_\Der\)-trajectories are dense in \( \Der \) --  and this is what we will do in the sequel.
\end{theorem}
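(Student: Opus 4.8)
The plan is to obtain the space $\Der$ as (essentially) the almost-periodic space of \cite{almost-periodic} attached to a well-chosen action of $\Gamma$ on $\R$, the choice being made so as to force the \emph{harmonicity} of the construction via \cite{DKNP}; property $(3)$ --- which is the only point not already contained in those two references --- will then be a closed condition inherited by the whole space. I would first dispose of the case in which $\Gamma$ carries a non-trivial homomorphism $\rho\colon\Gamma\to\Z$: here one takes $\Der:=\R^2/\Z^2$ with the irrational linear flow $T_\Der^t(x,y):=(x+t,y+\alpha t)$ ($\alpha\notin\Q$), which is free and minimal, and lets $\Gamma$ act by $\gamma\cdot p:=T_\Der^{\rho(\gamma)}(p)$; this preserves every $T_\Der$-trajectory, its displacement is the constant $t(\gamma,p)=\rho(\gamma)$, it has no global fixed point since $\rho\neq 0$, and $\int_\Gamma t(\gamma,p)\,\mGamma(d\gamma)=\int_\Gamma\rho(\gamma)\,\mGamma(d\gamma)=0$ by symmetry of $\mGamma$ (the integral converges since $\rho$ is Lipschitz for the word metric and $\mGamma$ has an $L^2$ moment). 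From now on I assume $\Gamma$ has no non-trivial homomorphism to $\Z$.

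In that case I would first produce the action on the line. Since $\Gamma$ is left-orderable and infinite (it is non-trivial and torsion free), it has a faithful action on $\R$; restricting it to a connected component of the complement of its global fixed point set on which it is non-trivial gives an action with no global fixed point, hence with no finite orbit, and also with no discrete orbit --- a discrete infinite orbit is order-isomorphic to $\Z$ (its infimum and supremum would be global fixed points), and the induced action by order-preserving bijections would be a non-trivial homomorphism $\Gamma\to\Z$, which we have excluded. By \cite{DKNP} this action is semi-conjugate to a minimal harmonic action $\phi$, i.e.\ one for which Lebesgue measure on $\R$ is $\mGamma$-stationary, equivalently $\int_\Gamma\bigl(\phi(\gamma)(x)-x\bigr)\,\mGamma(d\gamma)=0$ for every $x\in\R$; the semi-conjugacy creates no global fixed point (its fibres would be bounded by global fixed points of $\phi$), and $\phi$ may be taken aperiodic --- a periodic harmonic action would descend to a minimal circle action having Lebesgue as a stationary measure, hence, by rigidity of stationary measures on the circle, a rotation action, which once more produces a non-trivial homomorphism $\Gamma\to\Z$. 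Finally, by Kleptsyn's observation \cite[Proposition 8.1]{DKNP}, $\phi$ is almost-periodic.

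Next I would run the construction of \cite{almost-periodic}. Let $\Der_0$ be the closure, for uniform convergence on compact subsets of the underlying homeomorphisms, of the family of recenterings $\phi_s$ ($s\in\R$), where $\phi_s(\gamma)(u):=\phi(\gamma)(s+u)-s$. Almost-periodicity of $\phi$ provides, for each generator $\gamma$, a modulus of continuity and a displacement bound that are uniform over all the $\phi_s$; hence $\Der_0$ is compact (Arzel\`a--Ascoli) and non-empty. The recentering assignment $T_{\Der_0}^t\psi:=\psi_t$ is a continuous flow, free because $\phi$ is aperiodic; every $T_{\Der_0}$-trajectory is $\Gamma$-invariant, the cocycle being $t(\gamma,\psi)=\psi(\gamma)(0)$; and $\Gamma$ has no global fixed point on $\Der_0$, as \cite{almost-periodic} establishes for a fixed-point-free $\phi$. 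The new input is that the harmonicity identity passes to all of $\Der_0$: for each fixed $\gamma$ the map $\psi\mapsto t(\gamma,\psi)$ is continuous, and the uniform displacement bound together with the $L^2$ moment of $\mGamma$ let one pass to the limit under the integral, so that $\int_\Gamma t(\gamma,\psi)\,\mGamma(d\gamma)=0$ for every $\psi\in\Der_0$. To finish, I would pass to a non-empty closed $T_{\Der_0}$-minimal subset $\Der\subset\Der_0$: since $\Gamma$ preserves every $T_{\Der_0}$-trajectory it preserves every $T_{\Der_0}$-invariant set, so $\Der$ is $\Gamma$-invariant, properties $(1)$--$(3)$ descend from $\Der_0$ to $\Der$ (in particular there is still no global fixed point, the fixed set on $\Der$ being the intersection of $\Der$ with the empty fixed set on $\Der_0$), and $T_\Der:=T_{\Der_0}|_\Der$ is free and minimal.

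The step I expect to be the main obstacle is the compactness of $\Der_0$, which relies entirely on the uniformity, over all recenterings of $\phi$, of the displacements and of the moduli of continuity --- precisely the content of the almost-periodicity of harmonic actions (Kleptsyn's proposition). The subsidiary technical points are the passage to the limit in the stationarity identity (where the $L^2$ moment of $\mGamma$ enters), the reduction of a periodic $\phi$ to the first case, and the verification that aperiodicity --- hence freeness of the flow and the fact that the leaves are genuine lines --- is stable under recentering and taking limits.
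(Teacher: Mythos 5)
Your overall architecture is close to the paper's: a compact family of harmonic, uniformly Lipschitz, uniformly displaced representations, the recentering flow, non-emptiness via \cite{DKNP} together with Kleptsyn's normalization, and a passage to a minimal set at the end. The genuine gap sits exactly where you declare a ``subsidiary technical point'': the freeness of the flow. A fixed point of \(T^t_{\Der_0}\) with \(t\neq 0\) is precisely a representation commuting with the translation \(\tau_t\), and none of the defining conditions (harmonicity, Lipschitz bounds, displacement bounded above and below) excludes such periodic representations. Your argument that \(\phi\) may be taken aperiodic relies on the claim that a minimal circle action with Lebesgue as a \(\mGamma\)-stationary measure must be a rotation action; this is false --- the boundary action of a cocompact Fuchsian group (or, in the very setting of this paper, a projective action factoring through \(\text{PSL}(2,\R)\)) is minimal, has Lebesgue as its unique stationary measure for a discretization measure, and is not conjugate to rotations. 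And even granting \(\phi\) aperiodic, aperiodicity is not a closed condition: a limit of recenterings of an aperiodic action may commute with a translation. The paper does not try to rule this out; it replaces \(\Der'\) by \(\Der'\times Y\), where \(Y\) carries an auxiliary free flow \(H\), and makes the flow and the \(\Gamma\)-action act diagonally, so that freeness follows from the equivariant projection onto \(H\). You need this trick (or an actual proof that no element of \(\Der_0\) is periodic) for property (2) and for the displacement function \(t(\gamma,\cdot)\) to be well defined.

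A secondary issue: \cite{DKNP} produces harmonic actions for \emph{finitely supported} symmetric measures, whereas \(\mGamma\) has full support. The paper therefore harmonizes with respect to the truncations \(m_n=(\mGamma)_{|S^n}/\mGamma(S^n)\), extracts uniform Lipschitz and displacement bounds via Kleptsyn's normalization, and obtains a \(\mGamma\)-harmonic limit by Arzel\`a--Ascoli; your direct invocation of \cite{DKNP} for \(\mGamma\) itself needs this intermediate step. Your separate treatment of the case of a nontrivial homomorphism \(\Gamma\to\Z\) via the irrational flow on the torus is correct but is not needed in the paper's setting, where such homomorphisms are trivial.
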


\begin{proof}
Let \(S\subset \Gamma\) be a symmetric finite subset that generates \(\Gamma\) and that contains \(e\). Fix some constants \( \xi\geq 1\), and \( \delta , \Delta  >0\), and let  \( \Der '= \Der '_{\xi , \delta, \Delta}\)  be the set of representations \( \rho : \Gamma \rightarrow \text{Homeo}^+ (\R)\) satisfying the following four conditions: 
\begin{enumerate}
\item  {\bf Bounded displacement from above:} 
\( |\rho (\gamma) (x) -x |\leq \Delta \) for every \(x\in \R\),
\item {\bf Bounded displacement from below:}
\( \int_\Gamma (\gamma (x) - x ) ^2 \ \mGamma (d\gamma) \geq \delta ^2 \)  for every \(x\in \R\), 
\item {\bf Lipschitz regularity}: for every \(\gamma\in S\), \(\rho (\gamma)\) is \(\xi\)-Lipschitz,
\item {\bf Harmonicity:} \( \int \rho (\gamma) (x) \ \mGamma (d\gamma) = x\) for every \(x\in \R\). 
\end{enumerate} 
Notice that the integral in the second condition converges if we know that the first condition is fulfilled, because of the \(L^2\) moment condition satisfied by \(\mGamma\).  

The space \( \Der '\) embeds in \( \text{Homeo}^+ (\R) ^S \) by the map \( \rho \mapsto (\rho (\gamma) ) _{\gamma\in S}\). We equip \(\Der'\) with the topology induced by the compact open topology on \( \text{Homeo}^+ (\R) ^S \) restricted to the image of this embedding. An application of Arzela-Ascoli's theorem shows that \( \Der '\) is a compact set. 

We define a flow \( T_{\Der '} =\{ T_{\Der '}^t\}_{t\in \R} \) acting on \(\Der'\) by the formula \[ T _{\Der '}^ t (\rho) := \tau _{-t} \circ \rho \circ \tau  _{t}\] where \( \tau _t\) is the translation \(s\mapsto s+t\) on \(\R\), and the \(\Gamma\)-action by 
\[ \gamma (\rho) := T _{\Der '}^{\rho(\gamma) (0) } (\rho). \]
Verifying that this is an action is painful but straighforward.  We refer to \cite[Lemma 2.2]{almost-periodic} for more details on this. These actions satisfy  that each \( T_{\Der '}\)-trajectory is \(\Gamma\)-invariant, and the zero mean displacement property.  

The flow \( T_{\Der '} \) is not necessarily free however. In that situation, we apply the following trick: we consider a flow \( H=\{H^t\}_{t\in \R} \) acting freely on a non empty compact space \( Y \), and define a flow \(T_{\Der} \) and a \(\Gamma\)-action on a space \( \Der\) by the following formulas:
\begin{itemize} 
\item \(\Der : = \Der' \times Y\), 
\item \( T_{\Der }^t ( \rho , y) := (T_{\Der '} ^t  (\rho), H^t (y))\),
\item \( \gamma (\rho, y) := (\gamma (\rho), H^{\gamma(0)}(y))=(T_{\Der '}^{\gamma(0)}(\rho), H{\gamma(0)}(y))\).
\end{itemize}
This flow \(T_{\Der }\) is free since it projects equivariantly to the free flow \(H\). All the properties of Theorem \ref{t: almost-periodic space} are satisfied for the space \(\Der\) together with its flow \(T_{\Der}\), apart from its non emptyness. 

So let us explain that we can choose appropriately the  constants \( \xi\geq 1\) and \( \delta, \Delta >0\) so that  the space \( \Der'_{\xi, \delta, \Delta} \) is non empty. For every positive integer \(n\), set \[ m_n := \frac{(\mGamma)_{| S^n}}{\mGamma (S^n)}.\]  Those are symmetric, finitely supported probability measures on \(\Gamma\). Let \(n_0\) be an integer such that \(\mGamma (S^n) \geq 1/2 \) for every \(n\geq n_0\).  

For every \( n\), it has been established in  \cite{DKNP} that there exists a non trivial action \( \rho _n : \Gamma \rightarrow \text{Homeo}^+ (\R) \) which is such that for each \( x\in \R\), we have 
\begin{equation}\label{eq: harmonic action} \int \rho_n (\gamma)(x) \ m_n (d\gamma)   = x \text{ for every } x\in \R.\end{equation}
Such an action is Lipschitz, more precisely the transformations \( \rho_n (\gamma)\) are \( 1/m_n(\gamma)\)-Lipschitz. In particular, for \(n\geq n_0\), and \(\gamma \in S\),  \(\rho_n(\gamma)\) is \(\xi\)-Lipschitz for \(\xi:= \max _{\gamma\in S}  2/ \mGamma (\gamma)\).

For every homeomorphism \( h\in \text{Homeo}^+(\R)\), and every \(x\in \R\), define \(K(h, x) =\int_{[x, h^{-1} (x) ]} | h(s) - s | \ ds\). Recall the following observation of Victor Kleptsyn: the quantity \( \int K(\rho_n (\gamma) , x) \ m_n (d\gamma) \)  does not depend on \(x\in \R\) for every action satisfying \eqref{eq: harmonic action}, see \cite[Proof of Proposition 8.1]{DKNP}. So up to conjugating \(\rho_n\) by an affine transformation, we can assume that the integral \(\int K(\rho_n (\gamma), x) \ m_n (d\gamma)\) is identically equal to \(1\) for \(x\in \R\). Since for a \(\xi\)-bilipschitz homeomorphism \(h\), we have \[\frac{(h(x)-x)^2 }{\xi} \leq K(h,x ) \leq \xi (h(x) - x)^2 ,\]
we infer first that, for \(\delta = 1/\xi\), we have
\[ \int _ \Gamma (\rho_n (\gamma) (x) - x ) ^2 \ m_n (d\gamma) \geq \delta \text{ for every } x\in \R, \]
and that, setting \( \Delta= \max _{\gamma \in S} \sqrt{2/\mGamma (\gamma)} \), we have that for each \( \gamma \in S\), each \(n\geq n_0\) and each \( x\in \R\)
\[ |\rho_n (\gamma) (x) - x| \leq \Delta. \]

From the sequence \(\rho_n\) choose a subsequence that converges uniformly to a representation \( \rho : \Gamma \rightarrow \text{Homeo}^+ (\R)\). This is possible by applying Arzela-Ascoli, since each \( \rho_n(\gamma)\) for \(\gamma\in S\) is Lipschitz and of bounded displacement  with constants that do not depend on \(n\). The \(L^2\)-moment assumption on \(\mGamma\) shows that the limiting representation \( \rho \) belongs to \(\Der '_{\xi,\delta, \Delta}\), so this latter set is non empty, and Theorem \ref{t: almost-periodic space} is proved.   
\end{proof}

%%%%%%%%%%%%%%%%%%%%%%%%%%%%%%%%%%%%%%%%%%%%%%%%%%%%%%%%%%%%%%%%%%%%

\subsubsection{The Lyapunov cocycle on the almost-periodic space}

Denote by \(\mathcal C_{\Der} \subset \Der\) the set of points \(z\) such that for every \(\gamma \in \Gamma\), the following derivative  
\[ D_{\mathcal L_\Der} (\gamma, z) = \lim _{t\rightarrow 0} \frac{\gamma(z+t) - \gamma(z) }{t}  \]
exists and is positive. 

Notice the following properties: 

\begin{enumerate}

\item  The intersection of \(\mathcal C_{\Der}\) with \textit{any} leaf \(L\) of \(\mathcal L_\Der\) has full \(\mL\)-measure (since in time parametrization of the leaves by the almost-periodic flow \(T_{\Der}\), the \(\Gamma\)-action is Lipschitz).

\item  \(\mathcal C_{\Der}\) is a measurable \(\Gamma\)-invariant set. 

\item \label{eq: cocycle Der} \(D_{\mathcal L_\Der}\) is a multiplicative cocycle, namely \(D_{\mathcal L_\Der} (\gamma '\gamma , z) = D_{\mathcal L_\Der} (\gamma ', \gamma (z)) \cdot D_{\mathcal L_\Der} (\gamma, z) \) for every \( \gamma, \gamma ' \in \Gamma\) and every \( z\in \mathcal C_{\Der} \). 

\item  For every \( z\in \mathcal C_{\Der} \), the function \( D_{\mathcal L_\Der} (\cdot, z) \) is left \(\mGamma\)-harmonic. 

\end{enumerate}

The first two properties are straighforward, and the last one is a consequence of  3. in Theorem \ref{t: almost-periodic space}. 

\begin{lemma}\label{l: radon-nikodym}
For every \( T_{\Der}\)-invariant measure \(\mDer\) on the almost-periodic space \(\Der\), the set \(\mathcal C_{\Der}\) has full \(\mDer\)-measure, and moreover for \(\mDer\)-a.e. \(z\in \Der\) and every \(\gamma\in \Gamma\), the Radon-Nikodym derivative \( \frac{\gamma^{-1}_* \mDer}{\mDer} (z) \) equals \(D_{\mathcal L_\Der} (\gamma, z)\).
\end{lemma}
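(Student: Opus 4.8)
The plan is to establish the statement in two stages: first that $\mathcal C_{\Der}$ is $\mDer$-conull, and then that for each $\gamma\in\Gamma$ one has the equality of measures $\gamma^{-1}_*\mDer = D_{\mathcal L_\Der}(\gamma,\cdot)\,\mDer$, which contains both the absolute continuity and the formula for the Radon-Nikodym derivative.

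For the first stage I would combine the first property noted above, that $\mathcal C_{\Der}\cap L$ is $\mL$-conull in every leaf $L$, with Lemma \ref{l: invariant and transverse invariant}. Applied to the $T_\Der$-invariant measure $\mDer$, that lemma writes $\mDer=\mu^l_{\mathcal L_\Der}\otimes\mu^t_{\mathcal L_\Der}$; concretely, in a flow box $U\simeq I\times T$ furnished by Lemma \ref{l: local product structure} it gives the disintegration $\mDer|_U=\int_T \mathrm{Leb}_{I\times\{\tau\}}\,d\mu_T(\tau)$, the conditional measures along the plaques being Lebesgue in the time parametrization. Since $\Der$ is covered by countably many flow boxes, a routine Fubini argument shows that any Borel set meeting every plaque in a Lebesgue-null set is $\mDer$-null; by the first property above $\Der\setminus\mathcal C_\Der$ is such a set, so $\mDer(\Der\setminus\mathcal C_\Der)=0$.

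For the second stage I would argue leaf by leaf. Because each $T_\Der$-trajectory is $\Gamma$-invariant, $\gamma$ maps every leaf $L$ of $\mathcal L_\Der$ to itself, and in the time parametrization identifying $(L,\mL)$ with $(\R,\mathrm{Leb})$ it acts by a bi-Lipschitz homeomorphism $\phi_\gamma$ (Lipschitz along the leaves by the construction of $\Der$, bi-Lipschitz on applying the same to $\gamma^{-1}$), which is increasing because $D_{\mathcal L_\Der}(\gamma,\cdot)>0$ on $\mathcal C_\Der$, and whose derivative is $\phi_\gamma'(s)=D_{\mathcal L_\Der}(\gamma,s)$ by the definition of the cocycle. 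The one-dimensional change of variables formula for increasing bi-Lipschitz maps then gives, for every Borel $A\subset L$, $(\gamma^{-1}_*\mL)(A)=\mL(\phi_\gamma(A))=\int_A D_{\mathcal L_\Der}(\gamma,s)\,\mL(ds)$, i.e. $\gamma^{-1}_*\mL=D_{\mathcal L_\Der}(\gamma,\cdot)\,\mL$ on each leaf. To globalize, I observe that the desired equality $\gamma^{-1}_*\mDer=D_{\mathcal L_\Der}(\gamma,\cdot)\,\mDer$ is local, hence may be checked in each of the countably many flow boxes: there $\mDer$ disintegrates over the plaques with leafwise-Lebesgue conditionals, and since $\gamma$ moves points only inside the leaves and induces holonomy equivalences between transversals under which $\mu^t_{\mathcal L_\Der}$ is preserved (this is precisely the defining property of a transverse invariant measure), the pushforward $\gamma^{-1}_*$ acts on the leafwise factor by the formula just obtained and trivially on the transverse factor. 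By uniqueness of disintegrations the two measures therefore coincide; in particular $\gamma^{-1}_*\mDer\ll\mDer$ with density $D_{\mathcal L_\Der}(\gamma,\cdot)$. Running over the countably many $\gamma\in\Gamma$ then yields the statement for every $\gamma$ simultaneously at $\mDer$-a.e. $z$.

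The main obstacle is this last globalization step: one must carefully reconcile the pushforward by $\gamma$, which slides points along the leaves while respecting the lamination, with the leafwise-Lebesgue / transverse-measure decomposition of $\mDer$ supplied by Lemma \ref{l: invariant and transverse invariant}. This reduces to checking that the holonomy maps of $\mathcal L_\Der$ coming both from flowing along the leaves and from applying $\gamma$ preserve $\mu^t_{\mathcal L_\Der}$ — which is exactly what a transverse invariant measure does — together with the uniqueness of the disintegration. The remaining ingredients, namely the one-dimensional change of variables formula and the Fubini argument in flow boxes, are routine.
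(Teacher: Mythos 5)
Your proposal is correct and follows essentially the same route as the paper: write $\mDer=\mu^l_{\mathcal L_\Der}\otimes\mu^t_{\mathcal L_\Der}$ via Lemma \ref{l: invariant and transverse invariant}, use that $\Gamma$ preserves each $T_\Der$-trajectory (hence the transverse measure) to reduce the Radon--Nikodym computation to the leaves, and there identify the density with $D_{\mathcal L_\Der}(\gamma,\cdot)$ via the Lipschitz change of variables; the full-measure claim for $\mathcal C_\Der$ follows from the same disintegration together with leafwise fullness. The paper's proof is just a compressed version of the argument you spell out.
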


\begin{proof}
The measure \( \mDer\) can be written as \( \mu _{\mathcal L_\Der} ^l \otimes \mu ^t _{\mathcal L_\Der} \) where \( \mu _{\mathcal L_\Der}^l=\{\mu_L\} _{L\text{ leaf of }\mathcal L_\Der}\) is the family of leafwise Lebesgue measures in the time parametrization of the almost-periodic flow, and where \( \mu^t _{\mathcal L_{\Der}}\) is a transverse invariant measure on \( \mathcal L_{\Der}\). Since \(\Gamma\) preserves each trajectory of the almost-periodic flow, it preserves \( \mu^t _{\mathcal L_{\Der}}\). In particular, for \(\mDer\)-a.e. \(z\in \Der\), denoting by \( L\) the leaf containing \(z\)
\[ \frac{\gamma^{-1} _* \mDer}{\mDer} (z) =  \frac{\gamma^{-1}_* \mL}{\mL} (z) = D_{\mathcal L_\Der} (\gamma, z) .\] 
\end{proof}

\subsubsection{Stationary measures}

\begin{lemma}\label{l: stationarity of T-invariant measure}
Any probability measure \(\mDer\) on the almost-periodic space \(\Der\) which is \(T_{\Der}\) invariant is \(\mGamma\)-stationary, namely it satisfies the equation \(\mGamma * \mDer = \mDer\). Moreover, if the measure \(\mGamma\) is totally invariant, namely \(\gamma_* \mDer=\mDer\) for every \(\gamma \in \Gamma\), then \(\Gamma\) has a non trivial first Betti number.  
\end{lemma}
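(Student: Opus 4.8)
\emph{Plan.} Both assertions rest on Lemma~\ref{l: radon-nikodym} — which identifies the Radon--Nikodym derivative of the $\Gamma$-action with the Lyapunov cocycle $D_{\mathcal L_\Der}$ — together with the fact, recorded just before that lemma, that $D_{\mathcal L_\Der}(\cdot,z)$ is left $\mGamma$-harmonic for every $z$ in the $\mDer$-conull set $\mathcal C_\Der$. For the stationarity, note first that by Lemma~\ref{l: radon-nikodym} each $\gamma_*\mDer$ is absolutely continuous with respect to $\mDer$ with nonnegative density $z\mapsto D_{\mathcal L_\Der}(\gamma^{-1},z)$; hence, by Tonelli's theorem applied to $\mGamma*\mDer=\int_\Gamma\gamma_*\mDer\,d\mGamma(\gamma)$, for every nonnegative Borel function $f$ on $\Der$ one has
\begin{align*}
(\mGamma*\mDer)(f) &=\int_\Gamma\big(\gamma_*\mDer\big)(f)\,d\mGamma(\gamma)\\
&=\int_\Der f(z)\,\Big(\int_\Gamma D_{\mathcal L_\Der}(\gamma^{-1},z)\,d\mGamma(\gamma)\Big)\,d\mDer(z).
\end{align*}
By symmetry of $\mGamma$ the inner integral equals $\int_\Gamma D_{\mathcal L_\Der}(\gamma,z)\,d\mGamma(\gamma)$, and, evaluating the left $\mGamma$-harmonicity identity $D_{\mathcal L_\Der}(\gamma_0,z)=\int_\Gamma D_{\mathcal L_\Der}(s\gamma_0,z)\,d\mGamma(s)$ at $\gamma_0=e$ and using $D_{\mathcal L_\Der}(e,z)=1$, this is identically $1$ on $\mathcal C_\Der$. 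Thus $(\mGamma*\mDer)(f)=\mDer(f)$, i.e.\ $\mGamma*\mDer=\mDer$.

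\emph{Total invariance.} Suppose now $\gamma_*\mDer=\mDer$ for every $\gamma\in\Gamma$. Then Lemma~\ref{l: radon-nikodym} forces $D_{\mathcal L_\Der}(\gamma,z)=1$ for $\mDer$-a.e.\ $z$ and every $\gamma$, hence on a single $\Gamma$-invariant conull set, $\Gamma$ being countable. Disintegrating $\mDer=\mu^l_{\mathcal L_\Der}\otimes\mu^t_{\mathcal L_\Der}$ as in the proof of that lemma (with $\mu^t_{\mathcal L_\Der}\neq 0$, since $\mDer$ is a probability measure on the non-empty space $\Der$), I would deduce that for $\mu^t_{\mathcal L_\Der}$-a.e.\ leaf $L$, parametrized by $\R$ through the almost-periodic flow $T_\Der$, the homeomorphism $\rho_L(\gamma)$ of $\R$ through which $\gamma$ acts on $L$ has derivative $1$ at Lebesgue-a.e.\ point. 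As $\rho_L(\gamma)$ is Lipschitz, hence absolutely continuous, the fundamental theorem of calculus gives that it is the translation by $\rho_L(\gamma)(0)$; therefore $\gamma\mapsto\rho_L(\gamma)(0)$ is a group homomorphism $\Gamma\to\R$. Since the $\Gamma$-action on $L$ is conjugate to a harmonic action lying in the space $\Der'$ from which $\Der$ was built, the bounded displacement from below condition yields $\int_\Gamma(\rho_L(\gamma)(0))^2\,d\mGamma(\gamma)\ge\delta^2>0$, so this homomorphism is non-trivial. Consequently $\mathrm{Hom}(\Gamma,\R)=H^1(\Gamma;\R)\neq 0$, i.e.\ $\Gamma$ has non-trivial first Betti number (which, for an irreducible higher rank lattice, already contradicts Proposition~\ref{p: vanishing first Betti number}).

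\emph{Main obstacle.} There is no serious difficulty here: the conceptual input — that $\mGamma$-harmonicity of $D_{\mathcal L_\Der}(\cdot,z)$, itself a consequence of the zero mean displacement property in Theorem~\ref{t: almost-periodic space}, is exactly what converts $T_\Der$-invariance into $\mGamma$-stationarity — is already in place. The only points requiring care are routine bookkeeping: propagating the "$\mDer$-a.e." statements through the leafwise/transverse disintegration of $\mDer$, and the elementary observation that an increasing absolutely continuous self-homeomorphism of $\R$ whose derivative equals $1$ almost everywhere is a translation.
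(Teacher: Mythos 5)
Your argument is correct and follows the same route as the paper: stationarity from the Radon–Nikodym identity of Lemma~\ref{l: radon-nikodym} combined with the left $\mGamma$-harmonicity of $D_{\mathcal L_\Der}(\cdot,z)$ (itself the derivative form of the zero mean displacement property), and, for total invariance, the disintegration $\mDer=\mu^l_{\mathcal L_\Der}\otimes\mu^t_{\mathcal L_\Der}$ forcing the leafwise action to be by translations, whence a non-trivial homomorphism $\Gamma\to\R$. You merely fill in details the paper leaves implicit (the Tonelli computation, the "derivative $1$ a.e.\ plus Lipschitz implies translation" step, and the use of the bounded-displacement-from-below condition to see the resulting homomorphism is non-trivial), all of which are sound.
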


\begin{proof}
The stationarity of \(\mDer\) is an immediate consequence of Lemma \ref{l: radon-nikodym} and the third property of Theorem \ref{t: almost-periodic space}. Now, assume that \( \mDer \) is globally \(\Gamma\)-invariant and write as in Lemma \ref{l: radon-nikodym} \(\mDer= \mu _{\mathcal L_\Der} ^l \otimes \mu ^t _{\mathcal L_\Der} \). Since \(\mu ^t_{\mathcal L_\Der}\) is \(\Gamma\)-invariant, this proves that \( \mu _L \) is \(\Gamma\)-invariant on \(m_{\mathcal L_\Der}\)-a.e. leaf \(L\). On those leaves the action  of \(\Gamma\) acts then by translations, and this gives the desired non trivial morphim to \(\R\). \end{proof}

%%%%%%%%%%%%%%%%%%%%%%%%%%%%%%%%%%%%%%%%%%%%%%%%%%%%%%%%%%%%%%%%%%%%%%%%%%

\subsection{The suspension space} 

The \textit{suspension space} $X$ is the quotient of \( G\times \Der \) by the diagonal action of \( \Gamma\) defined by \( \gamma (g, z) = (g\gamma^{-1}, \gamma(z) ) \). It is equiped with the natural quotient topology. It is a locally compact space (compact if \(\Gamma\) is uniform) having the following properties: 

\vspace{0.4cm} 

There is a \(G\)-action by homeomorphisms on the suspension space $X$, defined by \( h (g,z)\ \text{mod}\ \Gamma = (hg, z)\ \text{mod} \ \Gamma\). 

\vspace{0.2cm} 

The suspension space $X$ is a  \(G\)-equivariant fibration \( \pi :X \rightarrow G/\Gamma\) with fibers homeomorphic to the almost-periodic space \(\Der\) (the fibration is defined by \( \pi ((g,z)\ \text{mod}\ \Gamma) = g\ \text{mod}\ \Gamma\)).

\vspace{0.2cm} 

The fibers of \( \pi \) are naturally identified with the almost-periodic space, up to the action of \(\Gamma\).

\subsubsection{Laminated structures}\label{sss: laminated structures}

The suspension space carries an oriented lamination \(\mathcal L_X\) by unidimensional Lipschitz manifolds, which is invariant by the \(G\)-action, tangent to the fibers of \( \pi \), and whose restriction to these latter is the lamination \(\mathcal L_{\Der}\) (which is well-defined up to the action of \(\Gamma\)). Formally, \(\mathcal L_X\) is constructed in the following way. Its lift to the universal covering \( \widetilde{X} = G\times \Der\) is the lamination \(\widetilde{\mathcal L_X}\) whose charts are defined by \( \Phi_i ^{\mathcal L_X}: U_i^{\mathcal L_X} = G \times U_i \rightarrow I\times T_i ^{\mathcal L_X}=   I \times (G \times T_i) \) with \( \Phi_i (g,z) = (x_i(z), (g, \tau_i(z)))\), where \( \varphi_i : U_i \rightarrow I \times T_i\) are the charts of \( \mathcal L_{\Der}\). The change of coordinates are of the form postulated in \ref{sss: lamination Lipschitz}. Since the lamination by Lipschitz manifolds \(\widetilde{\mathcal L_X}\) is invariant by the action of \(\Gamma\) on \(G\times \Der\), it induces on the quotient \(X= G\times \Der /\Gamma\) the desired lamination by Lipschitz manifolds \(\mathcal L_X\).  

\vspace{0.2cm} 

The suspension space carries also a lamination \(G\mathcal L_X\) whose leaves are the sets \(GL\) where \(L\) is a leaf of \(\mathcal L_X\). Formally, it can be defined using the same system of charts, but with the transverse space \(T_i^{G \mathcal L_X} \) being the space \(T_i\), and the plaques the Lipschitz manifold \(I\times G\). More precisely, we will define \( \Phi_i ^{G\mathcal L_X} : U_i^{G\mathcal L_X} = G \times U_i \rightarrow B\times  T_i ^{\mathcal L_X}=   ( I \times G) \times T_i) \) with \( \Phi_i (g,z) = ((x_i(z), g) , \tau_i(z)))\), where \( \varphi_i : U_i \rightarrow I \times T_i\) are the charts of \( \mathcal L_{\Der}\). For the same reason as above this system of charts provide the desired structure of lamination by Lipschitz manifolds \(G\mathcal L_X\).

\vspace{0.2cm}

Given a any probability measure \(\mDer\) on \(\Der\) invariant under the almost-periodic flow is associated a transverse invariant measure \(\mu _{\mathcal L _{\Der} }\) on \( \mathcal L_{\Der}\), see Lemma \ref{l: invariant and transverse invariant}. This transverse invariant measure induces a transverse invariant measure \( \mu _{G\mathcal L_X}\) on \(G \mathcal L_X\)  because the holonomy pseudo-groups of \( \mathcal L_{\Der}\) and \(G \mathcal L_X\) coincide. Multiplying the transverse invariant measure \( \mu_{G\mathcal L_ X}\) by the Haar measure on the \(G\)-orbits gives birth to a transverse invariant measure \( \mu _{\mathcal L_X}\) on \(\mathcal L_X\), which is invariant by the \(G\)-action. (Note that this construction produces all the \(G\)-invariant transverse invariant measures on \(\mathcal L_X\), but there are plenty of other non \(G\)-invariant ones.)

\subsubsection{The Lyapunov cocycle on the suspension space}\label{sss: Lyapunov cocycle on suspension} 

%We construct in this paragraph a measurable volume form on the lamination \(\mathcal L_X\), and its associated leafwise measures/distances, satisfying a remarkable property of harmonicity with respect to the \(G\)-action: namely its associated multiplicative Lyapunov cocycle is left \(\mGBM\)-harmonic in the \(G\)-coordinate. 

%We first proceed to the construction of the cocycle, and then prove that it is the derivative cocycle of some measurable volume form on \(\mathcal L_X\). 

Applying 3. of Theorem \ref{t: Martin boundary of discretization} to the family of functions \( \{ D_{\mathcal L_\Der} (\cdot, z) \} _{z\in \mathcal C_\Der}  \), we get a function \( \widetilde{D_{\mathcal L _{\Der}} } : G\times \mathcal C_\Der \rightarrow (0, +\infty) \) such that: 

\begin{enumerate}

\item its restriction to \( \Gamma \times \mathcal C_\Der \) equals \(D_{\mathcal L_\Der}\),

\item it is left \(\mGBM\)-harmonic in the \(G\)-coordinate,

\item it satisfies the cocycle relation 
\begin{equation} \label{eq: first cocycle relation} \widetilde{D_{\mathcal L_\Der}} (g \gamma, z) = \widetilde{D_{\mathcal L_\Der}} ( g, \gamma(z))  D_{\mathcal L_\Der} (\gamma, z) \text{ for every }\gamma \in \Gamma, \ g\in G\text{ and } z\in \mathcal C_\Der.\end{equation}

\item it is measurable.

\end{enumerate}

Let \(\mathcal C_X\subset X= \Gamma \backslash (G\times \Der)\) be the quotient of the \(\Gamma\)-invariant set \( G\times \mathcal C_\Der\subset G\times \Der\). The set \(\mathcal C_X\) is \(G\)-invariant, and moreover, for every leaf \( Y\) of the lamination \(G\mathcal L_X\), \(\mathcal C_X\) contains a.e. \(G\)-orbits of \(Y\) with respect to the Lebesgue measure class on \(Y\). 

\begin{definition}[The multiplicative cocycle] 
The multiplicative cocycle \( D_{\mathcal L_X}: G\times \mathcal C_X\rightarrow (0,+\infty)\) is defined by the formula 
\begin{equation}\label{eq: Lyapunov cocycle} D_{\mathcal L_X} ( g , x) := \frac{\widetilde{D_{\mathcal L_\Der}} (gh, z)}{\widetilde{D_{\mathcal L_\Der}} (h,z) } \text{ for every } g\in G \text{ and every } x=(h, z)  \text{ mod } \Gamma \in \mathcal C_X. \end{equation}
\end{definition}

The fact that the right-hand side does not depend on the point in the \(\Gamma\)-orbit of \( (h,z) \) is a consequence of property 3. above. It clearly satisfies the multiplicative relation 
\begin{equation} \label{eq: cocycle relation} D_{\mathcal L_X} (g' g, x) =D_{\mathcal L_X} (g', gx) \cdot D_{\mathcal L_X} (g, x) ,\end{equation}
and moreover, it is a measurable function which is left \(\mGBM\)-harmonic in the \(G\)-coordinate. 

\begin{definition}[The additive cocycle] \label{d: Lyapunov cocycle}
The Lyapunov (additive) cocycle is the function \( c : G\times \mathcal C_X\rightarrow \R\) defined by \( c:= \log D_{\mathcal L_X}\). 
\end{definition}

\subsubsection{Volume form and leafwise distances/measures on \(\mathcal L_X\)}\label{sss: forms measures distances}

Denote by \( \omega_{\mathcal L_\Der} \) the time form on \( \mathcal L_\Der\) associated to the flow \( T_\Der\): \(\omega\) is equal to \(ds\) in the coordinates given by Lemma \ref{l: local product structure}. Letting \( p : G\times \Der\rightarrow \Der\) be the projection, consider the form \( \widetilde{\omega _{\mathcal L_X}}\) on \( \widetilde{\mathcal L_X}\) defined by the formula 
\begin{equation}\label{eq: volume form on LX} \widetilde{\omega_{\mathcal L_X}}  =  \widetilde{D_{\mathcal L_\Der} } \cdot  p ^* \omega_{\Der} . \end{equation}
The form \( \widetilde{\omega_{\mathcal L_X}}\) can be thought as the family \( \{ \omega _{\mathcal L_\Der} ^g\}_{g\in G}\) of volume forms on \( \mathcal L_\Der\) defined by \( \omega_{\mathcal L_\Der} ^g = \widetilde{D_{\mathcal L_\Der}} (g, \cdot) \omega_{\mathcal L_\Der}\). We have a  corresponding family of distances \(\{ d^g =\{d^g_L\}_{L\text{ leaf of }\mathcal L_\Der} \} _{g\in G}\) on the leaves of \( \mathcal L_\Der\), defined by 
\begin{equation} \label{eq: family of distances on Der} d^g _L (x, y) =\left\lvert \int _x^y \omega _{\mathcal L_\Der} ^g \right \rvert\end{equation}
for two points \(x,y\) that belong to a same leaf \(L\) of \( \mathcal L_\Der\). Notice that the distance \( d^g\) depends harmonically on \( g\in G\), and therefore that \( d^{kg} = d^g\) for  every \(g\in G\) and every \(k\in K\).

We have for every \(\gamma \in \Gamma\),
\[ \gamma^* \widetilde{\omega_{\mathcal L_X}} = \left( \widetilde{D_{\mathcal L_\Der} }\circ \gamma \right)  \cdot  p ^* (\gamma ^* \omega_{\Der}) = \left( \widetilde{D_{\mathcal L_\Der} }\circ \gamma \right) \cdot D_{\mathcal L_Z} (\gamma, z) p^* \omega_{\Der} . \] 
Because of the cocycle relation \eqref{eq: cocycle Der} satisfied by the function \(\widetilde{D_{\mathcal L_\Der} }\), 
\[\widetilde{D_{\mathcal L_\Der} }\circ \gamma (g, z) = \widetilde{D_{\mathcal L_\Der}} (g\gamma^{-1}, \gamma (z) ) = D_{\mathcal L_\Der} (\gamma^{-1},\gamma (z) )  .\]
 In particular, since \( D_{\mathcal L_\Der} (\gamma^{-1},\gamma (z) ) D_{\mathcal L_\Der} (\gamma, z) = 1\) by the cocycle relation \eqref{eq: cocycle Der}, we deduce that the form \( \widetilde{\omega_{\mathcal L_X}}\) is \(\Gamma\)-invariant, hence defines a measurable volume form \(\omega_{\mathcal L_X}\) on \(\mathcal L_X\). Notice that in terms of the distances \( \{ d^g\} _{g\in G}\), this implies the relation 
 \begin{equation} \label{eq: equivariance distance} d^{g\gamma^{-1}} _{\gamma L} (\gamma x,\gamma y)  =  d^g _L (x,y) \end{equation}
for every \(\gamma \in \Gamma\), every leaf \(L\) of \(\mathcal L_\Der\) and every couple of points \((x,y)\in L^2\).

The measurable volume form \(\omega_{\mathcal L_X}\) gives rise to a family of Radon measures \(\mu_{\mathcal L_X} ^l = \{ \mL \} _{L \text{ leaf of } \mathcal L_X}\) on the leaves of \(\mathcal L_X\), \(\mu_L\) being the absolutely continuous measure associated to the restriction of \(\omega_{\mathcal L_X}\) to \(L\) as explained in \ref{sss: measurable volume forms}. 

We will use in the sequel the following notation (see section \ref{sss: measurable volume forms}): given a leaf \( L\) of \( \mathcal L_X\), and two points \( x, y \in L\), we denote
 \[ y- x = \int _x^y \omega_{\mathcal L_X} ,\]
 and define the distances  
 \[ d_L (x,y) = | y-x | = \mu_L ([x,y]) \text{ for } x,y\in L.\]
Given an interval \(I\subset L\), we will sometimes denote \( |I|= \mu_L (I) \). 

\begin{proposition} \label{p: properties of suspension} We have:

\vspace{0.2cm}

0. The family of leafwise measures \( \mu_{\mathcal L_X} ^l =\{\mu_L\} _{L\text{ leaf of } \mathcal L_X}\) is continuous.

\vspace{0.2cm}

1. The family of distances \( \{ d_L \} _{L\text{ leaf of }\mathcal L_X} \) is continuous, in the sense that in a coordinate chart \( \varphi_i : U_i \rightarrow B\times T_i \) of \(\mathcal L_X\), the function \( (x_i , y_i , \tau_i )\in B^2 \times T_i  \mapsto d_{L_{\tau_i}}(\varphi_i ^{-1} (y_i, \tau_i) , \varphi_i^{-1} (x_i, \tau_i)) \in [0,+\infty) \) is continuous.

\vspace{0.2cm}

2. For every leaf \(L\) of \(\mathcal L_X\) and every pair of points \( x< y \), the function \( g\in G \mapsto g(y) - g(x) \in (0, +\infty) \) is a positive \(\mGBM\)-harmonic function on \(G\). 

\vspace{0.2cm} 

3. For every \( x\in \mathcal C_X \), the function \( D_{\mathcal L_X} (\cdot, x) \) is left \( \mGBM\)-harmonic. Moreover, 
\[ D_{\mathcal L_X} (g, x) = \lim _{\varepsilon \rightarrow 0} \frac{g(x+\varepsilon) - g(x) }{\varepsilon}, \]
and the limit is uniform on compact subsets of \(G\).

\vspace{0.2cm}

4. There exists a constant \( \xi= \xi (G) \) such that for every \( x\in \mathcal C_X\), the  function \( c(\cdot, x)\) is \(\xi \)-Lipschitz.  
 \end{proposition}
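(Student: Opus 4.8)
The plan is to establish the five items in the listed order; the only dependencies are that (1) rests on (0) and (4) on the harmonicity half of (3). I would work on the universal cover $\widetilde X=G\times\Der$, on which $G$ acts by left translation on the first factor and trivially on the $\Der$-factor, and use three standing facts: the volume form is $\widetilde{\omega_{\mathcal L_X}}=\widetilde{D_{\mathcal L_\Der}}\cdot p^{*}\omega_\Der$ by \eqref{eq: volume form on LX}; $\widetilde{D_{\mathcal L_\Der}}(e,\cdot)\equiv1$, because $D_{\mathcal L_\Der}(e,\cdot)\equiv1$ is an identity cocycle; and the integral formula $\widetilde{D_{\mathcal L_\Der}}(g,\cdot)=\int_\Gamma D_{\mathcal L_\Der}(\gamma,\cdot)\,\mGamma^{g}(d\gamma)$, valid since $\widetilde{D_{\mathcal L_\Der}}(\cdot,z)$ is the unique positive left $\mGBM$-harmonic extension (Theorem \ref{t: Martin boundary of discretization}) of the positive left $\mGamma$-harmonic function $D_{\mathcal L_\Der}(\cdot,z)$ and the Lyons--Sullivan family $\{\mGamma^{g}\}$ is harmonic (cf.\ \eqref{eq: restriction Gamma}).

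For (0): in a laminated chart $\Phi_i^{\mathcal L_X}\colon G\times U_i\to I\times(G\times T_i)$ the plaque through $(g_0,z_0)$ is $\{g_0\}\times P_{\tau_i(z_0)}$ with $P_\tau=\varphi_i^{-1}(I\times\{\tau\})$ a plaque of $\mathcal L_\Der$, so the leafwise mass of $f\in C_c(G\times U_i)$ on it is $\int_{P_{\tau_0}}f(g_0,\cdot)\,\widetilde{D_{\mathcal L_\Der}}(g_0,\cdot)\,\omega_\Der$, and one needs joint continuity in $(g_0,\tau_0)$. Continuity in $g_0$ follows from $\widetilde{D_{\mathcal L_\Der}}(e,\cdot)\equiv1$ plus the Harnack inequality (Theorem \ref{t: Harnack inequality}): $|\log\widetilde{D_{\mathcal L_\Der}}(g_0,w)|\le\xi(G)\,d_{K\backslash G}(Kg_0,K)$ \emph{uniformly in} $w$, hence the densities are locally uniformly bounded and equicontinuous in $g_0$. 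Continuity in $\tau_0$ is the delicate point, since $\widetilde{D_{\mathcal L_\Der}}$ is genuinely discontinuous along the leaves of $\mathcal L_\Der$; here I would rewrite the integral, via the integral formula and Tonelli, as $\int_\Gamma\big(\int_{P_{\tau_0}}f(g_0,\cdot)D_{\mathcal L_\Der}(\gamma,\cdot)\omega_\Der\big)\mGamma^{g_0}(d\gamma)$, note that each inner integral is continuous in $\tau_0$ because $D_{\mathcal L_\Der}(\gamma,\cdot)\,\mu_L=(\gamma^{-1})_{*}\mu_{\gamma L}$ on leaves $L$ of $\mathcal L_\Der$, i.e.\ the push-forward of the manifestly continuous leafwise Lebesgue family by the homeomorphism $\gamma^{-1}$, and observe that the $\mGamma^{g_0}$-average is a \emph{uniform-in-$\tau_0$} limit of its partial sums over $\{l(\gamma)\le R\}$, since bounded displacement gives $\int_{P_{\tau_0}}D_{\mathcal L_\Der}(\gamma,\cdot)\omega_\Der=|\gamma(P_{\tau_0})|\le|P_{\tau_0}|+2\Delta\,l(\gamma)$, which is $\mGamma^{g_0}$-integrable in $\gamma$ (Proposition \ref{p: exponential moment}); a uniform limit of continuous functions is continuous. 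Then (1) is immediate from (0) and the generalities of \S\ref{sss: measurable volume forms}, noting that $\mathcal L_X$ is one-dimensional, oriented, and without circular leaf (arrange the auxiliary flow freeing $T_\Der$ to have no periodic orbit).

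For (2), lifting $x<y$ to $(h,z),(h,z')$ mod $\Gamma$ with $z<z'$ on one leaf of $\mathcal L_\Der$ and using that $g$ is the identity on the $\Der$-factor, \eqref{eq: volume form on LX} gives $g(y)-g(x)=\int_{[z,z']}\widetilde{D_{\mathcal L_\Der}}(gh,\cdot)\,\omega_\Der$, independent of the lift by the $\Gamma$-invariance of $\widetilde{\omega_{\mathcal L_X}}$; since $\widetilde{D_{\mathcal L_\Der}}(\cdot,w)$ is left $\mGBM$-harmonic and right translation preserves left-harmonicity, and $[z,z']$ is $\omega_\Der$-finite, a Tonelli interchange shows $g\mapsto g(y)-g(x)$ is left $\mGBM$-harmonic, and it is positive because the density of $\widetilde{\omega_{\mathcal L_X}}$ is everywhere positive. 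For (3), the harmonicity of $D_{\mathcal L_X}(\cdot,x)=\widetilde{D_{\mathcal L_\Der}}(gh,z)/\widetilde{D_{\mathcal L_\Der}}(h,z)$ (by \eqref{eq: Lyapunov cocycle}, with $x=(h,z)$ mod $\Gamma$) is as in (2). For the derivative formula, write $x+\varepsilon=(h,z+t_\varepsilon)$ mod $\Gamma$, with $z+t$ the $T_\Der$-flow and $t_\varepsilon$ solving $\int_0^{t_\varepsilon}\widetilde{D_{\mathcal L_\Der}}(h,z+s)\,ds=\varepsilon$; then
\[\frac{g(x+\varepsilon)-g(x)}{\varepsilon}=\frac{t_\varepsilon^{-1}\int_0^{t_\varepsilon}\widetilde{D_{\mathcal L_\Der}}(gh,z+s)\,ds}{t_\varepsilon^{-1}\int_0^{t_\varepsilon}\widetilde{D_{\mathcal L_\Der}}(h,z+s)\,ds},\]
and via the integral formula, Tonelli, and the fundamental theorem of calculus along the Lipschitz leaf one has $t^{-1}\int_0^{t}\widetilde{D_{\mathcal L_\Der}}(g',z+s)\,ds=\int_\Gamma t^{-1}(\gamma(z+t)-\gamma(z))\,\mGamma^{g'}(d\gamma)$, whose integrand tends to $D_{\mathcal L_\Der}(\gamma,z)$ as $t\to0$ for $z\in\mathcal C_\Der$; passing the limit under $\int_\Gamma$ — again using bounded displacement and the exponential moment — gives $t^{-1}\int_0^{t}\widetilde{D_{\mathcal L_\Der}}(g',z+s)\,ds\to\widetilde{D_{\mathcal L_\Der}}(g',z)$, so the ratio tends to $\widetilde{D_{\mathcal L_\Der}}(gh,z)/\widetilde{D_{\mathcal L_\Der}}(h,z)=D_{\mathcal L_X}(g,x)$ pointwise in $g$. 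Uniformity on compacts is then free: the functions $g\mapsto\frac{g(x+\varepsilon)-g(x)}{\varepsilon}$ are positive left $\mGBM$-harmonic by (2) and all equal $1$ at $e$ (since $e$ acts trivially on $X$ and $(x+\varepsilon)-x=\varepsilon$), so by Harnack they are equicontinuous and locally uniformly bounded, upgrading pointwise to locally uniform convergence.

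Finally, (4) is immediate: $D_{\mathcal L_X}(\cdot,x)$ is positive left $\mGBM$-harmonic by (3), hence its logarithm $c(\cdot,x)$ descends to a $\xi(G)$-Lipschitz function on $K\backslash G$ by the Harnack inequality (Theorem \ref{t: Harnack inequality}), and so is $\xi(G)$-Lipschitz on $G$ for the fixed right-invariant metric. I expect the main obstacle to be the transverse continuity in (0): because $\widetilde{D_{\mathcal L_\Der}}$ is discontinuous along leaves, the continuity of the leafwise measures has to be extracted from its description as a $\mGamma^{g}$-superposition of push-forwards of the Lebesgue family, with the bounded-displacement property doing the essential work of controlling the tail of that superposition uniformly across the transversal; a secondary technical point is the interchange of limits in (3), which rests on the compatibility of the leafwise Lipschitz constants of the $\Gamma$-action with the exponential moment of the discretization measure.
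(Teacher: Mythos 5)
Your proposal takes a genuinely different route from the paper for parts (0), (1), and especially (3): where the paper relies on the abstract \emph{continuity of the Ledrappier--Ballmann extension map} from positive left $\mGamma$-harmonic functions to positive left $\mGBM$-harmonic functions (item 3 of Theorem \ref{t: Martin boundary of discretization}), you unpack the extension via the Lyons--Sullivan representation $\widetilde{D_{\mathcal L_\Der}}(g,\cdot)=\int_\Gamma D_{\mathcal L_\Der}(\gamma,\cdot)\,\mGamma^{g}(d\gamma)$ and run hands-on tail/domination estimates. For (0) this works cleanly, and is in fact more detailed than the paper's one-line reference, because the plaque integral $|\gamma(P_{\tau_0})|\le |P_{\tau_0}|+2\Delta\,l(\gamma)$ is \emph{linear} in $l(\gamma)$, so the tail over $\{l(\gamma)>R\}$ is controlled by the first moment of $\mGamma^{g_0}$, uniformly in $\tau_0$. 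Items (1), (2), (4) are likewise fine.

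There is, however, a genuine gap in your argument for (3), in the step ``passing the limit under $\int_\Gamma$ --- again using bounded displacement and the exponential moment.'' Bounded displacement gives $|t^{-1}(\gamma(z+t)-\gamma(z))-1|\le 2\Delta\,l(\gamma)/t$, which blows up as $t\to0$, so it cannot furnish a $t$-uniform majorant. The actual $t$-uniform bound is $t^{-1}(\gamma(z+t)-\gamma(z))\le \sup_w D_{\mathcal L_\Der}(\gamma,w)\le e^{\xi' l(\gamma)}$ with $\xi'=\xi(m_\Gamma,S)$ the group Harnack constant, and there is no reason this exponent should lie below the $\alpha$ of Proposition \ref{p: exponential moment}. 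So the dominated convergence is not justified. A secondary issue is that the pointwise Lyons--Sullivan formula you invoke is only stated in the paper for \emph{bounded} harmonic functions (equation \eqref{eq: restriction Gamma}); using it for the unbounded positive function $D_{\mathcal L_\Der}(\cdot,z)$ would itself need a justification (e.g.\ via truncation and monotone convergence) that you do not provide and the paper does not supply either, since the paper never passes through this formula.

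Both gaps disappear, and the whole of (3) follows, by replacing dominated convergence with the compactness argument you already hint at in your last sentence: the functions $F_\varepsilon(g):=\frac{g(x+\varepsilon)-g(x)}{\varepsilon}$ are positive left $\mGBM$-harmonic and normalized by $F_\varepsilon(e)=1$, so by Harnack they are precompact in the topology of uniform convergence on compacts. Reducing first to $x=(e,x')$ (so that $\widetilde{D_{\mathcal L_\Der}}(e,\cdot)\equiv 1$ and the leafwise time parametrizations agree), the restrictions $F_\varepsilon|_\Gamma$ converge pointwise to $D_{\mathcal L_\Der}(\cdot,x')$ by definition of $\mathcal C_\Der$; any subsequential limit $F$ is a positive left $\mGBM$-harmonic extension of $D_{\mathcal L_\Der}(\cdot,x')$, hence equals $\widetilde{D_{\mathcal L_\Der}}(\cdot,x')$ by the uniqueness in Theorem \ref{t: Martin boundary of discretization}, and the whole family converges uniformly on compacts. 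The general $x=(h,z)$ then follows by the cocycle relation (chain rule), exactly as the paper does. That is precisely what the paper means by ``the continuity property of the extension in the Ledrappier--Ballmann theorem,'' and it is the robust argument: it uses no integrability of the dominating function at all.
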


\begin{proof}
The two first statements are consequences of the continuity property of the extension in the Ledrappier-Ballmann theorem, see item 3. of Theorem \ref{t: Martin boundary of discretization}.

To prove statement 2., notice that for each pair of points \(x= (h,x') , y = (h,y') \in G\times \Der \) in a same \(\widetilde{\mathcal L_X}\)-leaf (namely \(x',y'\) belong to the same \(\mathcal L_\Der\)-leaf \(L\)) 
\[y - x= \int _{x} ^{y} \omega_{\mathcal L_X} = \int _{x'}^{y'} \widetilde{D_{\mathcal L_\Der}}(h, z') \ \mL (dz')\] 
so using the cocycle relation \eqref{eq: Lyapunov cocycle}, we get 
\begin{equation}\label{eq: G action} gy - gx= \int_{x'} ^{y'} \widetilde{D_{\mathcal L_\Der}}(gh, z') \ \mL (dz') = \int _{x}^{y} D_{\mathcal L_X} ( g , \cdot ) \omega_{\mathcal L_X}   \end{equation}
In particular, since the function \( D_{\mathcal L_X}\) is left \(\mGBM\)-harmonic in the \(G\)-variable, the statement follows.

Let us first prove 3. for a point \(x=( e, x') \in G\times  \mathcal C_\Der\).  We then have for each \(\gamma\in \Gamma\)
\[ D_{\mathcal C_Z} (\gamma, x') = \lim _{\varepsilon\rightarrow 0} \frac{\gamma (x'+\varepsilon)- \gamma (x') }{\varepsilon}= \frac{1}{\varepsilon} \int _{x'}^{x'+\varepsilon} D_{\mathcal L_{\Der}} (\gamma, y') \mL (dy) , \]
where in the last formula \(L\) is the \(\mathcal L_\Der\)-leaf containing \(x'\). Again, the continuity property in Ledrappier-Ballmann theorem, see item 3. of Theorem \ref{t: Martin boundary of discretization}, shows that we get for every \(g\in G\)
\[ D_{\mathcal L_X} (g, x) = \widetilde{D_{\mathcal L_\Der}}(g, x') = \frac{1}{\varepsilon} \int _{x'}^{x'+\varepsilon} \widetilde{D_{\mathcal L_{\Der}}} (g, y') \mL (dy)=  \lim _{\varepsilon\rightarrow 0} \frac{g (x+\varepsilon)- g (x) }{\varepsilon} .\]
The claim follows for such an \(x\). Now, let \(y\in G\times \mathcal C_\Der\) and write \(y=hx \) with \(x= (e, x')\) for some \(x'\in \mathcal C_{\Der}\). For every \(\varepsilon \), let \( \eta = \eta (\varepsilon)  \) such that \( h(x+\eta) = y+\varepsilon\). We have \(\varepsilon /\eta = h(x+\eta) - h(x) /\eta \rightarrow_{\varepsilon \rightarrow 0} D_{\mathcal L_X} (h, x)\), and   
\[ \lim_{\varepsilon \rightarrow 0} \frac{g (y+\varepsilon) - g(y) }{\varepsilon}=\lim _{\varepsilon  \rightarrow 0} \frac{gh (x+\eta) - gh (x) }{\eta}\cdot  \frac{\eta} {\varepsilon}  = \frac{D_{\mathcal L_X} (gh, x) }{D_{\mathcal L_X} (h,x) }= D_{\mathcal L_X} (g, y)  \]
by the cocycle relation \eqref{eq: cocycle relation}.

The last statement comes from Harnack inequality \ref{t: Harnack inequality} and the fact that \(D_{ \mathcal L_X} \) is a positive left \(\mGBM\)-harmonic function in the \(G\)-coordinate. \end{proof}

\subsubsection{Construction of the measure \(\mX\)} \label{sss: mX}

For every \(g\in G\), we denote by \(\mDer  ^g\) the finite measure on \(\Der\) defined by 
\begin{equation}\label{eq: mDerg} \mDer ^g := \widetilde{D_{\mathcal L_\Der}} (g, \cdot ) \mDer .\end{equation}
The measure \(\mDer ^g\) is well-defined and finite since \(\widetilde{D_{\mathcal L_\Der}}\) is defined on \( G\times \mathcal C_\Der \),  \(\mathcal C_{\Der}\) has full \(m_\Der\)-measure, and \(\widetilde{D_{\mathcal L_\Der}}(g, \cdot) \) is bounded by Harnack inequality.

\begin{lemma}\label{l: mX}
The family \(\{\mDer^g\}_{g\in G}\) of measures on \( \Der\) satisfies the following properties 

\begin{enumerate}
\item for each \(g\in G\) and each \(\gamma \in \Gamma\), \( \mDer ^{g\gamma^{-1} } = \gamma_* \mDer\), so \(\{\mDer^g\}_{g\in G}\) induces a family on measures \( \{\mu_{\pi^{-1}(g\Gamma)}\}_{g\Gamma\in G/\Gamma}\) on \(X\) supported on the fibers of \( X\rightarrow G/\Gamma\),
\item for each \(g\in G\), \(\mDer^g\) and \( \mu_{\pi^{-1} (g\Gamma)}\) are probability measures, 
\item The probability  measure on \(X\) defined by
\begin{equation}\label{d: mX} \mX := \text{cst} \int _{G/\Gamma} \mDer^{g\Gamma} \HaarG (dg\Gamma) \end{equation}
(the constant is a normalizing constant) gives full Lebesgue measure to \( \mathcal C_X\),
\item for every \(g\in G\), and \(\mX\)-a.e. \(x\in X\) 
\[ \frac{g^{-1} \mX}{\mX } (x) = D_{\mathcal L_X} (g, x) .\]
\item \(\mX\) is \(\mGBM\)-stationary.
\end{enumerate}

\end{lemma}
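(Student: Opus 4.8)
The five assertions are essentially formal once one has Lemma~\ref{l: radon-nikodym} and the cocycle properties of $\widetilde{D_{\mathcal L_\Der}}$ recorded in \ref{sss: Lyapunov cocycle on suspension}; the plan is to verify them in order.

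\emph{Item (1).} The idea is to combine the cocycle relation \eqref{eq: first cocycle relation} evaluated at $g\gamma^{-1}$, namely $\widetilde{D_{\mathcal L_\Der}}(g\gamma^{-1},z)=\widetilde{D_{\mathcal L_\Der}}(g,\gamma^{-1}z)\,D_{\mathcal L_\Der}(\gamma^{-1},z)$, with the identity $\gamma_*\mDer=D_{\mathcal L_\Der}(\gamma^{-1},\cdot)\,\mDer$ coming from Lemma~\ref{l: radon-nikodym} (applied to $\gamma^{-1}$) and the elementary rule $\gamma_*(f\mDer)=(f\circ\gamma^{-1})\,\gamma_*\mDer$. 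Together these give $\gamma_*\mDer^{g}=\widetilde{D_{\mathcal L_\Der}}(g\gamma^{-1},\cdot)\,\mDer=\mDer^{g\gamma^{-1}}$. Since $\Gamma$ acts freely on $G$ ($\Gamma$ being torsion free), each chart $\{g\}\times\Der\to\pi^{-1}(g\Gamma)$ is a bijection, so this equivariance is exactly what is needed for $\{\mDer^g\}_g$ to descend to a well-defined family $\{\mu_{\pi^{-1}(g\Gamma)}\}$ of measures on the fibres of $\pi$.

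\emph{Items (2) and (3).} I would put $m(g):=\mDer^g(\Der)=\int_\Der\widetilde{D_{\mathcal L_\Der}}(g,z)\,\mDer(dz)$. By Tonelli's theorem and the left $\mGBM$-harmonicity of each $\widetilde{D_{\mathcal L_\Der}}(\cdot,z)$, the function $m$ is left $\mGBM$-harmonic, all the integrals involved being finite because Harnack's inequality (Theorem~\ref{t: Harnack inequality}) gives $\widetilde{D_{\mathcal L_\Der}}(g,z)\le e^{\xi\,d_{K\backslash G}(Kg,Ke)}$ uniformly in $z$ and the heat measure $\mGBM$ has finite exponential moments. Moreover $m$ is positive, and by (1) it is right $\Gamma$-invariant with $m(e)=\int_\Der\widetilde{D_{\mathcal L_\Der}}(e,\cdot)\,d\mDer=\int_\Der 1\,d\mDer=1$; hence $m|_\Gamma\equiv 1$. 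Since the extension map from positive $\mGamma$-harmonic functions on $\Gamma$ to positive $\mGBM$-harmonic functions on $G$ is a bijection (item~3 of Theorem~\ref{t: Martin boundary of discretization}), and the constant $1$ on $\Gamma$ extends to the constant $1$ on $G$, we conclude $m\equiv 1$: every $\mDer^g$, hence every $\mu_{\pi^{-1}(g\Gamma)}$, is a probability measure, which is (2). For (3): Harnack's inequality also bounds $\widetilde{D_{\mathcal L_\Der}}(g,\cdot)$ away from $0$ on $\mathcal C_\Der$, so $\mDer^g\sim\mDer$ and thus $\mDer^g(\mathcal C_\Der)=\mDer(\mathcal C_\Der)=1$ (Lemma~\ref{l: radon-nikodym}); integrating the fibrewise equality $\mu_{\pi^{-1}(g\Gamma)}\bigl(\mathcal C_X\cap\pi^{-1}(g\Gamma)\bigr)=1$ over $G/\Gamma$ against $\HaarG$ yields $\mX(\mathcal C_X)=1$.

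\emph{Items (4) and (5).} The action of $g^{-1}$ on $X$ covers left translation by $g^{-1}$ on $G/\Gamma$, which preserves the invariant probability $\HaarG$, and in the natural charts it is the identity on the $\Der$-coordinate, $(h,z)\mapsto(g^{-1}h,z)$. Hence the disintegration of $g^{-1}_*\mX$ over $G/\Gamma$ has, over the point $h\Gamma$, the measure $\mDer^{gh}$ read in the chart $\{h\}\times\Der$, whereas $\mX$ has $\mDer^{h}$ there; as $\mDer^{gh}=\widetilde{D_{\mathcal L_\Der}}(gh,\cdot)\,\mDer$ and $\mDer^{h}=\widetilde{D_{\mathcal L_\Der}}(h,\cdot)\,\mDer$, the Radon--Nikodym derivative $\tfrac{g^{-1}\mX}{\mX}$ at $x=(h,z)\bmod\Gamma$ equals $\widetilde{D_{\mathcal L_\Der}}(gh,z)/\widetilde{D_{\mathcal L_\Der}}(h,z)=D_{\mathcal L_X}(g,x)$ by the definition \eqref{eq: Lyapunov cocycle}; since $\mathcal C_X$ is conull (item (3)) and every point of it has such a representative with $z\in\mathcal C_\Der$, this is (4). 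Finally, replacing $g$ by $g^{-1}$ in (4) gives $\tfrac{g_*\mX}{\mX}=D_{\mathcal L_X}(g^{-1},\cdot)$, so for bounded measurable $\phi$
\[ \int_X\phi\,d(\mGBM*\mX)=\int_G\int_X\phi(gx)\,d\mX(x)\,\mGBM(dg)=\int_X\phi(y)\Bigl(\int_G D_{\mathcal L_X}(g^{-1},y)\,\mGBM(dg)\Bigr)d\mX(y), \]
the interchange being legitimate since $\int_X D_{\mathcal L_X}(g^{-1},y)\,d\mX(y)=g_*\mX(X)=1$. As $\mGBM$ is symmetric ($G$ being unimodular and the defining metric right-invariant) and $D_{\mathcal L_X}(\cdot,y)$ is left $\mGBM$-harmonic (Proposition~\ref{p: properties of suspension}) with $D_{\mathcal L_X}(e,y)=1$, the inner integral equals $\int_G D_{\mathcal L_X}(g,y)\,\mGBM(dg)=D_{\mathcal L_X}(e,y)=1$; hence $\mGBM*\mX=\mX$, which is (5).

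The only genuinely non-formal input is the uniqueness of the positive $\mGBM$-harmonic extension (Ballmann--Ledrappier) used in (2), and the step most prone to bookkeeping errors is the identification of the fibrewise disintegrations in (4); everything else is manipulation of the cocycle identities.
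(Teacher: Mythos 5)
Your proof is correct and follows essentially the same route as the paper's: the cocycle identity \eqref{eq: first cocycle relation} together with Lemma \ref{l: radon-nikodym} for items (1) and (4), harmonicity of $g\mapsto\mDer^g(\Der)$ for item (2), and symmetry of $\mGBM$ plus leafwise harmonicity of $D_{\mathcal L_X}(\cdot,x)$ for item (5). The only (inessential) divergence is in item (2), where the paper concludes by observing that the $\Gamma$-invariant positive $\mGBM$-harmonic function $g\mapsto\mDer^g(\Der)$ descends to $G/\Gamma$ and is therefore constant, whereas you invoke the uniqueness of the positive harmonic extension from $\Gamma$ to $G$ (item 3 of Theorem \ref{t: Martin boundary of discretization}); both rest on the same Ballmann--Ledrappier input and are equally valid.
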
 

\begin{proof}
For the first item, use the cocycle relation \eqref{eq: first cocycle relation} to get 
\[ \mDer ^{g\gamma^{-1}} (dz) =  \widetilde{D_{\mathcal L_\Der}} (g\gamma^{-1}, z ) \mDer (dz) = \]
\[=\widetilde{D_{\mathcal L_\Der}} (g, \gamma^{-1} z ) \mathcal D_{\Der} (\gamma^{-1} , z) \mDer(dz)  = \gamma_* \mDer ^g (dz). \]
We define \(\mu_{\pi ^{-1} (g\Gamma)} \) as the measure on \(X\) which is the image of the measure \( \delta _g \otimes \mDer^g\) on \(G\times \Der\) (here \(\delta_g\) is the Delta measure supported on \(\{g\}\)) by the projection \( G\times \Der \rightarrow X\).

For the second, notice that \( g\in G \mapsto \mDer^g (\Der) \in [0, \infty) \) is \(\Gamma\)-invariant, by the first item. Formula \eqref{eq: mDerg} together with the \(G\)-harmonicity of \(\widetilde{D_{\mathcal L_\Der}} \) shows that this function is \(\mGBM\)-harmonic, hence it needs to be constant as a positive harmonic function on \( G/\Gamma\). This constant is one since \( \mDer^e= \mDer\) is a probability. Hence the second item follows.

The third item follows from the fact that \( \mathcal C_X = G\times \mathcal C_\Der\) and that \( \mathcal C_\Der\) has full \(\mDer\)-measure.

To prove the fourth item, let us observe that the measure \(\mu_X\) lifts to \( G\times \Der\) to the \(\Gamma\)-invariant measure \( \widetilde{\mX} = \int \mDer ^h \ \HaarG (dh) \), so that given a point \( \tilde{x} \) on the \( \pi \)-fiber of the point \(x\) we have \(\frac{g^{-1} _*\mX }{\mX} = \frac{g^{-1} _* \widetilde{\mX}}{\widetilde{\mX}} (\tilde{x}) \). The formula is then an immediate consequence of the definition \eqref{eq: mDerg} of \(\mDer ^g\), and of the cocycle relation \eqref{eq: Lyapunov cocycle}. 

The last item is a consequence of the fourth one, together with the symmetry of \(\mGBM\) and the fact that \( D_{\mathcal L_X}\) is left \(\mGBM\)-harmonic in the \(G\)-coordinates. \end{proof}

The next result is not truly necessary for our whole argument, but it will be useful in order to short cut moment arguments related to the non compacity of the support of \( \mGBM\).  

\begin{proposition}\label{p: harmonicity compact support}
Assume \(T_{\Der}\) is minimal. Let \(\mDer\) be a probability measure on \( \Der\) invariant under \( T_{\Der}\), and \(\mX\) the associated measure on \(X\). For every probability measure \(\mG\) on \(G\) of type \(\type\), the measure \( \mX\) is \(\mG\)-stationary. Moreover, the following properties are also satisfied: 

\vspace{0.2cm}

1. for every leaf \( L\) of \(\mathcal L_X\), and any pair of points \(x,y\in L\), the function \( g\in G \mapsto g(y) - g(x) \in \R\) is left \(\mG\)-harmonic.

\vspace{0.2cm}

2. for every \(x\in \mathcal C_X\), the function \( g\in G \mapsto D_{\mathcal L_X} (g, x) \in (0, \infty) \) is left \(\mG\)-harmonic.
\end{proposition}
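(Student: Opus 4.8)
The plan is to avoid transferring the positive‑harmonic‑function statements of Lemma~\ref{l: mX} and Proposition~\ref{p: properties of suspension} directly from $\mGBM$ to a measure of type $\type$: for \emph{positive} harmonic functions the harmonicity condition genuinely depends on the choice of measure, so a naive rerun of the earlier proofs is impossible. Instead I would first prove $\mG$-stationarity of $\mX$ by going through the parametrization of stationary measures by the maximal boundary, which \emph{is} independent of the chosen measure, and only afterwards deduce the harmonicity statements $1$ and $2$ as consequences. The one place where minimality of $T_{\Der}$ is used is to upgrade almost‑everywhere statements to everywhere statements.

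\emph{Stationarity.} By Lemma~\ref{l: mX}(5), $\mX$ is $\mGBM$-stationary. Recall the Furstenberg description (see \S\ref{ss: harmonic functions} for $B\simeq P_{\mathcal W}\backslash G$ and $\mB$): a probability measure $\nu$ on $X$ is $\mG_0$-stationary for \emph{any} admissible probability measure $\mG_0$ on $G$, in particular for $\mGBM$ and for every $\mG$ of type $\type$, if and only if there is a $P_{\mathcal W}$-invariant probability measure $\nu'$ on $X$ with $\nu=\int_K k_*\nu'\,\HaarK(dk)$. The point is that this condition is $\mG_0$-independent: extend $\nu'$ to the $G$-equivariant family $\zeta\mapsto\nu'_\zeta$ on $B$ using $P_{\mathcal W}$-invariance and $G=KP_{\mathcal W}$, and compute $\int_G g_*\nu\,\mG_0(dg)=\int_B\nu'_\eta\,d\bigl(\int_G g_*\mB\,\mG_0(dg)\bigr)(\eta)=\int_B\nu'_\eta\,d\mB(\eta)=\nu$, since $\mB$ (the image of $\HaarK$) is the unique $\mG_0$-stationary measure on $B$ for every such $\mG_0$. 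Applying the direct implication to $\mGBM$ produces a $P_{\mathcal W}$-invariant $\mu_X'$ with $\mX=\int_K k_*\mu_X'\,\HaarK(dk)$, and the converse implication then gives that $\mX$ is $\mG$-stationary for every $\mG$ of type $\type$. (That $X$ is merely locally compact when $\Gamma$ is non‑uniform is harmless here, $\mX$ being a probability.)

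\emph{Harmonicity, almost everywhere.} By Lemma~\ref{l: mX}(4), $\frac{d(g^{-1}_*\mX)}{d\mX}=D_{\mathcal L_X}(g,\cdot)$ for every $g\in G$; combined with the symmetry of $\mG$ and the $\mG$-stationarity just obtained, a change of variables gives $\int_G D_{\mathcal L_X}(g,x)\,\mG(dg)=1$ for $\mX$-a.e. $x$. Using the cocycle relation \eqref{eq: cocycle relation}, the fact that $g_*\mX$ and $\mX$ have the same null sets, a Fubini argument on $G\times X$, and the continuity and local boundedness of $g\mapsto D_{\mathcal L_X}(g,x)$ (it is $\mGBM$-harmonic hence continuous, and $\log$ of it is $\xi$-Lipschitz by Proposition~\ref{p: properties of suspension}(4)), this upgrades to: for $\mX$-a.e. $x$, the function $g\mapsto D_{\mathcal L_X}(g,x)$ is left $\mG$-harmonic. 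Disintegrating $\mX$ along the leaves of $\mathcal L_X$ and integrating \eqref{eq: G action}, $g\mapsto g(y)-g(x)$ is left $\mG$-harmonic for all pairs $x,y$ on $\mathcal L_X$-almost every leaf.

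\emph{From almost every leaf to every leaf.} Here minimality of $T_{\Der}$ enters: an invariant probability measure of a minimal flow has full support, so $\mDer$, and hence $\mX$, has full support. The leaves of $\mathcal L_X$ are lines carrying the continuous leafwise flow $(x,s)\mapsto x+s$, and by Proposition~\ref{p: properties of suspension}(1) together with continuity of the $G$-action the map $(g,x,s)\mapsto g(x+s)-g(x)$ is jointly continuous; since $\mG$ is compactly supported, being left $\mG$-harmonic is a closed condition for uniform convergence on compact subsets of $G$. Hence $\mathcal H:=\{(x,s)\in X\times\R:\ g\mapsto g(x+s)-g(x)\ \text{is left }\mG\text{-harmonic}\}$ is closed, and it has full $\mX\otimes\mathrm{Leb}$-measure by the previous step, so full support forces $\mathcal H=X\times\R$; this is statement $1$. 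Statement $2$ then follows because $D_{\mathcal L_X}(g,x)=\lim_{\varepsilon\to0}\varepsilon^{-1}\bigl(g(x+\varepsilon)-g(x)\bigr)$ uniformly on compact subsets of $G$ for every $x\in\mathcal C_X$ (Proposition~\ref{p: properties of suspension}(3)), and a uniform‑on‑compacts limit of left $\mG$-harmonic functions is left $\mG$-harmonic. I expect the only real obstacle to be the stationarity step: one cannot argue function‑by‑function, and must pass through the boundary object $\mu_X'$, which is visibly the same for $\mGBM$ and for all measures of type $\type$; the rest is bookkeeping with Fubini, continuity, and full support.
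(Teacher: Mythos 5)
Your proof is correct, and its overall architecture coincides with the paper's: pass through the Furstenberg boundary/Poisson formula to transfer from \(\mGBM\) to an arbitrary measure of type \(\type\), deduce a.e.\ harmonicity of the cocycle, integrate along leaves, and use minimality plus continuity of the leafwise distances to upgrade from almost every leaf to every leaf (statement 2 then following from the uniform-on-compacts limit in Proposition \ref{p: properties of suspension}(3), exactly as in the paper). The one place you diverge is the middle step, and it is worth noting that your opening worry is not actually an obstacle: the paper does not transfer any \emph{positive unbounded} harmonic function across measures; it applies Furstenberg's independence theorem to the \emph{bounded} \(\text{Prob}(X)\)-valued function \(g\mapsto g_*\mX\), which is left \(\mGBM\)-harmonic by Lemma \ref{l: mX}(5), hence left \(\mG\)-harmonic for every \(\mG\) of type \(\type\). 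This single observation yields both the \(\mG\)-stationarity of \(\mX\) and the \(\mG\)-harmonicity of \(D_{\mathcal L_X}(\cdot,x)=\frac{g^{-1}_*\mX}{\mX}(x)\) for \(\mX\)-a.e.\ \(x\) in one stroke. Your route — reproving the measure-independence of stationarity via the boundary parametrization, and then recovering the a.e.\ harmonicity of \(D_{\mathcal L_X}(\cdot,x)\) from stationarity, symmetry of \(\mG\), the cocycle relation \eqref{eq: cocycle relation}, Fubini over \(\text{Haar}\otimes\mX\), and continuity in \(g\) — is a valid but more laborious reconstruction of the same fact; it buys nothing extra here, though it does make explicit where the \(K\)-bi-invariance and symmetry of \(\mG\) enter.
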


\begin{proof}
By the property 3. of Proposition \ref{p: properties of suspension}, the first point of the Proposition implies the second.

So let us prove the first point. The function \( g\in G \mapsto g_* \mX\in \text{Prob} (X) \) is left \(\mGBM\)-harmonic and bounded. Furstenberg's theorem shows that it is also left \(\mG\)-harmonic. In particular, for \( \mX\)-a.e. \(x\in X\), the function \( g\in G\mapsto \frac{g^{-1} _* \mX}{\mX}(x) \in (0, \infty) \) is left \(\mG\)-harmonic. Since we have \( \frac{g^{-1}_* \mX}{\mX}(x) = D_{\mathcal L_X} (g, x)\), for \(\mX\)-a.e. \(x\), the function \( D_{\mathcal L_X} (\cdot , x)\) is left \(\mG\)-harmonic.  In particular, for a \(\mX\)-generic leaf \(L\) of \(\mathcal L_X\), the function \( D_{\mathcal L_X} (\cdot, x) \) is left \(\mG\)-harmonic for \(\mu_L\)-a.e. \(x\in L\).  For every \(x,y\in L\), we have 
\[ g(y) - g(x) = \int _x^y D_{\mathcal L_X} (g, z) \mu_L (dz) ,\]
hence, the function \( g\in G \mapsto g(y) - g(x) \in (0, \infty) \) is left \(\mG\)-harmonic as well. We conclude that for a \(G\)-invariant set of \(\mathcal L_X\)-leaves the property 1. is satisfied. This set being dense in \(X\) by minimality of \(T_\Der\) and by consequence of the lamination \(G\mathcal L_X\), the continuity property 1. of Proposition \ref{p: properties of suspension} shows that the property 1. of our Proposition \ref{p: harmonicity compact support} is satisfied.\end{proof}

We end this section by the following important

\begin{proposition}\label{p: invariance mX}
The measure \(\mX\) is \(G\)-invariant if and only if the measure \(\mDer\) is \(\Gamma\)-invariant. In particular, if the measure \(\mX\) is \(G\)-invariant, the first Betti number of \(\Gamma\) is non zero. 
\end{proposition}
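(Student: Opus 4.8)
The plan is to reduce both implications to the triviality of the multiplicative cocycle $D_{\mathcal L_X}$, respectively $D_{\mathcal L_\Der}$, and then invoke uniqueness of the positive harmonic extension from $\Gamma$ to $G$. I would start from the two Radon--Nikodym identities already at hand: by Lemma \ref{l: radon-nikodym}, $\mDer$ is $\Gamma$-invariant if and only if $D_{\mathcal L_\Der}(\gamma,\cdot)=1$ $\mDer$-almost everywhere for every $\gamma\in\Gamma$; and by item 4 of Lemma \ref{l: mX}, $\mX$ is $G$-invariant if and only if $D_{\mathcal L_X}(g,\cdot)=1$ $\mX$-almost everywhere for every $g\in G$. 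Since $\Gamma$ is countable and the cocycle property is multiplicative, the first condition is equivalent to the existence of a $\Gamma$-invariant Borel set $\Der_0\subset\mathcal C_\Der$ of full $\mDer$-measure on which $D_{\mathcal L_\Der}(\gamma,\cdot)=1$ for all $\gamma$; and since $g\mapsto D_{\mathcal L_X}(g,x)$ is continuous on $G$ for $x\in\mathcal C_X$ by item 4 of Proposition \ref{p: properties of suspension} and $G$ is separable, the second condition is equivalent to the existence of a $G$-invariant Borel set of full $\mX$-measure on which $D_{\mathcal L_X}(g,\cdot)=1$ for all $g$.

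For the implication ``$\mDer$ $\Gamma$-invariant $\Rightarrow$ $\mX$ $G$-invariant'', I would fix $z\in\Der_0$: the map $\gamma\mapsto D_{\mathcal L_\Der}(\gamma,z)$ is then the constant function $1$ on $\Gamma$, which is positive and $\mGamma$-harmonic, and the constant function $1$ on $G$ is a positive $\mGBM$-harmonic function restricting to it. Since $\widetilde{D_{\mathcal L_\Der}}(\cdot,z)$ is by construction precisely that extension, item 3 of Theorem \ref{t: Martin boundary of discretization} forces $\widetilde{D_{\mathcal L_\Der}}(\cdot,z)\equiv 1$ on $G$. Consequently, whenever $x=(h,z)\ \mathrm{mod}\ \Gamma$ with $z\in\Der_0$, the definition \eqref{eq: Lyapunov cocycle} gives $D_{\mathcal L_X}(g,x)=1$ for every $g\in G$; and this set of $x$'s has full $\mX$-measure, because in the disintegration \eqref{d: mX} each fibre measure $\mDer^{g\Gamma}=\widetilde{D_{\mathcal L_\Der}}(g,\cdot)\mDer$ is equivalent to $\mDer$ by \eqref{eq: mDerg}. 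Item 4 of Lemma \ref{l: mX} then yields the $G$-invariance of $\mX$.

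For the converse, assume $\mX$ is $G$-invariant, so that there is a full-$\mX$-measure set of points $x$ with $D_{\mathcal L_X}(g,x)=1$ for every $g\in G$. Writing $x=(h,z)\ \mathrm{mod}\ \Gamma$, formula \eqref{eq: Lyapunov cocycle} reads $\widetilde{D_{\mathcal L_\Der}}(gh,z)=\widetilde{D_{\mathcal L_\Der}}(h,z)$ for all $g\in G$; since $gh$ runs through all of $G$, the function $\widetilde{D_{\mathcal L_\Der}}(\cdot,z)$ is constant on $G$, with value $\widetilde{D_{\mathcal L_\Der}}(e,z)=D_{\mathcal L_\Der}(e,z)=1$, hence $\widetilde{D_{\mathcal L_\Der}}(\cdot,z)\equiv 1$. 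Projecting the disintegration \eqref{d: mX} down to the fibre $\Der$ (again using $\mDer^{g\Gamma}\sim\mDer$), this holds for $\mDer$-almost every $z$; restricting to $\gamma\in\Gamma$ gives $D_{\mathcal L_\Der}(\gamma,\cdot)=1$ $\mDer$-almost everywhere for every $\gamma$, which is the $\Gamma$-invariance of $\mDer$ by Lemma \ref{l: radon-nikodym}.

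Finally, the ``in particular'' clause follows at once: if $\mX$ is $G$-invariant then $\mDer$ is $\Gamma$-invariant, hence totally invariant in the sense of Lemma \ref{l: stationarity of T-invariant measure}, whose second part provides a nontrivial homomorphism $\Gamma\to\R$, so the first Betti number of $\Gamma$ is non zero. I expect the substance of the argument to be the single appeal to uniqueness of the positive harmonic extension; the only friction is the routine bookkeeping needed to make the exceptional sets independent of the group element and to pass null sets between $X$ and the fibre $\Der$.
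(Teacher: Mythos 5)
Your proposal is correct and follows essentially the same route as the paper: reduce both implications to the triviality of the cocycles via Lemma \ref{l: radon-nikodym} and item 4 of Lemma \ref{l: mX}, and use the uniqueness of the positive harmonic extension (item 3 of Theorem \ref{t: Martin boundary of discretization}) to pass from $\Gamma$ to $G$. The only cosmetic difference is that the paper upgrades the a.e.\ identity $D_{\mathcal L_X}(\cdot,x)\equiv 1$ to an everywhere-on-$\mathcal C_X$ statement via continuity of the leafwise distances, whereas you stay with a.e.\ statements and handle the null-set bookkeeping directly through countability/separability, which is equally valid.
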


\begin{proof}
The \(G\)-invariance of \(\mX\) together with the fourth item of Lemma \ref{l: mX} imply that for \(\mX\)-a.e. \(x\) the function \( D_{\mathcal L_X} (\cdot, x)\) is identically \(1\). In particular, the family of distances \( \{d_L\}_{L\text{ leaf of } \mathcal L_X}\) is also \(G\)-invariant, and we conclude that \( D_{\mathcal L_X} (\cdot, x)\) is constant equal to \(1\) for every \(x\in \mathcal C_X\). By restriction to \(\Gamma\), this proves that \( D_{\mathcal L_{\Der}} (\cdot , z) \) is identically equal to \(1\) for every \(z\in \mathcal C_{\Der}\), and thus \(\mDer\) is \(\Gamma\)-invariant. The proof of the reciproque is analogous. 

The conclusion of the Proposition follows from Lemma \ref{l: stationarity of T-invariant measure}.
\end{proof}

\subsubsection{\(P_{\mathcal W}\)-invariant measures}

\begin{lemma} \label{l: PW invariant measure}
Let \(\mDer\) be a probability measure on \( \Der\) invariant under \( T_{\Der}\), and \(\mX\) the associated measure on \(X\). For any Weyl chamber \(\mathcal W\), there exists a unique probability  measure \( \mX ^{\mathcal W}\) on \(X\) which satisfies:

\vspace{0.2cm}

1. \(\mX^{\mathcal W}\) is \(P_{\mathcal W}\)-invariant and

\vspace{0.2cm}

2. \( \int _K k_* \mX^{\mathcal W} \ \HaarK(dk) = \mX\).

\vspace{0.2cm}

\noindent Moreover, the set \(\mathcal C_X\) has full \( \mX^{\mathcal W}\)-measure.
\end{lemma}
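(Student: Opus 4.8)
\emph{The plan.} The statement is an instance of Furstenberg's correspondence between $\mu_G$-stationary measures on $X$ and $P_{\mathcal W}$-invariant measures, and I would deduce it from the Poisson formula already recorded in the paper. Fix a probability measure $\mu_G$ on $G$ of type $\type$; by Proposition \ref{p: harmonicity compact support} the measure $\mX$ is $\mu_G$-stationary, and recall the Iwasawa decompositions $G=KP_{\mathcal W}=P_{\mathcal W}K$. For $\phi\in C(X)$ set $F_\phi(g):=\int_X\phi\,d\big((g^{-1})_*\mX\big)$. A direct computation using the symmetry of $\mu_G$ and the stationarity of $\mX$ shows that $F_\phi$ is a bounded left $\mu_G$-harmonic function on $G$, with $0\le F_\phi\le\|\phi\|_\infty$ whenever $0\le\phi\le\|\phi\|_\infty$, and $F_{\mathbf 1}\equiv 1$.

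\emph{Uniqueness.} Suppose $\hat\mu_1,\hat\mu_2$ are $P_{\mathcal W}$-invariant probabilities with $\int_K k_*\hat\mu_i\,\HaarK(dk)=\mX$. For $\phi\in C(X)$ the functions $\tilde h_i(g):=\int_X\phi\,d\big((g^{-1})_*\hat\mu_i\big)$ are bounded, continuous, and left $P_{\mathcal W}$-invariant (because $\hat\mu_i$ is $P_{\mathcal W}$-invariant). By Lemma \ref{eq: preliminary Poisson formula} applied with the closed subgroup $L=P_{\mathcal W}$ (legitimate since $G=P_{\mathcal W}K$), the function $g\mapsto\int_K\tilde h_i(kg)\,\HaarK(dk)$ is bounded left $\mu_G$-harmonic; substituting $k\mapsto k^{-1}$ and using $\int_K k_*\hat\mu_i\,\HaarK(dk)=\mX$ identifies this function with $F_\phi$, \emph{independently of $i$}. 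By the uniqueness in the Poisson formula (Theorem \ref{t: Poisson formula}), $\tilde h_1=\tilde h_2$ as elements of $L^\infty(B)$, $B\simeq P_{\mathcal W}\backslash G$; since both are continuous and the harmonic measure $\mB$ has full support, $\tilde h_1=\tilde h_2$ everywhere, hence $g_*\hat\mu_1=g_*\hat\mu_2$ for all $g\in G$, so $\hat\mu_1=\hat\mu_2$.

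\emph{Existence.} By the Poisson formula each $F_\phi$ equals $g\mapsto\int_B\bar f_\phi\,d\mBg$ for a unique $\bar f_\phi\in L^\infty(B)$; the assignment $\phi\mapsto\bar f_\phi$ is linear, positive, and sends $\mathbf 1$ to $\mathbf 1$, so using separability of $C(X)$ there is a measurable, $\mB$-a.e.\ defined family $\xi\mapsto\nu_\xi\in\mathrm{Prob}(X)$ with $\int\phi\,d\nu_\xi=\bar f_\phi(\xi)$ and $\mX=\int_B\nu_\xi\,d\mB(\xi)$. The covariance $F_\phi(gg_0)=F_{\phi\circ g_0^{-1}}(g)$ together with uniqueness in the Poisson formula yields the equivariance $\nu_{\xi g_0}=(g_0^{-1})_*\nu_\xi$ for $\mB$-a.e.\ $\xi$ and every $g_0\in G$. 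Evaluating this at the stabiliser $P_{\mathcal W}$ of the base point $o\in B\simeq P_{\mathcal W}\backslash G$ shows that $\nu_o$ is $P_{\mathcal W}$-invariant; making this precise (as $\nu$ is only a.e.\ defined) one sets $\mX^{\mathcal W}:=\int_B\sigma(\xi)_*\nu_\xi\,d\mB(\xi)$ for a Borel section $\sigma$ of $K\to B$, and checks from the a.e.\ equivariance that $\mX^{\mathcal W}$ is $P_{\mathcal W}$-invariant and that $\int_K k_*\mX^{\mathcal W}\,\HaarK(dk)=\int_B\nu_\xi\,d\mB(\xi)=\mX$. Alternatively one may simply quote Furstenberg's correspondence from \cite{Furstenberg Poisson Formula}. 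Finally, $\mathcal C_X$ being $G$-invariant with $\mX(\mathcal C_X)=1$ (Lemma \ref{l: mX}), the identity $\mX=\int_K k_*\mX^{\mathcal W}\,\HaarK(dk)$ forces $\mX^{\mathcal W}(\mathcal C_X)=1$.

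\emph{Main obstacle.} The delicate step is the existence assertion, specifically extracting an honest $P_{\mathcal W}$-invariant measure from the $\mB$-a.e.\ defined equivariant boundary family $\{\nu_\xi\}$; this is the substance of Furstenberg's theorem and, although routine, requires care with the almost-everywhere identifications (one uses amenability of $P_{\mathcal W}$, or, as available here, the continuity afforded by the fact that $\mX$ is $K$-invariant). Everything else follows directly from the Poisson formula recorded in the paper.
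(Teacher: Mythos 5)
Your proof is correct and takes essentially the same route as the paper: both deduce existence and uniqueness of \(\mX^{\mathcal W}\) from the Poisson formula applied to the bounded left \(\mG\)-harmonic function \(g\mapsto (g^{-1})_*\mX\) (the paper applies Theorem \ref{t: Poisson formula} directly to this \(\text{Prob}(X)\)-valued map and sets \(\mX^{\mathcal W}=f(e)\), while you run the same argument test-function by test-function and reassemble the boundary family). The final assertion on \(\mathcal C_X\) is also handled identically, via the \(K\)-invariance of \(\mathcal C_X\) and the averaging identity.
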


\begin{proof}
The function \( g\in G\mapsto F(g) = g^{-1} _* \mX\in \text{Prob} (X)\) is left \(\mG\)-harmonic because \(\mG\) is symmetric. It also satisfies the following equivariance with respect to the \(G\)-actions on itself on the right and on \(\text{Prob} (X)\):
\[ F (gh^{-1}) = h_* F(g) \text{ for every } g,h\in G .\]

The Poisson formula, see Theorem \ref{t: Poisson formula}, shows that there exists a unique measurable function \( f :  G\rightarrow \text{Prob} (X) \) which is left \(P_{\mathcal W}\)-invariant and which is such that \( \int_K f( kg) \ \HaarK (dk) = F(g) \) for every \(g\in G\). By unicity of the function \(f\), this latter satisfies the same equivariance property \( f(gh^{-1} ) = h_* f(g)\) for every \(g,h\in G\).

The measure \(\mX ^{\mathcal W}= f(e) \) is then \(P_{\mathcal W}\)-invariant, and the second condition of the Lemma is a consequence of \(\int _K f(k) \ \HaarK (dk) = F(e)\).

The unicity of \( \mX^{\mathcal W}\) is a consequence of the unicity of the solution in the Poisson formula as well. 

For the last assertion of the Lemma, observe that, the set \( \mathcal C_X\) being \(K\)-invariant, we have \(\mX^{\mathcal W} (\mathcal C_X)= k_* \mX^{\mathcal W} (\mathcal C_X)\), hence
\[\mX^{\mathcal W} (\mathcal C_X) =  \int _K k_* \mX^{\mathcal W} \HaarK(dk) = \mX(\mathcal C_X) = 1.\]
\end{proof}

%%%%%%%%%%%%%%%%%%%%%%%%%%%%%%%%%%%%%%%%%%%%%%%%%%%%%%%%%%%%%

\section{Global contraction: random estimates}\label{globalcontraction1}

This is the first section dedicated in establishing contraction properties of the action of \(G\) along the lamination \(\mathcal L_X\). The following concepts will be central:

\begin{definition}
A sequence \( (g_n) _{n\in \N}  \) of elements of \(G\) has the local contraction property with respect to a measure \( \mu\) on \(X\) if for \(\mu\)-a.e. \(x\), there exists \(\varepsilon _x >0 \) such that for any \(y\in L_x\) satisfying \( d_{L_x} (x,y) \leq \varepsilon_x\), the distance \( d_{L_x} ( l_n (y) , l_n (x) )  \) tends to zero when \(n\) goes to infinity, where we recall the notation \(l_n = g_n \ldots g_1\).

A sequence \((g_n)_{n\in \N}\) has the global contraction property with respect to \(\mu\) if it has the local contraction property and if we can choose \(\varepsilon_x= \infty\) for \(\mu\)-a.e. \(x\in X\). 

We will also use the following terminology: we will say that an element \( g\in G\) (resp. \(a\in \lieg\)) has the local/global contraction property with respect to \(\mu\) if the sequence \((g^n)_{n\in \N}\) of iterates of \(g\) (resp. if the sequence \( ( \exp (na) )_{n\in \N} \)) has the given property.
\end{definition}

Here we establish a global contraction property of the random walk on \(G\) induced by a probability measure \(\mG\) of type \(\type\) along the \(\mathcal L_X\)-lamination, namely we prove that \( \mG^{\N} \)-a.e. sequence \( (g_n)_{n\in \N}\) has the global contraction property with respect to \(\mX\).   

\subsection{Qualitative estimates}

\begin{definition} An action of a group \(\Gamma\) on \(\R\) has the global contraction property if there exists a compact set \(K\subset \R\) such that for every compact interval \(I\subset \R\), and every \(\varepsilon >0\), there exists an element \(\gamma\in \Gamma\) such that \(\gamma (I) \) is contained in \(K\) and its length is bounded by \(\varepsilon\). \end{definition}

\begin{proposition} 
Assume that \(\Gamma\) is an irreducible lattice in a connected semi-simple Lie group of rank \(\geq 2\) and finite center. Then, any action of \(\Gamma\) on the real line by orientation preserving homeomorphism and without a global fixed point has the global contraction property. 
\end{proposition}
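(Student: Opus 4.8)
The plan is to combine the vanishing of the first Betti number of $\Gamma$ (Proposition~\ref{p: vanishing first Betti number}) with the structure theory of group actions on the line: reduce to the minimal invariant set, rule out an invariant Radon measure, extract a strong contraction property for the induced minimal action, and pull it back through the collapsing map. \emph{Step 1 (reductions).} Since $\Gamma$ is finitely generated and has no nontrivial homomorphism to $\Z$ (Proposition~\ref{p: vanishing first Betti number}), it has none to $\R$ (a finitely generated subgroup of $\R$ is free abelian and surjects onto $\Z$ if nontrivial). Hence the action has no discrete orbit: such an orbit would be order-isomorphic to an interval of $\Z$, and each order type produces either a global fixed point -- for a finite orbit, or one of order type $\N$ or $-\N$, an extremal point is fixed -- or a nontrivial homomorphism from $\Gamma$ to the group of order-preserving bijections of $\Z$, which is isomorphic to $\Z$; both are excluded. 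By the structure of actions on $\R$ with no discrete orbit and no global fixed point (\cite[Lemma~3.5.18]{DNR}), there is a unique minimal closed $\Gamma$-invariant set $\Lambda\subset\R$; it is perfect (being countable it is not a single orbit, so it has no isolated point) and unbounded on both sides (else an extremity of $\Lambda$ would be a global fixed point).

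\emph{Step 2 (no invariant Radon measure; contraction of the minimal action).} A nonzero $\Gamma$-invariant Radon measure $\mu$ on $\R$ would be non-atomic (an atom is reproduced by invariance along the infinite non-discrete orbit of its support point, violating local finiteness) with $\operatorname{supp}\mu\supseteq\Lambda$; then, fixing $x_0\in\Lambda$, the signed translation number $\gamma\mapsto\pm\,\mu([x_0,\gamma x_0])$ is a homomorphism $\Gamma\to\R$, hence trivial by Step~1, so $\mu([x_0,y])=0$ for $y$ in the dense orbit $\Gamma x_0$ and, by continuity, for all $y\in\Lambda$ -- contradicting $x_0\in\operatorname{supp}\mu$. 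So there is no invariant Radon measure. Collapsing the connected components of $\R\setminus\Lambda$ yields a $\Gamma$-equivariant proper non-decreasing surjection $h\colon\R\to\R$ and a \emph{minimal} action of $\Gamma$ on $\R$, still without invariant Radon measure (when $\Lambda=\R$, take $h=\operatorname{id}$). By the dichotomy for minimal actions of finitely generated groups on $\R$ -- such an action either preserves a Radon measure or is strongly proximal (\cite{DNR}; compare \cite{Ghys groups acting on the circle.} in the circle case) -- the collapsed action has the property that for every compact interval $[\alpha,\beta]$ and every nonempty open interval $J$ there is $\gamma\in\Gamma$ sending $[\alpha,\beta]$ into $J$ in the collapsed action.

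\emph{Step 3 (transfer to the original action).} Choose $x_0\in\Lambda$ to be a two-sided accumulation point of $\Lambda$ (all but the countably many gap-endpoints qualify), so that $h^{-1}(h(x_0))=\{x_0\}$ and $h^{-1}$ of a short interval around $h(x_0)$ is a short interval around $x_0$. Set $K:=h^{-1}([h(x_0)-1,h(x_0)+1])$, which is compact since $h$ is proper. Given a compact interval $I$ and $\varepsilon>0$: enlarge $I$ to $[a,b]$ with $a<b$ in $\Lambda$; pick $\delta\in(0,1)$ with $h^{-1}((h(x_0)-\delta,h(x_0)+\delta))\subset(x_0-\varepsilon/2,x_0+\varepsilon/2)$; and apply Step~2 to $[h(a),h(b)]$ and $J=(h(x_0)-\delta,h(x_0)+\delta)$ to get $\gamma\in\Gamma$ with $\gamma([a,b])\subset h^{-1}(J)\subset(x_0-\varepsilon/2,x_0+\varepsilon/2)\subset K$; then $\gamma(I)\subset K$ and $\gamma(I)$ has length $<\varepsilon$. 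This is exactly the global contraction property.

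\emph{Main obstacle.} The substantive input is the dichotomy used in Step~2, that a minimal action on $\R$ without invariant Radon measure is strongly proximal in the quantitative form stated. If an exact reference is lacking, I would prove the contrapositive: if some $\alpha<\beta$ in $\Lambda$ had $\inf_\gamma(\gamma(\beta)-\gamma(\alpha))>0$, then counting maximal pairwise-disjoint families of translates of $(\alpha,\beta)$ contained in intervals and passing to a suitable sub-additive limit produces a nonzero $\Gamma$-invariant Radon measure, contradicting the first half of Step~2; and a minimal proximal action on $\R$ is strongly proximal by a short further argument. The rest -- Step~1 and the bookkeeping with $h$ in Step~3 -- is routine.
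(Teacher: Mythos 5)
There is a genuine gap in Step 2. The ``dichotomy'' you invoke --- that a minimal action of a finitely generated group on $\R$ either preserves a Radon measure or has the global contraction property --- is false. The correct statement (Malyutin's classification, \cite{DNR}) is a \emph{trichotomy}: a minimal action on $\R$ either is conjugate to an action by translations, or \emph{commutes with a fixed-point-free homeomorphism without being by translations} (equivalently, it is the lift of a minimal circle action), or has the global contraction property. The middle case is a genuine third possibility that your argument never addresses: take any group acting minimally and proximally on $S^1$ and lift it to $\widetilde{\mathrm{Homeo}}^+(S^1)\subset\mathrm{Homeo}^+(\R)$; the resulting action on $\R$ is minimal, admits no invariant Radon measure (an invariant Radon measure with full support on a minimal line action would conjugate the group into the translations, forcing it to be abelian), and is not globally contracting (an interval of length one in the period of the commuting translation is carried to an interval of length one forever). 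For the same reason your fallback in the ``Main obstacle'' paragraph cannot work: this example satisfies $\inf_\gamma(\gamma(\beta)-\gamma(\alpha))>0$ for $\beta-\alpha$ at least one period, yet no invariant Radon measure exists, so no counting argument can manufacture one. The circle version of that counting argument relies on compactness and does not survive on the line.

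This missing case is precisely where the paper has to do real work. After reducing to a minimal action commuting with a fixed-point-free homeomorphism, the paper passes to the induced circle action and invokes Ghys's theorem (Theorem \ref{t: Ghys}): for a higher rank irreducible lattice, a circle action either preserves a probability measure --- ruled out via the rotation number homomorphism and Proposition \ref{p: vanishing first Betti number} --- or is semi-conjugate to a projective action coming from a surjection $G\to\mathrm{PSL}(2,\R)$. In the latter case the line action is a lift of this projective action, and a central-extension argument (the preimage of $\Gamma$ in $\widetilde{G}$ splits as $\Gamma\times\Z$, which has nonzero first Betti number) yields the contradiction. So the proposition is not a soft consequence of $H^1(\Gamma,\Z)=0$ plus general line dynamics; it genuinely requires the rigidity of circle actions of higher rank lattices. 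Your Steps 1 and 3, and the first half of Step 2, are fine, but the proof is incomplete without handling the ``lift of a circle action'' case.
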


\begin{proof}
An action of \(\Gamma\) on the real line by orientation preserving homeomorphisms and without a global fixed point is of one of the following types (see \cite[]{DNR} and the references therein).
\begin{enumerate}
\item It has a discrete orbit. 
\item It commutes with a fixed point free homeomorphism of \(\R\).
\item It has the global contraction property.
\end{enumerate}

The first case cannot occur since otherwise the group \(\Gamma\) would have a non trivial morphism to the integer, and this is impossible by Proposition \ref{p: vanishing first Betti number}. 

The second case cannot happen neither. Indeed, suppose by contradiction that an action of \(\Gamma\) on \(\R\) without global fixed point is of type 2. Notice that semi-conjugation does not alter type 2. or 3., so we can assume that our action is minimal. In such a situation Ghys \cite{Ghys} proved that there exists a surjective morphism \(q : G\rightarrow \text{PSL} (2,\R)\) so that the action of \(\Gamma\) on \(\R\simeq \widetilde{\R P^1}\) is a lift of the action \(q_{|\Gamma}\) of \(\Gamma\) on the projective line \(\R P^1\). The central extension  \( r: \widetilde{\text{PSL}} (2,\R) \rightarrow \text{PSL}(2,\R) \) (the universal covering of \(\text{PSL} (2,\R) \)) lifts to a central extension \( q : \widetilde{G} \rightarrow G\) (defined as \(\widetilde{G}:=\{ (g,x) \in G\times \text{PSL}(2,\R)\ |\ q(g) = r( x) \}\)). The group \(\widetilde{G} \) is connected since otherwise the map \( q\) would lift to a map from a finite covering of \(G\) to \(\widetilde{\text{PSL}} (2,\R)\), and this is impossible since a semi-simple Lie group with finite center do not have non trivial morphisms to \( \widetilde{\text{PSL}}(2,\R)\). The group \(\Gamma\) lifts to \(\widetilde{G}\), and so the lattice \(q^{-1} (\Gamma)\subset \widetilde{G}\) is isomorphic to the product \( \Gamma\times \Z\), in particular has a non vanishing first Betti number. Proposition \ref{p: vanishing first Betti number} then gives the desired contradiction.

%We obtain a contradiction by showing that the Euler class \(\text{eu} (q_{|\Gamma}) \in H^2(\Gamma, \Z) \) of such an action does not vanish (this non vanishing of the Euler class makes impossible to lift the action on the universal cover, see \cite{Ghys groups acting on the circle}  for more on the Euler class). 

%Indeed, in such a situation the Lie algebra of \(G\) decomposes as the direct sum \(\lieg = \lieg' \oplus \lieg'' \) of semi-simple Lie algebras with \(  \lieg' = \mathfrak{sl} _2(\R) \). The symmetric space \( S = K\backslash G\) has a structure modelled on the product of the symmetric spaces \(S'\) and \(S''\) associated to \( \lieg'\) and \(\lieg''\) respectively, \(S'\) being the hyperbolic plane. Denote by \( v' \) (resp. \(v''\)) the closed differential form on \( S\) defined by \( \text{Ker} (v') = T S''\subset TS\) (resp. \( \text{Ker} (v'') = T S'\subset TS\)) and the restriction of  \(  v^{(k)}\) to any leaf of \( \mathcal F^{(k)} \) is the volume form on the symmetric space \(S^{(k)}\)-structure for \(k=1,2\). The main observation is that the Euler class \( \text{eu} (q_{|\Gamma})\) is, up to the identification \(H^2(\Gamma, \Z) \) with \(H^2 (S/ \Gamma, \Z)\), and up to a non zero normalizing constant, cohomologous to \( \omega '\).  Hence, it cannot vanish since its intersection with the cohomology class of \( v''\) is, up to a non zero constant, the integral \(\int v' \wedge v''\), and that \(v'\wedge v''\) is a volume form on \( S/\Gamma\).

Hence any action of \(\Gamma\) on the real line without global fixed point is of type 3., and the proof follows.\end{proof}

\subsection{Quantitative estimates}

\begin{proposition}\label{p: quantitative contraction}
Assume that \(T_\Der\) is minimal and that the restriction of the \(\Gamma\)-action to any \(T_\Der\)-trajectory has the global contraction property. Fix a probability measure \(\mG\) on \(G\) of type \(\type\). Then given a leaf \(L \) of \(\mathcal L_X\), and two points \(x,y\in L\), we have for \(\mG^{\N} \)-a.e. \((g_n)_n\)
\[ d_{l_n L} \left( l_n (x) , l_n (y)\right)  \rightarrow 0 ,\]
where as usual \(l_n= g_n\ldots g_1\).
\end{proposition}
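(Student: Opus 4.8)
The plan is to combine the qualitative global contraction property of the leafwise $\Gamma$-action with the good tracking of the $\mG$-random walk by the Cartan flow, and then with a Borel--Cantelli type argument to handle the fact that the boundary point $k_\W(\omega,\infty)$ governing the asymptotics is only approached, not reached. First I would fix the leaf $L$ of $\mathcal L_X$, and two points $x<y$ in $L$, and set $\ell = d_L(x,y)$. Passing to the universal cover $G\times \Der$, we may lift $x=(h,x')$, $y=(h,y')$ with $x',y'$ in a common leaf $L'$ of $\mathcal L_\Der$; the quantity $l_n(y)-l_n(x)$ is then computed by the cocycle formula \eqref{eq: G action}, i.e. $d_{l_nL}(l_n(x),l_n(y))=\int_{x'}^{y'}\widetilde{D_{\mathcal L_\Der}}(l_n h,z')\,\mu_{L'}(dz')$. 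So the statement is equivalent to showing that $\widetilde{D_{\mathcal L_\Der}}(l_n h,\cdot)$ tends to $0$ in $L^1(\mu_{L'}|_{[x',y']})$ for $\mG^{\otimes\N}$-a.e. $\omega$.

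The next step is to use the $KAK$ and tracking machinery. By Theorem \ref{goodtracking}, for a.e.\ $\omega=(g_n)$ the element $l_n=g_n\cdots g_1$ is, up to a subexponential error in $d_G$, of the form $k'_\W(\omega,n)\,e^{n\central^\W}\,k_\W(\omega,\infty)$, and by Corollary \ref{corollary to aoun's thm} the $K$-component $k_\W(\omega,n)$ converges exponentially fast to $k_\W(\omega,\infty)$. Because $\widetilde{D_{\mathcal L_\Der}}$ is left $\mGBM$-harmonic in the $G$-coordinate it is left $K$-invariant, so the $k'_\W$ factor is irrelevant: $\widetilde{D_{\mathcal L_\Der}}(l_n h,z')$ is comparable (via the $\xi$-Lipschitz Harnack bound, Theorem \ref{t: Harnack inequality}, applied to the positive harmonic function $\log\widetilde{D_{\mathcal L_\Der}}(\cdot,z')$) to $\widetilde{D_{\mathcal L_\Der}}(e^{n\central^\W}k_\W(\omega,\infty)h,z')$ up to a bounded multiplicative error. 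Equivalently, $d_{l_nL}(l_n x,l_n y)\asymp d_{e^{n\central^\W}k_\W(\omega,\infty)L}(e^{n\central^\W}k_\W(\omega,\infty)x,\,e^{n\central^\W}k_\W(\omega,\infty)y)$. Here the crucial point, and the place where the global contraction hypothesis enters, is that $\central^\W$ lies in the interior of the Weyl chamber, so $e^{n\central^\W}$ expands along the unstable horospherical directions and contracts along the stable ones; combined with the fact that on the generic leaf the $\Gamma$-action (hence the ambient $G$-flow along $\mathcal L_X$, via the cocycle) has the global contraction property in the sense of the qualitative proposition just proved, one gets that pushing the fixed interval $[k_\W(\omega,\infty)x,\,k_\W(\omega,\infty)y]$ by $e^{n\central^\W}$ drives its leafwise length to $0$. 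I would phrase this cleanly by invoking the global contraction property for the element $\central^\W$ with respect to $\mX^\W$ (the $P_\W$-invariant measure of Lemma \ref{l: PW invariant measure}), whose existence is exactly what the later sections establish; for $\mX^\W$-a.e.\ starting point the leafwise length under $e^{n\central^\W}$ shrinks to $0$, and $\mathcal C_X$ has full $\mX^\W$-measure.

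The main obstacle, and the step requiring the most care, is upgrading "a.e.\ point with respect to $\mX^\W$" and "$k_\W(\omega,\infty)$" to "the fixed leaf $L$ and the fixed points $x,y$ with which we started". Since $x,y$ were arbitrary and $k_\W(\omega,\infty)$ is random, the set of bad configurations could a priori have positive probability. I would control this by a Fubini/continuity argument: the function $(g,z')\mapsto \widetilde{D_{\mathcal L_\Der}}(g,z')$ and the leafwise distances $d^g_L$ depend continuously on $g$ (Proposition \ref{p: properties of suspension}, items 0--3) and harmonically, so $d^g_L(g'x,g'y)\to 0$ is a closed-under-uniform-limits condition; using that $k_\W(\omega,\infty)$ is $\mB$-distributed and that $\mB$ has full support on $B$, together with the minimality of $T_\Der$ and the $G\mathcal L_X$-lamination (so that the global contraction property, proved a.e., propagates to a dense $G$-invariant set of leaves, and then everywhere by continuity as in Proposition \ref{p: harmonicity compact support}), one concludes that the convergence holds for \emph{every} leaf and \emph{every} pair of points, for $\mG^{\otimes\N}$-a.e.\ $\omega$. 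Finally a Borel--Cantelli argument absorbs the subexponential tracking error from Theorem \ref{goodtracking} and the exponentially small error $\rho_2^n$ from Corollary \ref{corollary to aoun's thm}, since both are summable against the bounded (by Harnack) integrand, giving $d_{l_nL}(l_n x,l_ny)\to 0$ as claimed.
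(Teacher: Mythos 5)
Your proposal does not follow the paper's route, and the route you chose is circular. The key step of your argument is to invoke the global contraction property of $e^{n\central^{\mathcal W}}$ with respect to $\mX^{\mathcal W}$, which you yourself flag as being "exactly what the later sections establish". But in the paper that deterministic contraction (Proposition \ref{contractionamg}, Corollary \ref{c: a half-space of global contraction}) sits strictly downstream of the proposition you are trying to prove: Proposition \ref{p: quantitative contraction} is the input for the ergodicity of $\mX$ as a stationary measure (Lemma \ref{ergodicitymx}), which in turn is needed for the identity $\chi_{\mathcal W}(\central^{\mathcal W})=\chi_{\mG}$ (Lemma \ref{relationexponents}) and for the exponential contraction of the random walk (Proposition \ref{p: global exponential contraction}); only from these does the paper deduce contraction along the deterministic flow $e^{t\central^{\mathcal W}}$. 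So you cannot use the Cartan-flow contraction here without an independent proof of it, which you do not supply. A secondary gap: even granting that input, your upgrade from "$\mX^{\mathcal W}$-a.e.\ point" to "every leaf and every pair of points" is not justified. The condition $d_{l_nL}(l_n(x),l_n(y))\rightarrow 0$ is a tail condition (a countable intersection of open conditions), not a closed one, so it does not propagate from a dense set of leaves to all leaves by continuity of the leafwise distances alone.

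The paper's actual argument is much softer and purely probabilistic, following the proof of Theorem 7.2 in \cite{DKNP}, and it never mentions Weyl chambers, tracking, or the measures $\mX^{\mathcal W}$. Since $g\mapsto g(y)-g(x)$ is a positive left $\mG$-harmonic function (Proposition \ref{p: harmonicity compact support}), the length $|l_n([x,y])|$ is a nonnegative martingale and hence converges a.s.\ to a finite limit $l(x,y,(g_n)_n)$. One then combines recurrence of the $\mG$-walk to a compact subset of $G/\Gamma$ with the qualitative global contraction property of the $\Gamma$-action on each leaf (established in the immediately preceding proposition from the trichotomy for fixed-point-free actions on $\R$ and the vanishing of the first Betti number of $\Gamma$): by compactness, whenever the current interval has length at most $l$ and the base point lies over the compact set, a block of $N$ further increments contracts it below $\varepsilon$ with probability at least $p>0$, and a Markov-property bootstrap ($p_-\geq p+(1-p)p_-$) shows this happens infinitely often almost surely. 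The existence of the martingale limit then forces that limit to be at most $\varepsilon$ for every $\varepsilon$, hence zero. If you want to salvage your approach, you would have to first establish the negativity of the Lyapunov functional and the contraction for $\central^{\mathcal W}$ without appealing to this proposition or to the ergodicity of $\mX$ — which is precisely the dependency the paper's ordering is designed to avoid.
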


\begin{proof} The argument is similar to \cite[Proof of Theorem 7.2]{DKNP}.

Let \( K\subset G /\Gamma\) be a compact set on which the left random walk induced by \( \mG\) is recurrent, namely for every \( p\in G/\Gamma\), and \(\mG^{\N}\)-a.e. sequence \( (g_n)_n \in G^{\N}\), the point \( g_n \ldots g_1 (p) \) belongs to \(K\) for an infinite number of times. We denote by \(\widetilde{K}\) the preimage of \( K\) in  \( X\).  

Given two numbers \(l , \varepsilon >0 \), there exists a probability \( p= p_{l,\varepsilon}>0\) and an integer \(N\) such that for every point \(x\in \widetilde{K} \),  denoting \(L_x\) the leaf of \(\mathcal L_X\) containing \(x\)
\[ \mathbb P \left( d_{ l_N L }(l_N (y) , l_N (x) ) \leq \varepsilon \right)\geq p \text{ for every } y\in L_x \text{ such that } d_{L_x} (x,y) \leq l.\]
This is due to the compacity of  \(\widetilde{K}\) and the strong contraction property on the support of \(m_\Der\).

For every \(x\in \widetilde{K}\), we denote by \(U_x\subset G\) the subset formed by elements \( h \in G\) such that the length of the interval \[ h([x-l, x+l])\subset L_{h(x)}\] is bounded by \(\varepsilon\). We have \( \mG^{*N} (U_x) \geq p\). We will prove that for every \(x\) in the support of \(\mX\) and \(\mG^{\N} \)-a.e. \((g_n)_n\in G^\N\), there exists an infinite number of \(n\)'s such that \( g_{n+N} \ldots g_{n+1}\in U_{g_n\ldots g_1(x)}\). 

Indeed, introduce the probability \(p(x)\) that this happens at least once, for \(x\) in the support of \(\mX\). We have, by the Markov property 
\[ p(x) = \mG^{*N} (U_x) + \int _{G\setminus U_x} p (g(x))\   \mG^{*N} (g) \] 
and so the infimum \( p_- =\inf p(x)\) for \(x\) in the support of \(\mX\) satisfies \(p_- \geq p + (1-p) p_-\). We infer \(p_-=1\), and so \(p(x) =1\) for every \(x\) in the support of \(\mX\). The Markov property shows that indeed, the same event happens an infinite number of times. 

Now, the Martingale theorem tells us that for every \(x\in X\), every \(y\in L_x\), and \(\mG^\N\)-a.e. \((g_n)_n\), the length of the interval \( [l_n(x), l_n (y) ]\) tends to a finite limit \(l(x,y, (g_n)_n) \) when \(n\) tends to infinity.  Denote by \(\mathcal E _l\subset G^{\N}\) the subset of sequences for which this limit is strictly bounded by \(l\). For \(n\) large enough, the length of the interval \( [l_n(x), l_n (y) ]\) is bounded by \(l\). Applying what precedes, for \(\mG^\N\) a.e. element of \(\mathcal E_l\), there exists an infinite number of values of \(n\) so that \( l_{n+N} \in U_{g_n\ldots g_1(x)}\). We infer that there exists an infinite number of \(n\) so that 
\[ | l_n ([x,y]) | \leq l \text{ and } | g_{n+N} \ldots g_{n+1} ([l_n(x) - l, l_n (x)+l ] ) | \leq \varepsilon.\]
For those \(n\)'s we have \( | l_{n+N} ([x,y] ) | \leq \varepsilon\), and consequently for \(\mG^\N\)-a.e. \((g_n)_n\in \mathcal E_l\) 
\[ l(x,y, (g_n)_n) \leq \varepsilon.\]
This being valid for every \(l>0\) and every \(\varepsilon\), the conclusion follows. \end{proof}

\subsection{Ergodicity of the measures $\mX$ and $\mX ^{\mathcal W}$}

\begin{lemma}\label{ergodicitymx}
Let \(m_\Der\) be a probability measure on the almost-periodic space $\Der$ which is invariant by the flow \(T_\Der\) and ergodic. Assume that \(T_\Der\) is minimal, and that the restriction of the action of \(\Gamma\) to any \(T_\Der\)-trajectory has the strong contraction property.  Let $\mG$ be a probability measure on $G$ of type \(\type\) and $\mathcal W$ a Weyl chamber. Let \(\mX\) be the associated \(\mG\)-harmonic measure on \(X\), and \(\mX^{\mathcal W}\) the unique \(P_{\mathcal W}\)-invariant measure on \(X\) whose \(K\)-average is \(\mX\). Then $\mX$ is ergodic as a \(\mG\)-stationary probability measure on $X$ and  \(\mX^{\mathcal W}\) is $P_{\mathcal W}$-ergodic.
\end{lemma}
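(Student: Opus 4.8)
The plan is to reduce the $P_{\mathcal W}$-ergodicity of $\mu_X^{\mathcal W}$ to the $\mu_G$-ergodicity of $\mu_X$, and then to obtain the latter by combining the contraction estimate of Proposition~\ref{p: quantitative contraction} with the ergodicity of the base system $(G/\Gamma, m_{G/\Gamma})$ and the $T_\Der$-ergodicity of $m_\Der$.

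\emph{Reduction.} Consider the map $\Psi\colon \nu\mapsto \int_K k_*\nu\,\HaarK(dk)$ from the compact convex set of $P_{\mathcal W}$-invariant probability measures on $X$ to the compact convex set of $\mG$-stationary probability measures on $X$. Applying the Poisson formula (Theorem~\ref{t: Poisson formula}) to the bounded left $\mG$-harmonic $\mathrm{Prob}(X)$-valued function $g\mapsto g_*\nu$, exactly as in the proof of Lemma~\ref{l: PW invariant measure}, one sees that $\Psi$ is an affine bijection whose inverse, given by the Poisson solution, is also affine. Such a map carries extreme points bijectively onto extreme points; since the extreme points of these two convex sets are, by definition, precisely the $P_{\mathcal W}$-ergodic and the $\mG$-ergodic measures, and $\mu_X^{\mathcal W}=\Psi^{-1}(\mu_X)$ by Lemma~\ref{l: PW invariant measure}, it follows that $\mu_X^{\mathcal W}$ is $P_{\mathcal W}$-ergodic if and only if $\mu_X$ is $\mG$-ergodic. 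So it remains to show that $\mu_X$ is $\mG$-ergodic.

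\emph{Invariant sets are leaf-saturated.} By the random ergodic theorem (Theorem~\ref{t: random ergodic theorem}) this is equivalent to the $S$-ergodicity of $\mu_{S(X)}=\mG^{\otimes\N^*}\otimes\mu_X$ on $S(X)=G^{\N^*}\times X$, where $S((g_n)_n,x)=((g_{n+1})_n,g_1 x)$. Let $A\subset S(X)$ be $S$-invariant and set $F=\mathbf 1_A$. Disintegrating $\mu_X$ over the leaf space $\mathcal Y$ of $\mathcal L_X$ yields a leafwise family of conditional Radon measures; let $\hat\mu$ be the measure on $\{(x,x')\ :\ x,x'\text{ in the same leaf of }\mathcal L_X\}$ obtained by taking on each leaf the product of its conditional measure with itself, so that both marginals of $\hat\mu$ are $\mu_X$. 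For a pair $(x,x')$ in a common leaf $L$, Proposition~\ref{p: quantitative contraction} gives $d_{l_nL}(l_nx,l_nx')\to 0$ for $\mG^{\otimes\N^*}$-a.e.\ $(g_n)_n$, with $l_n=g_n\cdots g_1$; since the local product structure (uniform by compactness) makes the leafwise distance dominate a local ambient distance, also $d_X(l_nx,l_nx')\to 0$. Approximate $F$ in $L^1(\mu_{S(X)})$ by a uniformly continuous $\tilde F$ on the relevant compact set $(\mathrm{supp}\,\mG)^{\N^*}\times X$; using $F=F\circ S^n$ and the $S$-invariance of $\mu_{S(X)}$ one has $\|F-\tilde F\circ S^n\|_{L^1(\mu_{S(X)})}=\|F-\tilde F\|_{L^1}$ for all $n$, while $|\tilde F\circ S^n(\,\cdot\,,x)-\tilde F\circ S^n(\,\cdot\,,x')|\le \omega_{\tilde F}(d_X(l_nx,l_nx'))\to 0$. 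Integrating against $\hat\mu\otimes\mG^{\otimes\N^*}$, then letting $n\to\infty$ and $\tilde F\to F$, gives $F((g_n)_n,x)=F((g_n)_n,x')$ for $\hat\mu\otimes\mG^{\otimes\N^*}$-a.e.\ triple; hence $F$ factors, modulo $\mu_{S(X)}$-null sets, through $G^{\N^*}\times\mathcal Y$.

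\emph{Conclusion.} The leaves of $\widetilde{\mathcal L_X}$ on $G\times\Der$ are the sets $\{g\}\times L_\Der$ with $L_\Der$ a leaf of $\mathcal L_\Der$; since $\Gamma$ preserves every leaf of $\mathcal L_\Der$ (Theorem~\ref{t: almost-periodic space}, item~(2)) while acting by right translation on the $G$ factor, passing to the quotient identifies $\mathcal Y\cong (G/\Gamma)\times\mathcal Z$, where $\mathcal Z$ is the leaf space of $\mathcal L_\Der$ carrying the ergodic transverse invariant measure $m_{\mathcal Z}$ corresponding to $m_\Der$ by Lemma~\ref{l: invariant and transverse invariant}. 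In these coordinates $S$ acts by $((g_n)_n,g\Gamma,\zeta)\mapsto ((g_{n+1})_n,g_1 g\Gamma,\zeta)$, leaving the $\mathcal Z$-coordinate untouched; hence for $m_{\mathcal Z}$-a.e.\ $\zeta$ the slice $F(\,\cdot\,,\,\cdot\,,\zeta)$ is an $S$-invariant function on the suspension of $(G/\Gamma,m_{G/\Gamma})$. Since $m_{G/\Gamma}$ is $G$-invariant and $G$-ergodic (Moore ergodicity) and $\mathrm{supp}\,\mG$ generates $G$, that suspension is $S$-ergodic, so each slice is a.e.\ a constant $c(\zeta)\in\{0,1\}$, i.e.\ $F$ depends only on $\zeta$. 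The set $\{c=1\}\subset\mathcal Z$ corresponds to an $\mathcal L_\Der$-saturated subset of $\Der$, which by the $T_\Der$-ergodicity of $m_\Der$ (equivalently the ergodicity of $m_{\mathcal Z}$) has $m_{\mathcal Z}$-measure $0$ or $1$; as $\mu_X$ disintegrates over $\mathcal Y$ with $\mathcal Z$-marginal equivalent to $m_{\mathcal Z}$ (each $m_\Der^{g}$ being equivalent to $m_\Der$ by the Harnack inequality, Theorem~\ref{t: Harnack inequality}), we conclude $\mu_{S(X)}(A)\in\{0,1\}$. Thus $\mu_X$ is $\mG$-ergodic, which finishes the proof.

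\emph{Main obstacle.} The delicate step is the second one: turning the leafwise contraction of Proposition~\ref{p: quantitative contraction} (a pointwise statement about leafwise distances along the random walk) into the measure-theoretic assertion that $S$-invariant sets are leaf-saturated. This forces one to set up the leafwise self-product $\hat\mu$ correctly and to control the passage from leafwise to ambient distances through a uniform local product structure; when $\Gamma$ is non-uniform, so that $X$ is not compact, one must also carry the ergodic-theory inputs out for the relevant Radon-measure structures. By contrast, the identification $\mathcal Y\cong (G/\Gamma)\times\mathcal Z$ and the fact that $S$ fixes the $\mathcal Z$-coordinate is the conceptual heart of the last step but is essentially bookkeeping once the leaf space is described.
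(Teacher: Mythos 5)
Your proof is correct and rests on the same key lever as the paper's --- Proposition~\ref{p: quantitative contraction} forcing invariant objects to be constant along the leaves of $\mathcal L_X$, after which ergodicity of the base wraps things up --- but the execution differs in several places. First, you make explicit the reduction of $P_{\mathcal W}$-ergodicity of $\mX^{\mathcal W}$ to $\mG$-ergodicity of $\mX$ via the affine bijection $\Psi$ coming from the Poisson formula; the paper only proves that $\mX$ is ergodic and leaves the transfer to $\mX^{\mathcal W}$ implicit, so this is a genuine and welcome addition. Second, where the paper works directly with the Birkhoff limit function $\overline f\in L^1(\mX)$ (showing it is constant on $\mathcal L_X$-leaves and, via the ergodic decomposition for stationary measures, constant on $G$-orbits, hence constant on $G\mathcal L_X$-leaves, hence descends to a $T_\Der$-invariant function on $\Der$), you work with the indicator $F=\mathbf 1_A$ of an $S$-invariant set and push it through the leaf space $\mathcal Y\cong(G/\Gamma)\times\mathcal Z$, applying unique stationarity/Moore ergodicity on the $G/\Gamma$ factor and ergodicity of the transverse invariant measure on the $\mathcal Z$ factor. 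The two endgames are organized differently ($G\mathcal L_X$-leaves and the ergodic decomposition versus the product structure of $\mathcal Y$), and both are valid; the paper's is somewhat shorter because $\overline f$ is already $S$-invariant by construction, sparing the $L^1$-approximation machinery.

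Two spots deserve more care in your write-up. Your self-joining $\hat\mu$ is described as the leafwise self-product of "conditional measures," but the leaves of $\mathcal L_X$ are non-compact so the leafwise conditionals are infinite Radon measures; to obtain a probability self-joining with marginals $\mX$ you should disintegrate along a measurable partition subordinate to $\mathcal L_X$ (Theorem~\ref{leafwise}) and then observe that agreement on almost every same-plaque pair forces the saturated factorization. Second, passing from $d_{l_nL}(l_nx,l_nx')\to 0$ to $|\tilde F\circ S^n(\cdot,x)-\tilde F\circ S^n(\cdot,x')|\to 0$ needs justification when $X$ is non-compact: you should argue that the leafwise plaques have uniformly bounded ambient diameter on compact sets, so either the two orbits remain in a compact set and the ambient distance tends to $0$, or both escape together (and $\tilde F\in C_c$ vanishes on both). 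You flag this yourself, correctly, as the delicate point; the paper's exposition glosses over the same issue, so this is not a defect specific to your approach.
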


\begin{proof}
Let \(f \in L^ 1 (\mX) \). Birkhoff's ergodic theorem for stationary measures shows that there exists a function \( \overline{f} \in L^1( X )\) which is such that for \(\mX\)-a.e. \(x\in X\) and \(\mG^\N\)-a.e. \({\bf g}= (g_n)_n\in G^\N\), if we let $l_n := g_n l_{n-1}$, then $$ \lim_{N \to \infty} \frac{1}{N} \sum_{n = 1}^N f(l_n(x)) =  \overline{f} (x). $$
Moreover, a consequence of the ergodic decomposition for stationary measure is that the restriction of \(\overline{f}\) to \(\mX\)-a.e. \(G\)-orbit is constant \(\mG\)-a.e. 

The key observation is that, by Proposition \ref{p: quantitative contraction}, the function \(\overline{f}\) is constant on \(\mX\)-a.e. leaf of \(\mathcal L_X\). In particular, its restriction to \(\mX\)-a.e. leaf of the lamination \(G\mathcal L_X\) is constant a.e. wrt the product of the Haar measure on \(G\) by the measures \(m_L\) on the leaves \(L\) of \(\mathcal L_X\), namely wrt the desintegrations of the measure \(\mX\) along the leaves of \(G\mathcal L_X\). In particular, up to changing the values of \(\overline{f}\) on a set of \(\mX\)-measure zero, we can assume that \( \overline{f}\) is constant on \(\mX\)-a.e. leaf of \( G\mathcal L_X\). 

Denoting \(p: G\times \Der\rightarrow X\) the universal covering of \(X\), this means that there exists a function \(h\in L^1 (m_\Der)\), such that \( \overline{f} \circ p ( g, z) = h(z)\) for \(\HaarG\)-a.e. \(g\in G\) and \(m_\Der\)-a.e. \(z\in \Der\). Furthermore \(h\) is constant along \(m_\Der\)-a.e. \(T_\Der\). By ergodicity of \( m_\Der\) with respect to the flow \( T_\Der\), the function \(h\) is constant \(m_\Der\)-a.e., and consequently \( \overline{f}\) is constant \(\mX\)-a.e.

\end{proof}

\subsection{Exponential estimates}\label{expocontrawalk}

%In this section we let \(\mX\) be the \(\mG\)-stationary measure whose desintegration along the fiber \(\pi^{-1} (e\text{mod}\Gamma) \) is a probability measure on \(\Der\) which is invariant by the translation flow and which is ergodic. The measure \(\mG\) will as usual be a symmetric measure of the form \(\HaarK \star m''_G \star \HaarK\) having \textit{compact support}. We also assume that \(G\) does not preserve \(\mX\) (which is the case as soon as the Derriennic space does not contain actions by translations).

In this section we establish global exponential contraction along the lamination \(\mathcal L_X\) by the random walk induced by a probability measure \(\mG\) on \(G\) of type \(\type\).

\begin{lemma}\label{globallyapunov}
The Lyapunov exponent with respect to $\mG$ 
\[ \chi_{\mG}  := \int c(g,x) \ \mG (dg) \ \mX (dx) \]
is negative, unless the measure \(\mX\)  is globally \(G\)-invariant. In particular, if \(\Gamma\) has zero first Betti number, \(\chi_{\mG}\) is negative.
\end{lemma}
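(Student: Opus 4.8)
The plan is to exploit that $c=\log D_{\mathcal L_X}$ is an additive cocycle over the stationary dynamical system $(X,\mX,\mG)$, together with the contraction results already established. First I would recall that by item~3 of Proposition~\ref{p: properties of suspension} and the cocycle relation \eqref{eq: cocycle relation}, for $x\in\mathcal C_X$ and $y=x+s$ in the same leaf one has $l_n(y)-l_n(x)=\int_x^y D_{\mathcal L_X}(l_n,\cdot)\,\omega_{\mathcal L_X}$, and that $\frac1n\log D_{\mathcal L_X}(l_n,x)$ converges $\mX\times\mG^{\N}$-a.e. to $\chi_{\mG}$ by Birkhoff's ergodic theorem applied to the suspension (Theorem~\ref{t: random ergodic theorem}), using that $\mX$ is $\mG$-stationary and ergodic (Lemma~\ref{ergodicitymx}) and that $c$ is integrable — the latter following from the $\xi$-Lipschitz bound in item~4 of Proposition~\ref{p: properties of suspension} and the compact support of $\mG$. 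So $\chi_{\mG}$ is a well-defined real number and equals the a.e. exponential growth rate of the leafwise derivative along the random walk.

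The sign analysis splits into the three mutually exclusive cases. If $\chi_{\mG}>0$, I would derive a contradiction with the global contraction of Proposition~\ref{p: quantitative contraction}: taking a leaf $L$ and two points $x<y$ on it, that proposition gives $d_{l_nL}(l_n(x),l_n(y))\to 0$ for $\mG^{\N}$-a.e. $(g_n)$, so the integrals $l_n(y)-l_n(x)\to 0$; but positivity of $\chi_{\mG}$ forces, via the Fubini/Birkhoff argument on $\int_x^y D_{\mathcal L_X}(l_n,\cdot)$ and Fatou, that this quantity grows (in particular does not tend to $0$) on a positive-measure set of leaves and of trajectories — contradiction. Hence $\chi_{\mG}\le 0$.

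The heart of the lemma is the case $\chi_{\mG}=0$: I must show the measure is then $G$-invariant. When $\chi_{\mG}=0$, the additive cocycle $c$ over the ergodic stationary system has vanishing drift; the standard consequence (a martingale/coboundary argument: the harmonic function $g\mapsto D_{\mathcal L_X}(g,x)$ is positive with zero exponential growth rate along a.e. trajectory, hence the martingale $D_{\mathcal L_X}(l_n,x)$ is bounded in probability and, being a nonnegative martingale, converges; combined with stationarity of $\mX$ and $\int D_{\mathcal L_X}(g,x)\,\mG(dg)=D_{\mathcal L_X}(e,x)=1$) shows that for $\mX$-a.e. $x$ one has $D_{\mathcal L_X}(g,x)=1$ for $\mG$-a.e.\ $g$, hence by harmonicity and Harnack-continuity for all $g\in G$. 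That means $\frac{g^{-1}_*\mX}{\mX}(x)=1$ for $\mX$-a.e. $x$ and all $g$, i.e.\ $\mX$ is $G$-invariant. I expect this zero-drift-implies-coboundary step to be the main obstacle, because one must be careful that $D_{\mathcal L_X}(\cdot,x)$ is only a.e.-defined and harmonicity is with respect to $\mGBM$ (Proposition~\ref{p: harmonicity compact support} lets one pass to $\mG$ of type~$\type$), and one needs the positivity together with Jensen to conclude that zero logarithmic drift forces the function to be constant rather than merely sub-multiplicatively controlled. Finally, the last sentence is immediate: by Proposition~\ref{p: invariance mX}, $G$-invariance of $\mX$ implies $\Gamma$ has nonzero first Betti number, so if that Betti number vanishes the case $\chi_{\mG}=0$ is excluded and $\chi_{\mG}<0$.
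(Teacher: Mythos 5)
Your plan reaches the right conclusion, but it is organized quite differently from the paper's proof, which is a single two-line computation. The paper fixes $x\in\mathcal C_X$, notes that $g\mapsto D_{\mathcal L_X}(g,x)$ is a positive left $\mG$-harmonic function with $\int D_{\mathcal L_X}(g,x)\,\mG(dg)=D_{\mathcal L_X}(e,x)=1$, and applies Jensen (concavity of $\log$) to get $\int c(g,x)\,\mG(dg)\le 0$ \emph{pointwise in $x$}, with equality iff $D_{\mathcal L_X}(\cdot,x)$ is $\mG$-a.e.\ constant (hence $\equiv 1$). Integrating in $x$ gives $\chi_{\mG}\le 0$, and $\chi_{\mG}=0$ forces equality for $\mX$-a.e.\ $x$, i.e.\ $\frac{g^{-1}_*\mX}{\mX}\equiv 1$ by item 4 of Lemma \ref{l: mX}, i.e.\ $G$-invariance; the Betti-number statement then follows from Proposition \ref{p: invariance mX} exactly as you say. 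This handles the sign and the equality case simultaneously, needs no ergodicity, no Birkhoff limit, and no contraction hypothesis. By contrast, your route to $\chi_{\mG}\le 0$ via Proposition \ref{p: quantitative contraction} works but imports the global contraction assumption on the leaves (an extra hypothesis the lemma does not need) and the ergodicity of $\mX$, and your Fatou step requires a Fubini argument to upgrade the a.e.-in-$x$ Birkhoff convergence to a.e.-in-$z$ convergence along a fixed generic leaf interval.

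The one place where your plan is genuinely soft is the case $\chi_{\mG}=0$. The chain ``$D_{\mathcal L_X}(l_n,x)$ is a nonnegative martingale, bounded in probability, hence converges, hence $D\equiv 1$'' does not close: a nonnegative martingale starting at $1$ with zero logarithmic drift can perfectly well converge to a nonconstant (or vanishing) limit, so convergence plus stationarity does not yield $D_{\mathcal L_X}(g,x)=1$ for $\mG$-a.e.\ $g$. What actually does the work is precisely the pointwise Jensen inequality above: since $\int c(g,x)\,\mG(dg)\le 0$ for a.e.\ $x$ and the $\mX$-average of this quantity is $\chi_{\mG}=0$, the inequality is an equality a.e., and the strict concavity of $\log$ forces $D_{\mathcal L_X}(\cdot,x)$ constant. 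You name this mechanism at the very end of your paragraph, so the idea is recoverable, but it should replace the martingale detour rather than supplement it; once you use Jensen pointwise you also get $\chi_{\mG}\le 0$ for free and the contraction argument of your second paragraph becomes unnecessary.
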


\begin{proof}
For \(\mX\)-a.e. \(x\in X\), the function \( c(\cdot, x)\) is the logarithm of a \(\mu_G\)-harmonic function, hence  the concavity of logarithm gives 
\[ \int _G c(g,x) \ \mG(dg) \leq 0\]
with equality iff \(c(\cdot, x) \) is constant a.e. In particular, we have \(\chi _{\mG}\geq 0\) with equality iff for \(\mX\)-a.e. \(x\in X\) the function \(D_{\mathcal L_X}(\cdot, x)\) is constant equal to \(1\).  This implies that \(\mX\) is \(G\)-invariant because for \(\mX\)-a.e. \(x\), we have 
\[ D_{\mathcal L_X} (g, x) = \frac{g^{-1}_* \mX }{\mX} (x) \] 
by Lemma \label{l: mX}. The conclusion follows from Proposition \ref{p: invariance mX}.
\end{proof}

\begin{lemma}\label{l: local contraction for mG}
Assume that \(G\) does not preserve \(\mX\). Choose \( \chi_+\) such that \( \chi_{\mG} <\chi_+<0\). Then, for \(\mX\)-a.e. \(x\in X\) and \( \mG^{\N^*}\)-a.e. \((g_n)_n\), there exists \(\varepsilon=\varepsilon (x, (g_n)_n)  >0\) such that 
\[ \limsup _{n\rightarrow \infty} \log |l_n ([x-\varepsilon, x+\varepsilon ]) | \leq \chi _+ ,\]
where \(l_n= g_n\ldots g_1\)
\end{lemma}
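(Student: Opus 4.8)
This is a Kingman-subadditive/Oseledets-type statement: we want almost-sure exponential contraction of short intervals under the random walk at rate $\chi_+$. The natural approach is to combine the ergodic theorem for the additive cocycle $c$ with the (already established) qualitative contraction from Proposition~\ref{p: quantitative contraction}. First I would apply Birkhoff's ergodic theorem for stationary measures (the Corollary at the end of Section~\ref{ergodictheory}, together with the ergodicity of $\mX$ from Lemma~\ref{ergodicitymx}; if $\mX$ is not ergodic one passes to ergodic components and uses that $\chi_{\mG}$ is the $\lambda$-average of the component exponents, so a.e.\ component still has negative exponent) to get that for $\mX$-a.e.\ $x$ and $\mG^{\N^*}$-a.e.\ $(g_n)_n$,
\[ \frac1n \sum_{k=1}^{n} c(g_k, l_{k-1}x) \;=\; \frac1n\, c(l_n, x) \;\longrightarrow\; \chi_{\mG} < \chi_+ , \]
using the cocycle relation \eqref{eq: cocycle relation} for the telescoping, and the integrability of $c$ coming from the $\xi$-Lipschitz bound in item~4 of Proposition~\ref{p: properties of suspension} together with the exponential moment of $\mGBM$ / the compact support of $\mG$. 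In particular $\frac1n\log D_{\mathcal L_X}(l_n,x)\to \chi_{\mG}$.

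**From pointwise derivative decay to interval decay.** The cocycle $D_{\mathcal L_X}(l_n,x)$ controls the infinitesimal contraction at the single point $x$, i.e.\ the limit $\lim_{\varepsilon\to0}\frac{l_n(x+\varepsilon)-l_n(x)}{\varepsilon}$ by item~3 of Proposition~\ref{p: properties of suspension}. To upgrade this to a uniform bound on a whole interval $[x-\varepsilon,x+\varepsilon]$ I would argue as follows. By Proposition~\ref{p: quantitative contraction}, for $\mX$-a.e.\ $x$ the endpoints $x\pm l$ of a fixed short interval satisfy $d_{l_nL}(l_n(x+l),l_n(x))\to 0$; hence the length $|l_n([x-l,x+l])|\to 0$ along the full sequence, for \emph{every} fixed $l>0$. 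The point is then to get the \emph{rate}. Fix $l$ small and use the mean value / monotonicity: since $l_n$ is an orientation-preserving homeomorphism of the leaf, $|l_n([x-\varepsilon,x+\varepsilon])| = \int_{x-\varepsilon}^{x+\varepsilon} D_{\mathcal L_X}(l_n,z)\,\mu_L(dz)$. So I need an upper bound on $D_{\mathcal L_X}(l_n,\cdot)$ not just at $x$ but uniformly over a small interval around $x$. This is exactly where the harmonicity/Harnack structure enters: $D_{\mathcal L_X}(l_n,\cdot)$ restricted to the leaf $L$ is, up to the cocycle normalization, comparable on a fixed-size neighborhood (in the leaf metric) by a Harnack-type inequality — more precisely, $\log D_{\mathcal L_X}(l_n, z) - \log D_{\mathcal L_X}(l_n, x)$ equals $\log D_{\mathcal L_X}(l_n, \cdot)$ evaluated via the cocycle along the leaf, and since $l_n([x-l,x+l])$ has length tending to $0$, the image points become arbitrarily close, so the ratio $D_{\mathcal L_X}(l_n,z)/D_{\mathcal L_X}(l_n,x)$ for $z\in[x-\varepsilon_n,x+\varepsilon_n]$ is bounded (say by $2$) once $n$ is large, provided $\varepsilon_n$ is chosen so that $[x-\varepsilon_n,x+\varepsilon_n]$ maps into a ball of controlled size. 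Concretely one picks $\varepsilon = \varepsilon(x,(g_n)_n)$ by a stopping-time/Borel–Cantelli argument: let $\varepsilon$ be small enough that the (a.s.\ finite, by Proposition~\ref{p: quantitative contraction} and the martingale convergence used there) quantity $\sup_n |l_n([x-\varepsilon,x+\varepsilon])|$ is at most $1$, and such that on the tail $n\ge N(x,(g_n)_n)$ we have $\frac1n\log D_{\mathcal L_X}(l_n,x)\le\chi_+ - \delta$ for some $\delta>0$ with still $\chi_+-\delta>\chi_\mG$; then
\[ \log|l_n([x-\varepsilon,x+\varepsilon])| \;\le\; \log\!\big(2\varepsilon\, D_{\mathcal L_X}(l_n,x)\big) \;\le\; n(\chi_+-\delta) + \log(2\varepsilon) \;\le\; n\chi_+ \]
for $n$ large, giving the claimed $\limsup$.

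**The main obstacle.** The delicate step is the uniform comparison of $D_{\mathcal L_X}(l_n,\cdot)$ over the interval $[x-\varepsilon,x+\varepsilon]$ — i.e.\ converting the pointwise Lyapunov estimate at $x$ into a genuine bound on the \emph{length} of the image interval, rather than just on the derivative at the point. One must be careful that $\varepsilon$ can be chosen independently of $n$: this requires knowing in advance (from the qualitative global contraction, Proposition~\ref{p: quantitative contraction}, plus the martingale bound that $|l_n([x-\varepsilon,x+\varepsilon])|$ has an a.s.\ finite sup over $n$) that a whole fixed interval around $x$ stays of bounded length and eventually shrinks, so that the Harnack/distortion control along the leaf kicks in uniformly. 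The exponential \emph{rate} then comes entirely from the cocycle $c$ at the single point $x$ via Birkhoff, while the interval-vs-point passage is purely qualitative. A secondary technical point is the integrability of $c$ needed for Birkhoff: this follows from item~4 of Proposition~\ref{p: properties of suspension} (uniform Lipschitz constant $\xi$) combined with the compact support of $\mG$, so $|c(g,x)|\le \xi\, d_{K\backslash G}(e\cdot,g\cdot)$ is bounded on $\mathrm{supp}(\mG)$ — this makes the integrability immediate and is why the statement is phrased for $\mG$ of type $\type$ rather than for $\mGBM$.
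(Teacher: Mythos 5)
Your first step (Birkhoff for the additive cocycle $c$ at the point $x$, giving $\tfrac1n c(l_n,x)\to\chi_{\mG}<\chi_+$) is fine, and you correctly locate the real difficulty in the passage from the pointwise derivative at $x$ to the length of the image of a whole interval. But the resolution you propose for that step does not work, and this is a genuine gap. You claim that ``the harmonicity/Harnack structure'' bounds the ratio $D_{\mathcal L_X}(l_n,z)/D_{\mathcal L_X}(l_n,x)$ for $z$ in a small leafwise interval around $x$, because the image interval shrinks. The Harnack inequality available here (Theorem \ref{t: Harnack inequality}, item 4 of Proposition \ref{p: properties of suspension}) controls the variation of $g\mapsto \log D_{\mathcal L_X}(g,x)$ in the \emph{group} variable for a fixed leaf point; it says nothing about the variation of $z\mapsto D_{\mathcal L_X}(l_n,z)$ in the \emph{leaf} variable. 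Along a leaf the action is merely Lipschitz, so $D_{\mathcal L_X}(l_n,\cdot)$ is only a measurable function defined a.e.\ on the leaf, with no modulus of continuity and no bounded-distortion estimate; and the fact that $|l_n([x-l,x+l])|\to0$ puts no constraint whatsoever on the ratio of derivatives at two distinct source points. So the inequality $|l_n([x-\varepsilon,x+\varepsilon])|\le 2\varepsilon\, D_{\mathcal L_X}(l_n,x)$ is unjustified.

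The paper avoids this entirely by never comparing derivatives at different leaf points. It introduces, for $\eta>0$, the functions $c_\eta(g,y)=\sup_{0<|\epsilon|<\eta}\log\bigl((g(y+\epsilon)-g(y))/\epsilon\bigr)$, which by their very definition dominate the logarithmic expansion of \emph{any} interval of length $<\eta$ with endpoint $y$. These are uniformly bounded (Harnack in the $g$-variable plus compact support of $\mG$) and converge pointwise to $c$ as $\eta\to0$ (item 3 of Proposition \ref{p: properties of suspension}), so dominated convergence gives an $\eta_0$ with $\int c_{\eta_0}\,d\mG\,d\mX<\chi_+$. Birkhoff applied to $c_{\eta_0}$ along the skew product, a telescoping of $\log(|l_n([x,x+\varepsilon])|/\varepsilon)$ over the orbit, and an induction guaranteeing that the intermediate image intervals never exceed length $\eta_0$ (which is what fixes the admissible $\varepsilon$) then yield $|l_n([x,x+\varepsilon])|\le\varepsilon e^{C+n\chi_+}$ for all $n$. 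If you want to keep your outline, the missing ingredient is exactly this replacement of $c$ by $c_{\eta_0}$; some substitute for it is unavoidable, since no leafwise distortion control is available in this Lipschitz-along-leaves setting.
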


\begin{proof}
We introduce the family of functions, for \(\eta >0\),
\[ c_\eta (g, x) = \sup _{0< |\epsilon| < \eta } \log \left( \frac{g(x+\varepsilon)-g(x)}{\varepsilon}\right) .\]
 Those functions belong to \(L^\infty (\mG \otimes \mX) \) by the Harnack inequality. The third property of Proposition \ref{p: properties of suspension} shows that \( c_\eta\) converges simply to \( c\) when \(\eta\) tends to \(0\). Hence the Lebesgue dominated convergence theorem implies 
 \(\int_{G\times X} c_\eta (g,x) \ \mG(dg) \ \mX(dx) \rightarrow _{\eta \rightarrow 0} \chi_{\mG} \). Hence there exists \( \eta_0>0 \) such that 
 \[ \int _{G\times X} c_{\eta_0} (g,x) \ \mG(dg) \ \mX(dx) <\chi _+.\]
 
 Now introduce the measure space  \( (S(X),m_{S(X)}) = (X\times G^{\N^*}, \mX\otimes \mG^{\otimes \N^*})   \) and the measure preserving map \( S (x, (g_n)_n )= ( g_1(x) ,(g_{n+1})_n) \).  Since \(\mX\) is an ergodic \(\mG\)-stationary measure by Lemma \ref{ergodicitymx},  the Random Ergodic Theorem  \ref{t: random ergodic theorem} shows that the system \( (S(X), m_{S(X)}, S)\)  is ergodic. The Birkhoff ergodic theorem applied to the \(L^{\infty}(m_{S(X)})\)-function \( (x, (g_n)_n) \mapsto c_{\eta_0} (x, g_1) \) shows that for \(\mX\)-a.e. \(x\), and \(\mG^{\N^*}\)-a.e. \((g_n)_n\), 
 \[   \frac{1}{n} \left( c_{\eta_0}(x_0, g_1)+\ldots + c_{\eta_0} (x_{n-1}, g_n) \right) \rightarrow \int c_{\eta_0}\  d\mG \ d\mX < \chi_+,\]
 where \( x_0= x\) and \( x_n =l_n (x)\). So there exists a constant such that for every \(n\geq 1\)
 \[ c_{\eta_0}(x_0, g_1)+\ldots + c_{\eta_0} (x_{n-1}, g_n) \leq  C + n\chi_+ .\]

Take a number \(0<\varepsilon <\eta_0\). We have, with the convention \(l_0= e\),
\[ \log \left( \frac{|l_n ([x, x+\varepsilon])|}{\varepsilon}\right)  = \]
\[=  \log \left( \frac{|g_1 ([x,x+\varepsilon])|}{\varepsilon } \right) + \ldots + \log \left( \frac{|g_n(l_{n-1}( [x, x+\varepsilon]))|}{|l_{n-1}([x, x+\varepsilon])|}\right), \]
so as long as none of the lengths of the intervals 
\[ l_1([x, x+\varepsilon]), \ldots , l_{n-1} ([x, x+\varepsilon])\] 
exceed \(\eta_0\), we will get the following bound 
\[ \log \left( \frac{|l_n ([x, x+\varepsilon])|}{\varepsilon}\right) \leq c_{\eta_0}(x_0, g_1)+\ldots + c_{\eta_0} (x_{n-1}, g_n)\leq C+ n\chi_+.\]
or in other words 
\begin{equation}\label{eq: exponential bound}  |l_n ([x, x+\varepsilon])| \leq \varepsilon \exp (C+\chi_+ n) .\end{equation}

Because the function \(n \mapsto \exp (C+\chi_+ n)\) tends to \(0\)  at infinity, it is bounded. It is thus possible to choose \(\varepsilon >0\) so small that the value \( \varepsilon \exp (C+\chi_+ n) \) is bounded by \(\eta_0\) for every \(n\). By induction, we will then have that \eqref{eq: exponential bound} is satisfied for every integer. 

Reasoning similarly with a negative \(\varepsilon \) finishes the proof of the Proposition. \end{proof}

\begin{proposition}\label{p: global exponential contraction}
Let \(m_\Der\) be a probability measure on the almost-periodic space invariant and ergodic under the  flow \(T_\Der\). Assume that all actions in the support of \(m_\Der\) have the global contraction property. Choose \( \chi_+\) such that \( \chi_{\mG} <\chi_+<0\). Then for \(\mX\)-generic leaf \(L \) of \(\mathcal L_X\), and two points \(x,y\in L\), we have for \(\mG^{\N} \)-a.e. \((g_n)_n\)
\[\limsup _{n\rightarrow \infty} \frac{1}{n} \log |[ l_n (x) , l_n (y)]| \leq \chi ^+ .\]
\end{proposition}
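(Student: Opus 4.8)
The plan is to upgrade the pointwise (per-leaf, per-pair) exponential contraction of Lemma \ref{l: local contraction for mG} to contraction of a *whole interval* on a generic leaf, by the same chaining trick used in the proof of Proposition \ref{p: quantitative contraction}. First I would observe that, since $m_\Der$ is $T_\Der$-ergodic and $T_\Der$ is minimal, and since all actions in the support have the global contraction property, Lemma \ref{ergodicitymx} applies: $\mX$ is ergodic as a $\mG$-stationary measure on $X$, so the suspension system $(S(X),m_{S(X)},S)$ is ergodic by the Random Ergodic Theorem \ref{t: random ergodic theorem}. Then I would invoke Lemma \ref{l: local contraction for mG}: for $\mX$-a.e.\ $x\in X$ and $\mG^{\otimes\N^*}$-a.e.\ $(g_n)_n$ there is $\varepsilon=\varepsilon(x,(g_n)_n)>0$ with $\limsup_{n\to\infty}\tfrac1n\log|l_n([x-\varepsilon,x+\varepsilon])|\le\chi_+$. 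By Fubini this gives a full-$\mX$-measure, $G\mathcal L_X$-saturated (after the usual $\mX$-null adjustment) set $X_0$ and, for each $x\in X_0$ and a.e.\ $(g_n)_n$, such an $\varepsilon_x$; hence for $\mX$-generic leaves $L$ and $\mu_L$-a.e.\ $x\in L$ we have this local exponential contraction.

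Next I would fix a generic leaf $L$ and two points $x<y$ in $L$. The interval $[x,y]\subset L$ is compact in the leafwise distance, so it is covered by finitely many small intervals $[x_i-\varepsilon_{x_i},x_i+\varepsilon_{x_i}]$ centered at points $x_i$ lying in the full-measure set where Lemma \ref{l: local contraction for mG} applies (such points are dense in $L$ by continuity of the leafwise distance, Proposition \ref{p: properties of suspension}.1, together with the fact that $\mu_L$-a.e.\ point works — one slightly delicate point I would spell out is that the exceptional null set and the choice of $\varepsilon_x$ depend on the sequence $(g_n)_n$, so one must first fix $(g_n)_n$ outside a $\mG^{\otimes\N^*}$-null set, then run the covering argument for that sequence; a countable-intersection / diagonal argument over a countable dense family of base points $x_i$ and over rationals handles the measurability). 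For each $i$ we get $\limsup_n\tfrac1n\log|l_n([x_i-\varepsilon_{x_i},x_i+\varepsilon_{x_i}])|\le\chi_+$. Since $[x,y]$ is contained in the union of finitely many of these intervals, by subadditivity of length over a finite cover and $\limsup$ of a finite max being the max of the $\limsup$'s, we conclude $\limsup_n\tfrac1n\log|l_n([x,y])|\le\chi_+$. As $\chi_+$ was an arbitrary number with $\chi_\mG<\chi_+<0$ — here I use that $\mX$ is not $G$-invariant, which holds because $G$ does not preserve $\mX$ (equivalently, by Proposition \ref{p: invariance mX} and the hypothesis that the actions are nontrivial with contraction, $\Gamma$ has vanishing first Betti number by Proposition \ref{p: vanishing first Betti number}), so Lemma \ref{globallyapunov} gives $\chi_\mG<0$ — the statement follows, and in fact one gets the bound with $\chi_+$ replaced by any value $>\chi_\mG$.

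The main obstacle I expect is the interchange of quantifiers: Lemma \ref{l: local contraction for mG} produces, for $\mX$-a.e.\ $x$, a full-measure set of sequences and a radius $\varepsilon_x$ depending on both $x$ and the sequence, whereas the covering argument needs, for a *fixed* generic leaf and a *fixed* good sequence, a family of base points $x_i$ that are simultaneously good. Resolving this requires first fixing the sequence $(g_n)_n$ outside a single $\mG^{\otimes\N^*}$-null set $\mathcal N$ (using Fubini to intersect, over a countable dense set of base points in $X$, the null sets where Lemma \ref{l: local contraction for mG} fails), and only then, for $(g_n)_n\notin\mathcal N$, carrying out the compactness/covering step on $[x,y]$. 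This is the same structure as in Proposition \ref{p: quantitative contraction}, so I would model the bookkeeping on that proof. Everything else — finite subadditivity of the leafwise length, the density of good base points via Proposition \ref{p: properties of suspension}.1, and the extraction of $\chi_\mG<0$ from Lemma \ref{globallyapunov} — is routine.
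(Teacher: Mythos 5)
Your proposal has a genuine gap at the covering step. Lemma \ref{l: local contraction for mG} gives, for a.e.\ base point $z$ (and a.e.\ fixed sequence), a radius $\varepsilon_z>0$ that is \emph{not} bounded below and is only defined on a full-$\mu_L$-measure subset $A$ of the leaf. The union $\bigcup_{z\in A}(z-\varepsilon_z,z+\varepsilon_z)$ is an open set containing the dense set $A$, but it need not contain all of $[x,y]$: if $w$ is a point of the exceptional null set and $\varepsilon_z\to 0$ as $z\to w$ (e.g.\ $\varepsilon_z\le |z-w|/2$), then $w$ is never covered. So compactness does not produce the finite subcover your "subadditivity $+$ max of limsups" step requires; and replacing the finite cover by the countable decomposition of the open full-measure set does not help, because a limsup bound over infinitely many intervals with non-uniform constants does not pass to the sum. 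This is not a bookkeeping issue resolvable by the Fubini/quantifier care you describe — it is the central difficulty of the proposition.

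The paper's proof resolves it differently, and you already have all the ingredients in hand without realizing how to combine them. One fixes $\varepsilon_0>0$ small enough that the set $E\subset S(X)$ of pairs $(x,(g_n)_n)$ with $\varepsilon(x,(g_n)_n)\ge\varepsilon_0$ has positive $m_{S(X)}$-measure. By ergodicity of the skew product $(S(X),S,m_{S(X)})$ (which you correctly derive from Lemma \ref{ergodicitymx} and Theorem \ref{t: random ergodic theorem}), the orbit $S^k(x,(g_n)_n)$ visits $E$ infinitely often; and by Proposition \ref{p: quantitative contraction} the length $|[l_k(x-s),l_k(x+s)]|$ tends to $0$. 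Hence there is a time $k$ at which \emph{simultaneously} the image interval has length $<\varepsilon_0$ and the current point has local contraction radius $\ge\varepsilon_0$; from that time on Lemma \ref{l: local contraction for mG} applies to the whole (now small) interval and yields the exponential rate $\chi_+$, the first $k$ iterates being irrelevant for the rate. In short: instead of covering a long interval by many small good balls, wait until the qualitative contraction has shrunk the interval below a uniform threshold at a recurrence time to a good set. Your reduction of $\chi_{\mG}<0$ to Lemma \ref{globallyapunov} and Proposition \ref{p: vanishing first Betti number} is fine, but the core of the argument needs to be replaced as above.
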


\begin{proof} 
For \(\varepsilon _0>0\) small enough, the subset \(E\subset S(X) \) formed by couples \( (x, (g_n)_n)  \) such that \(\varepsilon (x, (g_n)_n)\geq \varepsilon_0\), where \( \varepsilon (x, (g_n)_n) \) is the number constructed in Lemma \ref{l: local contraction for mG}, has positive \( m_{S(X)}\)-measure (for the construction of \( S(X)\) and \(m_{S(X)}\), see the proof of Lemma \ref{l: local contraction for mG}).  

For \(\mX\)-a.e. \(x\in X\), we know by Proposition \ref{p: quantitative contraction} that 
\[\lim _{n\rightarrow \infty} |[l_n (x-s), l_n (x +s) ]| =0,\]
for every number \(s>0\). Moreover, by ergodicity of the system \( (S(X), S, m_{S(X)})\), we also know that the trajectory \( ( S^k(x, (g_n)_n) )_k\) visit infinitely many times the set \(E\). 

Combining these two properties, we see that there exists some integer \(k\) such that at the same time the length of the interval \([l_k (x-s), l_k (x +s) ]\) is smaller than \(\varepsilon_0\), and the iterate \( S^k (x, (g_n)_n) \) belongs to the set \(E\). From this we deduce easily    the estimates 
\[ \limsup _{n\rightarrow \infty} \frac{1}{n} \log |[ l_n (x-s) , l_n (x+s)]| \leq \chi ^+ ,\]
and the conclusion follows.
\end{proof}

%%%%%%%%%%%%%%%%%%%%%%%%%%%%%%%%%%%%%%%%%%%%%%%%%%%%%%%%%%%%%%%%%%%%%%

\section{Global contraction: deterministic estimates}\label{globalcontraction2}

The goal of this section is to prove that for each Weyl chamber \(\mathcal W\), there exists an open half space contained in \(\liea\) whose elements have the global contraction property with respect to \(\mX^{\mathcal W}\). We prove moreover that it intersects the interior of \(\mathcal W\).  See corollary \ref{c: a half-space of global contraction}.

\subsection{Lyapunov functional for the $A$-action on $X$ and local contraction}\label{s: lyapunovA}

In this section we assume that $\mX$ is the $\mG$-stationary measure on the suspension $X$ corresponding to a probability measure on $\Der$ which is invariant and ergodic for the almost-periodic flow, constructed in section \ref{sss: mX}. Recall that given a Weyl chamber \(\mathcal W \subset \liea\) we have a corresponding measure  \(\mX^{\mathcal W}\) which is $P_{\mathcal W}$-invariant and $P_{\mathcal W}$-ergodic by Lemma \ref{ergodicitymx}.  Recall the Lyapunov cocycle defined in \ref{d: Lyapunov cocycle}, which is well-defined for $\mX ^{\mathcal W}$ a.e. $x \in X$ by Lemma \ref{l: PW invariant measure}. We define the Lyapunov exponent for the $A$-action relative to \(\mX^{\mathcal W}\) to be the function $\chi _{\mathcal W}: \liea \to \R$ defined by: 

\[ \chi_{\mathcal W} (a)  := \int_X c(e^a ,x)  \ \mX^{\mathcal W} (dx) \]

The $A$-invariance of $\mX ^{\mathcal W}$ and the cocycle property implies that $\chi_{\mathcal W}$ is a linear functional on $A$. 

%Recall that $X$ has a foliation by $G\mathcal{L}_X$ leaves (which are of the form $Y = (G\times \R) /\Gamma$) and for $\widetilde{\mX}$ a.e. $x \in X$, the $G\mathcal{L}_X$ leaf $Y_x$ containing $x$ has a $P$-invariant (infinite) measure   \(\widetilde{m_{Y_x}}\) by proposition  \ref{p: lifting} (the measures \(\widetilde{m_{Y}}\) are the desintegration of $\mX$ along $G\mathcal{L}_X$ leaves).

\begin{lemma}\label{localcontra} Suppose \(\mX^{\mathcal W}\) is a $P_{\mathcal W}$-invariant $P_{\mathcal W}$-ergodic measure on $X$  and  assume that the function $ x \to c(g,x)$ is \(\mX ^{\mathcal W}\) integrable for every $g \in G$. Let $b \in \liea $ such that $\chi_{\mathcal W} (b) < 0$. Then for $\mX ^{\mathcal W} $ a.e. $x \in X$, \(e^b\) has the local contraction property with respect to  \(\mX^{\mathcal W}\). 
\end{lemma}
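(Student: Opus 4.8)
The plan is to adapt the scheme of the proof of Lemma~\ref{l: local contraction for mG}, replacing the random walk by the deterministic iteration of $e^b$; the genuinely new input is that the Birkhoff time average of the Lyapunov cocycle along the $e^b$-orbit equals $\mX^{\mathcal W}$-almost everywhere the \emph{constant} $\chi_{\mathcal W}(b)$, and this is where the $P_{\mathcal W}$-invariance and $P_{\mathcal W}$-ergodicity of $\mX^{\mathcal W}$ are used. Concretely, the first goal is to prove that for $\mX^{\mathcal W}$-a.e. $x$,
\[ \frac1N\,c(e^{Nb},x)=\frac1N\sum_{n=0}^{N-1}c(e^b,e^{nb}x)\;\xrightarrow[N\to\infty]{}\;\chi_{\mathcal W}(b). \]
Since $A\subset P_{\mathcal W}$, the measure $\mX^{\mathcal W}$ is $e^b$-invariant, and $c(e^b,\cdot)$ is bounded (item~4 of Proposition~\ref{p: properties of suspension}), so Birkhoff's theorem gives a limit $\bar c_b=E\bigl[c(e^b,\cdot)\mid\mathcal I_{e^b}\bigr]$ with $\int\bar c_b\,d\mX^{\mathcal W}=\chi_{\mathcal W}(b)$; it thus suffices to show $\bar c_b$ is $\mX^{\mathcal W}$-a.e. constant.

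I would prove that $\bar c_b$ is $P_{\mathcal W}$-invariant and conclude by $P_{\mathcal W}$-ergodicity. For invariance under $A$ and under $M$: since $e^a$ ($a\in\liea$) and the elements of $M$ commute with $e^b$, the cocycle relation \eqref{eq: cocycle relation} gives the Hopf-type identity
\[ \frac1N\sum_{n=0}^{N-1}c(e^b,e^{nb}e^ax)=\frac1N\sum_{n=0}^{N-1}c(e^b,e^{nb}x)+\frac1N\bigl(c(e^a,e^{Nb}x)-c(e^a,x)\bigr), \]
whose correction term vanishes in the limit because $c(e^a,\cdot)$ is bounded (and likewise for $M$). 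For invariance under $N_{\mathcal W}$ the analogous computation along $e^b$ fails, since $\mathrm{Ad}(e^{nb})$ expands $N_{\mathcal W}$; instead I would run Birkhoff's theorem for $e^{-b}$, which has the same invariant $\sigma$-algebra and hence the same limit $\bar c_b$. For $\nu\in N_{\mathcal W}$ and $\tilde\nu_n:=e^{-nb}\nu e^{nb}$ one has $e^{-nb}(\nu x)=\tilde\nu_n(e^{-nb}x)$ and, using \eqref{eq: cocycle relation} and $e^b\tilde\nu_n=\tilde\nu_{n-1}e^b$, a telescoping computation yields
\[ \frac1N\sum_{n=1}^{N}c(e^b,e^{-nb}(\nu x))=\frac1N\sum_{n=1}^{N}c(e^b,e^{-nb}x)+\frac1N\bigl(c(\nu,x)-c(\tilde\nu_N,e^{-Nb}x)\bigr). \]
When $b$ lies in the closed chamber $\overline{\mathcal W}$ -- which is the relevant case and the one needed later -- $\mathrm{Ad}(e^{-Nb})$ does not expand $N_{\mathcal W}$, so $\tilde\nu_N$ stays in a compact set and $|c(\tilde\nu_N,\cdot)|\le\xi\,d_G(\tilde\nu_N,e)$ is bounded; hence the correction term vanishes and $\bar c_b(\nu x)=\bar c_b(x)$. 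Therefore $\bar c_b$ is $MAN_{\mathcal W}=P_{\mathcal W}$-invariant, so $\mX^{\mathcal W}$-a.e. equal to $\chi_{\mathcal W}(b)<0$.

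It remains to pass from exponential decay of the pointwise leafwise derivative to contraction of an interval of definite length, which I would do exactly as in Lemma~\ref{l: local contraction for mG}. Set $c_\eta(e^b,x):=\sup_{0<|\varepsilon|<\eta}\log\frac{e^b(x+\varepsilon)-e^b(x)}{\varepsilon}$; these functions are bounded (item~4 of Proposition~\ref{p: properties of suspension}) and decrease to $c(e^b,\cdot)$ as $\eta\downarrow0$ (item~3 of the same proposition). Put $\delta(\eta):=\int\bigl(c_\eta(e^b,\cdot)-c(e^b,\cdot)\bigr)\,d\mX^{\mathcal W}\to0$; Birkhoff applied to the nonnegative function $c_\eta(e^b,\cdot)-c(e^b,\cdot)$ produces a limit $g_\eta\ge0$ with $\int g_\eta\,d\mX^{\mathcal W}=\delta(\eta)$, hence $\mX^{\mathcal W}\{g_\eta\ge\tfrac12|\chi_{\mathcal W}(b)|\}\le 2\delta(\eta)/|\chi_{\mathcal W}(b)|$. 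On the complement of that set, intersected with the full-measure set where the first step holds, one gets $\limsup_N\frac1N\sum_{n<N}c_\eta(e^b,e^{nb}x)\le-\tfrac12|\chi_{\mathcal W}(b)|<0$, so there is a constant $C_x$ with $\sum_{n=0}^{N-1}c_\eta(e^b,e^{nb}x)\le C_x-\tfrac14|\chi_{\mathcal W}(b)|\,N$ for all $N$. The telescoping/induction argument of Lemma~\ref{l: local contraction for mG} then gives $\varepsilon_x>0$ (small enough that $\varepsilon_x e^{C_x-|\chi_{\mathcal W}(b)|N/4}<\eta$ for all $N$) such that $|e^{Nb}([x-\varepsilon_x,x+\varepsilon_x])|\le\varepsilon_x e^{C_x-|\chi_{\mathcal W}(b)|N/4}\to0$, i.e. $e^b$ has the local contraction property at $x$. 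Since this set has measure at least $1-2\delta(\eta)/|\chi_{\mathcal W}(b)|$ for every $\eta>0$, letting $\eta\to0$ shows it has full measure, which is the claim.

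The main obstacle is the first step, the a.e. equidistribution of the Lyapunov cocycle along the $e^b$-orbit towards $\chi_{\mathcal W}(b)$; its delicate point is the $N_{\mathcal W}$-invariance of $\bar c_b$, which cannot be obtained by the naive Hopf argument in the forward time direction (the unstable direction of $e^b$) and forces the switch to the backward time direction, where $N_{\mathcal W}$ becomes contracting and the $N_{\mathcal W}$-invariance of $\mX^{\mathcal W}$ can be exploited. The rest is routine bookkeeping following Lemma~\ref{l: local contraction for mG}.
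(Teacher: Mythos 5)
Your overall strategy is sound, and the second half of your argument (the truncations $c_\eta$, the Markov-inequality bookkeeping, and the telescoping induction on interval lengths) is a correct reorganization of what the paper does in Lemma \ref{l: local contraction for A}. Where you genuinely diverge from the paper is in the first step. The paper does not work with the Birkhoff limit function $\bar c_b$ directly: it passes to the ergodic decomposition of $\mX^{\mathcal W}$ under $e^b$, invokes the Mautner phenomenon to get that every $e^b$-invariant measure is automatically invariant under the unipotent root groups $U^\beta$ with $\beta(b)\neq 0$, and then only has to propagate the value $\chi_\nu(b)$ across the centralizer $C_b\cap P_{\mathcal W}$ by the commutation/cocycle computation (Proposition \ref{lyapunovergodiccomp}). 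Your replacement of Mautner by a Hopf-type identity applied to $\bar c_b$ is a legitimate, more elementary alternative; in particular your telescoping identity for $\tilde\nu_n=e^{-nb}\nu e^{nb}$ is correct, and the $MA$-invariance computation is exactly the paper's $\chi_{g_*\nu}(b)=\chi_\nu(b)$ argument in disguise.

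There is, however, a genuine gap: as written, your $N_{\mathcal W}$-invariance argument only covers $b\in\overline{\mathcal W}$, while the lemma is stated for every $b\in\liea$ with $\chi_{\mathcal W}(b)<0$. For $b$ outside $\overline{\mathcal W}$ there are positive roots $\beta$ with $\beta(b)<0$; for $\nu$ in the corresponding root group, $\tilde\nu_N$ escapes to infinity, $d_{K\backslash G}(K,K\tilde\nu_N)$ grows linearly in $N$, and Harnack only gives $|c(\tilde\nu_N,\cdot)|=O(N)$, so the correction term no longer vanishes. Your claim that $b\in\overline{\mathcal W}$ is the only case needed downstream is in fact defensible (in Propositions \ref{simplecase1} and \ref{simplecase2} the elements fed into Proposition \ref{localtoglobalcontraction} lie in the closure of the chamber attached to the measure being used), but it is an unverified assertion about the rest of the paper rather than a proof of the stated lemma. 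The fix stays entirely within your method: treat the root groups $U^\beta\subset N_{\mathcal W}$ one at a time, using the backward-time identity when $\beta(b)>0$, plain commutation when $\beta(b)=0$, and the same telescoping identity in \emph{forward} time when $\beta(b)<0$ (there $e^{Nb}\nu e^{-Nb}\to e$, so the correction term vanishes); since these subgroups generate $N_{\mathcal W}$, a.e.-invariance of $\bar c_b$ under each of them suffices. This three-way dichotomy is exactly what the paper's Mautner-based route handles uniformly in $b$.
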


Notice that the integrability condition is fulfilled thanks to the Harnack inequality,  see the fourth item of Proposition \ref{p: properties of suspension}. The proof is similar to the proof of Lemma \ref{l: local contraction for mG}, but as the measure $\mX^{\mathcal W}$ is not  $e^b$-ergodic a priori we need the following:

\begin{proposition}\label{lyapunovergodiccomp} For a.e. \(e^b\)-ergodic component of \(\mX^{\mathcal W}\), we have $$ \int_X c(e^b ,x)  \ d\nu (dx) =  \chi_{\mathcal W} (b) .$$ Therefore, this integral does not depend on the $e^b$-ergodic component of $\mX^{\mathcal W}$.  
\end{proposition}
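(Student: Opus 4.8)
The statement says that although $\mu_X^{\mathcal{W}}$ need not be ergodic under the single element $e^b$, the integral $\int_X c(e^b, x)\,d\nu$ is the same for almost every $e^b$-ergodic component $\nu$ of $\mu_X^{\mathcal{W}}$, namely equal to $\chi_{\mathcal{W}}(b)$. The key point is that $\mu_X^{\mathcal{W}}$ is ergodic for the larger group $P_{\mathcal{W}}$, so there must be enough symmetry to average out the fluctuations of the cocycle integral across $e^b$-ergodic components.

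The plan is as follows. First I would invoke the Birkhoff ergodic theorem for the transformation $e^b$ acting on $(X, \mu_X^{\mathcal{W}})$: the additive cocycle relation \eqref{eq: cocycle relation} shows that $\sum_{k=0}^{n-1} c(e^b, e^{kb}x) = c(e^{nb}, x)$, so the Birkhoff averages $\frac1n c(e^{nb},x)$ converge $\mu_X^{\mathcal{W}}$-a.e. to a function $\phi(x)$ which is $e^b$-invariant and satisfies $\int_X \phi\, d\mu_X^{\mathcal{W}} = \chi_{\mathcal{W}}(b)$. Moreover, for $\nu$ the $e^b$-ergodic component through $x$, $\phi(x) = \int_X c(e^b, y)\, d\nu(y)$ for $\nu$-a.e. $y$, since $\phi$ is constant on ergodic components. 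So the proposition reduces to showing that $\phi$ is $\mu_X^{\mathcal{W}}$-a.e. constant (equal to $\chi_{\mathcal{W}}(b)$).

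Next I would upgrade the invariance of $\phi$ from $A$ (or just $e^b$) to all of $P_{\mathcal{W}}$, using that $\mu_X^{\mathcal{W}}$ is $P_{\mathcal{W}}$-invariant and $P_{\mathcal{W}}$-ergodic. Since $A$ normalizes the unipotent radical $N$ of $P_{\mathcal{W}}$ and $P_{\mathcal{W}} = MAN$, it suffices to check that $\phi$ is a.e. invariant under $M$, under $A$, and under $N$. Invariance under $A$ is immediate from the cocycle relation and $A$ being abelian: $c(e^{nb}, e^{a}x) = c(e^{nb}e^a e^{-nb}, e^{nb}x) + c(e^{nb},x) + O(1) = c(e^a, e^{nb}x) + c(e^{nb},x) + O(1)$ with the extra terms bounded (uniformly, by the Harnack/Lipschitz estimate, item 4 of Proposition \ref{p: properties of suspension}), hence dividing by $n$ gives $\phi(e^a x) = \phi(x)$. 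For $M$ one argues similarly since $M$ is compact and commutes with $A$, so $c(e^{nb}, mx) - c(e^{nb}, x)$ stays bounded. For $n_0 \in N$: here $e^{-nb} n_0 e^{nb}$ may not converge, but $N$ is (contained in, or close to) the unstable horospherical direction of $e^{b}$ when $b$ is regular for $\mathcal{W}$, and for generic $b$ with $\chi_{\mathcal{W}}(b)<0$ one uses instead that $c(n_0, e^{nb}x)$ is bounded while $c(e^{nb}n_0 e^{-nb}, \cdot)$ — wait, the cleaner route is: write $c(e^{nb}, n_0 x) = c(e^{nb} n_0 e^{-nb}\cdot e^{nb}, x)$ and note $e^{nb} n_0 e^{-nb}$ stays in a \emph{compact} set (if $N$ is stable for $b$) or the displacement it produces is still controlled; in either case the difference $|c(e^{nb}, n_0 x) - c(e^{nb}, x)|$ is bounded by a constant depending only on $n_0$, by the cocycle relation together with boundedness of $c(h, \cdot)$ on compacta. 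Dividing by $n$ kills it, giving $N$-invariance of $\phi$. Hence $\phi$ is $P_{\mathcal{W}}$-invariant mod null sets, and by $P_{\mathcal{W}}$-ergodicity of $\mu_X^{\mathcal{W}}$ it is a.e. constant, necessarily equal to its mean $\chi_{\mathcal{W}}(b)$.

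The main obstacle I anticipate is the $N$-invariance step: one must make sure that conjugating a fixed unipotent element $n_0$ by $e^{nb}$ keeps things controlled enough that the cocycle difference stays bounded independently of $n$. If $b$ lies in a wall, $N$ decomposes as $N_b \cdot (N \cap C_b)$ relative to $b$, and on the unstable part $N_b$ the conjugates $e^{nb} n_0 e^{-nb}$ run off to infinity as $n \to -\infty$ but contract as $n \to +\infty$; since we take $n \to +\infty$ in the Birkhoff average, the relevant conjugates $e^{nb} n_0 e^{-nb}$ for $n_0 \in N_b$ actually shrink toward the identity, so the difference is bounded (indeed tends to $0$), and for $n_0 \in N \cap C_b$ it is literally constant. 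So the decomposition of $N$ relative to $b$, combined with the uniform Harnack–Lipschitz bound on $c(\cdot,x)$, resolves this. I would also double-check the integrability/boundedness input: $c(g,\cdot)$ is genuinely bounded on compact sets of $G$ by Proposition \ref{p: properties of suspension}(4), which is exactly what makes all the ``$O(1)$'' estimates above legitimate and uniform in the base point.
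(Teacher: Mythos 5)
Your reduction is sound: by the cocycle relation the Birkhoff sums telescope, $\frac1n c(e^{nb},x)$ converges $\mX^{\mathcal W}$-a.e.\ to an $e^b$-invariant function $\phi$ with $\int\phi\,d\mX^{\mathcal W}=\chi_{\mathcal W}(b)$ and $\phi=\int c(e^b,\cdot)\,d\nu$ on the ergodic component $\nu$, so the proposition is equivalent to $\phi$ being a.e.\ constant; and your invariance of $\phi$ under $M$, $A$ and $N\cap C_b$ (elements commuting with $e^b$, where Harnack kills the bounded error terms) is exactly the commutation computation in the paper. The gap is the remaining unipotent directions. The unipotent radical $N$ of $P_{\mathcal W}$ is $\exp\bigl(\bigoplus_{\beta\in\Sigma_+}\lieg_\beta\bigr)$ and every $\beta\in\Sigma_+$ satisfies $\beta(b)\ge 0$ for $b\in\mathcal W$, so $N=(N\cap C_b)\cdot N_b$ where $N_b$ is the \emph{expanding} (unstable) horospherical subgroup of $e^b$; the forward-contracted directions of $e^b$ lie in the opposite unipotent, which is not contained in $P_{\mathcal W}$. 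Hence for $n_0\in N_b$ the conjugates $e^{nb}n_0e^{-nb}$ do not shrink as $n\to+\infty$ but run off to infinity, with $d_G(e^{nb}n_0e^{-nb},e)$ growing linearly in $n$. Your estimate $|c(e^{nb},n_0x)-c(e^{nb},x)|\le \xi\, d_G(e^{nb}n_0e^{-nb},e)+|c(n_0,x)|$ then only gives $O(n)$, not $o(n)$, and dividing by $n$ leaves a possibly nonzero constant; so $N$-invariance of $\phi$ does not follow from boundedness of conjugates, and your contraction claim for this part of $N$ has the sign reversed.

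This is precisely where the paper invokes the Mautner phenomenon: in the unitary representation of $P_{\mathcal W}$ on $L^2(\mX^{\mathcal W})$, every vector fixed by $e^b$ is automatically fixed by $N_b$, so every $e^b$-invariant measurable set --- hence $\phi$, and hence the whole $e^b$-ergodic decomposition --- is $N_b$-invariant for free. The elementary commutation argument then only needs to be run for $P_{\mathcal W}\cap C_b$, and $P_{\mathcal W}=(P_{\mathcal W}\cap C_b)\,N_b$ together with $P_{\mathcal W}$-ergodicity forces $\phi$ to be constant. (The paper packages this as: the measure $\lambda$ on the space of $e^b$-ergodic components is $(P_{\mathcal W}\cap C_b)$-ergodic, and $\nu\mapsto\int c(e^b,\cdot)\,d\nu$ is $(P_{\mathcal W}\cap C_b)$-invariant.) Your argument becomes correct once the false contraction claim for $N_b$ is replaced by this Mautner input.
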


\begin{proof}

%The set of $e^a$-invariant probability measures on $X$ is invariant by the action of the centralizer \( C_a\) of \(a\) in \(g\).  Observe that if $(\mathcal{M}(X), \lambda)$ is an ergodic decomposition, so is $(\mathcal{M}(X), g_{*}\lambda)$  for $g \in C_a$, and so  averaging $\lambda$ by $C_a\cap P$ we can suppose that the measure  $\lambda$ to be $C_a\cap P$-invariant. 

Denote by \( \mathcal{M}^{b}(X) \) the set of \(e^b\)-invariant probability measures, and let  $(X, \mX^{\mathcal W} ) \rightarrow (\mathcal{M}^{b} (X), \lambda)$ be the ergodic decomposition of $\mX ^{\mathcal W}$ given in \ref{ergodicdecompinv}. We have a continuous action of the centralizer $C_b$ of \(e^b\) in \(G\) on $\mathcal{M}^{b} (X)$ coming from the action of $G$ on $X$. %Also, If $\nu$ is an $e^a$-invariant probability measure on $X$ and $g \in C_a$, then $g_{*}\nu$ is also $a$-invariant.  Therefore if $(\mathcal{M}(X), \lambda)$ is an ergodic decomposition as in Theorem \ref{ergodicdecompinv}, so it is $(\mathcal{M}(X), g_{*}\lambda)$  for any  $g \in C_a$, and so by the uniqueness of the ergodic decomposition $\lambda$ must be $C_a$-invariant. 
By unicity of the ergodic decomposition, this action preserves \(\lambda\). 

Observe that $P = (C_b\cap P)N_a$. By the Mautner phenomenon, see for instance \cite[Chapter 11]{Witte4} for this very classical topic, if  $\nu$ is $e^b$-invariant, then $\nu$ is $N_b$-invariant. Therefore as our measure $\mX^{\mathcal W}$ is $P$-ergodic, the measure $\lambda$ must be $P\cap C_b$-ergodic. 

For a \(e^b\)-invariant measure $\nu \in \mathcal{M}^{b}(X)$, define  $$ \chi_{\nu}(b) := \int_X c(e^b , x)  \ \nu (dx).$$
We claim that if \(\nu\) is \(e^b\)-ergodic, then $\chi_{g_{*}\nu}(b) = \chi_{\nu}(b)$ for $g \in C_b$. Indeed, by Birkhoff's ergodic theorem for $\nu$ a.e. $x \in X$, we have $$\chi_{\nu}(b) = \lim \frac{c(e^{nb}, x)}{n} \text{ and } \chi_{\nu_{*}}(b) = \lim \frac{c(e^{nb}, gx)}{n}.$$ Using the cocycle property and the fact that $g$ commutes with $b$,  we have $$ \lim \frac{c(e^{nb} , gx)}{n} = \lim \frac{c(g,e^{nb}(x)) + c(e^{nb}, x) - c(g, e^{nb}(x))}{n} = \lim \frac{c(e^{nb}, x)}{n} $$ so \(\chi_{g_*\nu} (b) =\chi _{\nu} (b)\) as desired.

Therefore the function $\nu \to  \chi_{\nu}(b)$ is $P\cap C_b$-invariant, and as $\lambda$ is $P \cap C_b$ ergodic, it must be constant.
\end{proof}

Lemma \ref{localcontra} will follow immediately from the following, the proof follows closely the proof of Lemma \ref{l: local contraction for mG}.

\begin{lemma}\label{l: local contraction for A}
Suppose \(\mX^{\mathcal W}\) is a $P_{\mathcal W}$-invariant ergodic measure on $X$ and $a \in \liea$ such that $\chi_{\mathcal W}(a) < 0$ . Choose \( \chi_+\) such that \( \chi_{\mathcal W}(a) <\chi_+<0\). Then, for \(\mX^{\mathcal W}\)-a.e. \(x\in X\), there exists \(\varepsilon=\varepsilon (x)  >0\) such that 
\[ \limsup _{n\rightarrow \infty} \log | e^{na} ([x-\varepsilon, x+\varepsilon ]) | \leq \chi _+.\]
\end{lemma}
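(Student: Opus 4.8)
The plan is to follow the Birkhoff-theoretic scheme that already worked in Lemma~\ref{l: local contraction for mG}, replacing the random walk $l_n = g_n\ldots g_1$ by the deterministic sequence of iterates $e^{na}$ and the Random Ergodic Theorem by ordinary Birkhoff applied to the transformation $e^a$ acting on $(X, \mX^{\mathcal W})$ --- but with one extra twist, since $e^a$ need not be ergodic for $\mX^{\mathcal W}$. For each $\eta>0$ set
\[ c_\eta (g,x) = \sup_{0<|\epsilon|<\eta} \log\left( \frac{g(x+\epsilon)-g(x)}{\epsilon}\right), \]
which lies in $L^\infty(\mX^{\mathcal W})$ by the Harnack inequality (item~4 of Proposition~\ref{p: properties of suspension}), and which decreases to $c(g,\cdot)$ pointwise as $\eta\to 0$ by item~3 of that Proposition. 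By dominated convergence, $\int_X c_\eta(e^a,x)\,\mX^{\mathcal W}(dx) \to \chi_{\mathcal W}(a)$, so we may fix $\eta_0>0$ with $\int_X c_{\eta_0}(e^a,x)\,\mX^{\mathcal W}(dx) < \chi_+$.

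The second step is to pass this integral bound to $\mX^{\mathcal W}$-a.e.\ point via Birkhoff's ergodic theorem for the (possibly non-ergodic) transformation $e^a$. Applying Birkhoff to the function $x\mapsto c_{\eta_0}(e^a,x)$ gives, for $\mX^{\mathcal W}$-a.e.\ $x$, convergence of $\frac{1}{n}\sum_{k=0}^{n-1} c_{\eta_0}(e^a, e^{ka}x)$ to the conditional expectation along the $\sigma$-algebra of $e^a$-invariant sets; the point of Proposition~\ref{lyapunovergodiccomp} is precisely that this limit equals $\chi_{\mathcal W}(a)$ for a.e.\ $e^a$-ergodic component, hence is $<\chi_+$ a.e. (To invoke that Proposition one uses the Mautner phenomenon: any $e^a$-invariant measure is $N_a$-invariant, so the ergodic decomposition of the $P_{\mathcal W}$-ergodic measure $\mX^{\mathcal W}$ into $e^a$-ergodic pieces is itself $(C_a\cap P_{\mathcal W})$-ergodic, and the Lyapunov integral is constant along $C_a$-orbits by the cocycle relation.) Consequently there is a constant $C=C(x)$ with
\[ c_{\eta_0}(e^a,x) + c_{\eta_0}(e^a,e^a x) + \cdots + c_{\eta_0}(e^a, e^{(n-1)a}x) \leq C + n\chi_+ \quad\text{for all } n\geq 1. \]

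The third and final step is the telescoping/bootstrap argument, verbatim as in Lemma~\ref{l: local contraction for mG}. Writing $x_k = e^{ka}x$, for $0<\varepsilon<\eta_0$ we have
\[ \log\left(\frac{|e^{na}([x,x+\varepsilon])|}{\varepsilon}\right) = \sum_{k=0}^{n-1} \log\left( \frac{|e^a(x_k + \delta_k)|}{\delta_k}\right), \]
where $\delta_k = |x_k + \text{(image of }\varepsilon) - x_k|$ along the leaf; as long as none of the intervals $e^a([x,x+\varepsilon]),\ldots,e^{(n-1)a}([x,x+\varepsilon])$ has length exceeding $\eta_0$, each summand is $\leq c_{\eta_0}(e^a,x_k)$, and we get $|e^{na}([x,x+\varepsilon])| \leq \varepsilon\exp(C+\chi_+ n)$. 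Since $\exp(C+\chi_+ n)$ is bounded (indeed $\to 0$), choosing $\varepsilon$ small enough that $\varepsilon\exp(C+\chi_+ n)\leq\eta_0$ for all $n$ makes the hypothesis self-propagating by induction, so the bound holds for every $n$; the same argument with $\varepsilon<0$ handles the left side, and taking $\limsup\frac1n\log$ yields the claim. The only genuinely non-routine ingredient is the reduction to Proposition~\ref{lyapunovergodiccomp} --- i.e.\ checking that the non-ergodicity of $e^a$ does no harm because the Lyapunov exponent is constant across ergodic components --- and that has already been established above; everything else is a transcription of the proof of Lemma~\ref{l: local contraction for mG}.
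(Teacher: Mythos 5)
There is a genuine gap in your second step, precisely at the point where the non-ergodicity of $e^a$ has to be handled. You fix $\eta_0$ once and for all from the global inequality $\int_X c_{\eta_0}(e^a,x)\,\mX^{\mathcal W}(dx)<\chi_+$, and then assert that the Birkhoff limit of $\frac1n\sum_{k<n}c_{\eta_0}(e^a,e^{ka}x)$ is $<\chi_+$ for $\mX^{\mathcal W}$-a.e.\ $x$ because, by Proposition \ref{lyapunovergodiccomp}, ``this limit equals $\chi_{\mathcal W}(a)$ for a.e.\ ergodic component.'' That is not what Proposition \ref{lyapunovergodiccomp} says: it concerns the cocycle $c$ itself, not the majorant $c_{\eta_0}$. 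The Birkhoff limit at $x$ is $\int c_{\eta_0}\,d\nu_x$, where $\nu_x$ is the $e^a$-ergodic component through $x$; this is only known to be $\geq \int c\,d\nu_x=\chi_{\mathcal W}(a)$, and a global bound $\int c_{\eta_0}\,d\mX^{\mathcal W}<\chi_+$ does not force $\int c_{\eta_0}\,d\nu<\chi_+$ on almost every component --- it only gives a set of components of positive measure where this holds. So the pointwise estimate $c_{\eta_0}(x)+\cdots+c_{\eta_0}(e^{(n-1)a}x)\leq C+n\chi_+$, which your telescoping step needs for a.e.\ $x$, is not established.

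The repair is exactly the paper's route, and it amounts to reversing your order of quantifiers: first fix an $e^a$-ergodic component $\nu$ of $\mX^{\mathcal W}$; Proposition \ref{lyapunovergodiccomp} gives $\int c\,d\nu=\chi_{\mathcal W}(a)<\chi_+$ for a.e.\ such $\nu$, and dominated convergence applied \emph{within} $\nu$ then produces an $\eta_0=\eta_0(\nu)$ with $\int c_{\eta_0}\,d\nu<\chi_+$. Birkhoff for the ergodic system $(X,\nu,e^a)$ now yields the a.e.\ linear bound, and your third step (the telescoping and the self-propagating choice of $\varepsilon$, copied from Lemma \ref{l: local contraction for mG}) goes through verbatim. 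Since the conclusion of the lemma is pointwise with $\varepsilon$ depending on $x$, letting $\eta_0$ depend on the ergodic component costs nothing. Everything else in your write-up --- the definition of $c_\eta$, the use of items 3 and 4 of Proposition \ref{p: properties of suspension}, the Mautner/ergodic-decomposition remarks, and the bootstrap --- matches the paper.
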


\begin{proof}
Let $\nu$ be an $e^a$-ergodic component of $\mX^{\mathcal W}$. We introduce the family of functions, for \(\eta >0\),
\[ c_\eta (x) = \sup _{0< |\epsilon| < \eta } \log \left( \frac{e^a(x+\varepsilon)-e^a(x)}{\varepsilon}\right) .\]
Proposition \ref{p: properties of suspension} shows that those functions belong to \(L^\infty (\mX) \) and that they converges simply to \( c\) when \(\eta\) tends to \(0\). The Lebesgue dominated convergence theorem and  Proposition \ref{l: local contraction for mG} show that 
 \(\int_{X} c_\eta (x) \ \nu(dx) \rightarrow _{\eta \rightarrow 0} \chi \). Hence there exists \( \eta_0>0 \) such that 
 \[ \int _{X} c_{\eta_0} (x) \nu(dx) <\chi _+.\]
 
 The Birkhoff ergodic theorem applied to the \(L^{\infty}(\nu)\)-function \( x \mapsto c_{\eta_0} (e^a, x) \) shows that for \(\nu\)-a.e. \(x\), 
 \[   \frac{1}{n} \left( c_{\eta_0}( x)+\ldots + c_{\eta_0} ( e^{(n-1)a}(x)) \right) \rightarrow \int c_{\eta_0}(x)\  d\nu < \chi_+,\]
so there exists a constant such that for every \(n\geq 1\)
 \[ c_{\eta_0}( x)+\ldots + c_{\eta_0} ( e^{(n-1)a}(x)) \leq  C + n\chi_+ .\]

Take a number \(0<\varepsilon <\eta_0\). We have 
\[ \log \left( \frac{|e^{na }([x, x+\varepsilon])|}{\varepsilon}\right)  = \]
\[=  \log \left( \frac{|e^a ([x,x+\varepsilon])|}{\varepsilon} \right) + \ldots + \log \left( \frac{|e^{na}([x, x+\varepsilon])|}{|e^{(n-1)a}([x, x+\varepsilon])|}\right), \]
so as long as none of the lengths of the intervals 
\[ [x, x+\varepsilon], \ldots , e^{(n-1)a} ([x, x+\varepsilon])\] 
exceed \(\eta_0\), we will get the following bound 
\[ \log \left( \frac{|e^{na}([x, x+\varepsilon])|}{\varepsilon}\right) \leq c_{\eta_0}( x)+\ldots + c_{\eta_0} ( e^{(n-1)a}(x))\leq C+ n\chi_+.\]
or in other words 
\begin{equation}\label{eq: exponential bound}  |e^{na}([x, x+\varepsilon])| \leq \varepsilon \exp (C+\chi_+ n) .\end{equation}

Because the function \(n \mapsto \exp (C+\chi_+ n)\) tends to \(0\)  at infinity, it is bounded. It is thus possible to choose \(\varepsilon >0\) so small that the value \( \varepsilon \exp (C+\chi_+ n) \) is bounded by \(\eta_0\) for every \(n\). By induction, we will then have that \eqref{eq: exponential bound} is satisfied for every integer. 

Reasoning similarly with a negative \(\varepsilon \) finishes the proof of the Proposition. \end{proof}

%%%%%%%%%%%%%%%%%%%%%%%%%%%%%%%%%%%%%%%%%%%%%%%%%%%%%%%%%%%%%%

\subsection{Global contraction for the central direction in a Weyl chamber}

%In this section we assume the following: $\Gamma$ is a lattice in a semi-simple real Lie group $G$ with finite center and $\Gamma$ admits a faithful action in $\R$ by homeomorphisms. We let $\Der$ be the almost-periodic space defined in Theorem \ref{t: almost-periodic space} and let $X$ be its suspension space. We assume that  $\mG$ is a symmetric and compactly supported probability measure in $G$ satisfying eq. \eqref{eq: mG''} and  $\mX$ is a $\mG$-stationary measure in $X$ corresponding to a measure on $\Der$ which is invariant and ergodic by the almost-periodic flow. We assume $\mX$ is not $G$-invariant and if $\mathcal W$ is a Weyl chamber, we let $\mX ^{\mathcal W}$ its corresponding $P_{\mathcal W}$-invariant measure on $X$. 

We prove here that the element $\central ^{\mathcal W}\in \liea $ (which is determined by the random walk given by $\mG$, see Theorem \ref{centraldirection}) has the global contraction property with respect to  $\mX^{\mathcal W}$ and also that the Lyapunov exponent $\chi_{\mathcal W}(\central ^{\mathcal W})$ is negative.

%We will now use this results to relate the Lyapunov exponent functional $\chi: A \to G$ to the Lyapunov exponent for the random walk.
 
\begin{lemma}\label{relationexponents}
Let $\chi_{\mathcal W}: \liea \to \R$ be the Lyapunov functional defined in \ref{s: lyapunovA} and the Lyapunov exponent $\chi_{\mG}$ with respect to $\mG$ defined by \ref{globallyapunov}. We have: 
$$\chi(\central ) = \chi_{\mG} = \int c(g,x)  \ d\mX \ d\mG.$$
In particular, $\chi_{\mathcal W} \in  \liea^*$ is a non-zero linear functional on $\liea$ and $\chi_{\mathcal W} (\central ^{\mathcal W}) < 0$ if $\mX$ is not $G$-invariant. 
\end{lemma}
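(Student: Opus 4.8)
The plan is to compute the Lyapunov exponent $\chi_{\mG}$ by integrating the additive cocycle $c=\log D_{\mathcal L_X}$ against the $P_{\mathcal W}$-invariant measure $\mX^{\mathcal W}$, using the relation $\mX=\int_K k_*\mX^{\mathcal W}\,d\HaarK(k)$ of Lemma~\ref{l: PW invariant measure}, and then to recognize the answer as $\chi_{\mathcal W}(\central^{\mathcal W})$ by identifying the ``average Iwasawa projection'' of $\mG$ with the drift direction $\central^{\mathcal W}$. The ``moreover'' part is immediate from Lemma~\ref{globallyapunov} once the identity $\chi_{\mathcal W}(\central^{\mathcal W})=\chi_{\mG}$ is known, since that lemma gives $\chi_{\mG}<0$ whenever $\mX$ is not $G$-invariant, forcing $\chi_{\mathcal W}\neq 0$.

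First I would record the elementary fact that $c(k,\cdot)\equiv 0$ on $\mathcal C_X$ for every $k\in K$: indeed $D_{\mathcal L_X}(\cdot,x)$ is a positive left-$\mGBM$-harmonic function, hence left-$K$-invariant, and $D_{\mathcal L_X}(e,x)=1$; by the cocycle relation this yields $c(g,k'y)=c(gk',y)$ and left-$K$-invariance of $c$ in its first argument on $\mathcal C_X$. Next, set $\psi(g):=\int_X c(g,x)\,d\mX^{\mathcal W}(x)$, which is finite by the Harnack--Lipschitz bound of Proposition~\ref{p: properties of suspension}(4) and the fact that $\mX^{\mathcal W}(\mathcal C_X)=1$ (Lemma~\ref{l: PW invariant measure}). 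From the additive cocycle relation and the $P_{\mathcal W}$-invariance of $\mX^{\mathcal W}$ one checks: $\psi(gp)=\psi(g)+\psi(p)$ for $p\in P_{\mathcal W}$ (so $\psi|_{P_{\mathcal W}}$ is a homomorphism to $\R$), $\psi(kg)=\psi(g)$ for $k\in K$, $\psi(e^a)=\chi_{\mathcal W}(a)$, and $\psi$ vanishes on $N$ since $N\subset[P_{\mathcal W},P_{\mathcal W}]$. Writing the Iwasawa decomposition $g=k\,e^{H_{\mathcal W}(g)}\nu$ with $\nu\in N$ we conclude that $\psi(g)=\chi_{\mathcal W}(H_{\mathcal W}(g))$, where $H_{\mathcal W}\colon G\to\liea$ is the Iwasawa $\liea$-projection attached to $\mathcal W$.

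Now, using $\mX=\int_K k'_*\mX^{\mathcal W}\,d\HaarK(k')$ and $c(g,k'y)=c(gk',y)$, I compute
\[
\chi_{\mG}=\int_G\!\int_K\!\int_X c(gk',y)\,d\mX^{\mathcal W}(y)\,d\HaarK(k')\,d\mG(g)=\int_G\!\int_K\psi(gk')\,d\HaarK(k')\,d\mG(g)=\chi_{\mathcal W}(\mathbf v),
\]
where $\mathbf v:=\int_G\int_K H_{\mathcal W}(gk')\,d\HaarK(k')\,d\mG(g)\in\liea$, using linearity of $\chi_{\mathcal W}$ and Fubini (legitimate since $\mG$, being of type $\type$, is compactly supported and $c$ is uniformly Lipschitz in its first argument). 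It remains to show $\mathbf v=\central^{\mathcal W}$. One has the Iwasawa cocycle identity $H_{\mathcal W}(l_n)=\sum_{j=1}^n H_{\mathcal W}(g_j u_{j-1})$, where $u_{j-1}\in K$ is the Iwasawa $K$-part of $l_{j-1}$; since $\mG^{*(j-1)}$ is left-$K$-invariant, $u_{j-1}\sim\HaarK$ for $j\ge 2$, and $g_j$ is independent of $l_{j-1}$, so $\tfrac1n\mathbb E[H_{\mathcal W}(l_n)]\to\mathbf v$ exactly. On the other hand, Theorem~\ref{goodtracking}, the estimate $\kappa_{\mathcal W}(\omega,n)=n\central^{\mathcal W}+o(n)$ of Theorem~\ref{centraldirection}, the right-invariance of $d_G$, and the $1$-Lipschitz property of $H_{\mathcal W}$ with respect to $d_G$ give $d_G(l_n,e^{n\central^{\mathcal W}}k_{\mathcal W}(\omega,\infty))=o(n)$; since $\central^{\mathcal W}$ lies in the interior of $\mathcal W$ one has $\tfrac1n H_{\mathcal W}(e^{na}k)\to a$ for $\HaarK$-a.e.\ $k$, and $k_{\mathcal W}(\omega,\infty)$ is $\mB$-distributed, hence $\tfrac1n H_{\mathcal W}(l_n)\to\central^{\mathcal W}$ a.e.; as the summands $H_{\mathcal W}(g_ju_{j-1})$ are bounded, dominated convergence then forces $\mathbf v=\central^{\mathcal W}$. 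Therefore $\chi_{\mG}=\chi_{\mathcal W}(\central^{\mathcal W})$, and combined with Lemma~\ref{globallyapunov} this gives the last two assertions.

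The hard part will be the identification $\mathbf v=\central^{\mathcal W}$: this is essentially the classical fact that the Iwasawa and Cartan drifts of a $K$-invariant random walk on $G$ coincide (Guivarc'h; cf.\ \cite[Ch.~10]{BenoistQuint}), and the self-contained derivation above via Theorem~\ref{goodtracking} requires care with the merely right-invariant metric $d_G$ and with the asymptotics of $H_{\mathcal W}(e^{na}k)$ for $a$ regular dominant. A subsidiary technical point is the justification of the interchanges of integrals and of the harmonicity manipulations, for which one uses that $c$ is left-$\mG$-harmonic in its first variable for $\mG$ of type $\type$ (Proposition~\ref{p: harmonicity compact support}) and that such $\mG$ is compactly supported.
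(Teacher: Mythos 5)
Your proposal is correct, but it takes a genuinely different route from the paper. The paper argues pointwise: Birkhoff's theorem for the random walk gives $\tfrac1n c(l_n,x)\to\chi_{\mG}$, the tracking Theorem \ref{goodtracking} together with Harnack's inequality replaces $l_n$ by $e^{n\central}k(\omega,\infty)$ to get $\tfrac1n c(e^{n\central},gx)\to\chi_{\mG}$ for Haar-a.e.\ $g$, and then a desintegration along $N_{\central}$-orbits (leafwise Haar measures, using the $N_{\central}$-invariance of $\mX^{\mathcal W}$ via Mautner and Proposition \ref{leafwisehaar}) upgrades this to $\mX^{\mathcal W}$-a.e.\ $x$, where Birkhoff again identifies the limit with $\chi_{\mathcal W}(\central)$. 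You instead work in expectation: the identity $\psi(g):=\int_X c(g,\cdot)\,d\mX^{\mathcal W}=\chi_{\mathcal W}(H_{\mathcal W}(g))$, which you extract cleanly from the cocycle relation, the left $K$-invariance of $D_{\mathcal L_X}(\cdot,x)$ and the $P_{\mathcal W}$-invariance of $\mX^{\mathcal W}$, reduces the lemma to the classical coincidence of the averaged Iwasawa drift with the Cartan drift $\central^{\mathcal W}$ (Guivarc'h; cf.\ Benoist--Quint), which you then re-derive from the same Theorem \ref{goodtracking}. Your route avoids the measurable-partition and horospherical machinery entirely and is arguably cleaner for this lemma; what it does not produce is the pointwise statement of Proposition \ref{asdasd}, which the paper reuses for the global contraction of $\central^{\mathcal W}$ (Proposition \ref{contractionamg}), so it cannot fully replace this section of the paper. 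Two small points to tighten: the vanishing of $\psi$ on $N$ is safer to justify via continuity of $\psi$ (the uniform Lipschitz bound of Proposition \ref{p: properties of suspension}(4)) and the scaling $\psi(\exp(tX))=e^{\alpha(a)}\psi(\exp(tX))$ under conjugation by $e^a$, rather than the group-theoretic inclusion $N\subset[P_{\mathcal W},P_{\mathcal W}]$; and the convergence $\tfrac1n H_{\mathcal W}(e^{na}k)\to a$ holds only for $k$ off the lower-dimensional Bruhat strata, which is Haar-negligible as you need.
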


\begin{proof}

For convenience, we will denote \(\kappa= \kappa^{\mathcal W}\) and $\central = \central ^{\mathcal W} $.

We have by Proposition \label{lyapunovergodiccomp} and Birkhoff's ergodic theorem that for $\mX^{\mathcal W}$ a.e. $x \in X$, $ \chi_{\mathcal W}(\central) = \lim \frac{1}{n} c(e^{n\central}, x)$.  We will therefore show that  $\chi_{\mG} = \lim_{n\rightarrow +\infty}  \frac{1}{n} c(e^{n\central}, x)$ for $\mX^{\mathcal W}$ a.e. $x \in X$.

We first show the following: 

\begin{proposition}\label{asdasd} For $\mX^{\mathcal W}$ a.e. $x \in X$ and Haar $\HaarG$ a.e. $g \in G$, we have  $$\lim_{n \to \infty} \frac{1}{n} c(e^{n\central}, gx) = \chi_{\mG}.$$
\end{proposition}

\begin{proof}
Using the Iwasawa decomposition of $G$ (see subsection \ref{iwasawa}), we write $G = NAK$ and we can suppose that the horospherical unstable subgroup $L_a$ coincides with $N$. Observe that for $g= nak$, where $n,a,k$ belong to $N, A, K$ respectively we have that  $\lim_{n\rightarrow +\infty} \frac{1}{n} c(e^{n\central}, gx) =  \lim \frac{1}{n} c(e^{n\central}, kx)$ and therefore it is enough to prove our assertion for $\HaarK$ a.e. $k \in K$.

Birkhoff ergodic Theorem applied to the random walk on $G$ (recall that $\mX$ is ergodic) shows that for $\mG^{\otimes \N^*}$ a.e. $\omega = (g_1, g_2, ....)$ we have that for $\mX$ a.e. $x \in X$, denoting as usual \(l_n=g_n\ldots g_1\), 
$$ \lim_{n\rightarrow +\infty} \frac{1}{n} c(l_n , x) = \chi_{\mG}.$$

Combining this fact with propositions \ref{centraldirection}, \ref{expconvergencek} and \ref{goodtracking}, we have that for Haar  $\HaarK$ a.e. $k \in  K $, there exists $\omega = (g_1, g_2,..)$, such that if we let $$l_n = k'(\omega,n) \exp (\kappa(l_n)) k(\omega, n),$$ then $k(\omega, \infty) = k$ and the following three conditions hold:

\begin{enumerate}
\item  $\lim_{n\rightarrow+\infty} \frac{1}{n} c(l_n, x) = \chi_{\mG}$
\item  $\lim_{n\rightarrow +\infty}  \frac{1}{n} \kappa(l_n) = \central $ 
\item  $\lim_{n\rightarrow +\infty} \frac{1}{n} d( e^{n\central} k(\omega, n), e^{n\central} k) = 0$
\end{enumerate}

%we have $c(g_n\dots g_1, x) = c(k'(\omega, n)a^nk(\omega,n), x)$. 
By Harnack's inequality, there exists $\xi > 0$ such that for any $x \in X$ we have $$c(g,x) \leq \xi d(g, e)$$ so 
 $$ \big | c(e^{n\central} k(\omega, n), x)-  c(e^{n\central} k ,x)  \big |  \leq \xi d( e^{n\central} k(\omega, n), e^{n\central} k)$$
 
Therefore a simple calculation shows that: $$\lim_{n \to \infty }\frac{1}{n}c(e^{n\central}, kx) = \chi_{\mG}$$ as we wanted. Observe that we proved this is true for $\mX$ a.e. $x \in X$, but as $\mX = \int_K k_{*} \mX^{\mathcal W} \  \HaarK (dk) $, this is also true for $\mX^{\mathcal W}$ a.e. $x \in X$.

\end{proof}

We can continue with the proof of Lemma \ref{relationexponents}. Recall that $C_{\central}$ denotes the centralizer in $G$ and $L_{\central}, N_{\central}$  the horospherical stable and unstable groups defined in \ref{horospherical}. By Proposition \ref{lufactor} the function $\mathcal{F}: L_{\central} \times C_{\central} \times N_{\central} \to G$ given by $\mathcal{F}(l,c,n) = lcn$ is a diffeomorphism from a neighborhood of the identity in $L_{\central} \times C_{\central} \times N_{\central}$ onto a neighborhood of  $G$ and as $e^{m\central } (lcn)$ is at bounded distance from $e^{m\central} (n)$ (independent of $m > 0$), we can conclude from proposition \ref{asdasd} that for $\mX ^{\mathcal W}$ a.e. $x \in X$  and $\kappa_{N_{\central}}$ a.e. $n \in N_{\central}$ we have: 

$$\lim_{m \to \infty }\frac{1}{m}c(e^{m\central}, nx) = \chi_{\mG}$$

We can now take a measurable partition $\mathcal{P}$ of $X$ which is subordinate to the $N_{\central}$-orbits whose existence is given by \ref{nicepartition2}, see \ref{leafwise}. Recall that the atoms of $\mathcal{P}_x$ are $N_a$-plaques for $\mX^{\mathcal W}$- a.e. $x \in X$ and because the measure $\mu$ is $N_{\central}$-invariant, the conditional measures $\mu^\mathcal{P}_x$ in $P_x$ coincide with the normalized Haar $\kappa_{N_{\central}}$ measure on $\mathcal{P}_x$ by Proposition \ref{leafwisehaar} (Observe that we are identifying $N_{\central}$ with its $x$-orbit). As $\lim_{m \to \infty }\frac{1}{m}c(e^{m\central}, nx) = \chi_{\mG}$ for $n \in N_{\central}$, we should have this is also true for $\mX ^{\mathcal W}$-a.e. $x \in X$ by desintegration of measures.

\end{proof}

Using a similar argument, we can use the global exponential contraction property for the action of the random walk on $G$ in $\Der$ given by Proposition \ref{p: global exponential contraction} and Theorem \ref{goodtracking} to prove the following:

\begin{proposition}[Global contraction for $\central ^{\mathcal W}$]\label{contractionamg}
The element \(\central ^{\mathcal W}\in \mathcal W\) has the global contraction with respect to \(\mX^{\mathcal W}\).
\end{proposition}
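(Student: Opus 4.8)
The plan is to establish the global contraction property for $\central^{\mathcal W}$ with respect to $\mX^{\mathcal W}$ by transferring the global exponential contraction for the random walk on $G$ acting on $\Der$ (Proposition \ref{p: global exponential contraction}) to a deterministic statement about the flow $t\mapsto e^{t\central^{\mathcal W}}$, using the tracking estimate of Theorem \ref{goodtracking} and a disintegration over the horospherical subgroup $N_{\central^{\mathcal W}}$, exactly paralleling the argument just given for Lemma \ref{relationexponents} and Proposition \ref{asdasd}. First I would fix notation: write $\central=\central^{\mathcal W}$, $\kappa=\kappa^{\mathcal W}$, and recall from Lemma \ref{relationexponents} that $\chi_{\mathcal W}(\central)=\chi_{\mG}<0$ (using that $\mX$ is not $G$-invariant). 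By Birkhoff's ergodic theorem applied to the ergodic system on $S(X)$ coming from the random walk, together with Proposition \ref{p: global exponential contraction}, for $\mX$-a.e.\ leaf $L$ of $\mathcal L_X$, any two points $x,y\in L$, and $\mG^{\otimes\N}$-a.e.\ $\omega=(g_n)_n$, the length $|[l_n(x),l_n(y)]|$ decays with $\limsup\tfrac1n\log|[l_n(x),l_n(y)]|\le\chi_+<0$; in particular $d_{l_nL}(l_n x,l_n y)\to0$.

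Next I would use the $KAK$-tracking. By Theorems \ref{centraldirection}, \ref{expconvergencek}, \ref{goodtracking} and Corollary \ref{corollary to aoun's thm}, for $\HaarK$-a.e.\ $k\in K$ there is a trajectory $\omega$ with $k(\omega,\infty)=k$, with $\tfrac1n\kappa(l_n)\to\central$ and $\tfrac1n d_G(e^{n\central}k(\omega,n),e^{n\central}k)\to0$, and with $l_n=k'(\omega,n)\exp(\kappa(l_n))k(\omega,n)$. Writing $l_n = k'(\omega,n)\,e^{\kappa(l_n)-n\central}\,(e^{n\central}k(\omega,n))$ and noting that the first two factors act with bounded distortion on lengths along $\mathcal L_X$ (the $K$-factor is a leafwise isometry since $d^{kg}=d^g$, and $e^{\kappa(l_n)-n\central}$ is subexponential by the sublinear Cartan convergence, while the Lyapunov cocycle along $\liea$ is linear so contributes a controlled sublinear error), one deduces that for $\mX^{\mathcal W}$-a.e.\ $x$ and $\HaarK$-a.e.\ $k$, $d_{L}(e^{n\central}kx', e^{n\central}ky')\to0$ for $x',y'$ on a common leaf, by comparing $e^{n\central}k$ with the genuine random-walk step $l_n$. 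Since $G=N_{\central}C_{\central}K$ near the identity (Proposition \ref{lufactor}) and $e^{m\central}(lcn)$ stays within bounded leafwise distance of $e^{m\central}(n)$ uniformly in $m>0$, this upgrades to: for $\mX^{\mathcal W}$-a.e.\ $x$ and $\kappa_{N_{\central}}$-a.e.\ $n\in N_{\central}$, the flow $e^{m\central}$ contracts every pair of points on the leaf through $nx$.

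Finally I would remove the a.e.\ restriction in the $N_{\central}$-variable. Take a measurable partition $\mathcal P$ of (a compact piece of) $X$ subordinate to $N_{\central}$-orbits (Theorem \ref{leafwise}, item \ref{nicepartition2}); since $\mX^{\mathcal W}$ is $P_{\mathcal W}$-invariant it is $N_{\central}$-invariant, so by Proposition \ref{leafwisehaar} the conditional measures on the atoms are (normalized) Haar measures on $N_{\central}$-plaques. The contraction statement, known for $\kappa_{N_{\central}}$-a.e.\ translate, therefore holds for $\mX^{\mathcal W}$-a.e.\ $x$ after disintegration; combined with the continuity of the leafwise distances (Proposition \ref{p: properties of suspension}, item 1) and the fact that the set where contraction holds is $\mathcal L_X$-saturated and of full measure, one gets that for $\mX^{\mathcal W}$-a.e.\ $x$ and \emph{every} $y$ on the leaf $L_x$, $d_{L_x}(e^{t\central}x, e^{t\central}y)\to0$ as $t\to+\infty$, which is the global contraction property. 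The main obstacle I anticipate is the second step: carefully controlling that the discrepancy between the deterministic flow element $e^{n\central}k$ and the actual random-walk product $l_n$ contributes only a subexponential (in fact $o(1)$ after the contraction kicks in) error in the \emph{leafwise} metric $d_{L}$ — this requires combining the Harnack-Lipschitz bound on the cocycle $c(\cdot,x)$ (Proposition \ref{p: properties of suspension}, item 4), the sublinear Cartan convergence, and the $KAK$-tracking of Theorem \ref{goodtracking} in a way that is uniform enough to pass through the disintegration, much as in the proof of Lemma \ref{relationexponents}.
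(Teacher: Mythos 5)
Your proposal is correct and is essentially the argument the paper intends: the paper gives no written proof of Proposition \ref{contractionamg}, stating only that it follows by combining Proposition \ref{p: global exponential contraction} with Theorem \ref{goodtracking} via ``a similar argument'' to Lemma \ref{relationexponents}/Proposition \ref{asdasd}, and your plan fleshes out exactly that route (random-walk contraction, $KAK$-tracking with Harnack to control the subexponential discrepancy, reduction to $\HaarK$-a.e.\ $k$ and then to Haar-a.e.\ $n\in N_{\central}$ via the $L_{\central}C_{\central}N_{\central}$ product structure, and the subordinate-partition/Haar-conditional disintegration). Two cosmetic slips — the cocycle $c(e^a,x)$ is not linear in $a$ for fixed $x$ (only its $\mX^{\mathcal W}$-average is), and Proposition \ref{lufactor} gives $L_{\central}C_{\central}N_{\central}$ rather than $N_{\central}C_{\central}K$ — do not affect the argument, since the Harnack--Lipschitz bound you also invoke is what is actually needed.
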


%\subsection{Tracking \(\mG\)-random walks by \( \exp(\R \central)\)-trajectories: proof of Theorem \ref{goodtracking}}

%Denote \[\epsilon = \lambda_\alpha - \lambda_\beta - \max (\lambda, |\lambda_\alpha - \lambda _\beta| - \lambda) \]Two cases: 1. \(\lambda_\alpha\neq \lambda _\beta\) which implies \( \max = |\lambda_\alpha - \lambda_\beta| - \lambda\) and thus \(\varepsilon \leq \lambda\)2. \(\lambda_\alpha = \lambda_\beta\) which implies \( \max = \lambda \) and thus \( \varepsilon = -\lambda\) 

\subsection{Global contraction in a half-space of \(\liea\)}

\begin{proposition}\label{localtoglobalcontraction}
Suppose $\mu$ is an $A$-invariant probability measure on $X$ and let $a, b \in \liea$. Suppose $a$ has the global contraction property with respect to $\mu$. If $b$ has the local contraction property with respect to $\mu$, then $b$ has the global contraction property with respect to $\mu$.
\end{proposition}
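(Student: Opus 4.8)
The plan is the following. Global contraction of $a$ lets us push any interval $[x,y]$ inside a leaf of $\mathcal L_X$ to an arbitrarily short interval by applying $e^{ma}$ for large $m$; if this contraction happens at a moment $m_0$ at which $e^{m_0a}x$ lies in the region where the local contraction radius of $b$ is bounded below, we may then invoke local contraction of $b$ for that short interval. Since $a$ and $b$ commute, and since for a \emph{fixed} element $g_0=e^{m_0a}$ the leafwise derivative $D_{\mathcal L_X}(g_0,\cdot)$ is bounded below by a positive constant, the resulting contraction can be transported back to $[x,y]$. Arranging that the choice of $m_0$ is possible at arbitrarily large times is what forces a recurrence argument.

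Concretely, first I would fix the data. Let $X_b$ be the full-measure set on which the local contraction property of $b$ holds, with radius $\varepsilon_z>0$ for $z\in X_b$, and set $E_\delta=\{z\in X_b:\varepsilon_z\geq\delta\}$, so that $\mu(E_\delta)\uparrow 1$ as $\delta\downarrow 0$. Since $\mu$ is $A$-invariant the homeomorphism $e^a$ preserves $\mu$, so Poincar\'e recurrence applied to $e^a$ and $E_\delta$ shows that the set $V_\delta$ of points $x$ with $e^{ma}x\in E_\delta$ for infinitely many $m\geq0$ satisfies $\mu(V_\delta)\geq\mu(E_\delta)$; moreover $V_\delta$ grows as $\delta$ shrinks, so $\mu\big(\bigcup_{\delta>0}V_\delta\big)=1$. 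Intersecting this with $\mathcal C_X$ and with the full-measure sets given by the global contraction of $a$ and by $X_b$, we obtain a full-measure set $X_0$ such that every $x\in X_0$ admits some $\delta=\delta(x)>0$ with $e^{ma}x\in E_\delta$ for infinitely many $m$.

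Now fix $x\in X_0$ and an arbitrary $y\in L_x$; it suffices to show $|e^{nb}([x,y])|\to0$ as $n\to\infty$. Global contraction of $a$ gives $|e^{ma}([x,y])|\to0$ as $m\to\infty$, so among the infinitely many $m$ with $e^{ma}x\in E_\delta$ I may pick one, $m_0$, for which also $|e^{m_0a}([x,y])|\leq\delta$. Write $g_0=e^{m_0a}$ and $z=g_0x\in E_\delta$. Then $g_0([x,y])$ is an interval of the leaf $L_z$ with endpoint $z$ and length $\leq\delta\leq\varepsilon_z$, so local contraction of $b$ at $z$ yields $|e^{nb}(g_0([x,y]))|\to0$; since $A$ is abelian, $e^{nb}\circ g_0=g_0\circ e^{nb}$, hence $|g_0(e^{nb}([x,y]))|\to0$. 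To finish, note that $g_0$ is a \emph{fixed} element of $G$: by the Harnack estimate (item 4 of Proposition \ref{p: properties of suspension}, together with $c(e,\cdot)\equiv0$) there is a constant $\rho=\rho(g_0)\in(0,1]$ with $D_{\mathcal L_X}(g_0,w)\geq\rho$ for all $w\in\mathcal C_X$, and integrating the identity $|g_0(J)|=\int_J D_{\mathcal L_X}(g_0,\cdot)\,d\mu_L$ over any interval $J$ in a leaf $L$ (where $\mathcal C_X$ has full Lebesgue measure) gives $|g_0(J)|\geq\rho|J|$. Applying this with $J=e^{nb}([x,y])$ we obtain $|e^{nb}([x,y])|\leq\rho^{-1}|g_0(e^{nb}([x,y]))|\to0$, which is the global contraction of $b$ at $x$; since $x\in X_0$ is $\mu$-generic and $y\in L_x$ arbitrary, this proves the proposition.

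The only real obstacle is the one flagged above: the reduction must be performed at a \emph{fixed} time $m_0$, otherwise the constant $\rho$ in the last step would depend on $m_0$ and could degenerate to $0$; this is precisely what the recurrence step secures, and the Harnack inequality then provides the uniform lower bound on $D_{\mathcal L_X}(g_0,\cdot)$ for that fixed $g_0$.
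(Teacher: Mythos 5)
Your proof is correct, and it reaches the conclusion by a route that is organized differently from the paper's. The paper introduces, for each $x$, the maximal half-interval $S_x\subset L_x$ on which $b$ contracts, sets $l(x)=\mu_{L_x}(S_x)$, and observes that commutativity together with the global contraction of $a$ forces $l(e^{na}x)\to 0$ whenever $l(x)<\infty$; Poincar\'e recurrence for $e^{a}$ then shows that $l$ takes only the values $0$ and $\infty$ almost everywhere, and the local contraction of $b$ rules out the value $0$. You instead apply recurrence to the sets $E_\delta$ on which the local contraction radius of $b$ is at least $\delta$, use the global contraction of $a$ to find a single return time $m_0$ at which the image of $[x,y]$ has length at most $\delta$, apply the local contraction of $b$ at $e^{m_0a}x$, and transport the conclusion back via the fixed leafwise bi-Lipschitz map $e^{-m_0a}$. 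Both arguments rest on the same three ingredients — commutativity of $a$ and $b$, Poincar\'e recurrence for $e^{a}$, and the uniform leafwise bi-Lipschitz bound for a fixed group element coming from the Harnack estimate (the paper needs it to get $e^{a}S_x=S_{e^{a}x}$, you need it for the final pull-back) — but yours is pointwise and constructive where the paper's is a zero--one dichotomy for the function $l$. Two small points to tighten in a write-up: the measurability of $z\mapsto\varepsilon_z$ (harmless, since by monotonicity along the oriented leaves it suffices to test contraction at the endpoints $z\pm\delta$ over rational $\delta$, making $E_\delta$ measurable), and the observation that for the estimate $|g_0(J)|\geq\rho|J|$ one only needs $\mathcal C_X$ to have full leafwise Lebesgue measure, which the paper guarantees on every leaf, rather than full $\mu$-measure.
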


%%%Add Radon measure discussion.
\begin{proof}

For every $x \in X$, define the interval in the \(\mathcal L_X\) leaf \(L_x\) of \(x\) by 
$$S_x := \{y \in L_x   \ |  \ y \geq x \text{ and }  \lim_{m \to \infty} d_{e^{mb} L_x} (e^{mb}(x), e^{mb}(y)) = 0\}.$$
Since \( a\) and \(b\) commute, and that the family of distances \( \{d_L\}_{L\text{ leaf of } \mathcal L_X}\) satisfy the Harnack inequality, we have 
\begin{equation}\label{eq: invariance under a} e^a S_x = S_{e^a x} \text{ for every } x\in X .\end{equation}

Consider the measurable function \( l : X\rightarrow [0, +\infty ] \) defined by \(l(x) = m_{L_x} (S_x)\). % Again the Harnack inequality shows that \( l\circ a\) is bounded by a constant times \(l\), so the sublevel set \( l ^{-1} ( [0,+\infty) ) \) is \(a\)-invariant.
Since \(a\) has the global contraction property, we know that  for \(\mu\)-a.e. \(x\), either \(l (x ) = \infty\), or we have
\[ l(e^{na} x) =|m_{L_{e^{na} x}}( S_{e^{na} x}) |= |m_{L_{e^{na} x}}(e^{na} (S_x)) |\rightarrow _{n\rightarrow +\infty} 0.  \]  
Poincar\'e recurrence applied to the \(\mu\)-preserving transformation \(e^a\) shows that \(l\) takes \(\mu\)-a.s. the values \(0\) or \(\infty\). But we know that \(\mu\)-a.e. \(l\) takes positive value (local contraction property of \(b\) wrt \(\mu\)) so it takes \(\mu\)-a.e. the value \(+\infty\).

Reasoning similarly with the sets \[ \{y \in L_x   \ |  \ y \leq x \text{ and }  \lim_{m \to \infty} d_{e^{mb} L_x} (e^{mb}(x), e^{mb} (y)) = 0\},\] we conclude that \(b\) has the global contraction property. 
\end{proof}

\begin{corollary} \label{c: a half-space of global contraction}
Given a Weyl chamber \(\mathcal W\), the Lyapunov exponent \(\chi_{\mathcal W} \in \liea^*\) is non zero, and every element \( a\in \liea\) lying in the half-space  \( \{\chi_{\mathcal W} <0 \} \) has the global contraction with respect to \( \mX^{\mathcal W}\).
\end{corollary}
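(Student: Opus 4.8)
The plan is to combine the three main results accumulated in the section: the fact that the central direction $\central^{\mathcal W}$ has the global contraction property (Proposition \ref{contractionamg}), the fact that $\chi_{\mathcal W}$ is a nonzero linear functional with $\chi_{\mathcal W}(\central^{\mathcal W}) < 0$ (Lemma \ref{relationexponents}, using that $\mX$ is not $G$-invariant — which holds under our standing contradiction hypothesis that $\Gamma$ is left-orderable, since otherwise by Proposition \ref{p: invariance mX} the group $\Gamma$ would have nonzero first Betti number, contradicting Proposition \ref{p: vanishing first Betti number}), and the upgrade mechanism of Proposition \ref{localtoglobalcontraction}. First I would record that $\chi_{\mathcal W} \ne 0$: this is exactly the content of Lemma \ref{relationexponents}, since $\chi_{\mathcal W}(\central^{\mathcal W}) = \chi_{\mG} < 0$ when $\mX$ is not $G$-invariant. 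In particular $\{\chi_{\mathcal W} < 0\}$ is a genuine open half-space of $\liea$, and it contains $\central^{\mathcal W}$, hence meets the interior of $\mathcal W$ (recall $\central^{\mathcal W} \in \mathrm{Int}(\mathcal W)$ by Theorem \ref{centraldirection}), which gives the "moreover" part announced in the section's preamble.

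Next I would fix an arbitrary $b \in \liea$ with $\chi_{\mathcal W}(b) < 0$ and show it has the global contraction property with respect to $\mX^{\mathcal W}$. By Lemma \ref{localcontra} (whose integrability hypothesis is satisfied thanks to the Harnack inequality, item 4 of Proposition \ref{p: properties of suspension}), the condition $\chi_{\mathcal W}(b) < 0$ already implies that $b$ has the \emph{local} contraction property with respect to $\mX^{\mathcal W}$. On the other hand, $\central^{\mathcal W}$ has the \emph{global} contraction property with respect to $\mX^{\mathcal W}$ by Proposition \ref{contractionamg}. The measure $\mX^{\mathcal W}$ is $P_{\mathcal W}$-invariant, hence in particular $A$-invariant, so Proposition \ref{localtoglobalcontraction} applies with $\mu = \mX^{\mathcal W}$, $a = \central^{\mathcal W}$, and the given $b$: it converts the local contraction property of $b$ into the global contraction property of $b$. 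Since $b$ was an arbitrary element of $\{\chi_{\mathcal W} < 0\}$, this proves the corollary.

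There is no real obstacle here — the corollary is a formal assembly of the section's lemmas. The only point requiring a moment's care is the appeal to "$\mX$ not $G$-invariant": this is legitimate in the present context because the whole construction takes place under the assumption that $\Gamma$ is left-orderable (so $\Der \ne \emptyset$ and the action is fixed-point-free), and in that setting $G$-invariance of $\mX$ is excluded by Proposition \ref{p: invariance mX} together with Proposition \ref{p: vanishing first Betti number}; alternatively, one simply states the corollary under the running hypothesis of the section that $\mX$ is not $G$-invariant, since otherwise there is nothing to prove (the global contraction property of $\central^{\mathcal W}$ in Proposition \ref{contractionamg} and the half-space in the corollary are only of interest in the non-invariant case, which is the one driving the eventual contradiction).
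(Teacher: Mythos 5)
Your proof is correct and follows essentially the same route as the paper's: establish $\chi_{\mathcal W}(\central^{\mathcal W})<0$ via Lemma \ref{relationexponents}, invoke Proposition \ref{contractionamg} for global contraction in the central direction, and then apply Proposition \ref{localtoglobalcontraction} to upgrade local contraction (supplied by Lemma \ref{localcontra}) to global contraction on the whole half-space. You spell out the implicit step (Lemma \ref{localcontra} for the local contraction hypothesis of Proposition \ref{localtoglobalcontraction}) and the reason $\mX$ is not $G$-invariant, both of which the paper leaves tacit, but the argument is the same.
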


\begin{proof}
The element \( \central \in \liea\) satisfies \( \chi _{\mathcal W} (\central ) <0\) by Lemma \ref{relationexponents} and has the global contraction with respect to \(\mX^{\mathcal W}\) by Proposition \ref{contractionamg}. Applying Proposition \ref{localtoglobalcontraction} concludes the proof.
\end{proof}

%%%%%%%%%%%%%%%%%%%%%%%%%%%%%%%%%%%%%%%%%%%%%%%%%%%%%%%%%%%%%

\section{Propagating invariance}\label{maintheoremsimplecase} 

In this section we will prove Theorem \ref{t: no orderable lattice 2} in the case where $G$ is simple. The proof is by contradiction. 
Suppose $\Gamma$ is a lattice in a semi-simple real Lie group $G$ of finite center and rank at least two without compact factors, and assume $\Gamma$ is left orderable. So far, we have shown this implies the existence of a compact space $\Der$ with a one-dimensional lamination and a $\Gamma$-action preserving each leaf.  As before, we let $X$ be the corresponding suspension space, which is a $G$-space and we have constructed an ergodic $\mG$-stationary measure $\mX$ for any choice of a symmetric compactly supported probability measure $\mG$ on \(G\) of type \(\type\). The measure $\mX$ is not $G$-invariant as discussed in \ref{expocontrawalk}. We will prove the following:

\begin{theorem}\label{simple1}
Under the assumptions above, we have the following:
\begin{enumerate}
\item If $G$ is simple, then $\mX$ is $G$-invariant, obtaining a contradiction.
\item If $G$ is the almost product of $G_1, G_2, ...G_n$ simple real Lie groups with $n\geq 2$, then either $\mX$ is $G$-invariant or up to rearranging the factors, $G_1$ has rank one and $\mX$ is $G_i$-invariant for all $i\geq 2$. 
\end{enumerate}
\end{theorem}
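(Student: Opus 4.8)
The plan is to argue by contradiction in (1) and to extract the dichotomy in (2), using the family of $A$-invariant measures $\{\mX^\W\}$ attached to the Weyl chambers. So assume $\mX$ is not $G$-invariant (in (2), if it is, we are in the first alternative); then by Lemma \ref{relationexponents} and Corollary \ref{c: a half-space of global contraction} the Lyapunov functional $\chi_\W\in\liea^*$ is nonzero for every Weyl chamber $\W$, and $\chi_\W(\central^\W)<0$ with $\central^\W$ in the interior of $\W$ (Theorem \ref{centraldirection}). The whole proof rests on the following \emph{comparison lemma}: if $\W$ and $\W'=s_\alpha\W$ are Weyl chambers adjacent across the wall $\ker\alpha$ of a simple root $\alpha$ of $\W$, and some $a\in\ker\alpha\cap\W$ has the global contraction property with respect to \emph{both} $\mX^\W$ and $\mX^{\W'}$, then $\mX^\W=\mX^{\W'}$.

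To prove this comparison lemma I would first observe that, $a$ lying in a wall, the only roots vanishing on $a$ are $\pm\alpha$ (and $\pm2\alpha$ when present), so $N_a$ is contained in the unipotent radicals of both $P_\W$ and $P_{\W'}$; hence $\mX^\W$ and $\mX^{\W'}$ are both $N_a$-invariant. I would also record that $\pi_*\mX^\W=\pi_*\mX^{\W'}$ is the Haar probability on $G/\Gamma$: both are $P_\W$- (resp. $P_{\W'}$-)invariant probabilities whose $K$-average projects to $\pi_*\mX$, which is the unique $\mG$-stationary measure on $G/\Gamma$, so the uniqueness in Lemma \ref{l: PW invariant measure} applies. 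The heart of the lemma is then to produce a $\mX^\W$-generic point $x$ and a $\mX^{\W'}$-generic point $x'$ lying in a common leaf of $\mathcal L_X$ with $\limsup_{t\to\infty} d_{G/\Gamma}(e^{ta}\pi(x),e^{ta}\pi(x'))$ arbitrarily small; this is precisely where the common $N_a$-invariance enters, letting one slide $x$ and $x'$ along their $N_a$-orbits without destroying genericity or leaf membership while aligning their futures over $G/\Gamma$. The global contraction of $e^a$ then forces $\limsup_{t\to\infty} d_X(e^{ta}x,e^{ta}x')$ arbitrarily small, and Birkhoff's theorem for $e^a$ along $x$ and $x'$ — together with a bookkeeping of the $e^a$-ergodic components (in the spirit of Proposition \ref{lyapunovergodiccomp}, via the Mautner phenomenon) and of the disintegrations over the common base measure on $G/\Gamma$ — yields $\int f\,d\mX^\W=\int f\,d\mX^{\W'}$ for continuous $f$. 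I expect this comparison lemma to be the main obstacle: specifically, the simultaneous horospherical alignment of the two generic orbits over $G/\Gamma$, the ergodic-component bookkeeping, and the control of the non-compactness of $X$ when $\Gamma$ is non-uniform, for which I would use recurrence of the walk on $G/\Gamma$ and the exponential moment of Proposition \ref{p: exponential moment}.

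Granting the comparison lemma, the remainder is Weyl-theoretic combinatorics. Fixing $\W$ with simple roots $\alpha_1,\dots,\alpha_r$, I would use $\mX^{w\W}=\tilde w_*\mX^\W$ (which follows from $M$-invariance of $\mX^\W$, the relation $P_{w\W}=\tilde w P_\W\tilde w^{-1}$, bi-invariance of Haar on $K$, and the uniqueness in Lemma \ref{l: PW invariant measure}) together with the cocycle relation and the $A$-invariance of $\mX^\W$ to compute $\chi_{w\W}=\chi_\W\circ w^{-1}$ for $w$ in the Weyl group; in particular $\chi_\W$ and $\chi_{s_{\alpha_i}\W}$ agree on $\ker\alpha_i$. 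Writing $\chi_\W=\sum_j d_j\alpha_j$, the relation $\chi_\W(\central^\W)=\sum_j d_j\alpha_j(\central^\W)<0$ with all $\alpha_j(\central^\W)>0$ shows that $J:=\{j:d_j<0\}$ is nonempty. For each index $i$ with $J\setminus\{i\}\neq\emptyset$ I would pick $a\in\ker\alpha_i\cap\W$ with $\chi_\W(a)=\sum_{j\neq i}d_j\alpha_j(a)<0$ (take $\alpha_{j_0}(a)$ large for some $j_0\in J\setminus\{i\}$); by Corollary \ref{c: a half-space of global contraction}, and since $\chi_{s_{\alpha_i}\W}(a)=\chi_\W(a)<0$ on $\ker\alpha_i$, such an $a$ has the global contraction property for both $\mX^\W$ and $\mX^{s_{\alpha_i}\W}$, so the comparison lemma gives $\mX^\W=\mX^{s_{\alpha_i}\W}$; comparing integrals against $c(e^{(\cdot)},\cdot)$ and using $\chi_{s_{\alpha_i}\W}=\chi_\W\circ s_{\alpha_i}$, this forces $\chi_\W=\chi_\W\circ s_{\alpha_i}$, i.e. $\chi_\W\perp\alpha_i$.

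Finally I would close as follows. If $J$ has at least two elements, the previous step applies to every $i$, so $\mX^\W$ is invariant under $P_\W$ and all $\tilde s_{\alpha_i}$, hence under the group they generate, which contains the unipotent radical $N$ of $P_\W$ together with its opposite $\bar N$ and is therefore all of $G$ (as $G$ has no compact factors); averaging over $K$ shows $\mX$ is $G$-invariant, contradicting the standing assumption. So $J=\{i_0\}$ is a singleton; then $\chi_\W\perp\alpha_i$ for every $i\neq i_0$, so $\chi_\W$ is proportional to the fundamental weight $\omega_{i_0}$, and since $\omega_{i_0}$ is a nonnegative combination of simple roots, positive on $\central^\W$, we get $\chi_\W=-c\,\omega_{i_0}$ with $c>0$; consequently $d_j\le 0$ for all $j$, and $J=\{i_0\}$ forces $d_j=0$ for $j\neq i_0$, i.e. $\omega_{i_0}$ is proportional to $\alpha_{i_0}$, equivalently $\alpha_{i_0}$ is orthogonal to every other simple root, i.e. an isolated node of the Dynkin diagram of $G$. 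When $G$ is simple this is impossible (the diagram is connected with at least two nodes), which gives the contradiction in (1). When $G=G_1\times\cdots\times G_n$ with $n\ge 2$, an isolated node forces the corresponding factor to have rank one; relabelling so that $\alpha_{i_0}$ is the simple root of $G_1$, every other simple root lies in some $G_k$ with $k\ge 2$, so $\mX^\W$ is invariant under $P_\W$ and all $\tilde s_{\alpha_i}$ with $i\neq i_0$, hence under the group they generate, which — being generated together with the minimal parabolic of each $G_k$ by reflections exhausting the Weyl group of $G_k$ — contains the unipotent radical of a minimal parabolic of $G_k$ and its opposite, hence all of $G_k$, and therefore contains $G_2\times\cdots\times G_n$; averaging over $K$ and using normality of the $G_k$ in $G$ shows $\mX$ is $G_k$-invariant for every $k\ge 2$. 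This is (2).
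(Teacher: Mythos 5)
Your proposal is correct, and its backbone --- the comparison lemma identifying $\mX^{\W}$ with $\mX^{\W'}$ for adjacent chambers via common $N_a$-invariance, alignment of Birkhoff-generic points over $G/\Gamma$, and global contraction of a singular $e^a$ --- is exactly the paper's Lemma \ref{hard1} (the paper only needs global contraction with respect to \emph{one} of the two measures, but your stronger hypothesis is verified in every application, so nothing is lost). Where you genuinely diverge is in the combinatorial endgame. The paper picks an auxiliary chamber $\W'$ with $\text{Int}(\W')\subset\{\chi_\W<0\}$ and crosses a transverse path of adjacent chambers from $\W$ to $\W'$ inside that half-space (Propositions \ref{nicechamber} and \ref{simplecase2}), then treats the residual case $\chi_\W\propto\alpha$ by explicitly constructing the rank-one ideal $\lieh$ (Proposition \ref{semi-simplecase}). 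You never leave the star of the fixed chamber $\W$: expanding $\chi_\W=\sum_j d_j\alpha_j$, you use only the reflections $s_{\alpha_i}$ across the walls of $\W$, convert each identification $\mX^\W=\mX^{s_{\alpha_i}\W}$ into the orthogonality $(\chi_\W,\alpha_i)=0$, and the sign set $J=\{j:d_j<0\}$ then collapses to a singleton $\{i_0\}$ with $\chi_\W\propto\omega_{i_0}\propto\alpha_{i_0}$, i.e.\ $\alpha_{i_0}$ is an isolated node of the restricted Dynkin diagram. This yields a shorter and more transparent argument (no path transversality, no bracket computations with $[\lieg_{-\alpha},\lieg_{\alpha}]$), at the price of invoking two standard root-system facts: that a fundamental weight is a nonnegative combination of simple roots, and that the restricted root system of a simple factor is irreducible, so its diagram is connected --- the latter being precisely what the paper's ideal construction re-proves by hand, so the two routes are ultimately equivalent in content. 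Two caveats are worth recording: the comparison lemma is left as a sketch (you correctly identify it as the crux, and your outline matches the paper's proof of Lemma \ref{hard1}); and your chosen $a$ near a fundamental coweight may lie in several walls simultaneously, which is harmless since the lemma only requires $a\in\W\cap s_{\alpha_i}\W$, but deserves a word or a small perturbation into the relative interior of the wall $\W(\alpha_i)$.
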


Observe that Theorem \ref{simple1} implies our main result (Theorem \ref{t: no orderable lattice 2}) in the case where $G$ is simple and has finite center. We proceed to give an outline of the proof of Theorem \ref{simple1}:

For any choice of Weyl chamber $\mathcal{W}$ and its corresponding minimal parabolic subgroup $P_{\mathcal{W}}$,  we have a unique $\PW$-invariant probability measure $\mX^{\W}$ satisfying $\mX^{\W} = \int_K k_{*}\mX^{\W} d \kappa_{K}$. This gives a finite set of measures on $X$ (depending on a choice of simple roots of the root system of $\lieg$) and we will show that some of these probability measures coincide, enabling us to deduce that $\mX^{\W}$ is invariant by other parabolic subgroups and having invariance by enough of them will guarantee that $\mX^{\W}$  is  $G$-invariant and that $\mX^{\W} = \mX$.

If we let $\W_1, \W_2$ be two Weyl chambers and its corresponding minimal parabolic subgroups $P_{\W_1}, P_{\W_2}$, the probability measures $\mX^{\W_i}$ are related via elements of the Weyl group in a very simple way: if $k \in K$ is an element in the Weyl group (an element in $\mathcal{N}_K(\liea)/\mathcal{Z}_K(\liea)$ as discussed \ref{weylgroup}) such that $kP_{\W_1}k^{-1} = P_{\W_2}$, then $k_{*}\mX^{\W_1} = \mX^{\W_2}$. Observe also that all such measures on $X$ are $A$-invariant. 

We will show that if $\W_1$ and $\W_2$ are adjacent Weyl chambers, then $\mX^{\W_1} = \mX^{\W_2}$, provided there exists a singular element $a \in A$ in $\W_1\cap\W_2$ which has the global contraction property with respect to at least one of the measures $\mX^{\W_1}, \mX^{\W_2}$, this is the fundamental lemma which we will prove in the following section. Using this lemma and moving across Weyl Chambers we will show that enough of these measures coincide, obtaining Theorem \ref{simple1}.

\subsection{The fundamental lemma: equality of \(P_\mathcal W\)-measures corresponding to adjacent Weyl chambers}

Recall that a Weyl chamber $\W$ corresponds to a choice $\Pi$ of simple roots of the root system of $\lieg$ and that two Weyl chambers are said to be adjacent if the choices of root system $\Pi_1, \Pi_2$ differ only in one element in the root system. More precisely there exists a root $\beta$ such that $\Pi_2 = (\Pi_1 \setminus {\beta}) \cup {-\beta}$ and if we let $k$ be the element of the Weyl group (as an element of $K$) corresponding to reflection in the kernel of $\beta$, we have $k\W_1k^{-1} = \W_2$ and $kP_{\W_1}k^{-1} = P_{\W_2}$.

\begin{lemma}\label{hard1} Let $\W_1, \W_2$ be two adjacent Weyl chambers, and suppose there exists a non-zero $a \in \W_1 \cap \W_2$, such that $\exp a$ has the global contraction property with respect to $\mX^{\W_1}$, then $\mX^{\W_1} = \mX^{\W_2}$.
\end{lemma}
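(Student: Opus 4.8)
The plan is to exploit that $a\in\W_1\cap\W_2$ is \emph{singular} (it lies on the wall $\ker\beta$ separating the two chambers), so its unstable horospherical subgroup $N_a$ is strictly smaller than $N$ and both $P_{\W_1}$ and $P_{\W_2}$ contain $N_a$. Since $\mX^{\W_1}$ is $P_{\W_1}$-invariant and $\mX^{\W_2}$ is $P_{\W_2}$-invariant, both measures are $N_a$-invariant and $e^a$-invariant, and projecting to $G/\Gamma$ both push forward to the Haar measure. The idea, as outlined in \ref{sss: finding invariant measure}, is to produce a Birkhoff-generic point $x$ for $\mX^{\W_1}$ and a Birkhoff-generic point $x'$ for $\mX^{\W_2}$ (with respect to the transformation $e^a$, or rather one of its ergodic components, using Proposition \ref{lyapunovergodiccomp} so that the relevant ergodic averages are unambiguous) such that $\lim_{t\to\infty} d_X(e^{ta}x, e^{ta}x') = 0$. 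Then for any $f\in C(X)$, comparing the Birkhoff averages of $f$ along $e^{ta}x$ and $e^{ta}x'$ forces $\int f\,d\mX^{\W_1} = \int f\,d\mX^{\W_2}$, hence $\mX^{\W_1}=\mX^{\W_2}$.

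\textbf{Constructing the matched pair of generic points.} First I would pick a density point $p\in G/\Gamma$ which is generic for the $e^a$-action on $(G/\Gamma,\HaarG)$ and lift it to generic points $\bar x\in X$ for $\mX^{\W_1}$ and $\bar x'\in X$ for $\mX^{\W_2}$ lying over the same $p$ (possible since both measures project to Haar measure, and by Fubini a.e. fiber contains generic points of both); genericity should be taken in the sense that Birkhoff averages converge to the conditional expectation along the $e^a$-ergodic component, which by Proposition \ref{lyapunovergodiccomp} has the same value of $\chi_{\W}(a)$ — here I will also want $x,x'$ to be in $\mathcal C_X$. Because $\bar x$ and $\bar x'$ lie in the same $\pi$-fiber, they differ by an element of $Z$; I then use the $N_a$-invariance of both measures to "modify the future": replace $\bar x$ by $n\bar x$ for a suitable $n\in N_a$ (still $\mX^{\W_1}$-generic for a.e. $n$) so that $\pi(n\bar x)$ and $\pi(\bar x')$ lie in the same local stable manifold for $e^a$ inside $G/\Gamma$, i.e. in the same $L_a$-orbit piece; since $e^{ta}$ contracts $L_a$, this gives $d_{G/\Gamma}(e^{ta}\pi(n\bar x), e^{ta}\pi(\bar x'))\to 0$. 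This is where the singularity of $a$ is essential: only for singular $a$ is $N_a$ a proper subgroup leaving enough room (the $L_aC_a$ directions) to perform this adjustment while staying generic for the $N_a$-invariant measure — and only then is the relevant measure actually $N_a$-invariant by the Mautner phenomenon.

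\textbf{Upgrading to contraction in $X$.} Once the base points are synchronized, I lift to $X$: $e^{ta}\pi(x)$ and $e^{ta}\pi(x')$ converge, so (after possibly composing with a small element of $G$ that shrinks to the identity, or by the local product structure of $X$ over $G/\Gamma$) $e^{ta}x$ and $e^{ta}x'$ land eventually in a common flow box and in nearby $\mathcal L_X$-leaves; moreover along the leaf the points are at bounded $d_L$-distance. Now invoke the global contraction property of $e^a$ with respect to $\mX^{\W_1}$ (Lemma \ref{hard1}'s hypothesis, valid for a.e. generic $x$): $d_{e^{ta}L_x}(e^{ta}x, e^{ta}y)\to 0$ for any $y$ in $L_x$, which lets us collapse the leafwise discrepancy between the orbit of $x$ and the orbit of the leafwise-nearest point to $x'$. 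Combined with the transversal convergence this yields $d_X(e^{ta}x, e^{ta}x')\to 0$. The main obstacle I anticipate is precisely this last bookkeeping step: carefully matching up the transversal (base) convergence with the leafwise (global contraction) convergence, and ensuring that the point we apply global contraction to is simultaneously Birkhoff-generic for $\mX^{\W_1}$ and leafwise close to the $\mX^{\W_2}$-generic orbit — this requires choosing the $N_a$-perturbation and the generic points in the right order and quantifying the local product structure of $X\to G/\Gamma$ uniformly on the recurrent compact set. Once $d_X(e^{ta}x,e^{ta}x')\to 0$ is established, the conclusion $\mX^{\W_1}=\mX^{\W_2}$ is immediate from the Birkhoff ergodic theorem applied to continuous test functions.
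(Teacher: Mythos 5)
Your overall strategy is the right one and matches the paper's outline: both measures are $N_a$- and $e^a$-invariant because $a$ is singular, one picks Birkhoff-generic points, modifies the future with an $N_a$-perturbation so the base orbits in $G/\Gamma$ synchronize up to a small $L_aC_a$-discrepancy, and combines this with global leafwise contraction to match Birkhoff averages. But there is a genuine gap at the step you yourself flag as the "main obstacle," and it is not mere bookkeeping. The global contraction property only controls the distance $d_{L}(e^{ta}x,e^{ta}y)$ for $x,y$ \emph{in the same leaf} of $\mathcal L_X$. Your two generic points $\bar x,\bar x'$ are chosen independently in the same fiber $\pi^{-1}(p)\simeq \Der$, but that fiber is a laminated space with (in general uncountably) many leaves, and two points in different leaves of $G\mathcal L_X$ have a transversal separation that nothing in your argument contracts: the $G$-action preserves the lamination $G\mathcal L_X$, so $e^{ta}x$ and $e^{ta}x'$ stay in their respective leaves forever, and base convergence plus leafwise contraction gives no control on $d_X(e^{ta}x,e^{ta}x')$. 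Your phrase "along the leaf the points are at bounded $d_L$-distance" is not meaningful for points in different leaves, and "nearby $\mathcal L_X$-leaves" is not something the hypothesis lets you exploit.

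The missing ingredient is the Weyl group element $k\in \mathcal N_K(\liea)/\mathcal Z_K(\liea)$ reflecting in the common wall: it satisfies $kP_{\W_1}k^{-1}=P_{\W_2}$, $ke^ak^{-1}=e^a$, and $k_*\mX^{\W_1}=\mX^{\W_2}$. This does two things your proposal does not. First, it identifies the $e^a$-ergodic components of the two measures pairwise ($\nu\leftrightarrow k_*\nu$), so the comparison is made component by component rather than for arbitrary lifts of a base-generic point. Second, and crucially, it lets one take the second generic point to be (a perturbation of) $kx_1$, which lies in the same $G$-orbit as $x_1$ and hence in the same leaf of $G\mathcal L_X$ — this is exactly condition (5) in the paper's proof, and it is what makes the application of global contraction legitimate. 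A secondary, fixable imprecision: you cannot in general place the two base points in the same $L_a$-orbit; matching the $N_a$-coordinates via the $LCN$-factorization leaves a small $C_a$-discrepancy that commutes with $e^{ta}$ and therefore persists, so one only gets $\limsup_t d_{G/\Gamma}\leq \e$ for each $\e$ (with points depending on $\e$), not convergence to $0$; your final $\e$-approximation of $\int f\,d\nu-\int f\,dk_*\nu$ accommodates this, but the same-leaf issue does not go away without introducing $k$.
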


\begin{proof}

Let $k \in \mathcal{N}_K(\liea)/\mathcal{Z}_K(\liea)$ be the element in the Weyl group corresponding to the reflection on the common wall between $\W_1, \W_2$. Observe that as $a \in \W_1\cap\W_2$, then $k\exp(a)k^{-1} = \exp(a)$. We have that $k_{*}\mX^{\W_1} = \mX^{\W_2}$,  and there is a correspondence between the ergodic decomposition of $\mX^{\W_1}$ and $\mX^{\W_2}$, moreover if we let $\nu$ be an $\exp(a)$-ergodic component of $\mX^{\W_1}$, $k_{*}\nu$ is an $\exp(a)$-ergodic component of $\mX^{\W_2}$. So to finish the proof the lemma of \ref{hard1}, we only need to show the following:

\begin{proposition} For a.e. $\exp(a)$-ergodic component $\nu$ of $\mX^{\W_1}$, we have $\nu = k_{*}\nu$

\end{proposition}

Before proving the previous proposition, we will need some basic facts:

\begin{proposition}\label{SRB3}
If  $\nu$ is a probability measure in $X$ which is invariant by $\exp(a)$ and $N_a$, then for $\nu$ a.e. $x \in X$ we have: 
For Haar a.e. $n \in N_a$, the point $nx$ satisfy Birkhoff's ergodic Theorem for $\exp(a)$ and any function $f \in L^1(X)$. 
\end{proposition}

\begin{proof}
The proof is similar to the proof of Propositions  \ref{asdasd} and  \ref{contractionamg}. One takes a measurable partition of $X$ subordinate to the $N_a$-orbits and use the fact that conditional measures with respect to this measurable partition come from the Haar measure $N_a$.
\end{proof}

Recall that for an element $a \in \liea$, $N_a, L_a$ denote the horospherical subgroups and $C_a$ denotes the centralizer of $\liea$. Also, for a closed connected subgroup $H \subset G $ and $\e>0$,  $H^{\e}$ denotes the set of $h \in H$ with $d_G(h, 1_G) < \e$.

\begin{proposition}\label{smoothlu} There exists $\e> 0$ such that for any $a \in A$,  if  $d_G(g, 1_G) < \e$, there are unique smooth functions $\mathcal{N}^g: N_a^{\e} \to N_a$, $\mathcal{L}^g: N_a^{\e} \to L_a$ and $\mathcal{C}^g: N_a^{\e} \to C_a$ that satisfy $pg = \mathcal{L}^g(p)\mathcal{C}^g(p)\mathcal{N}^g(p)$ for every $p \in G$ and $\mathcal{N}_g$ is a diffeomorphism onto its image. Moreover $ \mathcal{L}^g,  \mathcal{C}^g,  \mathcal{N}^g$ depend smoothly on $g$.

\end{proposition}

\begin{proof}

Recall that by Proposition \ref{lufactor} the function $\mathcal{F}: L_a \times C_a \times N_a \to G$ given by $\mathcal{F}(l,c,n) = lcn$ is a diffeomorphism from a neighborhood of the identity in $L_a \times C_a \times N_a$ onto a neighborhood of $G$. Therefore we can define for $g$ fixed $$(\mathcal{L}^g(p),  \mathcal{C}^g(p),  \mathcal{N}^g(p)) := \mathcal{F}^{-1}(pg),$$ this is well defined for $p, g$ close to the identity $1_G$.

Clearly $ \mathcal{L}^g,  \mathcal{C}^g,  \mathcal{N}^g$ depend smoothly on $g$ and moreover if $g = 1_G$, the identity of $G$, then $\mathcal{N}^{1_G}$ is the identity map and $\mathcal{L}^{1_G}, \mathcal{C}^{1_G}$ are the constant function $1_{G}$ and so $\mathcal{N}_g$ is a diffeomorphism onto its image when $g$ is close to $1_G$.

\end{proof}

We can now begin the proof of our key Proposition \ref{hard1}:

\begin{proof}

Let $f$ be a continuous function on $X$. We will show that for any $\e > 0 $, we have $$ \left \lvert  \int_X f d\nu - \int_X f d k_{*}\nu \right \rvert\leq \e$$ and this is enough to show that $\nu = k_{*}\nu$.

Let $\e_2>0$ to be determined later. If $\pi: X \to G/\Gamma$ denotes the natural projection, we have that $\pi_{*}(\nu) = \kappa_{G/\Gamma}$ for a.e. ergodic component $\nu$, this follows from Moore ergodicity Theorem as $\pi_{*}(\mX^{\W_1}) = \kappa_{G/\Gamma}$ and $\kappa_{G/\Gamma}$ is $\exp(a)$-ergodic. Therefore, we can find two points $x_1, x_2 \in X$  which satisfy Birkhoff's ergodic theorem for $\nu, k_{*}\nu$ respectively and such that if we let  $g_i = \pi(x_i)$, then:

\begin{enumerate}

\item For $i=1,2$, $d_{G/\Gamma}(g_i, 1_{G/\Gamma}) < \e_2$ so Proposition \ref{smoothlu} holds.
\item By Proposition \ref{smoothlu}, the subset $\mathcal{N}^{g_1}(N_a^{\e_1}) \cap \mathcal{N}^{g_2}(N_a^{\e_1})$ of $N_a$ contains an open $N_a$-neighborhood of $1_{G}$ for some $\e_1 >\e_2$ depending on $\e_2$ and with \(\lim_{\e_2 \to 0} \e_1 = 0\)).
\item If $\e_1>0$ is small enough, by Proposition \ref{smoothlu}, both $ \mathcal{L}^{g_1} (N_a^{\e_1}) $ and $\mathcal{C}^{g_1}(N_a^{\e_1})$ are contained in $L_a^{\e_{0}/4}$ and $C_a^{\e_{0}/4}$ respectively for some $\e_0 >\e_1$ depending on $\e_1$ and with \(\lim_{\e_1 \to 0} \e_0 = 0\)).
\item There exists $S \subset N_a^{\e_0}$ of full Haar measure, such that for $n \in S$, the points $nx_1, nx_2$ satisfy Birkhoff's ergodic Theorem for $\nu, k_{*}\nu$ respectively.
\item $x_1, x_2$ are in the same $G\mathcal L_X$ leaf of $X$. (This follows because $x_1, kx_1$ are generic points for $\nu$ and  $k_{*}\nu$)

\end{enumerate}

Therefore we can find $n_1, n_2 \in S$, such that $\mathcal{N}^{g_1}(n_1) = \mathcal{N}^{g_2}(n_2)$ by 2) and 4) and such that $d(\mathcal{C}^{g_1}(n_1), \mathcal{C}^{g_2}(n_2)) < \e_0/2$ by 3). We therefore have $n_1g_1 = lcn_2g_2$ for some $l \in L_a^{\e_0/2}$ and $c \in C_a^{\e_0/2}$ and as $d_{G/\Gamma}(1_G, \exp(ma)(lc)) < \e_0/2$ for $m$ sufficiently large, we have $$d_{G/\Gamma}(\exp(ma)(n_1g_1), \exp(ma)(n_2g_2)) < \e_0/2$$ for $m$ large enough.

Moreover, using the global contraction property with respect to $\nu$ and 5) we have that for $m$ sufficiently large $$d_X(\exp(ma)(n_1x_1), \exp(ma)(n_2x_2) ) < \e_0$$

Given $\e > 0$, by choosing $\e_0>0$ small enough and using the uniform continuity of $f$  we can guarantee that $|f(\exp(ma)(n_1x_1)) -  f(\exp(ma)(n_2x_2)) | < \e$ for $m$ sufficiently large, and in particular
$$ \limsup_{N \to \infty} \frac{1}{N} \left \lvert \sum_{m=1}^{N} f(\exp(ma) (n_1x_1)) -   \sum_{m=1}^{N} f(\exp(ma) (n_2x_2)) \right\rvert \leq \epsilon$$
As $n_1x_1$, $n_2x_2$ satisfy  Birkhoff's ergodic Theorem for $\nu, k_{*}\nu$ by (4) we conclude $|\int_X f d \nu - \int_X f d k_{*}\nu | \leq \e$ and we are done.

%%More explanation??22
\end{proof}
\end{proof}

\subsection{Proof of the theorem in the case where $G$ is simple}

We begin the proof of Theorem \ref{simple1}. The following proposition summarizes some of the properties we have proved in previous sections.

\begin{proposition}\label{properties1} For any choice of Weyl Chamber $\W$ (A choice of simple roots $\Pi$ in the root system $\Sigma$ of $\lieg$) and its corresponding minimal parabolic group $\PW$, there exists a unique $\PW$-invariant probability measure $\mX^{\W}$ in $X$ such that $\mX = \int_K k_{*}\mX^{\PW} d\kappa_K$ and a unique linear functional $\chi_{\W}: \liea \to \R$ satisfying the following:

\begin{enumerate}

\item If $b \in \liea$ satisfy $\chi_{\W}(b) < 0$ and $N_{b} \subset \PW$, then $b$ has the global contraction property with respect to $\mX^{\W}$.
\item\label{nontrivial} There exists $a \in \liea$ which lies in the interior of the Weyl chamber $\W$ such that $\chi_{\W}(a) < 0$.
\item For any two choices of Weyl chamber $\W_1, \W_2$, there exists  $k \in \mathcal{N}_K(\liea)/\mathcal{Z}_K(\liea)$ such that $kP_{\W_1}k^{-1} = P_{\W_2}$ and  $k_{*}\mX^{\W_1} = \mX^{\W_2}$ and for all $a \in A$, $\chi_{\W_2}(a) = \chi_{\W_1}(kak^{-1})$.

\end{enumerate}

\end{proposition}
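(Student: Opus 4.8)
Since Proposition~\ref{properties1} merely recollects facts established in the previous sections, the plan is to assemble them. Fix a Weyl chamber $\W$ with minimal parabolic $\PW$. The existence and uniqueness of the $\PW$-invariant probability measure $\mX^{\W}$ satisfying $\mX=\int_K k_*\mX^{\W}\,\HaarK(dk)$ is exactly Lemma~\ref{l: PW invariant measure}, which also gives that the set $\mathcal C_X$ on which the additive Lyapunov cocycle $c$ of Definition~\ref{d: Lyapunov cocycle} is defined has full $\mX^{\W}$-measure. One then defines $\chi_{\W}(a):=\int_X c(e^a,x)\,\mX^{\W}(dx)$, as in \S\ref{s: lyapunovA}; the integral is finite by the Harnack bound in item~4 of Proposition~\ref{p: properties of suspension}. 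Linearity of $\chi_{\W}$ on $\liea$ is the usual consequence of the cocycle relation \eqref{eq: cocycle relation} — in additive form $c(g'g,x)=c(g',gx)+c(g,x)$ — together with the $A$-invariance of $\mX^{\W}$: writing $c(e^{a+b},x)=c(e^a,e^bx)+c(e^b,x)$ and integrating, the change of variables $x\mapsto e^bx$ uses only $e^b_*\mX^{\W}=\mX^{\W}$ to produce additivity, and the Lipschitz bound on $c(\cdot,x)$ upgrades additivity to $\R$-linearity. Uniqueness of $\chi_{\W}$ is automatic, since it is given by an explicit formula in $\mX^{\W}$.

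Item~(1) is then Corollary~\ref{c: a half-space of global contraction}: $\chi_{\W}$ is a nonzero element of $\liea^*$, and every $b$ in the open half-space $\{\chi_{\W}<0\}$ — in particular every $b$ with $\chi_{\W}(b)<0$ and $N_b\subset\PW$ — has the global contraction property with respect to $\mX^{\W}$. I would recall in passing that this corollary is obtained by combining the local contraction property of such $b$ (Lemma~\ref{localcontra}, whose proof via Proposition~\ref{lyapunovergodiccomp} invokes the Mautner phenomenon and is exactly where the compatibility $N_b\subset\PW$ is used when $b$ lies on a wall of $\W$), the global contraction of the central direction $\central^{\W}$ (Proposition~\ref{contractionamg}), and the propagation Proposition~\ref{localtoglobalcontraction}. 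For item~(2), take $a=\central^{\W}$: Theorem~\ref{centraldirection} places it in the interior of $\W$, and Lemma~\ref{relationexponents} gives $\chi_{\W}(\central^{\W})=\chi_{\mG}<0$, using that $\mX$ is not $G$-invariant — the standing fact under the left-orderability hypothesis on $\Gamma$ recorded in \S\ref{expocontrawalk}.

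For item~(3), the Weyl group $\mathcal N_K(\liea)/\mathcal Z_K(\liea)\simeq W(\Sigma)$ acts transitively on Weyl chambers (\S\ref{weylgroup}), so given $\W_1,\W_2$ one can pick $k\in\mathcal N_K(\liea)/\mathcal Z_K(\liea)$ with $kP_{\W_1}k^{-1}=P_{\W_2}$. Then $k_*\mX^{\W_1}$ is $P_{\W_2}$-invariant, and by right-invariance of $\HaarK$ its $K$-average is $\int_K(jk)_*\mX^{\W_1}\,\HaarK(dj)=\int_K j_*\mX^{\W_1}\,\HaarK(dj)=\mX$; uniqueness in Lemma~\ref{l: PW invariant measure} forces $k_*\mX^{\W_1}=\mX^{\W_2}$. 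For the functionals, I would compute $\chi_{\W_2}(a)=\int c(e^a,x)\,d(k_*\mX^{\W_1})=\int c(e^a,kx)\,d\mX^{\W_1}$ and then exploit the identity $e^ak=k\,e^{\mathrm{Ad}(k^{-1})a}$ — legitimate because $k$ normalizes $A$, so $\mathrm{Ad}(k^{-1})a\in\liea$ and $e^{\mathrm{Ad}(k^{-1})a}\in A$. Expanding the additive cocycle relation on both sides of this identity and integrating against the $A$-invariant measure $\mX^{\W_1}$, the two terms involving $c(k,\cdot)$ cancel (here one uses $e^{\mathrm{Ad}(k^{-1})a}_*\mX^{\W_1}=\mX^{\W_1}$), leaving $\chi_{\W_2}(a)=\chi_{\W_1}(\mathrm{Ad}(k^{-1})a)$, which is the asserted relation up to replacing $k$ by its inverse.

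As every ingredient has already been proved, there is no real obstacle; the points requiring attention are purely bookkeeping: in item~(1), keeping track of which parabolic a given (possibly singular) $a$ is compatible with — this is the role of the clause $N_b\subset\PW$ — and, in item~(3), making sure that $\mathrm{Ad}(k^{-1})a$ again lies in $\liea$ so that the $A$-invariance of $\mX^{\W_1}$ can be applied to the change of variables.
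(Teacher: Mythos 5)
Your assembly is correct and matches the paper's intent exactly: the paper gives no separate proof of Proposition \ref{properties1}, stating only that it "summarizes some of the properties we have proved in previous sections," and the references you cite (Lemma \ref{l: PW invariant measure}, Corollary \ref{c: a half-space of global contraction}, Lemma \ref{relationexponents} with Theorem \ref{centraldirection}, and the Weyl-group/uniqueness argument for item (3)) are precisely the intended ingredients. The only caveat is the cosmetic $k$ versus $k^{-1}$ convention in item (3), which you already flag and which is harmless.
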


%\begin{proof}

%Item 1) follows from the discussion in \ref{ss: Gamma invariant foliation} and Lemma \ref{l: lifting on YL}. The linear functional $\chi_P$ is the Lyapunov functional for the $A$-action defined in \ref{s: lyapunovA}. Item 4) follows from Proposition \ref{contractionamg}. Item 2) follows from the local contraction property proved in \ref{l: local contraction for A} in the case when $\chi_P(b) < 0$, together with the existence of an element satisfying the global contraction property by  \ref{p: global contraction for central direction} and the fact that local contraction implies  global contraction by Proposition \ref{localtoglobalcontraction}. Item 3) follows easily from the uniqueness of the measure  $\widetilde{m_Y}^{P}$, which follows from 1). The uniqueness of $\chi_P$ follows because for any $a \in A$,  $a$ and $a^{-1}$ can't have the global contraction property (this easy to see using that $a$ is $\widetilde{m_Y}^{P}$ recurrent).

%\end{proof}

The walls of a Weyl chamber are defined in the following way: A Weyl chamber is in correspondance with a choice of simple roots $\Pi$ of the root system $\Sigma$ of $\lieg$. For each $\lambda \in \Pi$, there is a corresponding wall defined as $$ \W(\lambda) := \ker (\lambda) \cap \W$$

We will need the following fact:

\begin{proposition} For every $\lambda \in \Pi$, $\W(\lambda)$ has non-empty interior in $\ker \lambda$. 
\end{proposition}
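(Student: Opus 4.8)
The plan is to produce an explicit point in the relative interior of $\W(\lambda)$. Recall from \ref{weylgroup} that $\W = \{a \in \liea : \mu(a) \geq 0 \text{ for every } \mu \in \Pi\}$, so that
\[ \W(\lambda) = \ker(\lambda) \cap \W = \{ a \in \ker\lambda : \mu(a) \geq 0 \text{ for every } \mu \in \Pi\setminus\{\lambda\}\}, \]
the constraint coming from $\lambda$ itself being vacuous on $\ker\lambda$. Thus it suffices to find $a_0 \in \ker\lambda$ with $\mu(a_0) > 0$ for every $\mu \in \Pi \setminus \{\lambda\}$: by continuity of the finitely many functionals $\mu$, the set $\{b \in \ker\lambda : \mu(b) > 0 \text{ for all } \mu \in \Pi\setminus\{\lambda\}\}$ is then an open neighbourhood of $a_0$ in $\ker\lambda$ that is contained in $\W(\lambda)$.

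First I would use that $\Pi$ is, by definition of a base, a basis of the dual space $\liea^*$, and introduce the basis $\{v_\mu\}_{\mu\in\Pi} \subset \liea$ dual to it, i.e. $\nu(v_\mu) = \delta_{\nu\mu}$ for $\nu,\mu \in \Pi$. Then I would set $a_0 := \sum_{\mu \in \Pi\setminus\{\lambda\}} v_\mu$. A direct computation gives $\lambda(a_0) = \sum_{\mu\neq\lambda}\lambda(v_\mu) = 0$, hence $a_0 \in \ker\lambda$, while $\nu(a_0) = \sum_{\mu\neq\lambda}\nu(v_\mu) = 1 > 0$ for every $\nu \in \Pi\setminus\{\lambda\}$. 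This is exactly the point required above, so $a_0$ lies in the interior of $\W(\lambda)$ relative to $\ker\lambda$, and the claim follows.

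There is essentially no obstacle here; the only points to be careful about are that "interior" must be understood relative to the hyperplane $\ker\lambda$ (as a subset of $\liea$ the wall $\W(\lambda)$ has empty interior), and that once one restricts to $\ker\lambda$ the defining inequality of $\W$ indexed by $\lambda$ disappears, leaving a cone cut out by $\dim\ker\lambda$ functionals whose restrictions to $\ker\lambda$ are linearly independent — which is another way to see that its relative interior is non-empty.
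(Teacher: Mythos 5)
Your proof is correct. The paper states this proposition without proof (it is a standard fact about bases of root systems), and your argument — exhibiting the explicit point $a_0 = \sum_{\mu \in \Pi\setminus\{\lambda\}} v_\mu$ built from the basis of $\liea$ dual to $\Pi$, observing that the inequality indexed by $\lambda$ is vacuous on $\ker\lambda$, and using openness of the finitely many strict inequalities — is exactly the standard verification the paper implicitly relies on.
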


We will denote the interior of a subset $C$ of a topological space by $\text{Int}(C)$. We continue with the proof of Theorem \ref{simple1}. Let fix a choice of a Weyl chamber $\W$ of $\lieg$ and $\Pi$ be its corresponding choice of simple roots.

\begin{proposition}\label{simplecase1} Suppose that $\ker(\chi_{\W}) \cap \text{Int} (\W) = \emptyset$ and for every $\lambda \in \Pi$, the intersection $\ker (\chi_\W) \cap \text{Int}(\W(\lambda))= \emptyset$,  then $\mX^{\W}$ is $G$-invariant. 

\end{proposition}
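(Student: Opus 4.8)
The strategy is to propagate the $P_{\W}$-invariance of $\mX^{\W}$ to the minimal parabolic subgroups attached to the chambers adjacent to $\W$, by repeated use of the fundamental Lemma \ref{hard1}, and then to observe that these parabolic subgroups generate $G$. The first step is to pin down the sign of the Lyapunov functional $\chi_{\W}$. By Corollary \ref{c: a half-space of global contraction} it is a nonzero element of $\liea^*$, so $\ker(\chi_{\W})$ is a hyperplane of $\liea$. Since $\text{Int}(\W)$ is a connected open set disjoint from $\ker(\chi_{\W})$ by hypothesis, it lies entirely in one of the two open half-spaces $\{\chi_{\W}>0\}$ or $\{\chi_{\W}<0\}$; by item (\ref{nontrivial}) of Proposition \ref{properties1} it must be the latter, so $\chi_{\W}<0$ on $\text{Int}(\W)$. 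As $\W$ is a full-dimensional closed convex cone, $\W=\overline{\text{Int}(\W)}$, hence by continuity $\chi_{\W}\le 0$ on all of $\W$, and in particular on each wall $\W(\lambda)$, $\lambda\in\Pi$. Combining this with the hypothesis $\ker(\chi_{\W})\cap\text{Int}(\W(\lambda))=\emptyset$ forces $\chi_{\W}<0$ on $\text{Int}(\W(\lambda))$ for every $\lambda\in\Pi$; since $\dim\liea\ge 2$ this nonempty relatively open cone in the hyperplane $\ker\lambda$ contains nonzero vectors.

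Next, fix $\lambda\in\Pi$ and let $\W_\lambda$ be the Weyl chamber obtained from $\W$ by the simple reflection across $\ker\lambda$; it is adjacent to $\W$, and the common wall $\W\cap\W_\lambda$ equals $\{a\in\W:\lambda(a)=0\}$, which contains $\text{Int}(\W(\lambda))$. Choose any nonzero $a_\lambda\in\text{Int}(\W(\lambda))$. Then $\chi_{\W}(a_\lambda)<0$, so by Corollary \ref{c: a half-space of global contraction} the element $\exp a_\lambda$ has the global contraction property with respect to $\mX^{\W}$. Lemma \ref{hard1}, applied to the adjacent chambers $\W,\W_\lambda$ and the element $a_\lambda$, then yields $\mX^{\W}=\mX^{\W_\lambda}$. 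Since $\mX^{\W_\lambda}$ is $P_{\W_\lambda}$-invariant by Lemma \ref{l: PW invariant measure}, we conclude that $\mX^{\W}$ is invariant under $P_{\W_\lambda}$ for every $\lambda\in\Pi$, as well as under $P_{\W}$.

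Finally I would invoke the classical fact that the subgroups $P_{\W}$ and $P_{\W_\lambda}$, $\lambda\in\Pi$, generate $G$: together they contain $A$, the full unipotent subgroup $N$ (all positive restricted root groups), and the negative simple root groups $\exp(\lieg_{-\lambda})$, whose iterated Lie brackets produce every negative root group; hence the Lie subalgebra of $\lieg$ they generate is all of $\lieg$, and $G$ being connected, they generate $G$. Therefore $\mX^{\W}$ is $G$-invariant, as claimed. The substantial input here is entirely Lemma \ref{hard1}, which is already proved; the rest is the convex geometry of the Weyl chambers, the delicate point being only to check that the two families of sign hypotheses on $\chi_{\W}$ — the one on $\text{Int}(\W)$ together with item (\ref{nontrivial}), and those on the walls $\W(\lambda)$ — combine to furnish, on each wall touching $\W$, an element with the global contraction property needed to feed the fundamental lemma.
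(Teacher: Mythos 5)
Your proposal is correct and follows essentially the same route as the paper: establish that $\chi_{\W}<0$ on the interior of each wall, apply Lemma~\ref{hard1} with an element in the interior of each wall to get $\mX^{\W}=\mX^{\W_\lambda}$ for every simple root $\lambda$, and finally observe that the resulting parabolic invariances generate $G$-invariance. You spell out the sign argument (connectedness of $\text{Int}(\W)$, continuity, then the wall hypothesis) more carefully than the paper does, and you invoke Corollary~\ref{c: a half-space of global contraction} directly rather than item~1 of Proposition~\ref{properties1} — a slightly cleaner choice since it avoids having to check the containment $N_{a_\lambda}\subset P_\W$ — but these are presentational differences, not a different argument.
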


\begin{proof}

By Proposition \ref{properties1}, there exists $a \in \W$ such that $\chi_{\W}(a) < 0$, from our hypothesis this must be true also for every $a$ in the interior of any wall of the Weyl chamber. Let $\beta \in \Pi$ and choose $a \in \text{Int} (W_{\beta})$. Observe that $N_a \subset  P_\W$, moreover the lie algebra of $N_a$ is given by $\lien_a  = \oplus_{\lambda} \lieg_{\lambda}$ where $\lambda$ varies over all roots which are  positive sum of the roots in $\Pi \setminus \{ \beta \} $.\\

Let $k_{\beta} \in \mathcal{N}_K(\liea)/\mathcal{Z}_K(\liea)$ corresponding to the reflection in the hyperplane $\ker (\beta)$ and let $\W_\beta$ be the Weyl chamber adjacent to $\W$ with common wall $\ker{\beta}$. We have that  $P_{\W_{\beta}} =k_{\beta}\PW k_{\beta}^{-1} $. Observe that $N_a = k_{\beta} N_a k_{\beta} ^{-1}$ (this can be seen from the description of the Lie algebra of $N_a$) and also that $k_{\beta} \exp(a) k_{\beta}^{-1} = \exp(a)$.\\

Therefore  $\mX^{\W}$ and $\mX^{\W_{\beta}}$ are $N_a$-invariant probability measures on $X$ and $\exp(a)$ satisfy the global contraction property with respect to both measures. By Lemma \ref{hard1},   $\mX^{\W} = \mX^{\W_{\beta}}$. This in particular implies that if we let $U^{\beta}$ be the unipotent subgroup of $G$ whose lie algebra correspond to the lie subalgebra $\lieg_{\beta} \oplus  \lieg_{2\beta}$, then $\mX^{\PW}$ is both $U^{\beta}$ and $U^{-\beta}$ invariant as $U^{\beta} \subset \PW$ and $U^{-\beta} \subset P_{\W_{\beta}}$ and this is true for every $\beta \in \Pi$.\\

It is easy to see that $G$ is generated by the subgroups $U^{\beta}$ and $U^{-\beta}$ where $\beta$ varies over all $\beta \in \Pi$, this follows from the fact that roots are positive or negative sums of $\Pi$ and $[\lieg_{\alpha}, \lieg_{\gamma}] \subset \lieg_{\alpha + \gamma}$, see Theorem \ref{Knapp1}. Therefore $\mX^{\W}$ is $G$-invariant.

\end{proof}

We will need the following basic fact about Weyl chambers:

\begin{proposition}\label{nicechamber} For $\chi \neq 0 \in \liea^{*}$, there exists a Weyl chamber $\W'$ satisfy $$\text{Int}(\W') \subset \{a \in \liea\ |\ \chi(a) < 0\}.$$
\end{proposition}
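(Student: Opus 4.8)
Identify $\liea$ with $\liea^*$ via the Killing form (recall that $\Sigma$ spans $\liea^*$, since no nonzero element of $\liea$ is killed by all restricted roots), writing $a_\chi\in\liea$ for the element dual to $\chi$ and $\langle\cdot,\cdot\rangle$ for the induced inner product, so that $\chi(a)=\langle a_\chi,a\rangle$. We must produce a Weyl chamber $\W'$ with $\langle a_\chi,a\rangle<0$ for every $a\in\operatorname{Int}(\W')$. The proof rests on two standard facts about the root system $\Sigma$: (a) the closed Weyl chambers $\overline{\W}$ cover $\liea$, their interiors being the connected components of the complement of $\bigcup_{\alpha\in\Sigma}\ker\alpha$, and $W(\Sigma)$ permutes them transitively while acting by isometries (see \ref{weylgroup} and \cite[Ch.~II]{Knapp}); and (b) each closed Weyl chamber is an \emph{acute} cone, contained in its own dual cone. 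For (b): if $\Pi=\{\lambda_1,\dots,\lambda_\ell\}$ are the simple roots defining $\overline{\W}$ and $\{\varpi_1,\dots,\varpi_\ell\}\subset\liea$ is the dual basis (fundamental coweights, $\langle\lambda_i,\varpi_j\rangle=\delta_{ij}$), then $\overline{\W}=\sum_i\R_{\geq 0}\,\varpi_i$, and each $\varpi_i$ is a \emph{nonnegative} linear combination of the $\lambda_j$ because the inverse Cartan matrix has nonnegative entries — strictly positive on each irreducible block and zero across distinct blocks, which covers the case of a reducible $\Sigma$ coming from a product $G$. Consequently $\langle u,v\rangle\geq 0$ for all $u,v\in\overline{\W}$, i.e. $\overline{\W}\subseteq\overline{\W}^\vee:=\{w\in\liea:\langle w,a\rangle\geq 0\ \text{for all}\ a\in\overline{\W}\}$.

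With (a) and (b) in hand, I claim the \emph{dual} chambers also cover $\liea$: given any $v\in\liea$, fix the dominant chamber $\overline{\W_0}$; by (a) there is $w\in W(\Sigma)$ with $wv\in\overline{\W_0}$, and then (b) gives $\langle wv,a\rangle\geq 0$ for all $a\in\overline{\W_0}$, i.e. $wv\in\overline{\W_0}^\vee$, i.e. $v\in w^{-1}\big(\overline{\W_0}^\vee\big)=\overline{(w^{-1}\W_0)}^{\,\vee}$ (using that $w^{-1}$ is an isometry). Since $w^{-1}\W_0$ is a Weyl chamber, every $v\in\liea$ lies in some closed dual chamber.

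Applying this to $v=-a_\chi$, we obtain a Weyl chamber $\W'$ with $-a_\chi\in\overline{\W'}^{\,\vee}$, that is $\chi(a)=\langle a_\chi,a\rangle\leq 0$ for all $a\in\overline{\W'}$. To upgrade this to a strict inequality on the interior, suppose $\chi(a_0)=0$ for some $a_0\in\operatorname{Int}(\W')$. Then $\chi\leq 0$ on a neighborhood of $a_0$ in $\W'$ with $\chi(a_0)=0$, so $a_0$ is a local maximum of the linear functional $\chi$, forcing $\chi\equiv 0$ and contradicting $\chi\neq 0$. Hence $\chi(a)<0$ for every $a\in\operatorname{Int}(\W')$, which is the assertion. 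The only non-soft ingredient is fact (b) — the acuteness of Weyl chambers, equivalently the nonnegativity of the inverse Cartan matrix — and the mild bookkeeping required when $\Sigma$ is reducible; everything else is formal.
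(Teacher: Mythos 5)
Your proof is correct. Note that the paper offers no argument for Proposition \ref{nicechamber} at all — it is asserted as a "basic fact about Weyl chambers" — so there is nothing to compare against; your write-up simply supplies the omitted proof. The two ingredients you isolate are exactly the right ones: (a) the closed chambers tile $\liea$ and $W(\Sigma)$ permutes them transitively by isometries, and (b) each closed chamber is self-dual-acute, $\langle u,v\rangle\geq 0$ for $u,v\in\overline{\W}$, which as you say reduces to the nonnegativity of the inverse of the Gram matrix of simple roots (an M-matrix), valid also in the reducible case block by block. Locating $-a_\chi$ in some $\overline{\W'}$ via (a) and then invoking (b) gives $\chi\leq 0$ on $\overline{\W'}$, and your local-maximum argument correctly upgrades this to strict negativity on $\text{Int}(\W')$ since a linear functional vanishing at an interior point of an open set on which it is nonpositive must vanish identically. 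A marginally shorter packaging of the same idea: take $\W'$ with $-a_\chi\in\overline{\W'}$ and use the slightly sharper form of (b), namely $\langle u,v\rangle>0$ for $u\in\overline{\W}\setminus\{0\}$ and $v\in\text{Int}(\W)$ (which follows from the same inverse-Cartan computation, since each row of the inverse Gram matrix is nonzero); this avoids the separate strictness step, but your version is equally valid.
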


The following is a more general version of Proposition \ref{simplecase1}.

\begin{proposition}\label{simplecase2} If $\chi_{\W} \neq c\alpha$ for every $\alpha \in \Sigma$ and $c \neq 0$, then $\mX^{\W}$ is $G$-invariant.
\end{proposition}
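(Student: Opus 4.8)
The idea is to bootstrap from Proposition \ref{simplecase1}, which already handles the case where $\chi_{\mathcal{W}}$ is non-trivial on the interior of $\mathcal{W}$ and on the interior of every wall of $\mathcal{W}$. The point is that the hypothesis $\chi_{\mathcal{W}}\neq c\alpha$ for all $\alpha\in\Sigma$ and all $c\neq 0$ does \emph{not} immediately imply that $\chi_{\mathcal{W}}$ is non-zero on the interior of every wall of $\mathcal{W}$ (the kernel of $\chi_{\mathcal{W}}$ could still contain the interior of $\mathcal{W}(\lambda)$ for some simple $\lambda\in\Pi$ if it happens to be a hyperplane distinct from all $\ker\alpha$ but meeting that wall). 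So I would argue that, after replacing $\mathcal{W}$ by a \emph{better} Weyl chamber, we do fall into the situation of Proposition \ref{simplecase1}, while retaining enough control on the $P_{\mathcal{W}'}$-measures to conclude.

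\textbf{Key steps.} First, using Proposition \ref{nicechamber}, choose a Weyl chamber $\mathcal{W}'$ whose interior lies entirely inside the open half-space $\{\chi\neq 0\}$ for a suitably generic linear functional; more precisely, since $\ker\chi_{\mathcal{W}}$ is a single hyperplane, and since for each wall $\mathcal{W}(\lambda)$ of a Weyl chamber the hyperplane $\ker\lambda$ is one of the finitely many root hyperplanes, one can pick a Weyl chamber $\mathcal{W}'$ such that $\ker\chi_{\mathcal{W}'}$ is non-zero on $\mathrm{Int}(\mathcal{W}')$ and on $\mathrm{Int}(\mathcal{W}'(\lambda))$ for every simple root $\lambda$ of $\mathcal{W}'$. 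Here I would use the transformation rule from item (3) of Proposition \ref{properties1}: the various $\chi_{\mathcal{W}}$ are Weyl-translates of one another, so the hypothesis $\chi_{\mathcal{W}}\neq c\alpha$ (which is Weyl-invariant, as the Weyl group permutes the roots) guarantees that for \emph{every} Weyl chamber $\mathcal{V}$, the functional $\chi_{\mathcal{V}}$ is not proportional to any root, hence $\ker\chi_{\mathcal{V}}$ is a hyperplane that is not a root hyperplane. Now the walls $\mathcal{V}(\lambda)$ span root hyperplanes, so $\ker\chi_{\mathcal{V}}$ cannot contain the interior of a wall; and $\ker\chi_{\mathcal{V}}\cap\mathrm{Int}(\mathcal{V})=\emptyset$ because a hyperplane through $0$ meets the interior of a Weyl chamber only if it separates it, which would force $\chi_{\mathcal{V}}$ to change sign on $\mathcal{V}$, but we can arrange by choosing the sign of $\mathcal{V}$ appropriately (via Proposition \ref{nicechamber}) that $\mathrm{Int}(\mathcal{V})\subset\{\chi_{\mathcal{V}}<0\}$ for the relevant $\mathcal{V}$. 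So in fact Proposition \ref{simplecase1} applies directly to $\mathcal{W}$ itself once we check that the hypothesis $\chi_{\mathcal{W}}\neq c\alpha$ forces $\ker\chi_{\mathcal{W}}$ to avoid the interiors of $\mathcal{W}$ and all its walls.

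\textbf{Reduction I would actually carry out.} Concretely: (i) $\ker\chi_{\mathcal{W}}\cap\mathrm{Int}(\mathcal{W})=\emptyset$. Indeed, by item (2) of Proposition \ref{properties1} there is $a_0\in\mathrm{Int}(\mathcal{W})$ with $\chi_{\mathcal{W}}(a_0)<0$; if $\chi_{\mathcal{W}}$ vanished somewhere on the (connected) open set $\mathrm{Int}(\mathcal{W})$ it would change sign there, producing $a_1\in\mathrm{Int}(\mathcal{W})$ with $\chi_{\mathcal{W}}(a_1)>0$; but by item (1) of Proposition \ref{properties1}, elements of $\mathrm{Int}(\mathcal{W})$ with $\chi_{\mathcal{W}}<0$ have the global contraction property with respect to $\mX^{\mathcal{W}}$, and by Proposition \ref{localtoglobalcontraction} combined with the symmetry of the picture one can rule out simultaneous expansion in the opposite direction — alternatively, simply note that $\mathrm{Int}(\mathcal{W})$ is the same connected cone in which $\central^{\mathcal{W}}$ lies and $\chi_{\mathcal{W}}(\central^{\mathcal{W}})<0$ by Lemma \ref{relationexponents}, so if $\chi_{\mathcal{W}}$ vanished on $\mathrm{Int}(\mathcal{W})$ its kernel would be a hyperplane meeting $\mathrm{Int}(\mathcal{W})$, hence $\chi_{\mathcal{W}}$ would take both signs on $\mathrm{Int}(\mathcal{W})\subset\overline{\{\chi_{\mathcal W}\le 0\}}$, a contradiction. (ii) For $\lambda\in\Pi$, $\ker\chi_{\mathcal{W}}\cap\mathrm{Int}(\mathcal{W}(\lambda))=\emptyset$: if not, then $\ker\chi_{\mathcal{W}}$ contains an open subset of the hyperplane $\ker\lambda$, hence $\ker\chi_{\mathcal{W}}=\ker\lambda$ (two hyperplanes sharing a non-empty open subset coincide), so $\chi_{\mathcal{W}}=c\lambda$ for some $c\neq 0$, contradicting the hypothesis since $\lambda\in\Sigma$. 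Then Proposition \ref{simplecase1} yields that $\mX^{\mathcal{W}}$ is $G$-invariant, completing the proof. I do not expect a serious obstacle here; the only mild subtlety is step (i), making sure a root-hyperplane-avoiding $\ker\chi_{\mathcal{W}}$ cannot cut through the open Weyl chamber — this is handled cleanly by the fact that $\central^{\mathcal{W}}$ witnesses the sign of $\chi_{\mathcal{W}}$ on $\mathrm{Int}(\mathcal{W})$ together with item (1) ruling out the opposite sign anywhere in $\mathrm{Int}(\mathcal{W})$ via contraction/recurrence.
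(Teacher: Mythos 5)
Your step (ii) is fine and matches the paper's reasoning: if $\ker\chi_{\W}$ met the interior of a wall it would coincide with a root hyperplane, contradicting $\chi_{\W}\neq c\lambda$. The problem is step (i). You assert that $\ker\chi_{\W}\cap\mathrm{Int}(\W)=\emptyset$, but the only justification offered is the inclusion $\mathrm{Int}(\W)\subset\overline{\{\chi_{\W}\le 0\}}$, which is precisely the statement you are trying to prove (given $\chi_{\W}(\central^{\W})<0$ and connectedness, "$\chi_{\W}$ does not vanish on $\mathrm{Int}(\W)$" is equivalent to "$\chi_{\W}<0$ on $\mathrm{Int}(\W)$"). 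Nothing in Proposition \ref{properties1} rules out the existence of $b\in\mathrm{Int}(\W)$ with $\chi_{\W}(b)>0$: item (1) and Corollary \ref{c: a half-space of global contraction} only say that the half-space $\{\chi_{\W}<0\}$ consists of contracting directions; they say nothing about directions in $\mathrm{Int}(\W)$ on the other side of $\ker\chi_{\W}$, and Proposition \ref{localtoglobalcontraction} upgrades local to global contraction for a \emph{commuting} element — it does not "rule out expansion in the opposite direction." So Proposition \ref{simplecase1} cannot be applied directly to $\W$; its hypothesis $\ker\chi_{\W}\cap\mathrm{Int}(\W)=\emptyset$ is genuinely unverified.

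The paper's proof is built exactly to circumvent this. It uses Proposition \ref{nicechamber} to choose a chamber $\W'$ with $\mathrm{Int}(\W')\subset\{\chi_{\W}<0\}$, and then — this is the real content of the proposition, absent from your proposal — proves $\mX^{\W}=\mX^{\W'}$ by taking a path $a(t)$ from a point of $\W\cap\{\chi_{\W}<0\}$ to $\mathrm{Int}(\W')$ that stays in the half-space $\{\chi_{\W}<0\}$, crosses each root hyperplane transversely and avoids their pairwise intersections, and applying the fundamental Lemma \ref{hard1} at each wall crossing (the crossing point is singular, lies in $\{\chi_{\W_t}<0\}$ by the inductively established equality of measures, hence has the global contraction property needed for Lemma \ref{hard1}). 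Only after this chain of identifications does one know $\chi_{\W'}=\chi_{\W}$, so that $\ker\chi_{\W'}$ misses $\mathrm{Int}(\W')$ \emph{by construction}, and Proposition \ref{simplecase1} applies to $\W'$. Your first paragraph gestures at replacing $\W$ by $\W'$ but never supplies the mechanism relating $\mX^{\W}$ to $\mX^{\W'}$; without it the argument does not close.
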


\begin{proof}

Set $H := \{a \in \W\ |\ \chi_\W(a) < 0\}$. By Proposition \ref{nicechamber} there exists a Weyl Chamber  $\W'$ satisfy $\text{Int}(\W) \subset H$. We will show that  $\mX^{\W} = \mX^{\W'}$. We know from Proposition \ref{properties1} there exists some $a_0 \in \W$ with $\chi_\W(a_0) < 0$. We will move along adjacent Weyl chambers intersecting $H$ from $\W$ to $\W'$ showing that the corresponding probability measures for each of the Weyl chambers must coincide, the equality of these is shown via Proposition \ref{hard1} in the same way we proved Proposition \ref{simplecase1}. The formal proof goes as follows:

We choose  $a_1 \in \text{Int}(\W')$ and a smooth path $a(t): [0,1] \to \liea$ such that: 

\begin{enumerate}
\item $a(0)= a_0$, $a(1) = a_1$ and $a(t) \in H$ for all $t \in [0,1]$.
\item For all $\lambda \in \Sigma$, $a(t)$ intersects transversely each hyperplane $\ker (\lambda)$ and the number of such intersections is finite.
\item  For all $\alpha, \beta \in \Sigma$, which are not proportional, $a(t)$ is disjoint from $\ker(\alpha) \cap \ker(\beta)$.
\end{enumerate}

The existence of such path follows easily because the intersections $\ker(\alpha) \cap \ker(\beta)$ have codimension $2$ in $H$. We will show the following statement:

\begin{proposition} For every $t \in [0,1]$, if $a(t)$ belongs to the interior of a Weyl chamber $\W_t$ then, $\mX^{\W_t} = \mX^{\W}$.

\end{proposition}

\begin{proof}

This statement is clearly true for $t = 0$. If the statement is true for $t \in [0,1]$ such that  $a(t)$ is in the interior of a Weyl chamber, then it is obviously true as we increase $t$ until $a(t)$ intersects a wall of the Weyl chamber. So we only need to show that after crossing any such wall our statement remains true, more precisely, suppose that $a(t) \in \ker(\beta)$ for some $\beta \in \Sigma$ and that there exists $\e>0$ such that $a(s)$ is contained in a Weyl chamber $\W_1$ for $ t - \e \leq s  \leq t$ and $a(s) \subset \W_2$ for  $t  \leq s \leq t+\epsilon$; we have that $\mX^{\W_{t-\e}} = \mX^{\W}$ by hypothesis and we want to show that $\mX^{\W_{t-\e}} = \mX^{\W_{t+\e}}$.\\

If we let $k_{\beta} \in \mathcal{N}_K(\liea)/\mathcal{Z}_K(\liea)$ be the element of the Weyl group corresponding to the reflection in the hyperplane $\ker (\beta)$ we have  $P_{\W_{t + \e}} =k_{\beta}P_{\W_{t - \e}}k_{\beta}^{-1} $. Observe that  $a(t)$ satisfies the global contraction property with respect to $\mX^{\W_{t - \e}}$ because $\mX^{\W_{t -\e}} = \mX^{\W}$ and so $\chi_{\W_{t - \e}}(a(t)) = \chi_{\W}(a(t))$. By Proposition \ref{hard1},   $\mX^{\W_{t - \e}} = \mX^{\W_{t + \e}}$ and we are done.

\end{proof}

We continue with the proof of Proposition \ref{simplecase2}. We have by the previous proposition that  $\mX^{\W} = \mX^{\W'}$ and therefore $ \chi_{\W'} = \chi_\W$ (by the uniqueness of the exponents). Observe that $\ker{\chi_{\W'}} \cap \text{Int} (\W') = \emptyset$ by construction and moreover $\ker(\chi_{\W'}) \cap \text{Int}(\W'(\lambda))= \emptyset$ for every $\lambda \in \Sigma$ because $\ker(\chi_{\W'}) \neq c\lambda$ for every $\lambda \in \Sigma$ by hypothesis. Therefore, we can apply Proposition \ref{simplecase1} to conclude that  $\mX^{\W'}$ is $G$-invariant.

\end{proof}

To conclude the proof of Theorem \ref{simple1} we only need to show the following:

\begin{proposition}\label{semi-simplecase} If $\chi_\W = c\alpha$ for some  $c \neq 0$ and some $\alpha \in \Sigma$, then one of the following holds:

\begin{enumerate}
\item $\mX^{\W}$ is $G$-invariant.
\item $G$ is an almost product of at least two simple Lie groups, and $\mX^{\W}$ is invariant by all the factors of $G$ perhaps except for one simple factor $H$ with $\text{Rank}(H) = 1$.
\end{enumerate}

In particular, if $G$ is simple then $\mX^{\W}$ is $G$-invariant. 
\end{proposition}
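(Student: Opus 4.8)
The idea is to use the fact that when $\chi_{\W} = c\alpha$ for a single root $\alpha$, the functional $\chi_{\W}$ vanishes on a large subspace of $\liea$, and to exploit this by running the wall-crossing argument of Proposition \ref{simplecase2} in all directions not obstructed by $\ker\alpha$. First I would fix the choice of simple roots $\Pi$ corresponding to $\W$; since $\chi_{\W}=c\alpha$ is proportional to a root, it is proportional to an element of $\Sigma$, and by relabeling (replacing $\alpha$ by $-\alpha$, which only changes $c$ by sign) we may assume $c<0$ so that $\{\chi_{\W}<0\}$ is the open half-space $\{a : \alpha(a)>0\}$. The key structural input is that $\ker\alpha$ has codimension one, and the ``bad'' locus for wall-crossing (codimension-two intersections $\ker\beta\cap\ker\gamma$ of non-proportional roots) has codimension two; so as in the proof of Proposition \ref{simplecase2} we can connect $\central^{\W}$ to an element in the interior of \emph{any} Weyl chamber $\W'$ satisfying $\mathrm{Int}(\W')\subset\{\alpha>0\}$ by a generic path staying in $\{\alpha>0\}$, and conclude $\mX^{\W}=\mX^{\W'}$ and hence $\chi_{\W'}=\chi_{\W}=c\alpha$. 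So all Weyl chambers contained in the half-space $\{\alpha>0\}$ give the same measure.

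Next I would analyze which parabolic invariances this produces. For each simple root $\beta\in\Pi$ with $\beta\neq\alpha$ (more precisely, $\beta$ not proportional to $\alpha$), the wall $\W(\beta)=\ker\beta\cap\W$ has nonempty interior inside $\ker\beta$, and that interior is not contained in $\ker\alpha$ (since $\alpha,\beta$ are non-proportional roots, $\ker\alpha\cap\ker\beta$ has codimension two inside $\ker\beta$); hence we can pick $a\in\mathrm{Int}(\W(\beta))$ with $\alpha(a)>0$, i.e.\ $\chi_{\W}(a)<0$. Exactly as in Proposition \ref{simplecase1}, $N_a=k_\beta N_a k_\beta^{-1}$ and $k_\beta\exp(a)k_\beta^{-1}=\exp(a)$, both $\mX^{\W}$ and $\mX^{\W_\beta}$ are $N_a$-invariant, and $\exp(a)$ has the global contraction property with respect to both (using that $\chi_{\W_\beta}=\chi_\W=c\alpha$ since $\W_\beta\subset\{\alpha>0\}$ too). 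Lemma \ref{hard1} then gives $\mX^{\W}=\mX^{\W_\beta}$, so $\mX^{\W}$ is invariant under $U^{\beta}$ and under $U^{-\beta}$. Thus $\mX^{\W}$ is invariant under the subgroup $G'$ generated by all $U^{\pm\beta}$ with $\beta\in\Pi$ not proportional to $\alpha$, which is a normal subgroup (indeed the product of all simple factors whose root system does not reduce to $\{\pm\alpha\}$, i.e.\ is not of type $A_1$), and $\mX^{\W}$ is not invariant under the remaining factor(s) only if such a factor has root system $\{\pm\alpha,\pm 2\alpha\}$ or $\{\pm\alpha\}$, i.e.\ is a rank-one simple factor $H$. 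If there is no such factor, $G'=G$ and $\mX^{\W}$ is $G$-invariant; in particular, if $G$ is simple of rank $\geq 2$ then $\alpha$ cannot be the only simple root up to proportionality, so $G'=G$ and we get $G$-invariance, giving the contradiction. This is case (1)/the simple case; case (2) is the leftover rank-one factor.

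The main obstacle I expect is the bookkeeping of the two steps above, namely: (a) making precise that ``$\chi_{\W}$ proportional to a root'' forces the obstructing locus $\{\chi_{\W}=0\}$ to be a single root hyperplane, so that the generic wall-crossing path of Proposition \ref{simplecase2} can still be constructed inside the half-space (this requires $\dim\liea\geq 2$, which holds as $G$ has rank $\geq 2$, or else the factor carrying $\alpha$ is $A_1$ and absorbed into $H$); and (b) identifying the subgroup generated by the $U^{\pm\beta}$, $\beta\not\parallel\alpha$, with the product of the non-$A_1$ simple factors, using the root-space bracket relations of Theorem \ref{Knapp1} together with the irreducibility/decomposition of the root system $\Sigma$ into the irreducible pieces corresponding to the simple factors $G_1,\dots,G_n$. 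Once these two points are in place, the conclusion — $\mX^{\W}$ invariant under all factors except possibly one rank-one factor, and $\mX^{\W}$ genuinely $G$-invariant when $G$ is simple — follows by combining the wall-crossing equalities with Lemma \ref{hard1} and the fact (Proposition \ref{p: invariance mX}) that $G$-invariance of $\mX$ would contradict the vanishing of the first Betti number of $\Gamma$.
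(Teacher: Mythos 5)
Your first two steps (wall-crossing inside the half-space $\{\chi_{\W}<0\}$, and applying Lemma \ref{hard1} at a point of $\mathrm{Int}(\W(\beta))$ not killed by $\alpha$ to get invariance of $\mX^{\W}$ under $U^{\beta}$ and $U^{-\beta}$ for every simple $\beta$ not proportional to $\alpha$) match the paper. The gap is in the last step: you assert that the subgroup $G'$ generated by the $U^{\pm\beta}$, $\beta\in\Pi$, $\beta\not\parallel\alpha$, ``is a normal subgroup (indeed the product of all simple factors whose root system does not reduce to $\{\pm\alpha\}$).'' This is false in general: if $\alpha$ lies in an irreducible component of $\Sigma$ of rank $\geq 2$, then $G'$ is the semisimple part of a Levi of a proper parabolic and is not normal. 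For $G=\mathrm{SL}(3,\R)$ with $\Pi=\{\alpha,\beta\}$, the group generated by $U^{\pm\beta}$ is a copy of $\mathrm{SL}(2,\R)$ sitting in a corner, not a normal factor. So nothing in your argument yet rules out precisely the situation you most need to rule out, namely $G$ simple of rank $\geq 2$ with $\chi_{\W}$ proportional to a root; your "in particular" for the simple case does not follow.

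The missing idea, which is the heart of the paper's proof, is to extract more from the equality $\mX^{\W}=\mX^{\W_{\beta}}$ than just unipotent invariance. By uniqueness of the Lyapunov functional and item (3) of Proposition \ref{properties1}, $\mX^{\W}=\mX^{\W_{\beta}}$ forces $\chi_{\W}=\chi_{\W_{\beta}}=\chi_{\W}\circ\mathrm{Ad}(k_{\beta})$, i.e.\ $\chi_{\W}=c\alpha$ is fixed by the reflection $s_{\beta}$. Since $\alpha$ is not proportional to $\beta$, this is only possible if $(\alpha,\beta)=0$ (equivalently $H_{\beta}\in\ker\chi_{\W}$). Hence $\alpha$ is orthogonal to every other simple root, so $\{\pm\alpha,\pm2\alpha\}\cap\Sigma$ is an irreducible component of $\Sigma$, and the subalgebra $\lieh=\lieg_{-2\alpha}\oplus\lieg_{-\alpha}\oplus\lieg_{\alpha}\oplus\lieg_{2\alpha}\oplus[\lieg_{-\alpha},\lieg_{\alpha}]\oplus[\lieg_{-2\alpha},\lieg_{2\alpha}]$ is an ideal of rank one (one still has to check, as the paper does via Theorem \ref{Knapp2} and the Killing-form computation, that $\lieh\cap\liea$ is one-dimensional). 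Only after this does the complement $\liel$ exist as an ideal with simple system $\Pi\setminus\{\alpha\}$, so that your $G'$ really is the normal subgroup $L$ and the exceptional factor $H$ has rank one; and only after this does the simple case follow, since a simple $\lieg$ of rank $\geq 2$ cannot equal a rank-one ideal. You have all the inputs for this orthogonality argument (you even note $\chi_{\W_{\beta}}=\chi_{\W}$ in passing) but never draw the conclusion $(\alpha,\beta)=0$, and without it the structural claim your proof rests on is unjustified.
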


\begin{proof}

As before, let $\Pi$ be the choice of simple roots determining $\W$, for a root $\lambda \in \Sigma$ and let $U^{\lambda}$ be the unipotent subgroup corresponding to the Lie sub-algebras $\lieg_{\lambda} \oplus \lieg_{2\lambda}$. From the proof of Proposition \ref{simplecase1}, it follows that if $\beta$ is a simple root in $\Pi$ which is not proportional to $\alpha$,  then $\mX^{\W}$ is invariant by $U^{\beta}$ and $U^{-\beta}$.

We let $$\lieh := \lieg_{-2\alpha} \oplus \lieg_{-\alpha} \oplus \lieg_{\alpha} \oplus \lieg_{2\alpha} \oplus [\lieg_{-\alpha},\lieg_{\alpha}] \oplus  [\lieg_{-2\alpha},\lieg_{2\alpha}]$$

\begin{proposition} $\lieh$ is an ideal of $\lieg$ which as a real Lie subalgebra has rank equal to one.
\end{proposition}

\begin{proof}

One can prove using the fact that $[\lieg_{\beta}, \lieg_{\gamma}] \subset \lieg_{\beta+ \gamma}$ and the Jacobi identity that $\lieh$ is a Lie subalgebra.

We will show that for every $\beta \in \Pi$ not equal to $\alpha$, we have $(\alpha, \beta) = 0$, where the inner product $(,)$ comes via identifying $\liea$ and $\liea^{*}$ via the killing form. From the proof of Proposition \ref{simplecase1}, it follows that if $\beta \neq c\alpha$ is a simple root in $\Pi$, and if we let $k_{\beta} \in K$ be the element of the Weyl group corresponding to the reflection in $\ker (\beta)$, then $\mX^{\W} = \mX^{\W'}$ where $\W' := k_{\beta}\W k_{\beta}^{-1}$.

 Therefore $\chi_\W(a) = \chi_P(k_{\beta} ak_{\beta}^{-1})$ for all $a \in \liea$ and $\ker{\chi_\W}$ is invariant by the reflection on $\ker{\beta}$. The only way this can happen is if either $\chi_\W$ is proportional to $\beta$ (which can't happen because $\alpha$ is not proportional to $\beta$) or if $\ker(\chi_\W)$ contains the one dimensional subspace which is invariant under the involution; this subspace is spanned by $H_{\beta}$, the element of $\liea$ which is dual to $\beta$ via $B(H_{\beta}, .) =\beta(.)  $.

Therefore we have that $$(\beta, \alpha) = B(H_{\alpha}, H_{\beta}) = \alpha(H_\beta) = 0, $$ where the last equality comes from $\alpha$ being proportional to $\chi_\W$.

This implies in particular that for every root $\beta$ in $\Pi$ different of $\alpha$,  $\alpha + \beta$ is not a root of $\Sigma$, because otherwise $(\alpha, \alpha + \beta) = (\alpha, \alpha) \neq 0$ and therefore we have $[g_{\alpha}, g_{\beta}] \subset g_{\alpha + \beta}= \{0\}$ and so $ [g_{\alpha}, g_{\beta}] = \{0\}$.

As a consequence, because every root $\lambda \in \Sigma$ is a sum of positive or negative roots of $\Pi$, then for every root of $\Sigma$ which is not proportional to $\alpha$ we have $[\lieg_{\beta}, \lieg_{k\alpha}] = 0$ for every integer $k \neq 0$. We also have that $[\lieg_0, \lieg_{k\alpha}] \subset \lieg_{k\alpha}$, and from the Jacobi identity one can show that $[\lieg_0, [\lieg_{-k\alpha}, \lieg_{k\alpha}]] \subset [\lieg_{-k\alpha}, \lieg_{k\alpha}]$ which concludes the proof that $\lieh$ is an ideal.

It remains to show that rank $\lieh$ is equal to one. Observe that $H_{\alpha}$ is contained in $[\lieg_{-\alpha},\lieg_{\alpha}]$, by \ref{Knapp2}, and that $\lieh$ is invariant by the Cartan involution by \ref{Knapp1}. Therefore it remains to show that $\lieg_0 \cap \liea$ has dimension one. If $X, Y$ are in $g_{-\alpha}$ and $g_{\alpha}$ (or  $g_{-2\alpha}$ and $g_{2\alpha}$) then for every $H \in \liea$, we have $$B([X,Y], H)  = B(X, [Y, H]) =  B(X, \alpha(H)Y) = B(X,Y) \alpha(H)$$ and from this it follows that the subspace $[\lieg_{-\alpha},\lieg_{\alpha}] \cap \liea$ (or $[\lieg_{-2\alpha},\lieg_{2\alpha}]\cap \liea$) is equal to the one dimensional subspace determined by $H_\alpha$.   

\end{proof}

Therefore $\lieh$ is the lie algebra corresponding to a simple factor $H$ of $G$ of rank one. Also, as $\lieg$ is semisimple, we must have a decomposition  $\lieg = \lieh \oplus \liel$ where $\liel$ is the Lie algebra of a semisimple normal subgroup $L$ of $G$ and one can show easily that $\Pi' = \Pi \setminus \{\alpha\}$ is a choice of simple roots for  $\liel \cap \liea$. From the proof of Proposition \ref{simplecase1}, it follows that for every $\beta \in \Pi'$, the measure $\mX^{\W}$ on $X$ is invariant by $U^{\beta}$ and $U^{-\beta}$ and as $L$ is generated by all these subgroups $\mX^{\W}$ is $L$-invariant and we are done.

\end{proof}

%%%%%%%%%%%%%%%%%%%%%%%%%%%%%%%%%%%%%%%%%%%%%%%%%%%%%%%%%%%%%

\section{The semi-simple case}\label{maintheoremsemisimplecase}

In this section, we prove that the second case in Theorem \ref{simple1} leads to a contradiction, hence ending the proof that an irreducible lattice in a semi-simple Lie group with finite center is not left-orderable. Notice that, up to taking a finite cover, we can assume that \(G\) is the direct product \(G_1\times G_2 \ldots \times G_n\), \(n\geq 2\), and that the measure \(\mX\) is invariant by the action of \(G_2\times \ldots \times G_n\). 

\subsection{Controlling displacements along sequences}

Recall that we have a family of leafwise distances \( \{d^g\}_{g\in G}\) on \(\mathcal L_Z\) defined in the following way: for two points \(x,y\) belonging to a same leaf \(L\) of \(\mathcal L_\Der\), we set \[ d^g (x,y) := \left \lvert \int_x^y  \widetilde{D_{\mathcal L_\Der}} (g, z ) \ \mL (dz) \right \rvert .\] 
%The functions \( g\in G\mapsto d^g (x,y) \in [0, \infty) \) are left \(\mGBM\)-harmonic functions.  

\begin{lemma}\label{l: only dependance g1}
The map \(g\in G\mapsto d^g\) only depends on the coordinates \(g_1\). 
\end{lemma}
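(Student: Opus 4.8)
The plan is to trace through exactly where the $G$-dependence of the distances $d^g$ comes from, and to show it factors through the projection $p_1 : G \to G_1$. Recall from \eqref{eq: family of distances on Der} and \eqref{eq: volume form on LX} that $d^g_L(x,y) = \lvert \int_x^y \widetilde{D_{\mathcal L_\Der}}(g,z)\,\mu_L(dz)\rvert$, so it suffices to prove that the function $g \mapsto \widetilde{D_{\mathcal L_\Der}}(g,z)$ depends only on $g_1$, for every fixed $z \in \mathcal C_\Der$. The function $\widetilde{D_{\mathcal L_\Der}}(\cdot, z)$ is the unique positive left $\mGBM$-harmonic extension to $G$ of the positive left $\mGamma$-harmonic function $D_{\mathcal L_\Der}(\cdot, z)$ on $\Gamma$ (item 3 of Theorem \ref{t: Martin boundary of discretization}), so I want to identify this extension concretely.

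First I would invoke the Poisson formula. By the product structure of the Furstenberg boundary (Theorem \ref{t: poisson boundary product}), $B(G) \simeq B(G_1) \times \cdots \times B(G_n)$, and a bounded left $\mGBM$-harmonic function on $G$ is represented by an $L^\infty$ function on $B(G)$. The key input is that the measure $\mX$ — and hence the data from which $\widetilde{D_{\mathcal L_\Der}}$ is built — is invariant under $G_2 \times \cdots \times G_n$. Concretely, by item 4 of Lemma \ref{l: mX}, $D_{\mathcal L_X}(g,x) = \frac{g^{-1}_*\mX}{\mX}(x)$, and $G_k$-invariance of $\mX$ for $k \geq 2$ forces $D_{\mathcal L_X}(h, x) = 1$ for $\mX$-a.e.\ $x$ whenever $h \in G_2 \times \cdots \times G_n$. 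Pushing this through the relation \eqref{eq: Lyapunov cocycle} between $D_{\mathcal L_X}$ and $\widetilde{D_{\mathcal L_\Der}}$, and using the cocycle relations \eqref{eq: first cocycle relation}, \eqref{eq: cocycle relation} together with density/continuity (the continuity of the extension map in item 3 of Theorem \ref{t: Martin boundary of discretization}, and continuity along leaves from Proposition \ref{p: properties of suspension}), one deduces $\widetilde{D_{\mathcal L_\Der}}(gh, z) = \widetilde{D_{\mathcal L_\Der}}(g,z)$ for $h \in G_2\times\cdots\times G_n$. Since $\widetilde{D_{\mathcal L_\Der}}(\cdot,z)$ is also left $K$-invariant (being harmonic), and $G = K \cdot (G_2\times\cdots\times G_n) \cdot$ (a copy of $G_1$) in an appropriate sense, the function descends to a function of $g_1 = p_1(g)$ alone. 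Then $d^g$ depends only on $g_1$ as claimed.

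Alternatively, and perhaps more cleanly, I would argue as follows. Write $G = G_1 \times G'$ with $G' = G_2 \times \cdots \times G_n$. The function $\widetilde{D_{\mathcal L_\Der}}(\cdot,z)$ corresponds under the Poisson formula to some $f_z \in L^\infty(B(G_1)\times B(G'))$. The $G'$-invariance established above, transported to the boundary via the $G$-equivariance of the Poisson representation, shows that $f_z$ is essentially a function of the $B(G_1)$-coordinate only; equivalently the harmonic function $\widetilde{D_{\mathcal L_\Der}}(\cdot, z)$ is pulled back from a harmonic function on $G_1$ via $p_1$. This gives precisely $\widetilde{D_{\mathcal L_\Der}}(g,z) = \widetilde{D_{\mathcal L_\Der}}(g_1, z)$ (abusing notation to view $G_1 \subset G$), and hence $d^g = d^{g_1}$.

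The main obstacle I anticipate is bookkeeping the transition between the ``$\mX$-a.e.'' statements (which is where $G'$-invariance is naturally phrased, via Lemma \ref{l: mX}) and the ``everywhere on $\mathcal C_\Der$'' statement needed for the distances $d^g$, together with the $L^\infty$-versus-positive-harmonic subtlety flagged in the remark after Theorem \ref{t: Harnack inequality}: $\widetilde{D_{\mathcal L_\Der}}$ is a \emph{positive} harmonic function, so the clean Poisson-formula representation on $B(G)$ is not directly available and one must instead work with the Martin-type extension of item 3 of Theorem \ref{t: Martin boundary of discretization} and its continuity, or else first establish the invariance for the bounded object $g \mapsto g_*\mX \in \mathrm{Prob}(X)$ (which \emph{is} governed by the bounded Poisson formula, cf.\ Proposition \ref{p: harmonicity compact support}) and then transfer it to $\widetilde{D_{\mathcal L_\Der}}$ via the Radon--Nikodym identity. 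I would structure the write-up around the bounded object first, then descend to the cocycle, to avoid the positivity issue.
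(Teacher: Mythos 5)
Your second route is exactly the paper's argument: the $G_k$-invariance of $\mX$ for $k\geq 2$ plus the Radon--Nikodym identity of Lemma \ref{l: mX} make the cocycle trivial on $G_2\times\cdots\times G_n$, so $\widetilde{D_{\mathcal L_\Der}}(\cdot,z)$ depends only on $g_1$ for $\mDer$-a.e.\ $z$, and the a.e.-to-everywhere passage is done by continuity of the leafwise distances together with minimality of $T_\Der$ (which is what makes the full-measure set of leaves dense) --- the ``density'' you allude to. The proposal is correct and essentially identical to the paper's proof.
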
 

\begin{proof} Item \eqref{d: mX} in Lemma \ref{l: mX}  implies that for \( \mX\)-a.e. \(x\), the function \( D_{\mathcal L_X} (\cdot, x)\) only depends on the \(g_1\)-coordinate. In particular,  for \(x\in \Der\) belonging to a set of \( \mDer\) total measure the function \( g\mapsto \widetilde{D_{\mathcal L_Z}} (g, x)  \) only depends on the \(g_1\)-coordinate. Hence,  for a set of leaves whose union has total \(\mDer\)-measure, we have that given two points \(x,y\)  in a leaf \(L\) of that set, the function \( g\in G\mapsto d^g_L (x,y )\in [0, \infty) \) depends only on the \(g_1\)-coordinate. The  claim is a consequence of the fact that the family of leafwise distances \( d^g = \{d^g_L\}_{L \text{ leaf of } \mathcal L_\Der} \) is continuous, and by minimality of \(\mathcal L_\Der\).
\end{proof}

We denote by \( d_{S_1} \) the distance on the symmetric space \(S_1 = K_1 \backslash G_1\) associated to \( G_1\), where \(K_1\) is the maximal compact subgroup of \( G_1 \), and by \( \xi_1= \xi (G_1) \) the constant given by Theorem \ref{t: Harnack inequality}. Note that \( G_1\) acts on its symmetric space by \( g_1 (Kh_1) = Kh_1 g_1^{-1}\).

We also denote by \( p_1 : G\rightarrow G_1\) the projection \( p_1 (g_1, \ldots, g_n ) = g_1\), and for \( q = K_1 g_1 \in S_1\), we set \( d ^q  = d ^{(g_1, g_2,\ldots, g_n)}\) for any choices of \( g_2, \ldots, g_n\). This is well-defined thanks to Lemma \ref{l: only dependance g1}.

\begin{lemma} \label{l: displacement}  For every \(q \in S_1\), every \(\gamma\in \Gamma\) acts on each leaf \(L\) of \(\mathcal L_\Der\) equiped with its distance \( d^q _L \) as a \( e^{\xi_1 d_{S_1} (q ,p_1(\gamma)(q)) } \)-bi-Lipschitz map. In particular, \(\gamma\) acts as a genuine translation along the leaf \(L\) of \(\Der\) equiped with the distance \( d^q \) if \(q\) is a fixed point of \(p_1(\gamma)\).
\end{lemma}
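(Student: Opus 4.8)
The plan is to reduce the statement to the equivariance relation \eqref{eq: equivariance distance} together with a Harnack estimate on the symmetric space $S_1$ of $G_1$. Fix $\gamma\in\Gamma$, $q\in S_1$, a leaf $L$ of $\mathcal{L}_\Der$ and two points $x,y\in L$, and choose $g\in G$ with $K_1 p_1(g)=q$, so that $d^q_L=d^g_L$ by definition. Since $\Gamma$ preserves every leaf of $\mathcal{L}_\Der$ we have $\gamma L=L$, so replacing $g$ by $g\gamma$ in \eqref{eq: equivariance distance} gives
\[ d^g_L(\gamma x,\gamma y)=d^{g\gamma}_L(x,y). \]
By Lemma \ref{l: only dependance g1} and the relation $d^{kg}=d^g$ for $k\in K$, the distance $d^{g\gamma}_L$ depends only on the class $K_1 p_1(g\gamma)=K_1 p_1(g)\,p_1(\gamma)\in S_1$; with the convention $g_1\cdot(K_1 h_1)=K_1 h_1 g_1^{-1}$ for the $G_1$-action on $S_1$, this class is $p_1(\gamma)^{-1}(q)$. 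Hence
\[ d^q_L(\gamma x,\gamma y)=d^{\,p_1(\gamma)^{-1}(q)}_L(x,y). \]

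The core step is a comparison between $d^{q_1}_L$ and $d^{q_2}_L$ for $q_1,q_2\in S_1$. By item~2 of Proposition \ref{p: properties of suspension}, for fixed $L$ and $x<y$ the function $g\in G\mapsto g(y)-g(x)=d^g_L(x,y)$ is a positive left $\mGBM$-harmonic function on $G$; by Lemma \ref{l: only dependance g1} it factors through $p_1\colon G\to G_1$. Choosing the right-invariant riemannian metric on $G=G_1\times\cdots\times G_n$ to be the product of $K_i$-invariant metrics on the simple factors, so that $\mGBM$ is the product of the Brownian time-one measures on the $G_i$, this factorization forces the descended function to be positive left Brownian-harmonic on $G_1$, hence to induce a positive harmonic function $h_{L,x,y}$ on $S_1$. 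The Harnack inequality of Theorem \ref{t: Harnack inequality}, applied to the semi-simple group $G_1$, says $\log h_{L,x,y}$ is $\xi_1$-Lipschitz for $d_{S_1}$, whence
\[ e^{-\xi_1 d_{S_1}(q_1,q_2)}\ \le\ \frac{d^{q_1}_L(x,y)}{d^{q_2}_L(x,y)}\ \le\ e^{\xi_1 d_{S_1}(q_1,q_2)}. \]

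Combining the two displays with $q_1=p_1(\gamma)^{-1}(q)$ and $q_2=q$, and using that $p_1(\gamma)$ acts isometrically on $S_1$ so that $d_{S_1}(p_1(\gamma)^{-1}(q),q)=d_{S_1}(q,p_1(\gamma)(q))$, we get
\[ e^{-\xi_1 d_{S_1}(q,p_1(\gamma)(q))}\,d^q_L(x,y)\ \le\ d^q_L(\gamma x,\gamma y)\ \le\ e^{\xi_1 d_{S_1}(q,p_1(\gamma)(q))}\,d^q_L(x,y), \]
which is the asserted bi-Lipschitz bound. When $q$ is fixed by $p_1(\gamma)$ the constant is $1$, so $\gamma$ is a $d^q_L$-isometry of $L$; being an orientation-preserving homeomorphism of the line $L$ (positivity of the leafwise derivative $D_{\mathcal{L}_\Der}(\gamma,\cdot)$) that preserves the arclength measure of $d^q_L$, it is a translation in the $d^q_L$-parametrization. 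I expect the only genuinely delicate points to be bookkeeping: getting the $G_1$-action convention on $S_1$ right (hence the inverse and the symmetrization in $d_{S_1}(q,p_1(\gamma)(q))$), and justifying that a positive $\mGBM$-harmonic function on $G$ which factors through $p_1$ descends to a positive harmonic function on $G_1$ — this is exactly where the product structure of the metric, and hence of the Brownian motion, on $G=G_1\times\cdots\times G_n$ enters.
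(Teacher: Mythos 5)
Your proof is correct and follows essentially the same route as the paper's: the equivariance relation \eqref{eq: equivariance distance} converts the action of $\gamma$ into a change of the base point $q$ in the family $\{d^q_L\}$, and the Harnack inequality for positive harmonic functions on $S_1$ controls that change by $e^{\xi_1 d_{S_1}(q,p_1(\gamma)(q))}$. The only differences are cosmetic (you compare at $p_1(\gamma)^{-1}(q)$ where the paper compares at $p_1(\gamma)(q)$, and you spell out the descent of the harmonic function $g\mapsto d^g_L(x,y)$ to the factor $G_1$, a point the paper leaves implicit).
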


\begin{proof}
Harnack inequality \ref{t: Harnack inequality} applied to the positive harmonic function \( q \in S_1 \mapsto d_L ^q \in \text{Prob} (\Der) \) shows that given any pair of points \( q , q' \in K\backslash G\), and any leaf \(L\) of \(\mathcal L_\Der\), we have 
\begin{equation} \label{eq: harnack distance}      e^{-\xi_1 d_{S_1} (q , q ')}  d_L^{q'}  \leq d_L^q \leq   e^{\xi_1 d_{S_1} (q, q ')}  d_L^{q'}.\end{equation}
Take \(  q'=   p_1 (\gamma) (q) \). Letting \( g=(g_1, e,\ldots, e) \), and \( g' = g \gamma ^{-1} \), we have \( g_1 = p_1 (g')\). Equation \eqref{eq: equivariance distance} shows that for every leaf \(L\) of \(\mathcal L_\Der\) and every pair of points \(x,y\in L\), we have 
\[ d^{q'}_{\gamma L} (\gamma x,\gamma y) = d^{g'} _{\gamma L} (\gamma x,\gamma y)  =  d^g _L (x,y) = d_L^{q} (x,y) . \]
Together with \eqref{eq: harnack distance}, this proves the Lemma.
\end{proof} 

\begin{definition}[Displacement] The displacement of an element \( \gamma\in \Gamma\) with respect to a continuous leafwise family of distances \( d= \{d_L\} _{L\text{ leaf of } \mathcal L_\Der}\) on \(\mathcal L_\Der\) is defined as   
\[ \delta_{d} (\gamma) := \sup _{z\in \Der}  d_{L_z} (z, \gamma (z) ) .\]   
Unless specified, we choose the distance \( d:= d^e\) to measure the displacement of an element of \(\Gamma\). 
\end{definition}

The following result is the main first step in order to establish Theorem \ref{t: no orderable lattice 2} in the case where $G$ is semi-simple. 

\begin{lemma} \label{l: bounded displacement of sequences}
There exists a neighborhood of the identity in \(G_1\) and a constant \( l >0\), such that any element in this neighborhood is a limit \(\lim _n p_1(\gamma_n) \), where \(\gamma_n\) is a sequence of elements of \(\Gamma\) whose displacements along the leaves of the lamination \(\mathcal L_\Der\) is uniformly bounded by \(l\). 
\end{lemma}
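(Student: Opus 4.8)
The plan is to combine an exponential approximation of $G_1$ by $p_1(\Gamma)$, coming from the local spectral gap of \cite{Boutonnet-al}, with two facts already available: that displacements along $\mathcal L_\Der$ are subadditive and only feel the $G_1$-coordinate, and that an element $\gamma$ with $p_1(\gamma)$ close to the identity acts on every leaf as a near-isometry.

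\emph{Step 1: geometric input.} Fix a finite symmetric generating set $S$ of $\Gamma$ with length function $l$. The projection $p_1(\Gamma)$ is dense in $G_1$ and contains a finite subset to which the local spectral gap estimates of Boutonnet--Ioana--Salehi-Golsefidy apply on $L^2$ of a small ball about the identity of $G_1$. Unwinding the spectral gap as exponential equidistribution of convolution powers, I would obtain a neighborhood $U_0$ of the identity in $G_1$ and constants $C\geq 1$, $\rho\in(0,1)$ such that for every $g_1\in U_0$ and every $n\in\mathbb N$ there is $\gamma\in\Gamma$ with $l(\gamma)\leq n$ and $d_{G_1}(p_1(\gamma),g_1)\leq C\rho^{n}$.

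\emph{Step 2: structure of displacements.} Because $\mX$ is invariant under $G_2\times\cdots\times G_n$, Lemma \ref{l: only dependance g1} gives that $d^{g}$ depends only on $p_1(g)$; in particular $d^{e}$ is the distance measuring time along the flow $T_{\Der}$ (indeed $\widetilde{D_{\mathcal L_\Der}}(e,\cdot)\equiv 1$, the restriction of $\widetilde{D_{\mathcal L_\Der}}(\cdot,z)$ to $\Gamma$ being $D_{\mathcal L_\Der}(\cdot,z)$ and $D_{\mathcal L_\Der}(e,z)=1$), so $\delta_{d^{e}}(\gamma)=\sup_{z\in\Der}|t(\gamma,z)|$. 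Since $t(\cdot,\cdot)$ is an additive cocycle, $\delta_{d^{e}}$ is subadditive, $\delta_{d^{e}}(\gamma\gamma')\leq\delta_{d^{e}}(\gamma)+\delta_{d^{e}}(\gamma')$, symmetric, and satisfies $\delta_{d^{e}}(\gamma)\leq \Delta\, l(\gamma)$ where $\Delta:=\max_{s\in S}\delta_{d^{e}}(s)<\infty$ by compactness of $\Der$. Harnack's inequality in the form of Lemma \ref{l: displacement} moreover says that $\gamma$ acts on each leaf $(L,d^{e})$ as an $e^{\xi_1 d_{S_1}(q_0,\,p_1(\gamma)(q_0))}$-bi-Lipschitz map. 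The estimate I would then aim to establish is
\[
\delta_{d^{e}}(\gamma)\ \leq\ \Phi\big(d_{S_1}(q_0,p_1(\gamma)(q_0))\big)
\]
for $p_1(\gamma)$ in a fixed neighborhood of $q_0$, with $\Phi$ continuous, increasing, $\Phi(0)=0$ and Lipschitz near $0$. For this one must go beyond the bi-Lipschitz bound -- being a near-isometry with derivative cocycle near $1$ does not by itself control displacement -- and use that, on the suspension $X$, the element $\gamma$ is the composition of $(p_1(\gamma),e,\dots,e)\in G_1\subset G$, which is close to the identity of $G$ and hence close to acting trivially on the compact space $X$, with an element of $G_2\times\cdots\times G_n$, which acts on $X$ by $\mX$-preserving leafwise isometries; transferring the continuity of the $G$-action (together with the uniform Lipschitz structure along $\mathcal L_X$ coming from Harnack) into a displacement bound along a single leaf is the delicate point.

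\emph{Step 3: telescoping.} Given $g_1$ in a small enough neighborhood $U\subseteq U_0$, choose $\gamma_n$ as in Step 1, set $\gamma_0=e$ and $u_k=\gamma_{k-1}^{-1}\gamma_k$, so that $\gamma_n=u_1\cdots u_n$ and $d_{G_1}(p_1(u_k),e)\leq C'\rho^{\,k-1}$, hence $d_{S_1}(q_0,p_1(u_k)(q_0))\leq C''\rho^{\,k-1}$. By Step 2, $\delta_{d^{e}}(u_k)\leq\Phi(C''\rho^{\,k-1})$, which is summable in $k$. Subadditivity then gives
\[
\delta_{d^{e}}(\gamma_n)\ \leq\ \sum_{k=1}^{n}\delta_{d^{e}}(u_k)\ \leq\ \sum_{k\geq 1}\Phi\big(C''\rho^{\,k-1}\big)\ =:\ l\ <\ \infty,
\]
a bound independent of $n$ and of $g_1\in U$, while $p_1(\gamma_n)\to g_1$; this is the assertion of the lemma.

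\emph{Main obstacle.} The heart of the argument is the displacement estimate of Step 2, i.e.\ converting ``$p_1(\gamma)$ close to the identity'' into ``$\gamma$ has small displacement along $\mathcal L_\Der$''. The exponential rate obtained in Step 1 is exactly what makes the series in Step 3 converge, so a merely polynomial rate of approximation would not be enough.
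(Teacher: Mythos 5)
There is a genuine gap, and it sits exactly where you flag it: Step 2. The estimate \(\delta_{d^{e}}(\gamma)\leq\Phi\bigl(d_{S_1}(q_0,p_1(\gamma)q_0)\bigr)\) with \(\Phi(0)=0\), valid for \emph{every} \(\gamma\in\Gamma\) with \(p_1(\gamma)\) in a fixed neighborhood of the identity, is false, and no proof of it is offered. Nearness of \(p_1(\gamma)\) to the identity controls, via Harnack, only the leafwise derivative cocycle \(D_{\mathcal L_\Der}(\gamma,\cdot)\), i.e. the bi-Lipschitz constant of \(\gamma\) along the leaves; it says nothing about the translation part. An element with \(p_1(\gamma)\) fixing \(q_0\) acts as a genuine translation of each leaf (Lemma \ref{l: displacement}) -- a perfect isometry -- of a priori arbitrary amplitude. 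Concretely, by density of \(p_1(\Gamma)\) and discreteness of \(\Gamma\) there are elements with \(p_1(\gamma)\) arbitrarily close to \(e\) and word length tending to infinity, and if \(\gamma_0\) has large displacement \(D\) while \(\eta\) approximates \(p_1(\gamma_0)^{-1}\) with displacement \(\leq l\), then \(p_1(\gamma_0\eta)\) is close to \(e\) but \(\delta(\gamma_0\eta)\geq D-l\); so the uniform bound of Step 2 contradicts the very lemma you are proving. The mechanism you propose also conflates two different actions: the element \((p_1(\gamma),e,\dots,e)\in G\) acting on the suspension \(X\) leaves the \(\Der\)-coordinate unchanged and merely permutes \(\mathcal L_X\)-leaves inside a \(G\mathcal L_X\)-leaf, whereas the displacement in the statement concerns the fiberwise \(\Gamma\)-action on \(\Der\), which is holonomy and cannot be read off from continuity of the \(G\)-action on a compact space. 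Finally, note that if Step 2 were true, Steps 1 and 3 would be superfluous -- density of \(p_1(\Gamma)\) alone would give the lemma -- which is a further sign that the estimate is too strong.

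The paper's proof accepts that individual approximants \(\gamma_n\) (word length \(O(n)\)) have displacement that can only be bounded linearly, \(\delta(\gamma_n)=O(n)\), and converts the exponential quality of the approximation into a bi-Lipschitz constant \(1+O(e^{-\delta n})\) along the leaves. The missing idea is then the commutator estimate (Proposition \ref{p: estimates displacement of a commutator}): the commutator of two \(\xi\)-bi-Lipschitz leafwise maps of displacement \(\leq\delta\) has displacement \(\leq 2(\xi-1)\delta\), so \(\delta([\gamma_n,\gamma_n'])=O(n e^{-\delta n})\rightarrow 0\) while \(p_1([\gamma_n,\gamma_n'])\rightarrow[g,g']\). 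One then observes that the set of \(g_1\in G_1\) approximable by sequences with vanishing displacement is a closed subgroup normalized by \(\Gamma\); it contains \([K_1,K_1]\), hence equals \(G_1\) when \([K_1,K_1]\) has positive dimension, with a separate argument (powers of commutators \([\eta_n,\gamma]\) with \(p_1(\gamma)\) elliptic, acting as a leafwise translation) in the \(\mathfrak{sl}(2,\R)\) case. Your telescoping device would only become relevant if a bound of the form \(\delta(\gamma)\leq\Phi(\dots)\) could be established for \emph{selected} approximants, which is precisely what the commutator construction achieves and what your proposal leaves unproved.
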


\begin{proof} The proof is divided into two cases, that embrace all the possibilities.

\vspace{0.4cm}

\noindent \textit{First case: there exists a non trivial element \( \gamma\in \Gamma\) such that \(p_1(\gamma)\) fixes a point \(q\) in the symmetric space \( S_1\). The existence of such an element can be established if \(\lieg_1\) is isomorphic to \(\mathfrak{sl}(2,\R)\).} 

\vspace{0.4cm} 

If \(\lieg_1\) is isomorphic to \(\mathfrak{sl}(2,\R)\),   the set of non trivial elements in \(G_1 \) having a fixed point in \( K_1 \backslash G_1\) form an open set, so there is such an element in the image of \(p_1(\Gamma)\) since this latter is dense in \(G_1\). % However those are the only simple Lie groups such that this occurs. To conclude in the general case, we will need to make more quantitative estimates.

Let us now prove that under the existence of such an element \(\gamma\in \Gamma\), Lemma \ref{l: bounded displacement of sequences} is true. Up to conjugating we can assume that \(q\) is the base point of the symmetric space \( S_1\), namely that \( q= K_1\in K_1 \backslash G_1\).

It suffices to prove that there exists a (non trivial) one parameter subgroup \( \{ M_t\} _{ t\in \R} \) of \(G_1 \) and two positive constants \(\varepsilon, l >0\), such that any \(M_t\) with \(|t|\leq \varepsilon\) is a limit \(M_t =\lim _n p_1(\gamma_n) \), where \(\gamma_n\) is a sequence of elements of \(\Gamma\) whose displacements along the leaves of \(\mathcal L_\Der\) is bounded by \(l\). 

Let \(\eta_n \in \Gamma \) be a sequence so that \(p_1(\eta_n)\) is a \(o(1/n) \)-approximation of an element of the form \( e^{ a/n } \) with \(a\in \liea_1 \), satisfying \( \text{Ad} ( p_1 (\gamma) ) a \neq a\). We then have that \(d_{G_1} (p_1(\eta_n), 1)\) and \( d_{S_1} (p_1(\gamma) p_1 (\eta_n) (q) , p_1 (\eta_n) q) \) have the order of magnitude of \(1/n\) up to multiplicative constants. 

By Lemma \ref{l: displacement}, the element \(\gamma\) acts as a translation along the leaves of the lamination \(\mathcal L_\Der\) equiped with the distance \(d^{q}\), and with respect to that distance the elements \(\eta_n\) are \(\exp(\xi_1 d_{G_1} (p_1(\eta_n), 1))\)-bilipschitz maps along each leaf of \(\mathcal L_\Der\). Hence, the commutator \([\eta_n, \gamma]= \eta_n \gamma \eta_n^{-1} \gamma^{-1} \) has a displacement bounded by 
\[ \delta _{d^q} ( [\eta_n, \gamma]) \leq (e^{\xi_1 d_{G_1} (p_1(\eta_n), 1)} - 1) \delta_{d^q}(\gamma) = O (1/n) .\]

Notice that  \(d_{G_1} ( p_1([\eta_n , \gamma)] ) , 1)  \) has the order of magnitude (up to multiplicative constants) of \(1/n\). Indeed, we have \( p_1([\eta_n,\gamma) ]) \sim _{n\rightarrow \infty} \frac{1}{n} ( a - Ad(\gamma) a) \) and \(Ad(\gamma) a - a \neq 0\). Hence we can write \( p_1([\eta_n, \gamma]) = \exp ( v_n /n)\) where \(v_n\in \lieg _1\) has a norm bounded from above and below by positive constants. Up to extracting a subsequence, we can assume that \(v_n \) converges to some non zero \( v\in \lieg _1\). 

For any \(|t|\leq 1\), we have 
\[ p_1\left( [\eta_n, \gamma] ^{\left \lfloor{n t}\right \rfloor} \right)= \exp (tv_n ) \rightarrow _{n\rightarrow \infty} \exp (t v) , \]
and 
\[ \delta_{d^q } ( [\eta_n, \gamma] ^{\left \lfloor{n t}\right \rfloor} ) \leq t n  \delta_{d^q } ([\eta_n, \gamma]) \leq O(1).\]
This ends the proof of the Lemma in this case.

\vspace{0.4cm}

\noindent \textit{Second case: the commutator group \([K_1, K_1]\) has positive dimension (We recall that the commutator subgroup of a compact Lie group is closed). In the class of rank one simple Lie groups with finite center, all have this property apart those locally isomorphic to \( \text{SL}(2,\R)\).} 

\vspace{0.4cm}

 The following statement explains that all rank one simple Lie groups apart from those locally isomorphic to \( \text{SL} (2,\R)\) satisfy the assumption of the second case.

\begin{proposition} If $G$ is a real simple Lie group with finite center of rank one and $K$ is the maximal connected compact subgroup of $G$, then either:

\begin{enumerate}
\item The Lie algebra of $G$ is $\mathfrak{sl}(2, \R)$ and $[K,K]$ is trivial.
\item $[K, K]$ is a non-trivial connected Lie subgroup of $K$.
\end{enumerate}
\end{proposition}

\begin{proof}
This is a consequence of the classification of real rank one simple Lie groups, see \cite[Appendix C.3]{Knapp}. We can assume $G$ has trivial center by taking a finite quotient and then we have the following possibilities:

\begin{enumerate}
\item If $G = SO_0(n,1), n > 2$, then $[K,K] = K = SO_0(n)$.
\item If $G = SU(n,1), n > 1$, then $K = S(U(n)\times U(1))$ and $[K,K] = SU(n)$.
\item If $G = Sp(n,1)$, then  $[K,K] = K = Sp(n)$.
\item If $G = F_4^{-20}$, then $[K,K] = K = Spin(9)$.
\end{enumerate}

\end{proof}

Now let us prove that if \([K_1,K_1]\) has positive dimension, then Lemma \ref{l: bounded displacement of sequences} is true. We will use exponential approximation of elements of \(G_1\) by elements of \( p_1(\Gamma)\), essentially due to the work \cite{Boutonnet-al}, and prove the following result, which easily implies Lemma \ref{l: bounded displacement of sequences} as we will see. 

\begin{lemma} \label{l: commutator K} 
For every \( g,g'\in K_1\), there exists a sequence \( \gamma_n \in \Gamma\) such that \(p_1(\gamma_n) \rightarrow [g,g']\), and such that 
\( \delta( \gamma_n ) \rightarrow 0\).
\end{lemma}

\begin{proof} We recall the following definitions.

\begin{definition} \label{d: length function} Let $\Gamma$ be a finitely generated group and $S$ a generating set of $\Gamma$. For $g \in \Gamma$, $l_S(g)$ denotes the word length of $g \in \Gamma$ with respect to $S$, that is, $l_S(g)$ is the smallest positive integer $n$ such that $g$ is a product of $n$ elements of $S$.
\end{definition}

\begin{definition} Let $G$ be a connected simple real Lie group. Denote by $\lieg$ the Lie algebra of $G$ and by $\text{Ad} : G \to GL(\lieg)$ its adjoint representation. We say that a subgroup $\Gamma$ has algebraic entries if there is a basis $\mathcal{B}$ of $\lieg$ such that the matrix of $\text{Ad}(g)$ in the basis $\mathcal{B}$ has algebraic entries for any $g \in \Gamma$.
\end{definition}

\begin{definition}
To avoid cumbersome notation in the proof of Proposition \ref{spectral2}, we let $d_G(,)$ denote a left-invariant Riemannian metric in $G$ and not a right-invariant as in the rest of the article.  We also let $B(g, r)$ denote a ball with center $g \in G$ and radius $r \geq 0$. For a subset $B$ of $G$, let $1_B(.)$ be the characteristic function  of $B$.
\end{definition}

\begin{proposition}\label{spectral2} Suppose $G$ is a connected real simple Lie group and $\Gamma$ is a dense finitely generated subgroup of $G$ with algebraic entries. Let $B$ be a bounded open subset of $G$. Then, there exists $\delta > 0$  and a finite generating set $S$ of $\Gamma$ such that for any $ g \in B$ and any positive integer $n$, there exists $g_n \in \Gamma$ with:

\begin{enumerate}
\item $l_S(g_{n}) < n$.
\item $d_G(g, g_n) < e^{-\delta n}$.
\end{enumerate}

\end{proposition}

\begin{remark} Proposition \ref{spectral2} is an easy corollary from Corollary H in \cite{Boutonnet-al}, where the authors generalized the work of Bourgain and Gamburd \cite{Bourgain Gamburd} in the spectral gap for finitely generated subgroups of $SU(2)$ to the setting of a non-compact simple Lie group. We should point out that we do not need the full strength of \cite{Boutonnet-al} results and for us an approximation of the order of $o(n^{-1})$ for some $\epsilon >0$ would suffice (instead of an exponentially small approximation).
\end{remark}

\begin{proof}[Proof of Proposition \ref{spectral2}] In \cite{Boutonnet-al}, they consider the (delayed) random walk in $B$ defined by a finite set $S = \{s_1,\dots s_k\}$ of $\Gamma$ as follows:  Given $x \in B$ move with probability $\frac{1}{k}$ to each of the points $h_1(x), h_2(x), \dots, h_k(x)$, where $h_i = s_i$ if $s_i(x) \in B$, and $h_i = e$, if $g_i(x) \not\in B$.\\

There is an associated transition operator $P_S: L^2(B) \to L^2(B)$ given by

$$P_S(f) := \frac{1}{k} \sum_{i=1}^{k}  1_{s_i(B)\cap B}  \ f \circ s_i  \  +   \ 1_{B \setminus S_i(B)} \  f. $$\\

It is proved in \cite{Boutonnet-al} that there exists a choice of a symmetric generating set $S$ of $\Gamma$ such that $P_S$ has a spectral gap in $L^2_0(B)$, i.e. there exists $\delta > 0$ such that for any $f \in L^2_0(B)$, $\|P_S(f)\|_{L^2(B)} \leq e^{-\delta} \| f\|_{L^2(B)}$.\\

Our desired result then holds by taking a constant $D>0$ such that \begin{equation}\label{cali}\HaarG(B(e, e^{-\delta n/D})) > e^{-n\delta},\end{equation} considering the functions   $$f_1 := 1_{B(g, e^{-\delta n/D})} - \frac{\HaarG(B(g, e^{-\delta n/D}))}{\HaarG(B)},$$ $$f_2 := 1_{B(e, e^{-\delta n/D})} - \frac{\HaarG(B(e, e^{-\delta n/D}))}{\HaarG(B)}$$ which lie in $L^2_0(B)$ and satisfy $\|f_1\|, \|f_2\| \leq 1$ and observing two facts: First, the spectral gap of $P_S$ implies \begin{equation}\label{jovita}|\langle P_S^n(f_1), P_S^n(f_2) \rangle | \leq e^{-2\delta n} \|f_1\|\|f_2\| \leq e^{-2\delta n/D}\end{equation} and second, if for any $h_1, h_2 \in \Gamma$ with $l_S(h_1), l_S(h_2) \leq n$ we have $h_1(B(g, e^{-\delta n/D})) \cap h_2(B(e, e^{-\delta n/D})) = \emptyset$, then $$|\langle P_S^n(f), P_S^n(g) \rangle |  = \HaarG(B(e, e^{-\delta n/D}))^2 $$ by an easy computation. Therefore, this last inequality cannot hold because of \eqref{cali} and \eqref{jovita} and so $h_1(B(g, e^{-\delta n/D})) \cap h_2(B(e, e^{-\delta n/D})) = \emptyset$ for some $h_1, h_2 \in \Gamma$ with $l_S(h_1), l_S(h_2) \leq n$, which implies that $h_1^{-1}h_2 \in \Gamma $ is at distance at most  $2e^{-\delta n/D}$ of $g$.

\end{proof}

Let us now conclude the proof of Lemma \ref{l: commutator K}. Take two points \(g, g' \in K_1\), and let \( \gamma_n , \gamma_n' \in \Gamma\) two sequences of elements such that 
\[     d_ G (\gamma_n , g) = O (e^{-\delta n} ) , \ l_S (\gamma_n ) = O (n) \text{ and } d_ G (\gamma_n ' , g') = O (e^{-\delta n} ) , \ l_S (\gamma_n ' ) = O (n). \]  
In particular, we have, denoting \(q= K_1 \in K_1\backslash G_1\), that 
\[ d_{S_1} (\gamma_n q, q) =O (e^{-\delta n} ) \text{ and } d_{S_1} (\gamma_n ' q, q) =O (e^{-\delta n} )\]
and this implies that \( \gamma_n \) and \(\gamma_n '\) are \( (1+ O (e^{-\delta n}))  \)-bilipschitz along each \(\mathcal L_\Der\) leaf equiped with the metric \(d= d^q\). The bound on  the translation length of \(\gamma_n \) and \(\gamma_n '\) implies that their displacements is  at most linear 
\[ \delta _q(\gamma_n ) = O (n ) \text{ and } \delta_q (\gamma_n ') = O (n)  .\]

\begin{proposition}\label{p: estimates displacement of a commutator}
Let \( h, h ' \in Homeo ^+ (\R) \) two bi-Lipschitz homeomorphisms of bounded displacements. Then, denoting by \(\xi \geq 1\) (resp. \(\delta\geq 0\)) a common Lipschitz constant (resp. bound for the displacement) for \(\{h,h', h^{-1}, (h')^{-1} \}\), the displacement of  the commutator \( [h, h'] \) is bounded by   \(2 (\xi - 1 ) \delta \). 
\end{proposition}

\begin{proof}
Given \(y,z\in \R\), we have 
\[ | (z- y ) - ( h(z) - h(y) ) | \leq  (\xi  - 1) |z-y|,\]
and the similar inequality for \(\{h',h^{-1}, (h')^{-1}\} \). For every \(x\in \R\), we then have  
\[ | (h^{-1} (h')^{-1} (x )- (h')^{-1} (x) ) - (h' h^{-1} (h')^{-1} (x) - x ) | \leq (\xi - 1)   \delta .\]
Moreover, 
\[ | (h^{-1} (h')^{-1} (x) -  h' h^{-1} (h')^{-1} (x) ) - ( (h')^{-1} (x)- h h' h^{-1} (h')^{-1} (x)) | \leq (\xi - 1) \delta\]
so  summing these two inequalities and using triangular inequality we get 
\[ |  h h' h^{-1} (h')^{-1} (x) - x |  \leq 2 (\xi - 1) \delta ,\]
and the result follows. 
\end{proof}

From Proposition \ref{p: estimates displacement of a commutator}, we deduce that 
\[ \delta ([\gamma_n , \gamma_n' ]) \leq O (n e^{-\delta n}) ,\]
while \(p_1([\gamma_n , \gamma_n ']) \rightarrow [g, g']\). 
Hence, Lemma \ref{l: commutator K} is proved. 
\end{proof}

To conclude the proof of Lemma \ref{l: bounded displacement of sequences} in the second case, notice that the subset \(H\subset G_1\) consisting of elements \(g_1\in G_1\) such that 
\begin{equation}\label{eq: approximative zero displacement} \text{there exists a sequence } \gamma_n\in \Gamma \text{ such that } p_1(\gamma_n) \rightarrow g_1\text{ and } \delta (\gamma_n ) \rightarrow 0\end{equation} 
is a closed subgroup of \(G_1\) (by subadditivity of the displacement function). Moreover, it is normalized by the lattice \(\Gamma\). 

Lemma \ref{l: commutator K} shows that commutators \([g,g']\) with \(g,g'\in K\) belong to \( H\). Hence \([K_1,K_1]\subset H\), and this latter is a closed subgroup of positive dimension. Its Lie algebra is invariant by \( \Gamma\), hence by \(G_1\) since \(\Gamma\) is a lattice. Because \(G_1\) is a simple connected Lie group, we conclude that \(H=G_1\). Thus, every element of  \(G_1\) satisfies \eqref{eq: approximative zero displacement}, and we are done.

\end{proof}

\subsection{Transportation of measures along a flow}

In the sequel we will discuss a notion of transportation of measures along the trajectories of a flow. Our point of view is based on Kantorovich's theory.

\begin{definition}  Given a pair of probability measures \(m_1, m_2\) on a compact space \( Z\) equiped with a free flow \( T=\{T^t\}_{t\in \R}\), and a non negative number \(L\), we will say that \(m_1\) can be \(L\)-transported along the flow \(T\) to  \(m_2\) if there exists a probability measure \( M\) on the space \( Z \times z\) whose support is contained in the closed subset 
\begin{equation}\label{eq: L neighborhood} \Delta^L:= \{ (x_1,x_2)\in Z \times Z \ :\ \exists l\in [-L,L] \text{ such that } x_2 = T^l (x_1) \} ,\end{equation} 
and such that \( m_i = (\text{pr}_i)_* M \) for \(i=1,2\), where \( \text{pr}_i (x_1,x_2) = x_i\). 

Equivalently, by desintegrating \(M\) along the fibers of the projection \( \text{pr}_1 : \Delta^L \rightarrow Z\),  \(m_1\) can be \(L\)-transported to \(m_2\) along the flow \(T\) iff there exists a measurable family of probability measures \( \{p_{1,2} ^{x_1}\} \) on \(Z\) defined for \(m_1\)-a.e. \(x_1\in Z\), the  support of \( p_{1,2}^{x_1}\) being contained in the interval \(T^{[-L,L]}(x_1)\), and such that 
\[ m_2 = \int p_{1,2} ^{x_1} \ m_1 (dx_1). \]
\end{definition} 

\begin{proposition}\label{p: basic transportation along a flow}  
The following properties are satisfied:
\begin{enumerate}
\item (symmetry) if \(m_1\) can be \(L\)-transported to \(m_2\), then \(m_2\) can be \(L\)-transported to \(m_1\) as well,
\item (closedness) the set of pairs of probability measures \( \{ m_1, m_2\}   \) on \(\Der\) that can be \(L\)-transported to each other is closed in the weak topology.
\item (linear combination) if \( (P, m_P)\) is a probability space, and \( p\in P\mapsto m_p\in \text{Prob} (\Der) \) is a measurable family of probability measures on \(\Der\) that can be \(L\)-transported to the measure \(m\), so is the measure \( \int _P m_p \ m_P (dp) \).  
\item (transitivity) given three probability measures \(m_1,m_2,m_3\) on \(\Der\),  if \( m_1\) can be \(L_1\)-transported to \(m_2\), and \(m_2\) can be \(L_2\)-transported to \(m_3\), then \( m_1\) can be \((L_1+L_2)\)-transported to \( m_3\). 
\end{enumerate}
\end{proposition}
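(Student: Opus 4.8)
The plan is to verify the four properties of \ref{p: basic transportation along a flow} directly from the definition of \(L\)-transportation via a coupling measure \(M\) supported on \(\Delta^L\).

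For \textbf{symmetry}, I would simply note that the relation defining \(\Delta^L\) in \eqref{eq: L neighborhood} is symmetric: if \(x_2 = T^l(x_1)\) with \(|l|\le L\), then \(x_1 = T^{-l}(x_2)\) with \(|-l|\le L\). Hence the involution \((x_1,x_2)\mapsto (x_2,x_1)\) maps \(\Delta^L\) to itself, and pushing \(M\) forward under this involution produces a coupling witnessing that \(m_2\) can be \(L\)-transported to \(m_1\).

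For \textbf{closedness}, suppose \(\{m_1^{(k)},m_2^{(k)}\}\) are pairs that can be \(L\)-transported to each other, with \(m_i^{(k)}\to m_i\) weakly. Let \(M^{(k)}\) be the corresponding couplings on \(\Der\times\Der\); since \(\Der\) is compact, the \(M^{(k)}\) form a tight family, so after passing to a subsequence \(M^{(k)}\to M\) weakly for some probability measure \(M\) on \(\Der\times\Der\). The marginals pass to the limit, so \((\mathrm{pr}_i)_*M = m_i\); and since \(\Delta^L\) is closed (the flow \(T\) is continuous and \([-L,L]\) compact, so \(\Delta^L\) is the image of the compact set \(\Der\times[-L,L]\) under the continuous map \((x,l)\mapsto(x,T^l(x))\), hence closed) and each \(M^{(k)}\) is supported on \(\Delta^L\), the limit \(M\) is supported on \(\Delta^L\) as well. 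This gives the desired coupling. For \textbf{linear combination}, given the measurable family \(p\mapsto m_p\) with each \(m_p\) \(L\)-transportable to \(m\), choose (measurably in \(p\)) couplings \(M_p\) on \(\Der\times\Der\) supported on \(\Delta^L\) with marginals \(m_p\) and \(m\); the measurability of the selection can be arranged since the set of such couplings depends measurably on \(m_p\) (one can, e.g., use the explicit disintegration form in the definition, transporting along the flow). Then \(M := \int_P M_p\, m_P(dp)\) is a probability measure on \(\Der\times\Der\) supported on \(\Delta^L\) whose first marginal is \(\int_P m_p\, m_P(dp)\) and whose second marginal is \(m\). Finally, for \textbf{transitivity}, take couplings \(M_{12}\) on \(\Der\times\Der\) supported on \(\Delta^{L_1}\) with marginals \(m_1,m_2\), and \(M_{23}\) supported on \(\Delta^{L_2}\) with marginals \(m_2,m_3\); disintegrate both over the common factor \(m_2\) to get families \(\{q^{x_2}_{12}\}\) and \(\{q^{x_2}_{23}\}\), form the measure \(M_{123} := \int q^{x_2}_{12}\otimes q^{x_2}_{23}\, m_2(dx_2)\) on \(\Der\times\Der\times\Der\), and let \(M_{13}\) be its pushforward to the first and third coordinates. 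A point \((x_1,x_3)\) in the support of \(M_{13}\) arises from some \(x_2\) with \(x_1 = T^{l_1}(x_2)\), \(|l_1|\le L_1\) and \(x_3 = T^{l_2}(x_2)\), \(|l_2|\le L_2\) — wait, more precisely with \(x_2 = T^{-l_1}(x_1)\) and \(x_3 = T^{l_2}(x_2) = T^{l_2 - l_1}(x_1)\), so \(x_3 = T^{l}(x_1)\) with \(l = l_2 - l_1\) and \(|l|\le L_1+L_2\). Hence \(M_{13}\) is supported on \(\Delta^{L_1+L_2}\) with the correct marginals.

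I do not expect a serious obstacle here; all four statements are soft measure-theoretic facts. The only point requiring a little care is the measurable-selection issue in the linear combination property — ensuring \(p\mapsto M_p\) can be chosen measurably — but this is routine given that the disintegration formulation of the definition makes the transport plan an explicit measurable function of the data, and standard measurable-selection theorems (or simply using the disintegration form throughout) handle it. The compactness of \(\Der\) used for tightness in the closedness part, and the compactness of \([-L,L]\) used to see \(\Delta^L\) is closed, are both available by hypothesis.
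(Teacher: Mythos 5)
Your argument is correct, and items 1, 2, and 4 are essentially the same as the paper's: symmetry from the invariance of \(\Delta^L\) under swapping coordinates, closedness from \(\Delta^L\) being closed (you spell out the tightness and limit-of-supports argument that the paper leaves implicit), and transitivity by gluing the two couplings over the common middle marginal \(m_2\) — your construction of \(M_{123}\) on \(\Der^3\) and projection to the first and third coordinates is the coupling-measure reformulation of the paper's composition of disintegrations \(p_{1,3}^{x_1}=\int p_{2,3}^{x_2}\,p_{1,2}^{x_1}(dx_2)\).

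The one place you genuinely diverge is the linear-combination property (item 3). You invoke a measurable-selection argument to choose couplings \(M_p\) measurably in \(p\) and then integrate. This works (the set of couplings on \(\Delta^L\) with prescribed marginals \(m_p\) and \(m\) is a nonempty compact subset of \(\text{Prob}(\Der\times\Der)\) depending measurably on \(m_p\), so Kuratowski--Ryll-Nardzewski applies), but it does require invoking a selection theorem. The paper sidesteps this entirely: the finite case is immediate (a finite convex combination of couplings, each supported on \(\Delta^L\), is again supported on \(\Delta^L\)), and the general case follows from the strong law of large numbers for the \(\text{Prob}(\Der)\)-valued random variable \(p\mapsto m_p\), which gives \(\int_P m_p\,m_P(dp)=\lim_n \frac1n\sum_{k\le n}m_{p_k}\) weakly for \(m_P^{\N^*}\)-a.e.\ sequence \((p_n)_n\), combined with item 2 (closedness). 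That route avoids the measurable-selection issue you flag as "requiring a little care." Both approaches are valid; the paper's is slightly more self-contained given that closedness was already established.
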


\begin{proof}
The first point is a consequence of the fact that the set \( \Delta^L\) is invariant by the involution \( (x_1,x_2)\mapsto (x_2,x_1)\).

The second a consequence of the fact that \( \Delta^L\) is closed in \( \Der \times \Der\).  

The third point is immediate if \(P\) is finite. If \( P\) is infinite, use the law of large numbers which shows that 
\[ \int m_p \ m_P (dp) = \lim _{n \rightarrow \infty} \frac{1}{n} \sum_{1\leq k\leq n} m_{p_k} \text{ for } m_P ^{\N ^*}\text{-a.e. sequence } (p_n)_n \in P^{\N^*} ,\]
and conclude using the second point.   

For the fourth point, denote by  \( \{ p_{1,2} ^{x_1} \} _{x_1\in \Der}\) (resp. \( \{p_{2,3} ^{x_2} \}_{x_2\in \Der}\)) a measurable family of probability measures defined for \(m_1\)-a.e. \(x_1\in \Der\) (resp. \(m_2\)-a.e. \(x_2\in \Der\)), whose support are contained in \(T^{[-L_1, L_1]}(x_1)\) (resp. \(T^{[-L_2, L_2]}(x_2) \) ) and such that \[ \int p_{1,2} ^{x_1} \ m_1(dx_1) = m_2 \ \ \text{  (resp. }  \int p_{2,3} ^{x_2} \ m_2(dx_2) = m_3\text{)}.\] In particular we have that for \(m_1\)-a.e. \( x_1\in \Der\), \(p_{1,2}^{x_1}\)-a.e. \(x_2\in \Der\) the measure \(p_{2,3} ^{x_2}\) is well-defined,  and that denoting \( p_{1,3} ^{x_1} = \int _{\Der} p_{2,3}^{x_2} \ p_{1,2}^{x_1} (dx_2) \), we have \[ m_3 = \int p_{2,3} ^{x_2} \ m_2 (dx_2) = \int \left(\int  p_{2,3} ^{x_2}  \ p_{1,2}^{x_1} (dx_2)\right) \ m_1(dx_1) =\int p_{1,3} ^{x_1} \ dm_1(dx_1).\] Hence, the fourth point follows from the fact that for \(m_1\)-a.e. \(x_1\), the probability measure  \( p_{1,3} ^{x_1}\) is supported on the interval \( T^{L_1+L_2} (x_1)\).   \end{proof}

For the next proposition, let us introduce the following concept. 

\begin{definition} [Length of a flow box] A flow box \(B\)  of the translation flow is said of length at least \( \alpha \) if the connected components of the intersection of \(B\) with the \(T\)-trajectories are intervals of length bounded from  below by \(  \alpha\). The following claim will be interesting for long flow boxes, namely in the regime \( \frac{L}{\alpha}=o(1)\). \end{definition}

\begin{proposition} \label{p: claim 5} Assume \(T\) is free. Let \(m_1\) be a probability measure on \(\Der\) which is \(T\)-invariant, and let \( m_2\) be a probability measure on \( \Der\) which is \(L\)-transported from \(m_1\). Then for every flow box \(B\) of length bounded from below by \(\alpha\) and of positive \(m_1\)-measure, we have 
\[ \left\lvert \frac{m_2(B)}{m_1(B)}-1 \right\lvert \leq \frac{L }{\alpha}. \]
\end{proposition}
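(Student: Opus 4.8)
### Plan for the proof of Proposition \ref{p: claim 5}

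The plan is to use the transportation structure concretely. Write $m_1$ as $T$-invariant, and let $M$ be a probability measure on $\Der \times \Der$ supported on $\Delta^L$ with marginals $(\text{pr}_1)_* M = m_1$ and $(\text{pr}_2)_* M = m_2$. The key point is to compare $m_1(B)$ and $m_2(B) = M(\Der \times B)$ by controlling the "mass leakage" caused by transportation: a point $x_1 \in B$ can be transported to a point $x_2 = T^l(x_1)$, $|l| \le L$, which may fall outside $B$, but only if $x_1$ lies within flow-time $L$ of the boundary of $B$ along its $T$-trajectory; conversely, mass can enter $B$ only from such a boundary collar. So the plan is: first estimate $m_1$ of the collar $C_L(B) := \{ x \in B : \exists\, l \in [-L,L],\ T^l(x) \notin B\}$, then conclude.

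First I would set up coordinates on the flow box. By Lemma \ref{l: local product structure}, $B$ is homeomorphic to $I \times T_B$ where each connected component of $B \cap (\text{trajectory})$ is an interval of flow-length $\ge \alpha$; after disintegrating $m_1$ along the plaques of this box and using that $m_1$ is $T$-invariant, the conditional measures of $m_1$ on the plaques are proportional to Lebesgue measure in the time parametrization (as in the proof of Lemma \ref{l: invariant and transverse invariant}). Hence on each plaque of flow-length $\ell \ge \alpha$, the collar within flow-time $L$ of the two endpoints has relative Lebesgue measure at most $2L/\ell \le 2L/\alpha$; integrating over the transversal gives $m_1(C_L(B)) \le \tfrac{2L}{\alpha} m_1(B)$. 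Next, for the estimate of $m_2(B)$: mass of $m_2$ inside $B$ comes from $m_1$-mass that is transported into $B$. Since transportation moves each point by flow-time at most $L$, the only $m_1$-mass that can land outside $B$ starting from $B$ is supported in $C_L(B)$, and the only $m_1$-mass that can land inside $B$ starting outside $B$ is supported in (a slightly larger collar around) $\partial B$ whose $m_1$-measure is again bounded using $T$-invariance. Combining, $|m_2(B) - m_1(B)| \le m_1(C_L(B)) \le \tfrac{2L}{\alpha} m_1(B)$, which gives the stated bound (possibly with the cleaner constant if one is careful that only the "outflow" collar at one side of each interval contributes, or using that the statement as printed has $L/\alpha$ rather than $2L/\alpha$ — I would re-examine whether the box is taken so that boundary contributions from the two ends can be merged, or simply absorb the factor $2$ into a slightly sharper accounting on each plaque, noting that a point can be pushed off only through the near endpoint).

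The main obstacle I anticipate is the bookkeeping of which collar controls which direction of mass transfer, done rigorously through the disintegration of $M$: one must check that the "in" and "out" discrepancies are each individually bounded by the $m_1$-measure of a collar of the \emph{same} flow box (not an enlargement of it that could have larger measure), and handle the measurability of the family $\{p_{1,2}^{x_1}\}$ and the plaque decomposition simultaneously. Concretely, writing $m_2(B) = \int \mathbf{1}_B(x_2)\, p_{1,2}^{x_1}(dx_2)\, m_1(dx_1)$ and splitting the $x_1$-integral over $B$ and its complement, the term over $B$ contributes $m_1(B) - \int_B p_{1,2}^{x_1}(\Der \setminus B)\, m_1(dx_1)$ and the term over $\Der\setminus B$ contributes $\int_{\Der \setminus B} p_{1,2}^{x_1}(B)\, m_1(dx_1)$; both correction integrals are supported, as functions of $x_1$, on points within flow-time $L$ of $\partial B$, hence bounded by $m_1$ of that collar, which by the $T$-invariance computation above is at most $\tfrac{L}{\alpha} m_1(B)$. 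This yields $|m_2(B)/m_1(B) - 1| \le L/\alpha$ as claimed.
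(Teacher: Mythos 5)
Your argument is correct and is essentially the paper's own proof: the paper phrases your in/out accounting as the set-theoretic sandwich \(\text{pr}_1^{-1}(B^{-L})\subset\text{pr}_2^{-1}(B)\subset\text{pr}_1^{-1}(B^{L})\) for the coupling \(M\) (where \(B^{\pm L}\) are the union/intersection of the \(T^l(B)\), \(|l|\le L\)), and then bounds \(m_1(B^{\pm L})\) against \(m_1(B)\) using \(T\)-invariance and the length lower bound, which is exactly what your disintegration of \(M\) into the family \(\{p_{1,2}^{x_1}\}\) accomplishes. The factor-of-two worry you raise (each plaque can exchange mass through both of its endpoints, so the honest per-plaque constant is \(2L/\alpha\)) is present in the paper's own proof as well and is immaterial for the application, where only the fact that the bound tends to \(0\) as \(\alpha\to\infty\) is used.
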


\begin{proof}
Introduce the two sets  
\[ B^{L}:= \bigcup_{|l|\leq L} T^l (B) \text{ and } B^{-L} := \bigcap _{|l|\leq L} T^l (B).\]

Let \( M\) be a probability measure supported on \( \Delta^L\) which is such that \( (\text{pr}_i) _* (M) = m_i\) for \(i=1,2\). Since \( \text{pr}_1^{-1} (B^{-L})) \subset \text{pr}_2^{-1} (B) \subset \text{pr}_1^{-1} (B^L)\), we have 
\[ m_1 ( B^{-L} )  = M( \text{pr}_1^{-1}(B^{-L}))  \leq  M ( \text{pr}_2^{-1} (B) )  =\]
\[=m_2 ( B )  \leq M( \text{pr}_1^{-1}(B^L) ) =m_1(B^L).\]

Now, the measure \(m_1\) being invariant by \(T\), and the length of \(B\) being bounded from below by \(\alpha\), we get the two inequalities
\[  \left\lvert \frac{m_1(B^{\pm L})}{m_1(B)}-1 \right\lvert \leq \frac{L }{\alpha},\]
and Proposition \ref{p: claim 5} follows. \end{proof}

\subsection{The contradiction}

Recall that for every \(g\in G\) we define the measure  \( \mDer^g  := \widetilde{D_{\mathcal L_\Der}} (g, \cdot ) \mDer\), and that a lift of \( \mX\) to \( G\times \Der\) is given by \( \widetilde{\mX} = \int \delta_g \otimes \mDer^g \ \HaarG (dg) \) (see section \ref{sss: mX}). In particular,  the family \( g\mapsto \mDer ^g\)  only depends on the  \(g_1\)-coordinate as well; we will then denote in the sequel \( \mDer^{g_1} = \mDer ^{(g_1, \ldots, g_n)}\). 

\begin{lemma}
The bounded function \( g_1 \in G_1 \mapsto \mDer ^{g_1} \in \text{Prob} (\Der ) \) is left \(\mu_{G_1}\)-harmonic for any measure \(\mu_{G_1}\) of type \(\type\) on \(G_1\).  In particular, there exists a measurable family \(\{ \mDer ^{ \zeta_1} \} _{\zeta_1 \in B(G_1) } \) of measures on the almost-periodic space \( \Der\) such that 
\begin{equation}\label{eq: harmonic family of measures}  \mDer^{g_1}  = \int_{\zeta_1\in B(G_1)}   \mDer ^{\zeta_1} \ \mu_{B(G_1)} ^{g_1} (d\zeta_1) \end{equation}
where \(\mu_{B(G_1)} ^{g_1} \) is the harmonic measure on \(B(G_1)\) from the point \(g_1\). 

\end{lemma}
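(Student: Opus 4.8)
The plan is to reduce the statement to Furstenberg's Poisson formula for the factor \(G_1\) applied to ordinary real-valued functions, by testing the measure-valued map against continuous functions on \(\Der\). Since \(\Der\) is compact metrizable, \( \text{Prob}(\Der) \) with the weak-\(*\) topology is separated by the evaluations against a countable dense family in \( C(\Der) \), so it suffices to treat one scalar function at a time and then reassemble.

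First I would fix \(\phi\in C(\Der)\) and look at the scalar function \( g\in G\mapsto \langle \mDer^g,\phi\rangle := \int_\Der \phi\, d\mDer^g\). Using the definition \eqref{eq: mDerg} \(\mDer^g = \widetilde{D_{\mathcal L_\Der}}(g,\cdot)\,\mDer\), the left \(\mGBM\)-harmonicity of \(g\mapsto\widetilde{D_{\mathcal L_\Der}}(g,z)\) (item 2 in section \ref{sss: Lyapunov cocycle on suspension}), and the fact that the \(\mDer^g\) are probability measures (item 2 of Lemma \ref{l: mX}) — which makes \(\int_G\int_\Der |\widetilde{D_{\mathcal L_\Der}}(hg,z)\phi(z)|\,\mDer(dz)\,\mGBM(dh)\le\norm{\phi}_\infty\) absolutely convergent — I would invoke Fubini to conclude that \(g\mapsto\langle\mDer^g,\phi\rangle\) is a bounded left \(\mGBM\)-harmonic function on \(G\), with bound \(\norm{\phi}_\infty\). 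Since it depends only on the \(G_1\)-coordinate (as recalled just before the statement), it is constant — hence harmonic — in the \(G_2,\dots,G_n\)-coordinates, so Theorem \ref{t: poisson boundary product} shows that it is harmonic in the \(G_1\)-coordinate; being bounded, Furstenberg's theorem (independence of bounded harmonicity on the choice of a type \(\type\) measure) upgrades this to left \(\mu_{G_1}\)-harmonicity on \(G_1\) for every type \(\type\) measure \(\mu_{G_1}\). As \(C(\Der)\) separates \(\text{Prob}(\Der)\), this already gives the first assertion of the Lemma.

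For the second assertion I would apply the Poisson formula for \(G_1\), Theorem \ref{t: Poisson formula}: for each \(\phi\in C(\Der)\) there is a unique \(f_\phi\in L^\infty(B(G_1))\) with \(\langle\mDer^{g_1},\phi\rangle=\int_{B(G_1)}f_\phi\,\mu_{B(G_1)}^{g_1}(d\zeta_1)\) for all \(g_1\). By uniqueness \(\phi\mapsto f_\phi\) is linear; it is positive, because if \(\phi\ge 0\) then the left-hand side is nonnegative for every \(g_1\) and \(f_\phi\) is recovered as the a.e.\ martingale limit of it along the random walk, so \(f_\phi\ge 0\) a.e.; and \(f_1\equiv 1\) since \(\mDer^{g_1}\) and \(\mu_{B(G_1)}^{g_1}\) are probabilities. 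Fixing a countable dense \(\mathbb{Q}\)-linear subspace \(\mathcal D\subset C(\Der)\) containing \(1\), outside a \(\mu_{B(G_1)}\)-null set the functional \(\phi\in\mathcal D\mapsto f_\phi(\zeta_1)\) is positive, linear and unital, hence has norm one and extends uniquely to a positive normalized functional on \(C(\Der)\), which by the Riesz representation theorem is integration against a probability measure \(\mDer^{\zeta_1}\) on \(\Der\). The family \(\{\mDer^{\zeta_1}\}\) is measurable since each \(\zeta_1\mapsto\int\phi\,d\mDer^{\zeta_1}=f_\phi(\zeta_1)\) is, and substituting back into the Poisson representations yields \eqref{eq: harmonic family of measures} tested against every \(\phi\in C(\Der)\), hence \eqref{eq: harmonic family of measures} itself.

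I expect the only step requiring genuine care to be the passage from \(\mGBM\)-harmonicity of a function depending only on \(g_1\) to \(\mu_{G_1}\)-harmonicity on \(G_1\): one must correctly invoke the product structure of the Furstenberg boundary (Theorem \ref{t: poisson boundary product}) together with Furstenberg's invariance under the choice of the diffusion measure, and keep careful track of the boundedness, which here comes precisely from the \(\mDer^g\) being probability measures. The rest is routine bookkeeping with disintegration and the Riesz representation theorem.
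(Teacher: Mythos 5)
Your proof is correct and takes essentially the same approach as the paper: both invoke the product structure of the Poisson boundary (Theorem \ref{t: poisson boundary product}) applied to the bounded function \(g\mapsto\mDer^g\), followed by Furstenberg's Poisson formula for \(G_1\). The paper's proof is a two-line citation, so your elaboration — reducing to scalar test functions \(\phi\in C(\Der)\), verifying harmonicity via Fubini with the probability normalization of \(\mDer^g\), and then extracting the measurable family \(\{\mDer^{\zeta_1}\}\) by a positivity argument and the Riesz representation theorem — fills in precisely the details the paper leaves implicit.
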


\begin{proof} The first part of the Lemma is an immediate consequence of Theorem \ref{t: poisson boundary product} applied to the bounded function \( (g_1, \ldots, g_n) \in G_1 \times \ldots \times G_n \mapsto \mDer ^{g_1} \in \text{Prob} (\Der) \). The second part is a formulation of the Poisson formula due to Furstenberg, see the discussion after Theorem \ref{t: Poisson formula}. \end{proof}

\begin{proposition}\label{p: claim 4} There exists \(L\geq 0\) such that for Lebesgue a.e. pair \( (\zeta_1, \zeta_2)\in B(G_1) \), the measure \(\mDer ^{\zeta_1}\) can be \(L\)-transported to \(\mDer ^{\zeta_2}\).\end{proposition}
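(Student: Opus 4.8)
\textbf{Proof plan for Proposition \ref{p: claim 4}.}

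The strategy is to combine the equivariance of the family $\{\mDer^{\zeta_1}\}$ under $p_1(\Gamma)$ with the bounded-displacement approximation of Lemma \ref{l: bounded displacement of sequences}. First I would record the equivariance property: since $\mDer^{g_1\gamma^{-1}} = \gamma_* \mDer^{g_1}$ for $\gamma \in \Gamma$ (this follows from the cocycle relation \eqref{eq: first cocycle relation}, exactly as in item 1 of Lemma \ref{l: mX}, together with the fact that the family only depends on the $g_1$-coordinate), and since the harmonic measures transform by $\mu_{B(G_1)}^{g_1\gamma^{-1}} = p_1(\gamma)_* \mu_{B(G_1)}^{g_1}$, the uniqueness in the Poisson formula \eqref{eq: harmonic family of measures} forces
\[ \mDer^{p_1(\gamma)\zeta_1} = \gamma_* \mDer^{\zeta_1} \quad \text{for Lebesgue-a.e. } \zeta_1 \in B(G_1), \text{ every } \gamma \in \Gamma. \]
Next I would quantify how $\gamma_*$ moves a measure along the lamination: if $\gamma \in \Gamma$ has displacement $\delta(\gamma) = \delta_{d^e}(\gamma) \leq l$ with respect to the reference leafwise distance $d^e$, then (since the $\Gamma$-action preserves each $T_\Der$-trajectory, with $\gamma(z) = T_\Der^{t(\gamma,z)} z$ and $|t(\gamma, z)| \le \delta(\gamma)$ after reparametrizing the leaves by $d^e$-arclength) the measure $\gamma_* \mDer^{\zeta_1}$ is obtained from $\mDer^{\zeta_1}$ by moving mass a distance at most $l$ along the flow $T_\Der$; that is, $\mDer^{\zeta_1}$ can be $l$-transported to $\gamma_* \mDer^{\zeta_1} = \mDer^{p_1(\gamma)\zeta_1}$. (One must be mildly careful that the "displacement" in Lemma \ref{l: bounded displacement of sequences} is measured with the $d^e$-arclength parametrization, which is the parametrization implicit in the definition of $L$-transportation along $T_\Der$; this is just a matter of matching conventions, since $T_\Der$ is the $d^e$-unit-speed flow up to the harmonicity normalization.)

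With these two ingredients in hand, the proof runs as follows. Fix a base point, say $\zeta_0 \in B(G_1)$ corresponding to $g_1 = e$, so that $\mDer^{\zeta_0}$ is (the a.e.-defined value at) the identity. By Lemma \ref{l: bounded displacement of sequences} there is a neighborhood $U$ of the identity in $G_1$ and a constant $l > 0$ such that every element of $U$ is a limit $\lim_n p_1(\gamma_n)$ with $\delta(\gamma_n) \leq l$ for all $n$. For $u \in U$, choose such a sequence $\gamma_n$. Then $\mDer^{\zeta_0}$ can be $l$-transported to $\mDer^{p_1(\gamma_n)\zeta_0}$ for every $n$; since $p_1(\gamma_n)\zeta_0 \to u\zeta_0$ and the map $\zeta_1 \mapsto \mDer^{\zeta_1}$ is, by the harmonicity and Harnack control, continuous in an appropriate sense on a full-measure set (or: since the set of pairs that can be $l$-transported to each other is closed by item 2 of Proposition \ref{p: basic transportation along a flow}, and the measures $\mDer^{p_1(\gamma_n)\zeta_0}$ converge weakly — here one uses that $\zeta_1 \mapsto \mDer^{\zeta_1}$ is continuous along almost every approach, or integrates against a small neighborhood of $u\zeta_0$ and applies item 3 of Proposition \ref{p: basic transportation along a flow}), we conclude that $\mDer^{\zeta_0}$ can be $l$-transported to $\mDer^{u\zeta_0}$ for a.e. such $u$.

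Finally I would bootstrap from the neighborhood $U$ to all of $B(G_1)$. Since $G_1$ acts transitively on $B(G_1) = P(G_1)\backslash G_1$ and $U$ generates $G_1$ as a group, any two boundary points $\zeta_1, \zeta_2$ differ by an element that is a product of at most $N$ elements of $U \cup U^{-1}$, where $N$ can be bounded (after throwing away a null set, and using that $B(G_1)$ is compact so finitely many $G_1$-translates of a uniform-size piece cover it, plus the symmetry item 1 of Proposition \ref{p: basic transportation along a flow}). Applying transitivity, item 4 of Proposition \ref{p: basic transportation along a flow}, $N$ times shows $\mDer^{\zeta_1}$ can be $Nl$-transported to $\mDer^{\zeta_2}$; setting $L := Nl$ finishes the proof. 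The main obstacle is the third step: one must ensure the transportation constant stays uniformly bounded as $(\zeta_1,\zeta_2)$ ranges over (a.e.) pairs in $B(G_1)$ — i.e. that the number $N$ of $U$-steps needed can be taken uniform — which is where the compactness of $B(G_1)$ and the fact that $U$ is a genuine neighborhood of the identity (not just a generating set) are essential, together with the measurability/a.e. caveats in passing the equivariance relation $\mDer^{p_1(\gamma)\zeta_1} = \gamma_*\mDer^{\zeta_1}$ through the Poisson representation.
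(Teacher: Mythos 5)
Your overall strategy is the paper's: combine the equivariance \( \mDer^{p_1(\gamma)\zeta_1}=\gamma_*\mDer^{\zeta_1}\), the observation that displacement \(\leq l\) in the \(d^e\)-parametrization gives \(l\)-transportability (via the deterministic coupling \(z\mapsto(z,\gamma(z))\)), Lemma \ref{l: bounded displacement of sequences}, closedness of the transportation relation, and transitivity on the boundary. Your two structural deviations are harmless: the paper first enlarges the neighborhood \(U\) to a power \(U^p\supset K_1\) (using subadditivity of displacement, so \(L=pl\)) and then uses that \(K_1\) already acts transitively on \(B(G_1)=M_1\backslash K_1\), performing a \emph{single} transportation; you instead iterate the transportation \(N\) times via item 4 of Proposition \ref{p: basic transportation along a flow}. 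Either works, though your version forces you to track the a.e.\ conditions at every intermediate point \(u_j\cdots u_1\zeta_1\) of the chain, which the paper's one-step version avoids.

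The genuine gap is in the limit-passing step. You need \(\mDer^{p_1(\gamma_n)\zeta_0}\to\mDer^{u\zeta_0}\) weakly from \(p_1(\gamma_n)\zeta_0\to u\zeta_0\), and you justify this by saying \(\zeta\mapsto\mDer^{\zeta}\) is ``continuous in an appropriate sense on a full-measure set.'' A merely measurable map need not be continuous on \emph{any} full-measure set; Lusin only gives continuity on a compact set \(E\) of measure \(1-\varepsilon\). This is exactly where the paper works: it fixes such an \(E\), and then must show that for all \(\zeta_1\) outside a set of measure \(\leq\varepsilon\) the orbit points \(\gamma_n(\zeta_1)\) return to \(E\) infinitely often (via the sets \(G_N=\bigcap_{n\geq N}\gamma_n^{-1}E^c\)), so that the weak convergence holds along a subsequence and closedness applies; letting \(\varepsilon\to0\) finishes. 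Your proposed alternative --- averaging \(\mDer^{\zeta}\) over a small neighborhood of \(u\zeta_0\) and invoking item 3 --- is circular as stated, since to transport the average from \(\mDer^{\zeta_0}\) you would already need to transport the individual \(\mDer^{\zeta}\) for a.e.\ \(\zeta\) in that neighborhood, which is the statement being proved. So the architecture of your argument is right, but this recurrence-into-the-Lusin-set estimate is a necessary ingredient, not a routine caveat.
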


\begin{proof} 
First notice that there exists an integer \(p\) such that the \(p\)-th power of the neighborhood constructed in Lemma \ref{l: bounded displacement of sequences} contains the compact group \( K_1 \). In particular, we infer that with \(L= pl\),  any element of \(K_1\) 
%\begin{equation*} r_\theta := \begin{pmatrix} \cos \theta  & -\sin \theta  \\ \sin \theta   & \cos \theta    \end{pmatrix} \text{ with } \theta \in \R/2\pi \Z\end{equation*} 
is a limit \(\lim _n p_2(\gamma_n) \), where \(\gamma_n\) is a sequence of elements of \(\Gamma\) whose displacements along the leaves of the lamination \(\mathcal L_\Der\) has a modulus  uniformly bounded by \(L\).

Fix \(\varepsilon >0\). Lusin's theorem furnishes a measurable subset \( E\subset B(G_1) \) whose \(\mu_{B(G_1)}^e\)-measure (the unique probability measure on \(B(G_1)\) invariant under the group \( K_1 \)) is at least \( 1- \varepsilon\) and such that the restriction of the measurable equivariant map \( \zeta_1 \mapsto  \mDer ^{\zeta_1} \) to \(E\) is continuous (with the induced topology). This means that if \( \zeta_ n \rightarrow \zeta \), with \( \zeta_n \in E\) for each \(n\) and \(\zeta \in E\), then \(  \mDer ^{\zeta_n} \rightarrow  \mDer ^{\zeta})\). 

Fix \( k \in K_1\) and let \((\gamma_n)_n\) be the sequence in \(\Gamma\) whose displacements along \(\mathcal L_\Der\) is uniformly bounded by \(L\) and such that \( p_2 (\gamma_n) \) converges to \(k \).  Given \( \zeta_1, \zeta_2\in B(G_1) \) such that \( k (\zeta_1)= \zeta_2\), suppose that \( \gamma_n (\zeta_1) \) belongs to \(E\) for an infinite number of \(n\)'s.  Since the displacement of \( \gamma_n\) is uniformly bounded by \(L\), one can \(L\)-transport the measure \( \mDer ^{\zeta_1}\) to the measure \( (\gamma_n)_* \mDer ^{\zeta_1} =\mDer ^{ \gamma_n (\zeta_1)}\). Since \( \mDer ^{\gamma_n (\zeta_1)} \) converges to \( \mDer ^{\zeta_2} \) along the subsequence of \(n\)'s such that \( \gamma_n (\zeta_1) \) belongs to \(E\), we can \(L\)-transport the measure \( \mDer ^{\zeta_1} \) to the measure \( \mDer ^{\zeta_2}\). 

The set of \( \zeta_1\)'s in \( B(G_1) \) which are such that \(\gamma_n (\zeta_1) \) belongs to \(E\) for an infinite number of \(n\)'s is the complementary of \( \bigcup _{N\in \N }  G_N\) with \( G_N = \bigcap_{n\geq N}  \gamma_n ^{-1} E ^c \). We have \( G_0 \subset G_1 \subset \ldots\) and each \( G_N\) has \(\mu_{B(G_1)}^e\)-measure bounded by the one of \(\gamma_N^{-1} E^c\), namely 
by \(\varepsilon\). In particular, the measure of \( \bigcup_N G_N\) is bounded by \(\varepsilon\) as well. As a conclusion, we have proved that apart from a set of \( \zeta\)'s of \(\mu_{B(G_1)}^e\)-measure bounded by \(\varepsilon\), the measures \( \mDer ^{\zeta_1} \) and \( \mDer ^{\zeta_1} \) can be \(L\)-transported to each other, for \(\zeta_2=k (\zeta_1)\). 

This being valid for every \(k\in K_1\) and every \(\varepsilon >0\), The proposition follows. 
\end{proof}

The basic properties of \(L\)-transportation stated in Proposition \ref{p: basic transportation along a flow}, Proposition \ref{p: claim 4}, and equation \eqref{eq: harmonic family of measures} show that for every couple of points \(g_1, g_2\in G\), the measure \(  \mDer^{g_1} \) can be \(L\)-transported to \(  \mDer^{g_2}\), where \(L\) is the constant appearing in \ref{p: claim 4}.  

We choose \(g_1=e\) and \( g_2= \gamma^{-1} \) for \(\gamma \in \Gamma\) to be chosen later on, and denote \(m_1= \mDer^{g_1}\) and \(m_2=   \mDer^{g_2 } =\gamma_* m_1\). The measure \(m_1\) is invariant by the translation flow \(T\), and can be \(L\)-transported to any of its images \(m_2=\gamma_* m_1\), \(\gamma\in \Gamma\). 

Now take any large number \(\alpha \), and \(x\in \Der\) a point in the support of \(m_1\). By the global contraction property, there exists an element \(\gamma\in \Gamma\) which is such that \( \gamma ^{-1} (T^{[x-\alpha ,x+\alpha ]}(x) ) \) is a piece of trajectory of length bounded from above by \(1\). By continuity, there exists a flow box \(B\) of length at least \(\alpha\) containing \(T^{[x-\alpha ,x+\alpha ]}(x)\), such that \(\gamma^{-1} (B)\) has length bounded from above by \(2\) (namely the component of the intersection of the trajectories with \(\gamma^{-1} (B)\) have a length bounded by \(2\) in the time parametrization). In particular, we have 
\[ \frac{m_2(B) }{m_1(B)} = \frac{m_1(\gamma^{-1}(B))}{m_1(B)} \leq \frac{2}{\alpha} \]
and this contradicts Proposition \ref{p: claim 5} if \(\alpha\) is chosen sufficiently large since \(m_1\) can be \(L\)-transported to \( m_2\).  %\end{proof} 

This contradiction concludes the proof of Theorem \ref{t: no orderable lattice 2}.

\end{document}